\documentclass[12pt]{amsart}

\usepackage{amssymb}
\usepackage{amsmath}
\usepackage{bm}
\usepackage{bbm}
\usepackage{enumitem}
\usepackage{float}
\usepackage{makecell}
\usepackage{multirow}
\usepackage{stmaryrd}
\usepackage{mathrsfs}
\usepackage[cmtip,all]{xy}
\usepackage[colorlinks=true,linkcolor=blue,citecolor=blue,urlcolor=blue, pagebackref=true]{hyperref}
\ifpdf
\usepackage[pdftex,final]{graphicx}
\usepackage{subfigure}
\usepackage[pdftex,lmargin=1in,rmargin=1in,tmargin=1in,bmargin=1in]{geometry}

\usepackage{tikz}
\usetikzlibrary{matrix,arrows,arrows.meta}
\usepackage{tikz-cd}

\title[Handle Decompositions and the 1D inputs Skein Lasagna Module]{Handle Decompositions and the 1-Dimensional Inputs Skein Lasagna Module}
\author{Imogen Montague}
\address{The University of Texas at Austin, Austin, TX 78712}
\email{imogen.montague@austin.utexas.edu}
\author{Ian Sullivan}
\address{The University of California, Davis, Davis, CA 95616}
\email{iasullivan@ucdavis.edu}

\usepackage{hyperref}
\hypersetup{
    colorlinks=true,
    linkcolor=magenta,
    citecolor=magenta,      
    urlcolor=magenta,
}
\usepackage{amsmath,amssymb,amsthm}
\usepackage{tikz}
    \usetikzlibrary{cd} 
\usepackage{tikz-cd}
\allowdisplaybreaks
\usepackage{comment}
\usepackage{todonotes}
\usepackage{stmaryrd} 
\SetSymbolFont{stmry}{bold}{U}{stmry}{m}{n}
\usepackage{MnSymbol} 
\usepackage{textcomp} 
\usepackage{bbm}
\usepackage{enumitem}

\theoremstyle{definition}
\newtheorem{theorem}{{Theorem}}[section]
\newtheorem{lemma}[theorem]{{Lemma}}
\newtheorem{proposition}[theorem]{{Proposition}}

\newtheorem{corollary}[theorem]{{Corollary}}
\newtheorem{example}[theorem]{{Example}}
\newtheorem{conjecture}[theorem]{{Conjecture}}

\newtheorem{question}{{Question}}

\theoremstyle{definition}
\newtheorem{remark}[theorem]{Remark}
\newtheorem{definition}[theorem]{{Definition}}

\newcommand{\doublearrow}[2]{%
  \mathrel{\begin{tikzpicture}[baseline=-3pt]
    \draw[-{Stealth[angle=30:5pt]}] (0,0.6ex) -- (2em,0.6ex) node[midway,above] {$\scriptstyle #1$};
    \draw[-{Stealth[angle=30:5pt]}] (0,-0.6ex) -- (2em,-0.6ex) node[midway,below] {$\scriptstyle #2$};
  \end{tikzpicture}}%
}
\newcommand{\belt}[1]{\widetilde{\mathbbm{1}}({#1})}

\DeclareMathOperator{\im}{im}
\DeclareMathOperator{\id}{id}

\DeclareMathOperator{\std}{std}

\newcommand{\del}{\partial}
\newcommand{\wt}[1]{\widetilde{#1}}

\newcommand{\<}{\langle}
\renewcommand{\>}{\rangle}
\newcommand{\ul}[1]{\underline{#1}}
\newcommand{\ol}[1]{\overline{#1}}

\renewcommand{\id} { \mathrm{id}}

\newcommand{\threepartdef}[6]
{
	\left\{
		\begin{array}{ll}
			#1 & \mbox{if } #2 \\
			#3 & \mbox{if } #4 \\
            #5 & \mbox{if } #6
		\end{array}
	\right.
}

\newcommand{\bb}[1]{\mathbb{#1}}
\newcommand{\bbm}[1]{\mathbbm{#1}}
\newcommand{\mbf}[1]{\mathbf{#1}}
\newcommand{\cal}[1]{\mathcal{#1}}

\renewcommand{\sf}[1]{\mathsf{#1}}
\renewcommand{\frak}[1]{\mathfrak{#1}}

\DeclareMathOperator{\op}{op}
\DeclareMathOperator{\Tr}{Tr}

\newtheorem{thm}{Theorem}[section]

\newtheorem{prop}[thm]{Proposition}

\theoremstyle{definition}

\newtheorem{clm*}{Claim}

\theoremstyle{remark}

\makeatletter
\let\c@equation\c@thm
\makeatother
\numberwithin{equation}{section}

\newcommand{\idmap}{\mathrm{id}}  

\newcommand{\Kh}{\mathrm {Kh}}

\newcommand{\KhR}{\mathrm{KhR}} 
\newcommand{\Links}{\textbf{Links}} 

\newcommand{\fdVect}{fd\textbf{Vect}} 
\newcommand{\fVect}{f\textbf{Vect}} 
\newcommand{\skein}{\mathcal{S}}

\newcommand{\FT}{\mathrm{FT}} 

\usepackage{todonotes}

\begin{document}

\begin{abstract}
We establish handle attachment formulas for the Khovanov skein lasagna module with 1-dimensional inputs over $\mathbb{Q}$, defined recently by Ren, Wedrich, Willis, Zhang, and the second author. For a $4$-manifold built out of $1$- and $2$-handles, the invariant can be computed in terms of a cabled colimit of Rozansky-Willis homologies, modulo a new relation which we call the \emph{lasso} relation. We then present some explicit calculations for disk bundles over $S^{2}$, as well as a partial vanishing result for $4$-manifolds of the form $\Sigma_{g}\times D^{2}$, $g\geq 1$.
\end{abstract}

\maketitle

\section{Introduction}

In \cite{MWW-lasagna}, Morrison--Walker--Wedrich introduced the (Khovanov) skein lasagna module, a $4$-manifold invariant constructed using a version of $\frak{gl}_{2}$ Khovanov-Rozansky homology for links in $S^{3}$. More precisely, given a compact oriented $4$-manifold $X$ and a (possibly empty) framed oriented link $L\subset\del X$, the authors of \cite{MWW-lasagna} associate to the pair $(X,L)$ a triply-graded $\bb{Q}$-module denoted by $\skein_{0}^{2}(X;L)$. In recent years this invariant and its various extensions have been shown to detect exotic phenomena in dimension $4$ in a purely algebraic manner \cite{RW24,Nahm25,Sul25}.

The invariant also enjoys robust gluing properties \cite{MN22,MWWhandles,BKL25,MSW26} for facilitating computations. In particular, Manolescu--Walker--Wedrich \cite{MWWhandles} provide formulas for computing the Khovanov skein lasagna module of a 4-manifold after attaching an index $k$ handle, $k\in\{1,\dots,4\}$. For $k=2$, the handle attaching formula expresses the invariant as a colimit of lasagna modules governed by a \emph{cabling pattern} of the attaching link of the $2$-handle. In the case where a $2$-handle is attached to a 4-ball $B^{4}$ along a framed link $K$, the handle attaching formula provides an isomorphism between the skein lasagna module of the resulting $2$-handlebody and an invariant called the \emph{cabled Khovanov homology} of $K$ \cite{MN22} (see also \cite{HRW25}). 

In principle, the handle attachment formulas in \cite{MWWhandles} allow for the computation of the skein lasagna module for 4-manifolds built out of $1$- and $2$-handles. However, these formulas are computationally complex even in the simplest cases. For example, at present it is unknown how to compute the terms in the colimit computing $\skein_{0}^{2}(B^{4}):=\skein_{0}^{2}(B^{4};\emptyset)$ via the Kirby diagram of $B^{4}$ consisting of a canceling 1- and 2-handle (Figure \ref{fig:B4andSigma_gxD^2_kirby_diagrams}). 

Motivated by the desire for an invariant more well-suited for $4$-manifolds built out of $1$- and $2$-handles, Ren, Wedrich, Willis, Zhang and the second author \cite{RSWWZ25} defined a new invariant called the \emph{1-dimensional inputs (Khovanov) skein lasagna module}, an invariant of smooth 4-manifolds constructed in the same spirit as the original \cite{MWW-lasagna}, but with \emph{Rozansky--Willis} homology as the input link homology theory. Introduced by Rozansky \cite{ROZ-Cat} and further developed by Willis \cite{WillisS1xS2}, Rozansky--Willis homology (denoted by $\Kh_{RW}(L)$) is an invariant for links in connected sums of $S^{1}\times S^{2}$. In \cite{RSWWZ25} the authors constructed a $\frak{gl}_{2}$-refinement of Rozansky-Willis homology and showed it satisfied the functoriality properties necessary to construct a skein lasagna module. One can extract from their construction two link homology theories which we denote in this paper by $\KhR_{2,O}^{-}(L)$ and $\KhR_{2,T}^{-}(L)$; these are defined for (framed, oriented) null-homologous and two-divisible links $L\subset\sqcup_{i=1}^{k}\#^{m_{i}}S^{1}\times S^{2}$, respectively. These give rise to 1-dimensional inputs Khovanov skein lasagna modules $\overline{\skein}_{0}^{2,O}(X;L)$ and $\overline{\skein}_{0}^{2,T}(X;L)$.

In this work, we present handle attachment formulas for $k$-handles for $k\in\{1,\dots,4\}$ for the invariants $\overline{\skein}_{0}^{2,\diamond}(X;L)$, $\diamond\in\{O,T\}$. The index $k$ handle attaching formulas for $\overline{\skein}_{0}^{2,\diamond}(X;L)$ and the original skein lasagna module $\skein_{0}^{2}(X;L)$ are most noticeably distinct when $k=2$:

Consider a pair $(X,L)$ where $X$ is built from attaching $2$-handles along an $m$-component link $K=K_{1}\cup\cdots\cup K_{m}\subset\del\natural^{n}S^{1}\times B^{3}$. Analogous to the construction in \cite{MN22}, for $\diamond\in\{O,T\}$ one can define versions of \emph{cabled Rozansky-Willis homology} for a pair of links $K,L\subset\sqcup_{i=1}^{k}\#^{m_{i}}S^{1}\times S^{2}$, denoted by $\ul{\KhR}^{-}_{2,\diamond}(K,L)$. In view of Manolescu-Neithalath's $2$-handlebody formula, one would expect an isomorphism between $\ol{\skein}_{0}^{2,\diamond}(X,L)$ and $\ul{\KhR}^{-}_{2,\diamond}(K,L)$.

However, there are lasagna fillings in the 1-dimensional inputs setting which admit isotopies (called \emph{lasso moves}) that change the number of core-parallel sheets intersecting 2-handles (see Figure \ref{fig:lasso}). This gives rise to an additional relation (called the \emph{lasso relation}) on $\ul{\KhR}^{-}_{2,\diamond}(K,L)$ that one must quotient by to obtain the correct isomorphism type of $\ol{\skein}_{0}^{2,\diamond}(X,L)$.

We briefly describe the lasso relation. Let $\belt{1,1}\subset S^{1}\times{S^{2}}$ be the 2-component standard belt link as in Figure \ref{fig:beltlink}, and let $i\in\{1,\dots,m\}$. By taking the connected sum of the pairs $(\#^{n}S^{1}\times{S^{2}},K)$ and $(S^{1}\times{S^{2}},\belt{1,1})$ along $K_{i}\subset K$ and $\belt{1,0}\subset\belt{1,1}$, we obtain an $(m+1)$-component link $K(i)\subset{\#^{n+1}S^{1}\times{S^{2}}}$ as in Figure \ref{fig:new_K}. The lasso relation is then given by identifying the images of two $2$-handle cobordism maps
\[\Phi_{\diamond},\Psi_{\diamond}:\bigoplus_{i=1}^{m}\ul{\KhR}^{-}_{2,\diamond}(K(i),L)\to\ul{\KhR}^{-}_{2,\diamond}(K,L)\]
defined in Section \ref{subsec:lasso}.

\begin{figure}
    \centering
    \includegraphics[width=0.85\linewidth]{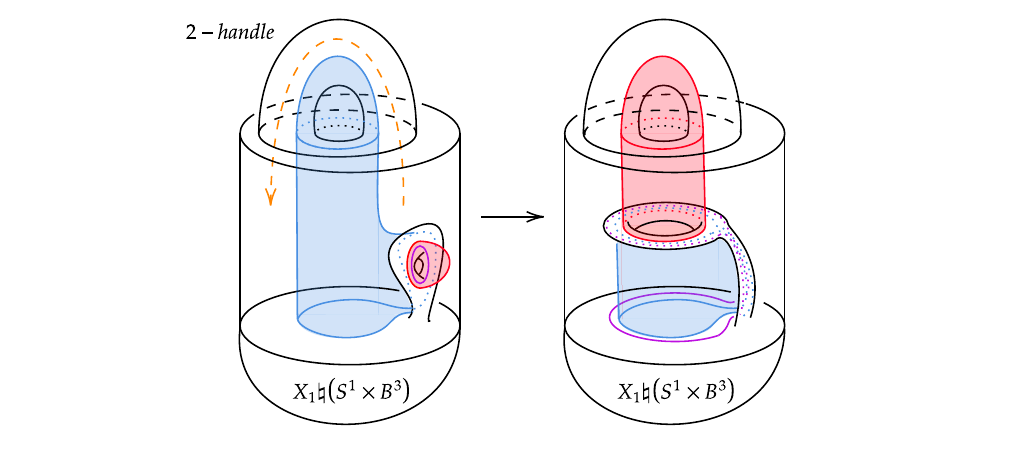}
    \caption{A cartoon of the lasso move isotopy.}
    \label{fig:lasso}
\end{figure}

We now state the main theorem of this work:

\begin{theorem}\label{theorem:main}
    Let $X$ be a compact oriented 4-manifold with handle decomposition $X_{1}\subset{X_{2}}\subset{X_{3}}\subset{X_{4}}=X$, where:
    \begin{itemize}
        \item $X_{i+1}$ is obtained from $X_{i}$ by attaching the index $(i+1)$ handles.
        \item $X_{1}$ is a 4-dimensional 1-handlebody.
        \item $K\subset\del X_{1}$ is framed attaching link for the $2$-handles of $X$.
    \end{itemize}
    Furthermore, let $L\subset\del X$ be a link, which we can interpret as a link in $\del X_{1}$. For $\diamond\in\{O,T\}$ the following statements are true:
    \begin{enumerate}
        \item\label{item:1} There is a natural isomorphism $\overline{\skein}_{0}^{2,\diamond}(X_{1};L)\cong{\KhR_{2,\diamond}^{-}(\partial X_{1},L)}$ (\cite{RSWWZ25}).
        \item\label{item:2} The lasagna module $\overline{\skein}_{0}^{2,\diamond}(X_{2};L)$ is isomorphic to a quotient of the cabled Rozansky-Willis homology of the triple $(\partial X_{1},K,L)$ (See Definition \ref{def:cabled_RW}) given by coequalizer of the maps $\Phi_{\diamond}$ and $\Psi_{\diamond}$ above:
        \[
        \overline{\skein}_{0}^{2,\diamond}(X_{2};L)\cong\mathrm{Coeq}\Big(\bigoplus_{i=1}^{m}\ul{\KhR}^{-}_{2,\diamond}(K(i),L)\doublearrow{\Phi_{\diamond}}{\Psi_{\diamond}}\ul{\KhR}^{-}_{2,\diamond}(K,L)\Big),
        \]
        where the skein grading is omitted (see Corollary \ref{cor:tortellini_1_2_handlebody}).
        \item\label{item:3} The lasagna module $\overline{\skein}_{0}^{2,\diamond}(X_{3};L)$ is isomorphic to a quotient of $\overline{\skein}_{0}^{2,\diamond}(X_{2};L)$ given by a coequalizing relation as in Proposition \ref{prop:3-handles}, similar to the 3-handle attachment formula in \cite{MWWhandles}.
        \item\label{item:4} There is a natural isomorphism $\overline{\skein}_{0}^{2,\diamond}(X_{3};L)\cong{\overline{\skein}_{0}^{2,\diamond}(X_{4};L)}$.
    \end{enumerate}
\end{theorem}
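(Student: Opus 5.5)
We prove the four items in order, following the strategy of Manolescu--Walker--Wedrich \cite{MWWhandles}, adapted to $1$-dimensional inputs.

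Item (\ref{item:1}) is quoted from \cite{RSWWZ25}. The idea is that $X_{1}=\natural^{n}S^{1}\times B^{3}$ is a collar $\del X_{1}\times I$ up to a $1$-skeleton. Since inputs are $1$-dimensional (neighborhoods of circles or arcs rather than balls), every lasagna filling can be isotoped so that all input balls merge into a single input neighborhood of the $1$-skeleton. The evaluation map to $\KhR^{-}_{2,\diamond}(\del X_{1},L)$ is then an isomorphism.

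For item (\ref{item:2}), we write $X_{2}=X_{1}\cup\bigcup_{i}(D^{2}\times D^{2})_{i}$ and put fillings in general position with respect to the cocores.
\begin{enumerate}
\item Isotope the input neighborhoods off the $2$-handles into $X_{1}$. The surface meets each $2$-handle in core-parallel sheets, so cutting along $\del X_{1}$ gives a filling of $X_{1}$ with boundary the cable $K(r^{+},r^{-})\cup L$.
\item Using item (\ref{item:1}), this yields a surjection from $\bigoplus_{r}\KhR^{-}_{2,\diamond}(K(r^{+},r^{-})\cup L)$ onto $\overline{\skein}_{0}^{2,\diamond}(X_{2};L)$. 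Exactly as in \cite{MN22,MWWhandles}, it factors through the colimit over the cabling pattern, i.e.\ through $\ul{\KhR}^{-}_{2,\diamond}(K,L)$ of Definition \ref{def:cabled_RW}. This step uses the symmetric group invariance and the dotted annulus/cup maps, which are the relations coming from isotopies inside $D^{2}\times D^{2}$ that fix the number of sheets up to cancelling pairs.
\item Identify the kernel. Two fillings of $X_{2}$ are related by a sequence of isotopies and input-ball insertions. After general position, each generic isotopy crossing the cocore either stays within the cabling-pattern relations or is a lasso move (Figure \ref{fig:lasso}). In a lasso move, a loop of surface sweeps around a $1$-dimensional input passing through the $2$-handle, which changes the sheet count by one.
\item Model each lasso move by a filling of $X_{1}\natural(S^{1}\times B^{3})$. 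Its boundary is the link $K(i)$ with the extra belt component $\belt{1,1}$. The two ways of capping the belt with a $2$-handle give exactly $\Phi_{\diamond}$ and $\Psi_{\diamond}$. Hence the kernel is generated by $\im(\Phi_{\diamond}-\Psi_{\diamond})$, which gives the coequalizer.
\end{enumerate}

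For item (\ref{item:3}), a $3$-handle is attached along an $S^{2}\subset\del X_{2}$. We put fillings transverse to its cocore, so that they meet the $3$-handle in arcs. As in the $3$-handle formula of \cite{MWWhandles}, pushing surfaces across the $3$-handle identifies the two ways of tubing across the attaching sphere. The result is a coequalizer of two maps into $\overline{\skein}_{0}^{2,\diamond}(X_{2};L)$, as in Proposition \ref{prop:3-handles}. Here the $1$-dimensional inputs may also be assumed disjoint from the $3$-handle, since their cores can be isotoped off a $1$-dimensional cocore.

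For item (\ref{item:4}), the $4$-handles have $0$-dimensional cocores. Surfaces and $1$-dimensional inputs generically miss them, and isotopies likewise miss them, so the inclusion $X_{3}\hookrightarrow X_{4}$ induces an isomorphism.

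The main obstacle is step 3 of item (\ref{item:2}): showing that the cabling-pattern relations together with the lasso relation generate all relations. This requires a Cerf-type analysis of generic one-parameter families of $1$-dimensional-input fillings relative to the cocores of the $2$-handles. We expect the only new codimension-one event to be an input arc passing through a cocore disk, which is precisely the lasso move.
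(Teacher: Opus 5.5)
\textbf{Verdict: genuine gap in item (2).} Your treatment of items (1), (3), (4) matches the paper. So does the overall shape of item (2): push the inputs off the $2$-handles, cap the cables with core-parallel disks, obtain the symmetric-group and annulus relations as in \cite{MN22}, and look for a new event when an input crosses a cocore. The problem is that the step you label ``the main obstacle'' is the entire content of item (2). Your step 4 states its conclusion (``hence the kernel is generated by $\operatorname{im}(\Phi_\diamond-\Psi_\diamond)$'') without an argument. Two things need proof.

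\textbf{First: the lasso relation must hold in $\overline{\skein}^{2,\diamond}_{0}(X_{2};L)$.} Take a filling $\widetilde{F}$ of $(X_{1}\natural S^{1}\times B^{3},K(\mathbf{k}^{\pm};\ell^{\pm};i)\cup L)$. Cap it through $W_{i}$, respectively $W'_{i}$, and then by the $2$-handles along $K$. This gives two fillings of $X_{2}$ that are not obviously isotopic. The paper proves they are (Lemma~\ref{lem:isotopy_of_embeddings}). It shows the two resulting embeddings $X_{1}\natural(S^{1}\times B^{3})\hookrightarrow X_{2}$ are ambiently isotopic. The key fact is that handlesliding $\gamma$ (a push-off of $\belt{0,1}$) over $K_{i}$ yields a curve isotopic to $\gamma'$ (a push-off of $K_{i}\#\belt{1,0}$).

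\textbf{Second, and more seriously: the converse.} You must show that each switch event in a generic $1$-parameter family is accounted for by one instance of $\Phi_\diamond(\widetilde{F})\sim\Psi_\diamond(\widetilde{F})$.

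Your description of the event is also off. A single switch does not change the sheet count by one. It trades $(k_i^{+},k_i^{-})$ sheets for arbitrary $(\ell^{+},\ell^{-})$ sheets, with the same algebraic count for $\diamond=O$ and agreement only mod $2$ for $\diamond=T$. In general, further sheets also pierce the cocore throughout the switch. This is why the source of $\Phi_\diamond,\Psi_\diamond$ must be the full cabled module of $K(i)$, including cables of the new component $K(i)_{m+1}$.

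The paper handles this in Lemma~\ref{lem:B0} in three steps:
\begin{enumerate}
\item[(i)] It puts the family in a standard local form near the switch. It then uses the enclosement relation to absorb the persistently piercing sheets into the moving input.
\item[(ii)] It chooses a family of embeddings $i_t\colon X_{1}\natural(S^{1}\times B^{3})\hookrightarrow X_{2}$, fixed on $X_{1}$, whose $1$-handle contains the piece of input passing through the $2$-handle. The whole family is then the pushforward of one fixed filling $\widetilde{F}$, with endpoints $\widetilde{f}(\Phi_\diamond(\widetilde{F}))$ and $\widetilde{f}(\Psi_\diamond(\widetilde{F}))$.
\item[(iii)] It shows $\widetilde{F}$ is well defined up to $1$-parameter families in $X_{1}\natural(S^{1}\times B^{3})$. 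This is what makes the inverse map well defined.
\end{enumerate}
Without these steps, nothing rules out switch events producing relations outside $\operatorname{im}(\Phi_\diamond-\Psi_\diamond)$.

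\textbf{Minor point in item (3).} A surface generically misses the $1$-dimensional cocore of a $3$-handle; it does not meet the handle in arcs. It is $1$-parameter families that cross the cocore, at isolated times, and these crossings produce the $\Delta^{\pm}$ relation.
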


\subsection{Calculations}
\label{subsec:intro_calculations}

In this work, we use items \ref{item:1} and \ref{item:2} to perform partial computations of $\overline{\skein}_{0}^{2,\diamond}$ for disk bundles over $S^{2}$, and a partial vanishing result for $\Sigma_{g}\times{D^{2}}$ where $\Sigma_{g}$ is a genus $g$ surface. We also recalculate $\overline{\skein}_{0}^{2,\diamond}(B^{4})$ using a Kirby diagram of $B^{4}$ consisting of a canceling 1- and 2-handle pair.

Recall that in \cite{MN22}, the authors compute the ordinary skein lasagna module of $S^{2}\times D^{2}$ and exhibit an isomorphism
\[\cal{S}^{2}_{0}(S^{2}\times D^{2})\cong\bb{Q}[A_{0},A_{0}^{-1},A_{1}]\]
for some formal variables $A_{0}$ and $A_{1}$. We show that $\ol{\cal{S}}^{2,O}_{0}(S^{2}\times D^{2})$ and $\ol{\cal{S}}^{2,T}_{0}(S^{2}\times D^{2})$ are proper, non-trivial quotients of $\cal{S}^{2}_{0}(S^{2}\times D^{2})$:

\begin{proposition}[Proposition \ref{prop:tortellini_S^2xD^2}]
\label{prop:intro_tortellini_S^2xD^2}
We have algebra isomorphisms
\begin{align*}
    &\ol{\cal{S}}^{2,O}_{0}(S^{2}\times D^{2})\cong\bb{Q}[A_{0},A_{0}^{-1}] & &\ol{\cal{S}}^{2,T}_{0}(S^{2}\times D^{2})\cong\bb{Q}[A_{0}]/(A_{0}^{2}-1)
\end{align*}
where $A_{0}^{\pm 1}$ is concentrated in tri-degree $(0,0,\pm 1)$. In particular,
\[\ol{\cal{S}}^{2,\diamond}_{0}(S^{2}\times D^{2};\alpha)\cong\bb{Q}\]
for $\diamond\in\{O,T\}$ and each skein grading $\alpha$, concentrated in bi-degree $(h,q)=(0,0)$. Section \ref{sec:calculations} also provides a computation for disjoint unions of connected sums of $S^{2}\times{D^{2}}$ and addresses the case with a link in the boundary.
\end{proposition}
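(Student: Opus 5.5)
We apply item \ref{item:2} of Theorem \ref{theorem:main} to the Kirby diagram of $S^{2}\times D^{2}$: there are no 1-handles, so $\partial X_{1}=S^{3}$, and a single $2$-handle is attached along the $0$-framed unknot $U\subset S^{3}$. In $S^{3}$ the Rozansky--Willis theory $\KhR^{-}_{2,\diamond}$ reduces to ordinary $\mathfrak{gl}_{2}$ Khovanov--Rozansky homology. So the cabled group $\ul{\KhR}^{-}_{2,\diamond}(U,\emptyset)$ should agree with the Manolescu--Neithalath cabled Khovanov homology of the $0$-framed unknot, with the caveat that for $\diamond=O$ (resp.\ $T$) only null-homologous (resp.\ two-divisible) cables contribute. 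The first step is to recompute this colimit following \cite{MN22}. For each skein degree $\alpha$ we take the colimit over cables $U(r^{+},r^{-})$ with $r^{+}-r^{-}=\alpha$ under the symmetric-group action and the annular dotted-annulus maps. This yields $\mathbb{Q}[A_{0},A_{0}^{-1},A_{1}]$, where $A_{0}^{\pm1}$ comes from a single oriented strand, in skein degree $\pm1$, and $A_{1}$ is the dotted version, in bidegree $(0,2)$ or similar. In the $T$ case, a two-divisibility constraint on which $\alpha$ are allowed should be tracked from the definitions in Section \ref{subsec:lasso} and Definition \ref{def:cabled_RW}. However, the claimed answer $\mathbb{Q}[A_{0}]/(A_{0}^{2}-1)$ suggests instead that the lasso relation identifies skein degrees modulo $2$. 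So the skein grading is presumably valued in $H_{2}(X;\mathbb{Z})$ or $H_{2}(X;\mathbb{Z}/2)$, and I expect the $T$ version to live in $\mathbb{Z}/2$-valued degrees.

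The second step is to compute the lasso relation. Here $K(1)\subset S^{1}\times S^{2}$ is obtained by connect-summing $U$ with the belt link $\belt{1,1}$ along $\belt{1,0}$. Since $U$ is an unknot, $K(1)$ is just the standard belt link $\belt{1,1}$, i.e.\ two components: a core-parallel circle $S^{1}\times pt$ and a meridional circle. Its cabled Rozansky--Willis homology is computed by Willis's result: the homology of the $S^{1}\times pt$ cables stabilizes to a finite torus-braid-closure limit, which is small in degree $0$. The maps $\Phi$ and $\Psi$ are then compared. Geometrically, one map caps with $r$ core-parallel sheets and the other with $r\pm1$ or $r\pm2$ sheets after the lasso move. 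I expect the effect on $\mathbb{Q}[A_{0}^{\pm1},A_{1}]$ to be twofold.

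\begin{itemize}
\item It imposes $A_{1}=0$, or more precisely $A_{1}$ equal to something that forces it to vanish. The dotted generator lies in the image of the difference $\Phi-\Psi$, since the dot on a sheet passing through the belt can be slid off in Rozansky--Willis homology, where $S^{1}\times S^{2}$ kills nontrivial $q$-shifts.
\item In the $T$ case, it imposes $A_{0}^{2}=1$, because a lasso move changes the signed sheet count by $2$ while preserving two-divisibility.
\end{itemize}

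In the $O$ case, the corresponding move must change the count by $0$ in the relevant homology, consistent with $A_{0}$ remaining invertible and free.

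The main obstacle is this explicit evaluation of $\Phi_{\diamond}-\Psi_{\diamond}$. It requires knowing $\ul{\KhR}^{-}_{2,\diamond}(\belt{1,1})$ in low degrees and tracking the $2$-handle cobordism maps through the Rozansky limit. I would first reduce to the lowest cables ($r^{+}+r^{-}\le 2$) by using the symmetric-group equivariance and the fact that the colimit is generated by the images of small cables. I would then compute those finitely many maps directly, using Willis's description of the belt-link homology.

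Finally, I would check three things. First, the quotient is nonzero in each remaining skein degree, with $\mathbb{Q}$ concentrated in $(h,q)=(0,0)$; this uses the element coming from the empty or single-strand filling and the fact that the map to a known quotient, for example the evaluation on $B^{4}$ pieces, is nonzero. Second, the algebra structure comes from disjoint-union multiplication in $\mathbb{Q}[A_{0}^{\pm1}]$. Third, these together give the stated isomorphisms.
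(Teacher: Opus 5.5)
Your outline has the same skeleton as the paper: no $1$-handles, start from the Manolescu--Neithalath algebra $\bb{Q}[A_{0}^{\pm 1},A_{1}]$, let the lasso relation kill $A_{1}$, and for $\diamond=T$ force $A_{0}^{2}=1$. However, the step you defer as ``the main obstacle'' is the whole proof, and the heuristics you offer in its place are not arguments.

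\textbf{The missing idea.} What you need is the factorization (\ref{eq:Phi_Psi_decomps_1_hbody}).
By Proposition \ref{prop:Psi_relate_to_Phi}, $\Psi$ is a $\Phi$-map precomposed with the swap diffeomorphism. For the unknot this diffeomorphism is $\mathsf{inv}$ followed by a planar isotopy. Hence
$\Phi=(\id\otimes\varepsilon_{\ell^{\pm}})\circ\epsilon$ and $\Psi=(\varepsilon_{k^{\pm}}\otimes\id)\circ(\id\otimes g)\circ\epsilon$, where $g$ only reorders tensor factors.
In homological degree $0$, the image of the counit $\epsilon$ is the span of the saddle maps $\nu_{\delta}$ over crossingless matchings $\delta$ (Definition \ref{def:Roz-cobar}). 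So you never need the full homology of the belt links via Willis, only $\im\epsilon$. Both halves of the proof then follow quickly.
\begin{enumerate}
\item \emph{Killing $(A_{1})$.} Choose a matching that pairs same-oriented strands of the $K$-cable, and label those circles $\mbf{1}$. The saddles produce factors $\Delta(\mbf{1})=\mbf{1}\otimes\mbf{X}+\mbf{X}\otimes\mbf{1}$. So $\Psi$ of the resulting element is $0$, because $\varepsilon$ hits a $\mbf{1}$ in every term. Meanwhile $\Phi$ of it is $2^{k^{+}+k^{-}}$ times a prescribed monomial with $k^{+}+k^{-}>0$ labels $\mbf{1}$, after stabilizing with dotted annuli and using the symmetric relations.
\item \emph{Nothing else dies.} Let $\lambda$ be $\varepsilon_{k^{\pm}}$ on each cable. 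It respects the symmetric and annulus relations, and
$\lambda\circ\Phi=(\varepsilon_{k^{\pm}}\otimes\varepsilon_{\ell^{\pm}})\circ\epsilon=\lambda\circ\Psi$.
So $\lambda$ descends to the quotient and is nonzero on every $A_{0}^{\alpha}$.
\item \emph{The case $\diamond=T$.} The relation $A_{0}^{2}=1$ is not a sheet-counting heuristic. It comes from the disoriented annulus relations of Proposition \ref{prop:disoriented_bundt_cake}, where $\psi_{T}^{\pm,\mbf{X}_{\pm}}$ acts as multiplication by $A_{0}^{\pm 2}$. The same $\lambda$ rules out any further collapse.
\end{enumerate}

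\textbf{Heuristics that point the wrong way.}
\begin{itemize}
\item $A_{1}=[(\mbf{1})\otimes(1)]$ is the $\mbf{1}$-labelled (undotted) single-strand class, and the survivors $A_{0}^{\pm1}$ are all-$\mbf{X}$ classes. So the mechanism is not ``sliding a dot off'': what dies is every class carrying a $\mbf{1}$ label.
\item $K(1)=\belt{1,1}$ consists of two oppositely oriented parallel copies of $S^{1}\times\{\mathrm{pt}\}$, not a longitude and a meridian.
\item Your reduction to small cables fails as stated. In each skein degree the colimit is infinite-dimensional (e.g.\ $\bb{Q}[A_{1}A_{0}^{-1}]$ in degree $0$), so it is not spanned by images of finitely many small cables.
\item On the two-strand belt link the only matching gives $\nu=\Delta$. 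There the lasso relations reduce either to $y\sim y$ (by the counit axiom) or to the annulus relations already imposed, so a computation restricted to that case finds nothing.
\item The algebra structure comes from gluing along $S^{2}\times I$, not from disjoint union.
\end{itemize}

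\textbf{A valid reduction does exist.} The map $f_{0,\diamond}$ of Proposition \ref{prop:surjections_lasagna_simply_connected} is a surjective algebra homomorphism, so its kernel is an ideal. It therefore suffices to show $f_{0,\diamond}(A_{1})=0$ using one larger cable. Together with $\lambda$, this replaces your unspecified ``evaluation on $B^{4}$ pieces''.
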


Let $D(p)$ denote the $D^{2}$-bundle over $S^{2}$ with Euler number $p$. As in the case of the ordinary skein lasagna modules, the invariants $\ol{\cal{S}}^{2,O}_{0}(D(p))$ and $\ol{\cal{S}}^{2,T}_{0}(D(p))$ can distinguish between the cases $p>0$ and $p<0$:

\begin{proposition}[Proposition \ref{prop:tortellini_D(p)}]
\label{prop:intro_tortellini_D(p)}
For $\diamond\in\{O,T\}$ there are isomorphisms
\begin{align*}
    &\ol{\skein}_{0}^{2,\diamond}(D(p))\cong\skein_{0}^{2}(D(p))=0\qquad\text{for }p>0, &
    &\ol{\skein}_{0,0,*}^{2,\diamond}(D(p);0)\cong\skein_{0,0,*}^{2}(D(p))\cong\bb{Q}\qquad\text{for }p<0,
\end{align*}
where the latter is concentrated in $q$-degree $0$.
\end{proposition}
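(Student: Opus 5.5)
We would prove Proposition \ref{prop:intro_tortellini_D(p)} by applying item \ref{item:2} of Theorem \ref{theorem:main} to the Kirby diagram of $D(p)$. This diagram has no 1-handles and a single 2-handle attached along the $p$-framed unknot $U\subset S^{3}=\partial B^{4}$. Since $X_{1}=B^{4}$ and $\partial X_{1}=S^{3}$, the Rozansky--Willis inputs reduce to ordinary $\frak{gl}_{2}$ Khovanov--Rozansky homology: $\KhR^{-}_{2,\diamond}$ of a link in $S^{3}$ agrees with $\KhR_{2}$ (for $\diamond=T$ we simply restrict to the two-divisible, i.e.\ even, cables). The cabled Rozansky--Willis homology $\ul{\KhR}^{-}_{2,\diamond}(U,\emptyset)$ is then the Manolescu--Neithalath cabled Khovanov homology of the $p$-framed unknot, restricted in the $T$ case to the appropriate parity classes. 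Our first step is to record this identification. It gives a surjection from $\underline{\KhR}_{2}(U^{p})\cong\skein_{0}^{2}(D(p))$ onto $\ol{\skein}^{2,\diamond}_{0}(D(p))$, whose kernel is generated by $\Phi_{\diamond}-\Psi_{\diamond}$ applied to $\ul{\KhR}^{-}_{2,\diamond}(U(1),\emptyset)$. Here $U(1)\subset S^{1}\times S^{2}$ is the unknot banded with the belt link.

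\textbf{Case $p>0$.} By \cite{MN22} (see also \cite{MWWhandles}), $\skein_{0}^{2}(D(p))=0$. The argument there is that the colimit maps (cabling by an extra pair of oppositely oriented strands, followed by the dotted annulus maps) eventually kill every class, via the Rozansky / Khovanov homology of positive torus links whose minimal homological degree escapes to infinity. We would run the same argument verbatim on the cabled system for $\diamond\in\{O,T\}$. Every colimit transition used there is one of the cabling maps that also appears in Definition \ref{def:cabled_RW}, and in the $T$ case it preserves parity. So the cabled group already vanishes, and the coequalizer is $0$. Alternatively, surjectivity from the ordinary module, which is $0$, gives the result directly.

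\textbf{Case $p<0$.} Here the degree $(0,0)$ part in skein class $0$ of $\skein^{2}_{0}(D(p))$ is $\bb{Q}$, generated by the empty filling, by \cite{MN22}. We must show that this generator survives the lasso quotient. We first check that the cabled group in tri-degree $(0,0,0)$ is $\bb{Q}$ for both $\diamond$: the zero cable lies in both versions, and negative twisting concentrates the relevant Khovanov homology in nonnegative homological degree. Then we analyze the source $\ul{\KhR}^{-}_{2,\diamond}(U(1),\emptyset)$ in bidegree $(0,0)$ and show that $\Phi_{\diamond}-\Psi_{\diamond}$ vanishes there.

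The intended argument for this last point is a grading/rank count: both $\Phi_{\diamond}$ and $\Psi_{\diamond}$ are 2-handle cobordism maps of fixed bidegree. In degree $(0,0)$ the source is spanned by cables in which the extra belt component carries matched strands. On these generators both maps send the distinguished class to the same multiple of the empty-filling generator, because each is realized by a lasagna filling isotopic (via the lasso move) to the same one. This step is the main obstacle, since it requires controlling Rozansky--Willis homology of the cables of $U(1)$ in $S^{1}\times S^{2}$. If the direct computation fails, the fallback is to construct a well-defined functional on $\ol{\skein}^{2,\diamond}_{0,0,*}(D(p);0)$ that is nonzero on the empty filling, such as evaluation via a closed-up cobordism into $\bb{Q}$. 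That would show the quotient is nonzero, and then the surjection from $\bb{Q}$ gives the isomorphism.
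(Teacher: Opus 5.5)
\textbf{Gap in the case $p<0$.} Your $p>0$ argument, via the surjection from the ordinary module, matches the paper. One correction: vanishing of $\skein_{0}^{2}(D(p))$ in \emph{all} tri-gradings is due to \cite{RW24}. \cite{MN22} only gives it in $h=0$ and skein class $0$, and that is not enough: for $\diamond=T$, several skein classes and shifted bidegrees feed into one target grading.

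The real gap is at the step you flag. Your main justification is that $\Phi_{\diamond}(w)$ and $\Psi_{\diamond}(w)$ agree because the two fillings are related by the lasso move. This is circular. That isotopy takes place in $D(p)$, so it only shows the two classes agree in $\ol{\skein}_{0}^{2,\diamond}(D(p))$, which is the coequalizer relation itself. It says nothing about whether they agree in the cabled group, which is exactly the question of whether the quotient is nonzero.

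In the cabled group these are genuinely different maps: $\Phi=(\id\otimes\varepsilon_{\ell^{\pm}})\circ\epsilon$, while $\Psi=(\id\otimes\varepsilon_{k^{\pm}})\circ\epsilon\circ\KhR^{-}_{2,\diamond}(\psi_{k^{\pm},\ell^{\pm},1})$ involves the swap diffeomorphism. No rank or grading count forces them to agree on a one-dimensional target. Your fallback, a functional nonzero on the empty filling, is the right idea. But ``evaluation via a closed-up cobordism'' is not an actual construction, and checking that the functional kills $\Phi_{\diamond}-\Psi_{\diamond}$ is the entire content of the proof.

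\textbf{What the paper does, and a problem with the type $T$ case.} The paper builds exactly such a functional. On each cable it sets $\wt{\varepsilon}_{\diamond,k^{\pm},p}=\varepsilon_{k^{\pm}}\circ\epsilon\circ\tau_{\diamond}^{-p}\circ\epsilon^{-1}$. This requires the counit $\epsilon:\KhR^{-}_{2,\diamond}(\belt{k^{+},k^{-}}_{p})\to\KhR_{2,\diamond}(T(k,pk)_{k^{+},k^{-}})$ to be an isomorphism in bidegree $(0,k^{+}+k^{-})$. The paper checks this at chain level: the all-$\mbf{X}$ generators $x_{\delta}$, one per crossingless matching, become equal in homology, and each maps to the generator $y$. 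Combined with the Gluck twist isomorphisms, this shows $\wt{\varepsilon}_{\diamond,k^{\pm},p}$ is an isomorphism onto $\bb{Q}$ sending $v_{\diamond,k^{\pm},p}\mapsto 1$ (Lemma \ref{lem:v_diamond,k_pm,p}). The paper then replaces the swap by the explicit diffeomorphism of Example \ref{ex:psi_i_D(p)}, justified by Propositions \ref{prop:Psi_relate_to_Phi} and \ref{prop:decompose_diffeomorphism}. A commutative square then gives $\wt{\varepsilon}\circ\Phi_{\diamond}=\wt{\varepsilon}\circ\Psi_{\diamond}$ in this degree, so the functional descends to the coequalizer.

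Separately, for $\diamond=T$ your claim that the cabled group in tri-degree $(0,0,\ol{0})$ is already $\bb{Q}$ does not hold in the paper's setup. The braid and annulus relations preserve $k^{+}-k^{-}$. By Lemma \ref{lem:v_diamond,k_pm,p}, which uses orientation-invariance of the type $T$ grading, every cable with $k^{+}\equiv k^{-}\pmod 2$ contributes its own copy of $\bb{Q}$ there. Your negative-twisting support argument therefore does not reduce the type $T$ cabled group to the zero cable.

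These copies are identified only by the disoriented annulus relations of Proposition \ref{prop:disoriented_bundt_cake}, which are consequences of the lasso relation, just as $A_{0}^{2}=1$ arises for $S^{2}\times D^{2}$. Hence $\Phi_{T}-\Psi_{T}$ does not vanish in this degree. What must be shown instead is that the lasso relation identifies these copies and kills nothing more, which is again what the compatible functional achieves.
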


Finally, consider the 4-manifold $\Sigma_{g}\times{D^{2}}$ where $\Sigma_{g}$ denotes the orientable surface of genus $g$.

\begin{proposition}(Corollary \ref{cor:surfacebundle})
In homological degrees $h\geq{1}$ and for $\diamond\in\{O,T\}$, we have that
\[\overline{\skein}_{0,h,*}^{2,\diamond}(\Sigma_{g}\times{D^{2})}=0.\]
\end{proposition}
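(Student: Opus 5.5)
We will deduce the vanishing from the $2$-handle formula, item \ref{item:2} of Theorem \ref{theorem:main}, applied to the standard Kirby diagram of $\Sigma_{g}\times D^{2}$. This diagram consists of $2g$ one-handles and a single $0$-framed $2$-handle. The attaching circle $K$ is the commutator curve $\prod_{j}[a_{j},b_{j}]$, which is null-homologous in $\partial X_{1}=\#^{2g}S^{1}\times S^{2}$. Since $\Sigma_{g}\times D^{2}$ has no $3$- or $4$-handles, items \ref{item:3} and \ref{item:4} are vacuous. It therefore suffices to show that the cabled Rozansky--Willis homology $\ul{\KhR}^{-}_{2,\diamond}(K,\emptyset)$ vanishes in homological degrees $h\geq 1$. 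Vanishing passes to the coequalizer, because a coequalizer is a quotient in each tri-degree.

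\textbf{Step 1: reduce to the cables.} By Definition \ref{def:cabled_RW}, $\ul{\KhR}^{-}_{2,\diamond}(K,\emptyset)$ is a colimit, taken over a cabling pattern and with symmetric-group coinvariants, of the groups $\KhR^{-}_{2,\diamond}(K(r,s))$. Here $K(r,s)$ is the cable of $K$ with $r$ strands of one orientation and $s$ of the other. Colimits over a directed system are exact, so they preserve vanishing in a fixed homological degree. It is then enough to show $\KhR^{-}_{2,\diamond,h,*}(K(r,s))=0$ for every $h\geq 1$ and every cable. The framing is $0$, so the cable is an honest parallel copy and contributes no full twists.

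\textbf{Step 2: vanishing for each cable.} The key geometric input is that $K$ sits on a separating curve of $\Sigma_{g}$. Each $a_{j}$ and $b_{j}$ passes through a $1$-handle, and the commutator word passes through every $1$-handle once in each direction. Consequently every cable $K(r,s)$ passes $r+s$ strands through each $1$-handle, half in each direction. In a Rozansky--Willis diagram, the strands through each $S^{1}\times S^{2}$ summand are closed up by the infinite full twist $\FT^{\infty}$. We then invoke the known structure of the infinite twist complex on $2n$ strands, as used in \cite{RSWWZ25} and \cite{WillisS1xS2}. Its limit is the categorified projector, which is supported in non-positive homological degrees in the normalization of the paper. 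This should show that the whole complex computing $\KhR^{-}_{2,\diamond}(K(r,s))$ can be homotoped to one supported in degrees $h\leq 0$. Here one uses that the remaining tangle, the commutator pattern inside $S^{3}$ minus balls, is a planar crossingless braid-like closure up to the projectors. If crossings do remain, we would instead use that they all come in cancelling pairs $a_{j}a_{j}^{-1}$-type resolutions after sliding through the projector, since projectors absorb crossings (they kill turnbacks and absorb twists up to grading shifts).

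\textbf{Main obstacle.} The hard step is Step 2: controlling the homological grading shifts. Absorbing crossings into projectors shifts $h$, and the net shift has to be non-positive for every cable $(r,s)$, compatibly with the colimit normalization. I would first try to reduce to the genus-one case, where $K$ is the Whitehead-type commutator curve in $\#^{2}S^{1}\times S^{2}$. There I would explicitly compute the degree bound for a single cable. I would then handle general $g$ by a connected-sum or band argument, in which each commutator factor contributes an independent bound.
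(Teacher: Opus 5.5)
Your reduction matches the paper's route. The paper deduces the corollary from Proposition \ref{thrm:crossingless0framed} via Corollary \ref{cor:tortellini_1_2_handlebody}: it shows each cable's Rozansky--Willis complex is supported in $h\le 0$ and then passes to the quotient. For that last step you do not need exactness of directed colimits. The cabled group is a quotient of a direct sum by relations that preserve $h$: the shifts $\{\cdot\}$ act only on $q$, and the annulus maps have bidegree $(0,2j)$. So degreewise vanishing passes to it, and then to the coequalizer.

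\textbf{The gap is Step 2, which you leave open.} The only input needed is the fact you state as a hedge and then do not rely on: $K\subset\#^{2g}S^{1}\times S^{2}$ has an admissible diagram with \emph{no crossings} whose blackboard framing is the $0$-framing. For $g=1$, put the two oppositely oriented passes of $K$ through each $1$-handle into that handle's surgery box. The four arcs prescribed by $aba^{-1}b^{-1}$ can then be drawn disjointly in the plane, and for general $g$ one bands such pictures together in the unbounded region. This is the diagram the paper uses.

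Given this, there is no grading shift to control. Each cable $K(k^{+},k^{-})$ is again crossingless with writhe $0$, so neither the orientation-change shift nor the type-$T$ shift by $\tfrac12 w$ appears. Its complex is a planar composition of two kinds of pieces:
\begin{itemize}
\item crossingless tangles, concentrated in $h=0$;
\item one copy of $P_{k^{+}+k^{-},0}$ in each box, on $2(k^{+}+k^{-})$ strands (not $r+s$), supported in $h\le 0$ by the cobar model of Definition \ref{def:Roz-cobar}.
\end{itemize}
So the whole complex lies in $h\le 0$ and you are done. No genus-one computation and no band or connected-sum argument is needed.

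Your fallback for diagrams that still have crossings is not a valid argument. The Rozansky projector is built out of through-degree-$0$ diagrams (Lemma \ref{lem:rozprops}(1)), so ``killing turnbacks'' is the wrong mechanism; that is a Jones--Wenzl property. Nothing lets you slide arbitrary crossings of the outside tangle into a projector in cancelling pairs. Positive crossings in the cables genuinely contribute positive homological degree, and their number grows with the cable. This is exactly why the paper's vanishing statement is restricted to crossingless $0$-framed diagrams.
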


\subsection{Future Directions}
\label{subsec:intro_future_directions}

The computational techniques developed and used in this work suggest the following questions and further directions.

Without evidence to the contrary (and in view of Proposition \ref{prop:intro_tortellini_S^2xD^2}), we pose the following conjecture:

\begin{conjecture}
Let $X$ be a 2-handlebody. Then for $\diamond\in\{O,T\}$, $\ol{\skein}_{0}^{2,\diamond}(X;\alpha)$ is finite-dimensional in each skein grading $\alpha$.
\end{conjecture}

Note that the above property does not hold for the ordinary skein lasagna invariants, as for example $\skein_{0}^{2}(S^{2}\times D^{2},\alpha)$ is infinitely generated (as a $\bb{Q}$-vector space) in each skein grading $\alpha\in\bb{Z}$.

\begin{question}\label{q:surfacexdh=0}
For $g\geq 1$ and $\diamond\in\{O,T\}$, is $\overline{\skein}_{0}^{2,\diamond}(\Sigma_{g}\times{D^{2})}$ non-vanishing (in non-positive homological degrees)? What about for $D^{2}$ bundles over $\Sigma_{g}$ with non-zero Euler number?
\end{question}

In analogy with the $g=0$ case, the authors conjecture a negative answer for positive-degree disk bundles and an affirmative answer for disk bundles of non-positive degree. It seems plausible that one could generalize the methods from \cite{Kho04} to compute these groups directly in homological degree zero, although this is beyond the scope of this paper. 

Note that this conjecture slightly contrasts what one would expect in view of the adjunction inequality from Seiberg--Witten theory (\cite{MSzT96}, \cite{OSz00}). From this point view, an affirmative answer to the conjecture would indicate that the skein lasagna package could potentially detect distinct exotic phenomena from the Seiberg--Witten invariants.

Next, given a link $L\subset\#^{n}S^{1}\times S^{2}$ and $k\in\bb{N}$, let $L(k)\subset S^{3}$ denote the link obtained from $L$ by replacing each zero-surgery curve with $k$ full-twists. It was shown in \cite{WillisS1xS2} that (the original version of) Rozansky-Willis homology can be approximated by the Khovanov homology of $L(k)$: more precisely, there exists an isomorphism $\Kh^{h,*}_{RW}(L)\cong\Kh^{h,*}(L(\mbf{k}))$ for all homological degrees $h$ larger than some constant $c(L,\mbf{k})$ depending on $L$ and $k$ such that $c(L,k)\to-\infty$ as $k\to\infty$. 

Analogous statements hold for the $\frak{gl}_{2}$-analogues $\KhR_{2,O}^{-}$ and $\KhR_{2,O}^{-}$ on the level of groups, but it does not imply that these approximation schema are sufficiently functorial for, e.g., facilitating computations of the relevant skein lasagna invariants. It would therefore be desirable to lift this full-twist approximation scheme to the $\frak{gl}_{2}$-setting:

\begin{question}\label{Q:gl2roz}
Does the $\mathfrak{gl}_{2}$ Rozansky projector defined in \cite[Appendix A]{RSWWZ25} admit an approximation by full-twist complexes? Furthermore, can a statement analogous to Proposition 6.1 in \cite{WillisS1xS2} be proven for Rozansky-Willis homology in the $\mathfrak{gl}_2$ webs and foams setting? 
\end{question}

In \cite{RW24}, Ren--Willis used Lee's deformation of Khovanov homology (\cite{lee-endo}, \cite{rasinv-genus}) in tandem with Manolescu-Neithalath's $2$-handle formula (\cite{MN22}) in order to develop vanishing/non-vanishing criteria for the ordinary skein lasagna modules of $2$-handlebodies, allowing them to detect families of exotic pairs of such manifolds.

\begin{question}
Can one use the Lee deformation of Rozansky--Willis homology (in the sense of \cite{MMSW22}) along with Theorem \ref{theorem:main} to develop vanishing/non-vanishing criteria for the $1$-dimensional inputs skein lasagna modules of $4$-manifolds built out of $1$- and $2$-handles, and potentially detect exotic pairs of this form?
\end{question}

The handle formulas proven in this paper should port over directly to analogues of $1$-dimensional inputs skein lasagna modules defined with respect to link homology theories other than Khovanov homology. For example, Chen \cite{CHE-Floer} defined an analogue of the $0$-dimensional inputs skein lasagna module using link Floer homology $\widehat{HFL}$ for links in $S^{3}$ as the input TQFT. However, the invariant $\widehat{HFL}$ is also functorial for links in closed 3-manifolds including connected sums of $S^{1}\times{S^{2}}$.

\begin{question}
Can a Floer theoretic invariant similar in spirit to the 1-dimensional input skein lasagna be defined and computed for $\widehat{HFL}$ using \cite{RSWWZ25} and techniques in this work?
\end{question}

Alternatively, one could define a new invariant of links in connected sums of $S^{1}\times S^{2}$ by constructing an analogue of the Rozansky projector in knot Floer homology via infinite full twists (see \cite{AGL25} for a related construction) and attempt to construct a 1-dimensional inputs skein lasagna modules from this putative homology theory. It would be interesting to see how this potential theory would compare to one constructed directly from $\widehat{HFL}$.


\subsection{Acknowledgments} 
The authors would like to thank Remy Bohm, Eugene Gorsky, Maggie Miller, Trevor Oliveira-Smith, and Melissa Zhang for helpful discussions, as well as Lisa Piccirillo and Qiuyu Ren for comments on an earlier draft. The first author was partially supported by the Simons collaboration ``New structures in low-dimensional topology'', and the second author by NSF grant no. DMS-2302305 during the preparation of this article. The authors would also like to thank the 2025 Trisectors Workshop for facilitating the working group which helped inspire this project.

\section{Background}
\label{sec:background}

\subsection{Notation and Conventions}
\label{subsec:conventions}

The conventions for Khovanov homology used in this paper follow \cite{RSWWZ25}, with only minor aesthetic differences in notation as recorded below. We refer to the link invariant defined by Rozansky and extended by Willis as \emph{Rozansky-Willis} homology and refer the reader to \cite{ROZ-Cat,WillisS1xS2} for further details and background about this invariant.

Throughout, we work over $\mathbb{Q}$, and we let ``$\otimes$" denote vertical stacking of tangles. 

\begin{itemize}
    \item For a framed oriented link $L\subset S^{3}$ we denote by $\KhR_{2}(L)$ the $\frak{gl}_{2}$-Khovanov-Rozansky homology of $L$.
    \begin{itemize}
        \item For the $0$-framed unknot $U$, we denote by $\mbf{1},\mbf{X}\in\KhR_{2}(U)$ the standard generators which lie in bi-degrees $(0,-1)$ and $(0,1)$, respectively.
        \item It was shown in \cite{MWW-lasagna} that the invariant $\KhR_{2}$ extends to a symmetric monoidal functor
        \[\KhR_{2}:\Links_{0}\to\fdVect_{\bb{Q}}^{\bb{Z}\times\bb{Z}}\]
        where:
        \begin{itemize}
            \item $\Links_{0}$ is the category of framed oriented links in abstract disjoint unions of $S^{3}$, with morphisms given by pairs $(W,\Sigma)$ where $W$ is a 4-manifold diffeomorphic to a disjoint union of 4-balls, with a disjoint union of 4-balls removed from each of them, and $\Sigma\subset W$ is a properly embedded framed oriented surface.
            \item $\fdVect_{\bb{Q}}^{\bb{Z}\times\bb{Z}}$ denotes the category of bigraded finite-dimensional $\bb{Q}$-vector spaces and homogeneous maps between them.
        \end{itemize}
        \item Given $L\subset S^{3}$, we have an isomorphism
        \[\KhR_{2}^{h,q}(L)\cong\Kh^{h,-q-w(L)}(m(L))\]
        where $w(L)$ denotes the writhe of of a diagram of $L$ in which the given framing of $L$ agrees with the blackboard framing, and $m(L)$ denotes the mirror of $L$.
    \end{itemize}
    \item For $\diamond\in\{O,T\}$, we have symmetric monoidal functors
    \begin{align*}
    &\KhR_{2,\diamond}^{+}:\Links_{1,\diamond}^{\op}\to\fVect_{\bb{Q}}^{\bb{Z}\times\bb{Z}} & &\KhR_{2,\diamond}^{-}:\Links_{1,\diamond}\to\fVect_{\bb{Q}}^{\bb{Z}\times\bb{Z}} 
    \end{align*}
    Here:
    \begin{itemize}
        \item $\Links_{1,T}$ is the category of (framed, oriented) links in abstract disjoint unions of connected sums of $S^{1}\times S^{2}$ (denoted by $\Links_{1}$ in \cite{RSWWZ25}) with $2$-divisible homology class, and $\Links_{1,O}\subset\Links_{1,T}$ denotes the full subcategory of null-homologous links. Cobordisms in these categories can be decomposed into elementary cobordisms; see \cite[Proposition 6.3]{RSWWZ25} for a comprehensive list of such cobordisms.
        \item $\fVect_{\bb{Q}}^{\bb{Z}\times\bb{Z}}$ denotes the category of bigraded $\bb{Q}$-vector spaces, finite-dimensional in every bi-degree, and homogeneous maps between them. These invariants are described below.
    \end{itemize}
    \item The \emph{type $O$} (or \emph{unrenormalized}) Rozansky-Willis homology $\KhR_{2,O}^{+}(L)$ (denoted by $\KhR_{2}^{+}(L)$ in \cite{RSWWZ25}) for a null-homologous link $L\subset\#^{n}S^1\times S^2$, is related to the usual Rozansky-Willis homology $\Kh_{RW}(L)$ of $L$ as defined in \cite{ROZ-Cat,WillisS1xS2} via an isomorphism
    \[\KhR_{2,O}^{+,h,q}(L)\cong(\Kh_{RW}^{-h,q+w(L)}(L))^{\vee}\]
    where $(\cdot)^{\vee}$ denotes the dual. In this paper we will use the convention that if $L$ is not null-homologous, then we formally set $\KhR_{2,O}^{+}(L)=0$.
    \item The \emph{type $T$} (or \emph{renormalized}) Rozansky-Willis homology, which we denote by $\KhR_{2,T}^{+}(L)$ (denoted by $\wt{\KhR}_{2}^{+}(L)$ in \cite{RSWWZ25}) for a 2-divisible link $L\subset\#^{n}S^{1}\times S^{2}$, is defined by
    \begin{equation}
    \label{eq:KhR_+_T_from_O}
        \KhR_{2,T}^{+,h,q}(L):=\KhR_{2,O}^{+,h+\frac{1}{2}w(L),q-\frac{1}{2}w(L)}(L).
    \end{equation}
    In this paper we will use the convention that if $L$ is not 2-divisible, then we formally set $\KhR_{2,T}^{+}(L)=0$. Note that the $h$- and $q$- gradings both take values in half-integers.
    \item There are dual versions of $\KhR_{2,O}^{+}(L)$ and $\KhR_{2,T}^{+}(L)$ defined by
    \[\KhR_{2,\diamond}^{-,h,q}(L):=(\KhR_{2,\diamond}^{+,-h,-q}(m(L)))^{\vee},\qquad\diamond\in\{O,T\}.\]
    By tracing through the definitions and noting that $w(m(L))=-w(L)$, we see that
    \begin{equation}
    \label{eq:KhR_-_T_from_O}
        \KhR_{2,T}^{-,h,q}(L)=\KhR_{2,O}^{-,h+\frac{1}{2}w(L),q-\frac{1}{2}w(L)}(L).
    \end{equation}
    In \cite{RSWWZ25} the invariants $\KhR_{2,O}^{-}(L)$ and $\KhR_{2,T}^{-}(L)$ are denoted by $\KhR_{2}^{-}(L)$ and $\wt{\KhR}_{2}^{-}(L)$, respectively.
    \item If $L$ is a link in $S^{3}$, we will sometimes simply write $\KhR_{2,\diamond}(L)$ to denote $\KhR_{2,\diamond}^{-}(S^{3},L)\cong\KhR_{2,\diamond}^{+}(S^{3},L)$, with the observation that $\KhR_{2,O}(L)=\KhR_{2}(L)$.
    \item When changing orientations of components of a link $L$, we obtain a degree shift in the type $O$-theory -- if $(L,\frak{o})$ denotes the link $L$ with a possibly different orientation $\frak{o}$ than the given one, we have a non-canonical isomorphism
    \[\KhR_{2,O}^{\pm,h,q}(L,\frak{o})\cong\KhR_{2,O}^{\pm,h+\frac{1}{2}(w(L)-w(L,\frak{o})),q-\frac{1}{2}(w(L)-w(L,\frak{o}))}(L).\]
    (See \cite{RW24}, Equation 10.) As a consequence, we have (non-canonical) isomorphisms
    \begin{equation}
    \label{eq:KhR_T_invariant_under_orientations}
        \KhR_{2,T}^{\pm,h,q}(L,\frak{o})\cong\KhR_{2,T}^{\pm,h,q}(L,\frak{o}')
    \end{equation}
    for any choices of orientations $\frak{o}$,$\frak{o}'$ on the components of a link $L$.
    \item For completeness, we note that
    \begin{align}
    \begin{split}
    \label{eq:conventions}
        &\KhR^{-,h,q}_{2,O}(L)=(\KhR_{2,O}^{+,-h,-q}(m(L)))^{\vee} \\
        &\qquad\qquad\qquad\qquad\cong((\Kh_{RW}^{h,-q+w(L)}(m(L)))^{\vee})^{\vee}
        =\Kh_{RW}^{h,-q+w(L)}(m(L)).
    \end{split}
    \end{align}
    \end{itemize}

\begin{remark}\label{rmk:signs}
We note here (once and for all) that we elect in this paper to work within the ($\frak{sl}_{2}$-)Bar-Natan formalism --- this has the advantage of being much more computable than the $\frak{gl}_{2}$-formalism in practice, but with the trade-off of introducing inherent sign ambiguities. We remark that fixing signs is possible by passing to the $\mathfrak{gl}_{2}$-webs and foams formalism and applying the main result of \cite{BHPW23}.
\end{remark}

The invariants $\KhR_{2,\diamond}^{-}$ for $\diamond\in\{O,T\}$ are governed by certain categorified idempotents called \emph{Rozansky projectors}, which are denoted by $P_{n,0}$. For these objects we follow the conventions found in \cite{SZ2024,RSWWZ25}. In \cite{SZ2024}, Zhang and the second author show that the original skein lasagna module of the pair $(S^{2}\times{D^{2}},L)$ for $L\subset\del(S^{2}\times D^{2})$ recovers $\KhR_{2,O}^{+}(L)$ up to an extra tensoral factor. However, the invariants $\ol{\skein}_{0}^{2,O}$ and $\ol{\skein}_{0}^{2,T}$ (defined later in Section \ref{subsec:skein_lasagna_recap}) take the minus versions $\KhR_{2,O}^{-}$ and $\KhR_{2,T}^{-}$ as their respective input TQFTs.

We recall the cobar construction of the Rozansky projector as discussed in \cite{hogancamp2020constructing}:

\begin{definition}\label{def:Roz-cobar}
Let $\mathcal{TL}_{n}$ denote the Temperley-Lieb category of $(n,n)$-planar tangles, let $\cal{B}_{n}$ denote the set of crossingless matchings on $n$ points, and let $C$ be the object in $\mathcal{TL}_{n}$ given by
\[C:=\bigoplus_{\delta\in{\cal{B}_{n}}}\delta\otimes{\delta^{t}}\]
where $\delta^{t}$ is the mirror of $\delta$ reflected across a horizontal axis. The object $C$ is a coalgebra with counit map $\epsilon:C\rightarrow{\bbm{1}}$ given by a composition of saddle maps $\nu_{\delta}:q^{n}(\delta\otimes{\delta}^{t})\rightarrow{\bbm{1}}$, where $\bbm{1}\in\cal{TL}_{n}$ denotes the $n$-strand identity braid. The Rozansky projector $P_{n,0}$ is homotopy equivalent to the infinite complex given by a cobar construction:
\[\dots\xrightarrow{f_{5}}{C^{\otimes 4}}\xrightarrow{f_{3}}C^{\otimes 3}\xrightarrow{f_{2}}C^{\otimes 2}\xrightarrow{f_{1}}\ul{C}\]
where $f_{k}:=\sum_{i=0}^{k}(-1)^{i}\id^{\otimes{i}}\otimes\epsilon\otimes\id^{\otimes{(k-i)}}$ for each $k\geq 1$, and the underlined term $\ul{C}$ is in homological degree zero. The counit on $C$ induces a counit $\epsilon:P_{n,0}\rightarrow{\bbm{1}}$ given by $\epsilon$ in homological degree $0$. We represent $P_{n,0}$ diagrammatically with a box on $2n$ strands:
\begin{center}
\includegraphics[width=2cm]{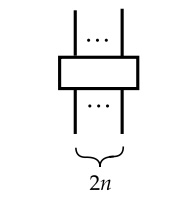}
\end{center}
\end{definition}

\begin{remark}\label{rmk:dual_projector}
The construction in Definition \ref{def:Roz-cobar} is dual to the construction in \cite{RSWWZ25}. In particular, the dual projector $P_{n,0}^{\vee}$ (which governs $\KhR_{2,\diamond}^{+}$) is obtained by the bar construction on the same coalgebra $C$, and is unital with unit map $\iota:\bbm{1}\to P_{n,0}^{\vee}$.
\end{remark}

\begin{remark}\label{rmk:Roz_gl_2}
The Rozansky projector $P_{n,0}$ is constructed in the $\frak{gl}_{2}$-formalism by lifting $C$ to an analogous counital coalgebra $\cal{C}$ comprised of $\frak{gl}_{2}$-webs, and applying Hogancamp's cobar construction to $\cal{C}$. We refer the reader to (\cite{RSWWZ25}, Appendix A.3) for additional details regarding the construction of $P_{n,0}$ in this manner.
\end{remark}

The Rozansky projector $P_{n,0}$ enjoys the following properties.

\begin{lemma}\cite[Proposition 2.6 - dually]{RSWWZ25}\label{lem:rozprops}
The Rozansky projector $P_{n,0}$ satisfies the following properties:
\begin{enumerate}
    \item As a complex, each term of $P_{n,0}$ consists entirely of $(n,n)$-Temperley-Lieb diagrams with through-degree 0.
    \item $P_{n,0}$ is counital, with counit map $\epsilon:P_{n,0}\to\bbm{1}$.
    \item The projector on $0$ strands is the empty projector $P_{0,0}$, and the counit map from $P_{0,0}$ to the empty braid is the identity.
    \item There is a chain homotopy equivalence $P_{n,0}\xrightarrow{\simeq}P_{n,0}\otimes{P_{n,0}}$, and furthermore the chain maps $\epsilon\otimes{\idmap}$ and $\idmap\otimes{\epsilon}$ are chain homotopic.
    \item For any $(k,l)$-tangle $T$, there are homotopy equivalences: $P_{l,0}\otimes{T}\otimes{P_{k,0}}\xrightarrow{\id\otimes\id\otimes{\epsilon}}P_{l,0}\otimes{T}$, $P_{l,0}\otimes{T}\otimes{P_{k,0}}\xrightarrow{\epsilon\otimes{\id}\otimes{\id}}T\otimes{P_{k,0}}$.
    \item The counit map yields chain homotopy equivalences:
        \begin{align}
            &\includegraphics[width=10cm]{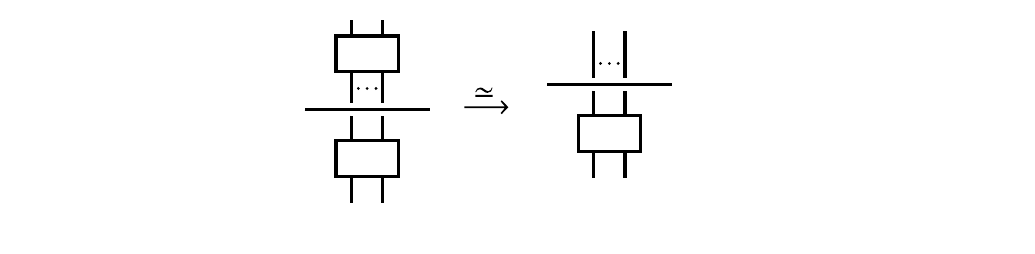} \\
            &\includegraphics[width=10cm]{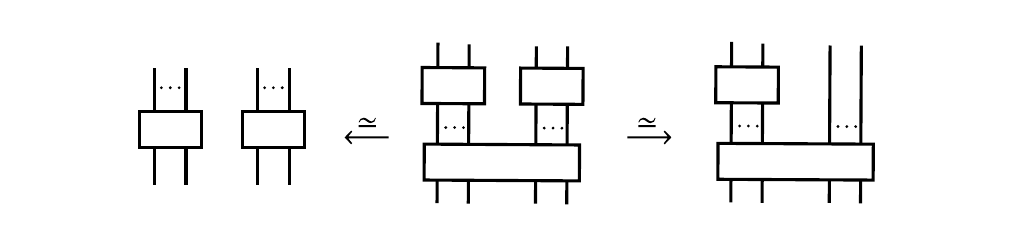}
        \end{align}
    \item The projector $P_{n,0}$ is homotopy equivalent to $\pi(P_{n,0})$, the complex obtained by a 180-degree planar rotation.
    \end{enumerate}
\end{lemma}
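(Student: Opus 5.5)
The statement is cited from \cite[Proposition 2.6]{RSWWZ25}, and our projector is the dual of theirs (Remark \ref{rmk:dual_projector}). So the plan is to transport each property of $P_{n,0}^{\vee}$ across the duality rather than re-derive it. Concretely, $P_{n,0}$ is obtained from $P_{n,0}^{\vee}$ by reversing all differentials and negating homological degree. At the level of Bar-Natan/$\frak{gl}_2$ complexes this is the functor that reflects each foam (reverses cobordisms). This functor sends the bar construction on the coalgebra $C$ to the cobar construction of Definition \ref{def:Roz-cobar}, and the unit $\iota:\bbm{1}\to P_{n,0}^{\vee}$ to the counit $\epsilon:P_{n,0}\to\bbm{1}$.

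The steps follow the list in the statement.

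\textbf{Property (1).} This is read off Definition \ref{def:Roz-cobar} directly. Each $C^{\otimes k}$ is a sum of stackings $\delta_1\otimes\delta_1^t\otimes\cdots$, and every such diagram factors through zero through-strands.

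\textbf{Properties (2) and (3).} Property (2) is by construction. For (3), note that $\cal{B}_0$ consists of the empty matching, so $C=\bbm{1}$. The cobar complex is then contractible above degree $0$, since $f_k$ alternates between the identity and zero. Hence $P_{0,0}\simeq\emptyset$ with $\epsilon=\id$.

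\textbf{Properties (4) and (5).} These are the substantive categorical statements. I would deduce them from the standard fact (Hogancamp \cite{hogancamp2020constructing}) that the cobar complex of a counital coalgebra $C$ is a counital idempotent whose "kernel" is killed by $C$. More precisely, $\mathrm{Cone}(\epsilon)\otimes C\simeq 0\simeq C\otimes\mathrm{Cone}(\epsilon)$, using the extra degeneracy coming from the comultiplication. Any counital idempotent satisfies (4): $\epsilon\otimes\id$ and $\id\otimes\epsilon$ are both inverse to the same equivalence.

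For (5), the strategy is first to reduce to showing that $P_{l,0}\otimes T\otimes\mathrm{Cone}(\epsilon_k)\simeq 0$. Filter $\mathrm{Cone}(\epsilon_k)$ by the cobar degree. Each piece $P_{l,0}\otimes T\otimes C^{\otimes j}$ has, by (1), only through-degree-zero diagrams on the right. Through-degree-zero diagrams slide through the projector, and the relevant cone vanishes by the idempotent property. This sliding step is the main obstacle. It requires showing that a tangle with zero through-degree, composed with a cap, can be absorbed into $P_{l,0}$. I expect to handle it with the identities in (6), or, following \cite{RSWWZ25}, via the characterization of $P_{n,0}$ as the unique counital idempotent annihilated by caps/cups up to homotopy. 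The symmetric statement follows by the same argument.

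\textbf{Property (6).} This is the specific cap/cup absorption identity. It follows from Hogancamp's identification of $P_{n,0}$ with the limit of Rozansky's full-twist approximations, or dually from \cite{RSWWZ25}, applying reflection.

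\textbf{Property (7).} The coalgebra $C$ is invariant under $180^\circ$ rotation: rotation sends $\delta\otimes\delta^t$ to $\pi(\delta)\otimes\pi(\delta)^t$, which is a bijection on $\cal{B}_n$. The counit saddles are carried to counit saddles, so $\pi(P_{n,0})$ is the cobar construction on an isomorphic coalgebra. Rather than checking sign compatibilities by hand, I would invoke uniqueness of counital idempotents with property (1). Signs are only determined up to the ambiguity noted in Remark \ref{rmk:signs}.
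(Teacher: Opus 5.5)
Your top-level plan, taking \cite[Proposition 2.6]{RSWWZ25} for $P_{n,0}^{\vee}$ and transporting it through cobordism reversal, is exactly what the paper does; the paper gives no argument beyond that citation. Your self-contained arguments for (1)--(4) are sound: the extra degeneracy built from the comultiplication gives $C\otimes\mathrm{Cone}(\epsilon)\simeq 0\simeq\mathrm{Cone}(\epsilon)\otimes C$, and filtering $P_{n,0}$ by cobar degree then gives the idempotent property.

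Your independent sketch of (5), however, has a genuine gap. Filtering $\mathrm{Cone}(\epsilon_{k})$ by cobar degree cannot prove $P_{l,0}\otimes T\otimes\mathrm{Cone}(\epsilon_{k})\simeq 0$. Its degree-zero piece is $P_{l,0}\otimes T$ itself, which is not contractible, so the associated graded tells you nothing. Your proposed fallback is also false. The idempotent characterized by being annihilated by caps and cups (turnbacks) is the Cooper--Krushkal categorified Jones--Wenzl projector, not $P_{n,0}$. The Rozansky projector is built out of caps and cups and \emph{absorbs} them. For example, (5) together with (3), applied to a single cap $T$ on two strands, gives $T\otimes P_{1,0}\simeq T\not\simeq 0$. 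So the ``sliding'' step you flag as the main obstacle is left unresolved, and one of your two routes to it would fail.

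The missing idea is an absorption lemma that follows from what you already proved, combined with filtering the \emph{other} factor.

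\emph{Lemma.} Let $S$ be a tangle glued to $\mathrm{Cone}(\epsilon_{k})$ along all of its endpoints, so that it has no endpoints on its other side. Then $S\otimes\mathrm{Cone}(\epsilon_{k})\simeq 0$.

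\emph{Proof.} After delooping, the Bar-Natan complex of $S$ is a bounded complex whose terms are sums of shifted crossingless matchings $\gamma^{t}$. Delooping the closed circles $\gamma^{t}\otimes\gamma$ exhibits $\gamma^{t}$ as a direct summand of $\gamma^{t}\otimes(\gamma\otimes\gamma^{t})$. Moreover, $\gamma\otimes\gamma^{t}\otimes\mathrm{Cone}(\epsilon_{k})$ is a direct summand of the contractible complex $C\otimes\mathrm{Cone}(\epsilon_{k})$, so each term is killed.

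\emph{Applying it to (5).} Filter $P_{l,0}$ by cobar degree. Every piece of $P_{l,0}\otimes T\otimes\mathrm{Cone}(\epsilon_{k})$ is then a sum of terms $\delta_{1}\otimes\cdots\otimes\delta_{j}^{t}\otimes T\otimes\mathrm{Cone}(\epsilon_{k})$ with $j\geq 1$. Since $\delta_{j}^{t}\otimes T$ has no endpoints on its far side, the lemma makes every piece contractible. Everything is bounded above with finitely many terms per degree, so the total complex is contractible. The second equivalence in (5) is symmetric: filter $P_{k,0}$ and use $\mathrm{Cone}(\epsilon_{l})\otimes C\simeq 0$. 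No appeal to (6) or to full-twist limits is needed.

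The same lemma is what makes your uniqueness appeal in (7) legitimate. Property (1) alone does not pin down a counital idempotent, since the zero complex satisfies it vacuously. You also need that through-degree-$0$ diagrams annihilate $\mathrm{Cone}(\epsilon)$. That said, your direct observation that rotation carries $C$ to an isomorphic coalgebra already suffices for (7).
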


\begin{remark}(Gluck Twist Maps)
\label{rmk:gluck_twist_maps}
For $k,l\in{\mathbb{N}}$, let $\belt{k,\ell}$ denote a \emph{standard belt link} in $S^{1}\times{S^{2}}$ (See Figure \ref{fig:beltlink}) consisting of $k+\ell$ zero-framed standard longitudes with $k$ positively oriented components and $\ell$ negatively oriented components. For $p\in\bb{Z}$ let $\belt{k,\ell}_{p}\subset S^{1}\times S^{2}$ denote the \emph{$p$-twisted belt link} obtained from $\belt{k,\ell}$ by adding $p$ full twists and framing each of the components by $p$ (see Figure \ref{fig:beltlink}). Note that $\belt{k,\ell}_{p}$ and $\belt{k,\ell}_{q}$ are isotopic as framed links in $S^{1}\times S^{2}$ if $p\equiv q\pmod{2}$, and in general $(S^{1}\times S^{2},\belt{k,\ell}_{p})$ and $(S^{1}\times S^{2},\belt{k,\ell}_{q})$ are diffeomorphic as pairs for all $p,q\in\bb{Z}$. For all $p,q\in\bb{Z}$ and $\diamond\in\{O,T\}$ we may consider the natural isomorphisms
\begin{equation}\label{eq:gluckmap}
\tau_{\diamond}^{p}:\KhR_{2,\diamond}^{-}(S^{1}\times{S^{2}},\belt{k,\ell}_{q})\xrightarrow{\cong}{\KhR^{-}_{2,\diamond}(S^{1}\times{S^{2},\belt{k,\ell}_{p+q}}})
\end{equation}
induced by Reidemeister moves that assemble to the homotopy equivalence $P_{n,0}\simeq{P_{n,0}\otimes{\mathrm{FT}_{n}^{p}}}$, which satisfy the identity $\tau_{\diamond}^{q}\circ\tau_{\diamond}^{p}=\tau_{\diamond}^{p+q}$. We refer to these maps as $p$-fold \emph{Gluck twist} maps for $\KhR_{2,\diamond}^{-}$, given their relation to the Gluck twist isomorphisms on $\skein_{0}^{2}$ discussed in \cite[Section 7]{RSWWZ25}.
\end{remark}

\section{1-dimensional inputs skein lasagna modules}
\label{sec:1d_inputs_skein_lasagna}

\subsection{Skein Lasagna Modules with 0- and 1- Dimensional Inputs}
\label{subsec:skein_lasagna_recap}

For the duration of this paper, given a compact oriented 4-manifold $X$ and a framed oriented link $L\subset\del X$ we will refer to the pair $(X,L)$ as a 4-manifold/boundary link pair.

In \cite{MWW-lasagna} the authors construct an invariant $\skein_{0}^{2}(X;L)$ of 4-manifold/boundary link pairs $(X,L)$. These take the form of $\bb{Q}$-vector spaces generated by \emph{lasagna fillings}, whose underlying \emph{skeins} each have an associated set of \emph{input manifolds} given by a collection of neighborhoods of a finite disjoint set of points in the interior of the 4-manifold. As the input manifolds are points, we refer to this construction as the \emph{skein lasagna module with 0-dimensional inputs}. For more background on skein lasagna modules with 0-dimensional inputs, we refer the reader to \cite{MWW-lasagna,MWWhandles,MN22}. 

We recall the definition of the \emph{skein lasagna module with 1-dimensional inputs}\footnote{skein tortellini module, for short.} as constructed in \cite[Definition 3.4]{RSWWZ25}.

\begin{definition}\label{def:1dimfilling}
    Given a 4-manifold/boundary link pair $(X,L)$, a finite indexing set $I$, and $\diamond\in\{O,T\}$, a \emph{type $\diamond$ $1$-dimensional input lasagna filling} is a tuple $(\Sigma,\{B_{i}\}_{i\in{I}},\{L_{i}\}_{i\in{I}},\{v_{i}\}_{i\in{I}})$ where:
    \begin{itemize}
        \item For each $i\in I$:
        \begin{itemize}
            \item $B_{i}\subset\text{int}(X)$ is an embedded codimension-zero submanifold homeomorphic to $\natural^{n_{i}}S^{1}\times B^{3}$ for some $n_{i}\geq 0$ (called an \emph{input manifold}), such that $B_{i}\cap B_{j}=\emptyset$ for $i\neq j$.
            \item $L_{i}$ is an \emph{input link} in $\del B_{i}$.
            \item $v_{i}\in\KhR_{2,\diamond}^{-}(\del B_{i},L_{i})$ is a homogenous element, which we refer to as the \emph{decoration} on $L_{i}$.
        \end{itemize}
        \item $\Sigma$ is a properly embedded framed oriented surface (called the \emph{skein}) in $X\setminus{\bigsqcup_{i\in{I}}\mathrm{int}(B_{i})}$ such that $\partial{\Sigma}\cap\partial{X}=L$ and $(-\partial\Sigma)\cap\partial{B_{i}}=L_{i}$.
    \end{itemize}
    The \emph{type $\diamond$ 1-dimensional input skein lasagna module over $\mathbb{Q}$} is defined to be
    \[\ol{\skein}_{0}^{2,\diamond}(X;L):=\mathbb{Q}\langle{\text{$\diamond$-$1$-dim input lasagna fillings}}\rangle/\sim\]
    where $\sim$ is the transitive and linear closure of the following:
    \begin{itemize}
        \item isotopy of skeins rel boundary,
        \item multilinearity in the $\KhR_{2,\diamond}^{-}(L_{i})$ labels,
        \item the \emph{enclosement relation}: For a 1-dimensional input filling $F$ with label $v=\bigotimes_{i}v_{i}$, skein $\Sigma$, and input manifolds $B=\bigsqcup_{i}B_{i}$, if $B^{\prime}$ is a 4-dimensional 1-handlebody in $\mathrm{int}(X)$ such that $B\subseteq{\mathrm{int}(B^{\prime}})$, then the filling $F$ becomes identified with a filling $F^{\prime}$ obtained by replacing $B$ with $B^{\prime}$, $\Sigma$ with $\Sigma\setminus{\mathrm{int}(B^{\prime})}$, and $v$ with $\KhR_{2,\diamond}^{-}(\Sigma\cap(B^{\prime}\setminus{\mathrm{int}(B))(v)}$
    \end{itemize}
\end{definition}

If $L$ is the empty link, then we use the notation $\skein_{0}^{2}(X):=\skein_{0}^{2}(X;\emptyset)$ (resp. $\ol{\skein}_{0}^{2,\diamond}(X):=\overline{\skein}_{0}^{2}(X;\emptyset)$).

\begin{remark}
\label{rem:core_1d_input}
We can think of one-dimensional input manifolds as neighborhoods of 1-complexes in $\text{int}(X)$. More precisely, any fixed identification of a one-dimensional input manifold $B$ with $\natural^{n}S^{1}\times B^{3}$ induces a canonical deformation retraction of $B$ onto a subset $\Gamma(B)\cong\vee^{n}S^{1}$ called its \emph{core}. Although $\Gamma(B)$ a priori depends on the identification $B\cong\natural^{n}S^{1}\times B^{3}$, it is still well-defined up to isotopy. 
\end{remark}

\begin{remark}
Our invariant $\ol{\skein}_{0}^{2,T}(X;L)$ corresponds to the invariant simply denoted by $\ol{\skein}_{0}^{2}(X;L)$ in \cite{RSWWZ25}, whereas $\ol{\skein}_{0}^{2,O}(X;L)$ corresponds to the invariant mentioned in \cite[Remark 3.5]{RSWWZ25}.
\end{remark}

The invariants $\skein_{0}^{2}(X;L)$ and $\ol{\skein}_{0}^{2,O}(X;L)$ come equipped with a tri-grading $(h,q,\alpha)\in\bb{Z}\times\bb{Z}\times H_{2}^{L}(X;\bb{Z})$, whereas the tri-grading on $\ol{\skein}_{0}^{2,T}(X;L)$ takes values in $\frac{1}{2}\bb{Z}\times\frac{1}{2}\bb{Z}\times H_{2}^{L}(X;\bb{Z}/2)$. Here, for $R=\bb{Z}$ or $\bb{Z}/2$, $H_{2}^{L}(X;R):=\del^{-1}([L])\in H_{2}(X,L;R)$ where $\partial$ is the connecting map in the long exact sequence for the pair $(X,L)$. In all of these theories, the $h$- and $q$-gradings descend from the corresponding input homology theories, whereas the third grading (called the \emph{skein grading}) for an element represented by a lasagna filling $F$ corresponds to the homology class of the underlying skein surface $\Sigma$ of $F$.

\subsection{Properties}
\label{subsec:properties_1d_inputs_skein_lasagna}

We now discuss several properties of 1-dimensional inputs skein lasagna modules. First, note that there always exist canonical maps
\begin{align}
\label{eq:maps_between_skein_lasagnas}
    &f_{0,\diamond}:\skein_{0}^{2}(X;L)\to\ol{\skein}_{0}^{2,\diamond}(X;L),\;\diamond\in\{O,T\}, &f_{O,T}:\ol{\skein}_{0}^{2,O}(X;L)\to\ol{\skein}_{0}^{2,T}(X;L)
\end{align}
defined as follows. The map $f_{0,O}$ associates $0$-dimensional input lasagna fillings of $(X,L)$ to their corresponding type $O$ $1$-dimensional input lasagna fillings under the canonical isomorphism of functors
\[\KhR_{2,O}^{-}|_{\Links_{0}}\cong\KhR_{2}:\Links_{0}\to \fVect_{\bb{Q}^{\bb{Z}\times\bb{Z}}}.\]
This map is well-defined, as the relations defining $\skein_{0}^{2}(X;L)$ are a subset of those used to define $\ol{\skein}_{0}^{2,O}(X;L)$.

Next, given the canonical inclusion of categories $\Links_{1,O}\hookrightarrow\Links_{1,T}$, consider the natural transformation of functors $\eta:\KhR_{2,O}^{-}\Rightarrow\KhR_{2,T}^{-}|_{\Links_{1,O}}$ induced by the grading shift $(\cdot)\mapsto(tq^{-1})^{w(L)/2}(\cdot)$. We then define $f_{O,T}$ to be the map which sends each lasagna filling $F=(\Sigma,\{B_{i}\}_{i\in{I}},\{L_{i}\}_{i\in{I}},\{v_{i}\}_{i\in{I}})$ to its corresponding grading-shifted lasagna filling $F'=(\Sigma,\{B_{i}\}_{i\in{I}},\{L_{i}\}_{i\in{I}},\{\eta(v_{i})\}_{i\in{I}})$. Again, this map is well-defined as the relations defining $\ol{\skein}_{0}^{2,O}(X;L)$ are (modulo grading shifts) a subset of those used to define $\ol{\skein}_{0}^{2,T}(X;L)$. The map $f_{0,T}$ is then defined to be the composite $f_{O,T}\circ f_{0,O}$.

These maps decompose along tri-gradings, in the sense that they split as direct sums of maps
\begin{align*}
    &f_{0,O}^{(h,q,\alpha)}:\skein_{0,h,q}^{2}(X;L;\alpha)\to\ol{\skein}_{0,h,q}^{2,O}(X;L;\alpha), \\ &f_{0,T}^{(h,q,\alpha)}:\skein_{0,h,q}^{2}(X;L;\alpha)\to\ol{\skein}_{0,h+\frac{1}{2}\alpha^{2},q-\frac{1}{2}\alpha^{2}}^{2,T}(X;L;\ol{\alpha}), \\ &f_{O,T}^{(h,q,\alpha)}:\ol{\skein}_{0,h,q}^{2,O}(X;L;\alpha)\to\ol{\skein}_{0,h+\frac{1}{2}\alpha^{2},q-\frac{1}{2}\alpha^{2}}^{2,T}(X;L;\ol{\alpha}),
\end{align*}
where $\ol{\alpha}$ denotes the mod $2$ reduction of $\alpha$.

\begin{proposition}
\label{prop:surjections_lasagna_simply_connected}
Let $(X,L)$ be a 4-manifold/boundary link pair, and suppose $X$ is simply connected. Then the the maps $f_{0,\diamond}^{(h,q,\alpha)}$ for $\diamond\in\{O,T\}$ and $f_{O,T}^{(h,q,\alpha)}$ are surjective for each tri-grading $(h,q,\alpha)$.
\end{proposition}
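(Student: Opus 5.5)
Since $f_{0,T}=f_{O,T}\circ f_{0,O}$, it suffices to prove that $f_{0,O}$ and $f_{0,T}$ are surjective in each tri-grading; surjectivity of $f_{O,T}$ then follows. Indeed, every element in the target of $f_{O,T}^{(h,q,\alpha)}$ is the image under $f_{0,T}$ of some element of $\skein_{0}^{2}$. Applying $f_{0,O}$ to that element gives a preimage under $f_{O,T}$. Strictly, this works for classes $\ol{\alpha}$ lifting to integral $\alpha$; the reduction map $H_2^L(X;\bb{Z})\to H_2^L(X;\bb{Z}/2)$ is surjective when $X$ is simply connected, because $H_1(X,L;\bb{Z})$ is torsion-free in that case. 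So the task is to show that every $1$-dimensional input lasagna filling is equivalent to a linear combination of fillings whose input manifolds are all $4$-balls.

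Take a filling $F=(\Sigma,\{B_i\},\{L_i\},\{v_i\})$ with $B_i\cong\natural^{n_i}S^1\times B^3$.

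\textbf{Step 1: enclose each input in a ball.} Since $\pi_1(X)=1$, each core $\Gamma(B_i)\cong\vee^{n_i}S^1$ (Remark \ref{rem:core_1d_input}) is null-homotopic in $X$. Because $\dim X=4$, general position upgrades the null-homotopies of the loops to immersed disks, and then we perturb the skein.

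The cleaner route is to avoid disks altogether. Isotope the skein so that it misses a regular neighborhood of the cores, and shrink each $B_i$ so that it lies in a small neighborhood of $\Gamma(B_i)$. Then, using the null-homotopy, isotope $\Gamma(B_i)$, together with the attached collar of the skein, into a small $4$-ball. In dimension $4$, homotopy of $1$-complexes implies isotopy, since crossings of $1$-dimensional strands can be removed generically.

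The delicate point is that the skein $\Sigma$ is attached to $\del B_i$. When we move $B_i$ around, we must drag $\Sigma$ along. Generic $2$-dimensional tracks of $1$-dimensional arcs can intersect the surface $\Sigma$ in points, since $1+2+1=4$. To handle this, choose the isotopy of the $1$-complex (the core), and extend it to an ambient isotopy of $X$. That ambient isotopy carries the whole filling along, and isotopy of fillings is among the relations. So the filling is equivalent to one in which $B_i\subset D_i$ for disjoint $4$-balls $D_i\subset\mathrm{int}(X)$. We must also keep the $D_i$ disjoint from the other inputs. This holds since the cores can be moved into disjoint small balls, again by general position.

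\textbf{Step 2: apply the enclosement relation.} Now enclose $B_i$ in $D_i$. A $4$-ball is a $1$-handlebody with $n=0$, so the enclosement relation applies. It replaces the data $(B_i,v_i)$ by $(D_i,\KhR^-_{2,\diamond}(\Sigma\cap(D_i\setminus \mathrm{int} B_i))(v_i))$, where $\KhR^-_{2,\diamond}(\del D_i, L')\cong\KhR_2(L')$.

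After this replacement every input is a ball. The new filling is the image under $f_{0,\diamond}$ of the corresponding $0$-dimensional filling. Tri-gradings are preserved: the $h$ and $q$ gradings are preserved by the cobordism maps, and the skein class is unchanged.

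The main obstacle is Step 1: justifying that the input manifold can be isotoped into a ball while the attached skein is carried along consistently. I would handle this with the ambient isotopy extension argument above. If needed, a fallback is to first push intersections with $\Sigma$ off along finger moves.
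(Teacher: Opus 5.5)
Your proposal is correct and follows essentially the same route as the paper. Simple connectivity plus general position in dimension four lets you isotope each input's core, and hence the input together with the attached skein, into a $4$-ball; the enclosement relation then turns the filling into a $0$-dimensional one, and surjectivity of $f_{O,T}$ follows from the factorization $f_{0,T}=f_{O,T}\circ f_{0,O}$.

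Your side remark about lifting $\ol{\alpha}$ is not needed, because the preimage you construct is a $0$-dimensional filling whose oriented skein already has an integral class. As written, its justification is also incomplete. Torsion-freeness of $H_{1}(X,L;\bb{Z})$ gives surjectivity onto $H_{2}(X,L;\bb{Z}/2)$, but landing in $\del^{-1}([L])$ additionally uses that $\del:H_{2}(X,L;\bb{Z})\to H_{1}(L;\bb{Z})$ is onto, which holds because $H_{1}(X;\bb{Z})=0$.
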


\begin{proof}
To show that $f_{0,O}$ is surjective, it suffices to show that any type $O$ $1$-dimensional input lasagna filling of $(X,L)$ is equivalent to a $0$-dimensional lasagna filling via the enclosement relation. Let $F=(\Sigma,\{B_{i}\}_{i\in{I}},\{L_{i}\}_{i\in{I}},\{v_{i}\}_{i\in{I}})$ be such a filling, and for each $i\in I$ fix an identification $B_{i}=\natural^{n_{i}}S^{1}\times B^{3}$ and fix a deformation retraction of $\natural^{n_{i}}S^{1}\times B^{3}$ onto a standard bouquet of circles $\vee^{n_{i}}S^{1}$. Since $X$ is simply connected, each circle in $\vee^{n_{i}}S^{1}$ bounds an immersed disk in $X$ via a standard transversality argument, and hence can be smoothly isotoped to the basepoint of $\vee^{n_{i}}S^{1}$. This implies that each $B_{i}$ can be isotoped so that it is contained in a 4-ball contained in $\text{int}(X)$. Hence $F$ is equivalent to a lasagna filling containing only $0$-dimensional inputs, as desired. 

A similar argument as above implies that $f_{0,T}=f_{O,T}\circ f_{0,O}$ is surjective, and therefore $f_{O,T}$ is surjective as well. The proposition then follows from the observation that the enclosement relation respects tri-gradings.
\end{proof}

\begin{proposition}[See (\cite{MN22}, Theorem 1.4 and Corollary 7.3)]
\label{prop:connected_sum}
Let $(X_{i},L_{i})$, $i=1,2$ be 4-manifold/boundary link pairs.
Then there exists a natural isomorphism
\[\ol{\skein}_{0}^{2,\diamond}(X_{1}\sqcup X_{2};L_{1}\sqcup L_{2})\cong\ol{\skein}_{0}^{2,\diamond}(X_{1};L_{1})\otimes\ol{\skein}_{0}^{2,\diamond}(X_{2};L_{2}).\]
Furthermore, suppose at least one of $X_{1}$ and $X_{2}$ are simply connected. Then there exist natural isomorphisms
\begin{align*}
    &\ol{\skein}_{0}^{2,\diamond}(X_{1}\natural X_{2};L_{1}\sqcup L_{2})\cong\ol{\skein}_{0}^{2,\diamond}(X_{1};L_{1})\otimes\ol{\skein}_{0}^{2,\diamond}(X_{2};L_{2}), \\
    &\ol{\skein}_{0}^{2,\diamond}(X_{1}\# X_{2};L_{1}\sqcup L_{2})\cong\ol{\skein}_{0}^{2,\diamond}(X_{1};L_{1})\otimes\ol{\skein}_{0}^{2,\diamond}(X_{2};L_{2}).
\end{align*}
\end{proposition}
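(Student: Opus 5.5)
The disjoint union statement comes first, by the argument of \cite[Theorem 1.4]{MN22} adapted to $1$-dimensional inputs. I would define a bilinear map
\[\ol{\skein}_{0}^{2,\diamond}(X_{1};L_{1})\otimes\ol{\skein}_{0}^{2,\diamond}(X_{2};L_{2})\to\ol{\skein}_{0}^{2,\diamond}(X_{1}\sqcup X_{2};L_{1}\sqcup L_{2})\]
by taking disjoint unions of fillings: skeins, input manifolds, and decorations are unioned, and the decorations are tensored. Each relation in Definition \ref{def:1dimfilling} on one factor gives a relation on the union, so the map is well defined. Here the enclosement relation uses that $\KhR_{2,\diamond}^{-}$ is symmetric monoidal.

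For the inverse, consider a filling of $X_{1}\sqcup X_{2}$. Each input manifold $B_{i}\cong\natural^{n_{i}}S^{1}\times B^{3}$ is connected, so it lies entirely in one $X_{j}$. The skein likewise splits into its parts in $X_{1}$ and $X_{2}$, and a decoration lives on a single input, so it is already a pure tensor. Hence every filling decomposes uniquely as a pair of fillings, and isotopies and enclosements in the union occur separately in each component. This gives the inverse map. Naturality and grading-preservation are immediate.

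For the boundary sum and connected sum, assume without loss of generality that $X_{2}$ is simply connected. The model is \cite[Corollary 7.3]{MN22}, and the key step is reducing to fillings that avoid the gluing region. For $X_{1}\natural X_{2}$, the gluing region is a $3$-ball $D$ in the boundary, whose neighborhood is a half-ball $N$. For $X_{1}\# X_{2}$, it is a separating $S^{3}$ with collar $S^{3}\times I$.

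Given a filling of the glued manifold, first isotope it so that the input manifolds avoid the gluing region. Their cores are $1$-complexes, so general position against a codimension-one hypersurface lets me push each core so that it meets the region transversally in finitely many points. The real issue is loops of a core that cross from one side to the other. Here I use simple connectivity of $X_{2}$, exactly as in the proof of Proposition \ref{prop:surjections_lasagna_simply_connected}. Each arc of a core lying in $X_{2}$, with endpoints on the gluing sphere or ball, is homotopic rel endpoints into the gluing region, because $\pi_{1}(X_{2})=1$ and the region is connected. I then upgrade this homotopy to an isotopy of the $1$-complex, using dimension $4>2\cdot 1+1$, so that the core lies in $X_{1}$ union a collar. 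Pushing the collar back into $X_{1}$ puts the entire input manifold in $X_{1}$.

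Inputs that lie entirely in $X_{2}$ can stay there. If an input straddles the region and has a core component that is a contractible arc, I enclose it in a $4$-ball. This reduces the filling to one whose inputs lie in the interiors of $X_{1}$ and $X_{2}$ and whose skein is transverse to the gluing region. The skein still meets that region in some link $J$.

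Now I adapt the Manolescu--Neithalath argument for removing $J$. I enclose a neighborhood of the gluing $S^{3}$ (or of the half-ball $N$) in a new $4$-ball input $B_{0}$. In $B_{0}$ I evaluate the skein piece via $\KhR_{2}$, and by Lemma~\ref{lem:rozprops}(3) this agrees with $\KhR_{2,\diamond}^{-}$ on a $0$-handlebody. Then I use the fact from \cite[Theorem 1.4]{MN22} that a $4$-ball input straddling the gluing sphere can be replaced, after isotopy, by a combination of fillings whose skeins miss the sphere. This step uses $\KhR_{2}(\emptyset)=\bb{Q}$ and the neck-cutting relations of Khovanov homology, applied in a ball. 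It follows that the map from the tensor product is surjective.

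For injectivity, I follow \cite{MN22}. I build the inverse by cutting along the sphere after performing the above reduction, and I must check that this is independent of the choices made. Two different reductions are related by a generic one-parameter family, so the check is local in the same way as before.

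The main obstacle is the first step: isotoping $1$-dimensional inputs off the gluing region, and showing that different choices of isotopy, homotopy, or enclosement give the same element. The dangerous case is a core loop that genuinely traverses both sides. My plan there is to use simple connectivity of $X_{2}$ to retract the $X_{2}$-portion of the loop through the gluing disk, and then to check well-definedness by general position of one-parameter families of cores against the hypersurface.
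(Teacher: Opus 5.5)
Your proposal follows essentially the same route as the paper: for disjoint unions you use monoidality of $\KhR_{2,\diamond}^{-}$; for (boundary) connected sums you use simple connectivity of one summand to isotope the input cores off the gluing region into the other summand, make the skein a product near that region, and apply the neck-cutting argument of \cite[Lemma 7.2]{MN22}, with injectivity deferred to \cite{MN22} at the same level of detail as the paper. One correction: a neighborhood of the whole separating $S^{3}$ (or of the boundary half-ball) cannot in general be enclosed in an interior $4$-ball, so, as the paper does, first isotope the skein so that it meets a thin collar $S^{3}\times[-\epsilon,\epsilon]$ in a product $J\times[-\epsilon,\epsilon]$ with $J$ inside a small $3$-ball $D\subset S^{3}$, and take $B_{0}=D\times[-\epsilon,\epsilon]$.
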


\begin{proof}
The isomorphism for disjoint unions follows from the monoidality of $\KhR_{2,\diamond}^{-}$ over $\mathbb{Q}$. In the case of connect sums, suppose without loss of generality that $X_{1}$ is the simply-connected connect summand, and for any 1-dimensional input lasagna filling $F$ of $X_{1}\#X_{2}$, let $B$ denote its input manifold. Recall from Remark \ref{rem:core_1d_input} that $B$ admits a deformation retraction onto its core $\Gamma(B)\approx\sqcup_{i=1}^{k}\vee^{m_{i}}S^{1}$. If $B$ has a disjoint component contained in $\mathrm{int}(X_{1})$, then it is contained in an interior $B^{4}$ and may be isotoped into $\mathrm{int}(X_{2})$. Alternatively, if $\Gamma(B)$ has a component which has non-trivial intersection with the connect sum 3-sphere, we may choose arcs in $S^{3}$ connecting intersection points and isotope $B$ along neighborhoods of disks bounded by the $S^{1}$ wedge summands of $\Gamma(B)$ and push the input manifold into $\mathrm{int}(X_{2})$.

We may then perform an isotopy so that the skein surface of $F$ intersects the connect-sum region in a trivial cobordism $L\times{I}$. The claim then follows from applying the same neck-cutting Lemma (\cite{MN22}, Lemma 7.2) as in the proof of (\cite{MN22}, Theorem 1.4). More specifically, the isotoped filling is equivalent under enclosement to the usual sum of ``neck-cut" fillings obtained by replacing the $L\times{I}$ region with two small disjoint input 4-balls with input links $L$ and $\overline{L}$ respectively with labels given by the homomorphism induced by $L\times{I}$ taken as a cobordism $\emptyset\rightarrow{L\sqcup{\overline{L}}}$. As $\KhR_{2,\diamond}^{-}$ is monoidal under disjoint unions, the result follows. The case of boundary connect sums follows by a nearly identical argument.
\end{proof}

The invariant $\ol{\skein}_{0}^{2,\diamond}(X,L)$, like the $0$-dimensional inputs version, satisfies more general gluing properties. Let $(X,L)$ be a 4-manifold/boundary link pair, and let $Y\subset X$ be a properly embedding separating 3-manifold such that $\del Y$ intersects $L\subset\del X$ in some set of points $P$. Let $(X_{1},T_{1})$ and $(X_{2},T_{2})$ denote the pairs obtained by cutting $(X,L)$ along $Y$, where $T_{i}\subset\del X_{i}$ are framed oriented tangles with endpoints $P$. Given any framed oriented tangle $T_{0}\subset Y$ with $\del T_{0}=P$, we have an induced map
\[\ol{\skein}_{0}^{2,\diamond}(X_{1};T_{1}\cup T_{0})\otimes \ol{\skein}_{0}^{2,\diamond}(X_{2};-T_{0}\cup T_{2})\to\ol{\skein}_{0}^{2,\diamond}(X;L)\]
obtained by gluing together lasagna fillings on each side (see \cite{MWWhandles} and \cite{RW24}, Section 2.2). We have the following analogue of (\cite{RW24}, Proposition 2.5):

\begin{proposition}
\label{prop:gluing_surjection}
Suppose at least one of $X_{1}$ and $X_{2}$ is simply connected. Then the gluing map
\[\sf{gl}:\bigoplus_{\substack{T_{0}\subset Y \\ \del T_{0}= P}}\ol{\skein}_{0}^{2,\diamond}(X_{1};T_{1}\cup T_{0})\otimes \ol{\skein}_{0}^{2,\diamond}(X_{2};-T_{0}\cup T_{2})\to\ol{\skein}_{0}^{2,\diamond}(X;L)\]
is a surjection.
\end{proposition}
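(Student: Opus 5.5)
The plan is to show that every 1-dimensional input lasagna filling $F=(\Sigma,\{B_i\},\{L_i\},\{v_i\})$ of $(X,L)$ is equivalent, under the relations of Definition \ref{def:1dimfilling}, to a filling whose input manifolds are disjoint from $Y$ and whose skein meets $Y$ transversely in a tangle $T_0$. Such a filling is visibly the image under $\sf{gl}$ of the tensor product of its restrictions to $X_1$ and $X_2$, with boundary tangles $T_1\cup T_0$ and $-T_0\cup T_2$. Since $\ol{\skein}_{0}^{2,\diamond}(X;L)$ is spanned by fillings, this gives surjectivity. We may assume without loss of generality that $X_1$ is simply connected.

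The first and main step is to move all input manifolds off $Y$ into $\mathrm{int}(X_2)$. We follow the argument in the proof of Proposition \ref{prop:connected_sum}. By Remark \ref{rem:core_1d_input}, each $B_i$ deformation retracts onto a core $\Gamma(B_i)\cong\vee^{n_i}S^1$. We may shrink $B_i$ to a thin regular neighborhood of its core, using isotopy and the fact that the skein can be pushed off along with it. Then $B_i$ can be moved by moving $\Gamma(B_i)$.

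\begin{itemize}
\item First put $\Gamma(B_i)$ in general position with respect to $Y$, so that it meets $Y$ in finitely many points.
\item Next, consider the arcs of $\Gamma(B_i)$ lying in $X_1$. Using arcs in $Y$ joining their endpoints, each such arc together with its connecting arc forms a loop in $X_1$.
\item Since $\pi_1(X_1)=1$, each of these loops bounds an immersed disk in $X_1$, and by dimension count (a $1$-complex in a $4$-manifold) it bounds an embedded one.
\item Isotope the arc across this disk into a collar of $Y$, and then slightly into $X_2$.
\item Any component of $\Gamma(B_i)$ lying entirely in $X_1$ is nullhomotopic there. It can therefore be contracted into a small 4-ball, as in Proposition \ref{prop:surjections_lasagna_simply_connected}, and that ball can be pushed across $Y$ into $X_2$.
\end{itemize}

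Ambient isotopies of $X$ carry $\Sigma$ and the $B_i$ along, and they preserve the class of the filling since isotopy of skeins is a relation. The delicate points are keeping the $B_i$ disjoint from each other and from the rest of the configuration, and making sure the isotopy can be taken supported away from $\partial X$ so that $L$ is fixed. Both should follow from general position, because cores are 1-dimensional in a 4-manifold. Alternatively, where needed, we can enclose several $B_i$ into a larger 1-handlebody using the enclosement relation.

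Once all $B_i\subset\mathrm{int}(X_2)$, make $\Sigma$ transverse to $Y$ rel $\partial$. The intersection $\Sigma\cap Y$ is then a framed oriented tangle $T_0$ with $\partial T_0=P$, framed by the restriction of the framing of $\Sigma$. Cutting along $Y$ gives a filling $F_1$ of $(X_1;T_1\cup T_0)$ with no inputs and a filling $F_2$ of $(X_2;-T_0\cup T_2)$ carrying all the inputs. Gluing these back recovers $F$, so $F=\sf{gl}(F_1\otimes F_2)$. The grading compatibility is automatic, so the result follows. We expect the isotopy of cores across $Y$ to be the only nontrivial step.
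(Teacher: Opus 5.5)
Your route is the paper's own, written out in more detail. The paper proves this in one line: isotope the filling to be transverse to $Y$, then push the input manifolds off $Y$ ``as in the proof of Proposition~\ref{prop:connected_sum}'', i.e.\ join the intersection points of the cores with $Y$ by arcs in $Y$ and isotope across disks on the simply connected side.

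\textbf{The gap (shared with the paper).} The step ``using arcs in $Y$ joining their endpoints'' needs the two endpoints of each arc of $\Gamma(B_i)\cap X_1$ to lie in the same component of $Y$. That holds in the connected-sum proof, where $Y=S^3$. Nothing in the statement makes $Y$ connected, and without that the proposition is false. Here is a counterexample:
\begin{itemize}
\item Take $X=S^1\times B^3$ and $Y=\{a\}\times B^3\sqcup\{b\}\times B^3$, so $X_1$ and $X_2$ are both $4$-balls.
\item Take $\diamond=T$ and $L=\belt{2,0}$.
\item A filling in the image of $\mathsf{gl}$ has all its inputs inside the balls $X_1$, $X_2$, so each input link is null-homologous in $X$.
\item Its oriented skein gives $[L]=\sum_j[L_j]=0$ in $H_1(X)$, whereas $[L]=2\neq 0\in H_1(X)\cong\mathbb{Z}$. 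So the image of $\mathsf{gl}$ is zero. In fact the index set is empty: $L$ crosses each component of $Y$ twice in the same direction, so no oriented $T_0$ with $\partial T_0=P$ exists.
\item On the other hand, $\ol{\skein}_{0}^{2,T}(S^1\times B^3;\belt{2,0})\cong\KhR_{2,T}^{-}(\belt{2,0})\neq 0$, by the discussion after Proposition~\ref{prop:disoriented_bundt_cake}.
\end{itemize}
Concretely, the core $S^1\times\{0\}$ of the obvious input has intersection number $1$ with each component of $Y$, so it cannot be isotoped off $Y$.

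\textbf{The fix.} Add the hypothesis that $Y$ is connected, or more generally that the pair $(X_1,Y)$ is $1$-connected. With that, your argument goes through after two small repairs.
\begin{enumerate}
\item The piece of $\Gamma(B_i)\cap X_1$ containing the wedge point need not be an arc. It can be a star, possibly with whole circles attached, and your bullet list does not handle it. Instead of working arc by arc, use the compression lemma: since $(X_1,Y)$ is $1$-connected, the $1$-complex $\Gamma(B_i)\cap X_1$ is homotopic rel $\Gamma(B_i)\cap Y$ into $Y$, and hence slightly into $X_2$.
\item ``By dimension count it bounds an embedded disk'' is not the right justification, since generic disks in a $4$-manifold have double points. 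You do not need embedded disks at all. By general position, homotopic embeddings of $1$-complexes in a $4$-manifold are isotopic, and the isotopy can be chosen to miss the other cores. Isotopy extension then moves the thin $B_i$ and drags $\Sigma$ along, rel $\partial X$.
\end{enumerate}
The remaining steps (transversality of $\Sigma$ to $Y$, then cutting and regluing) are fine as written.
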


\begin{proof}
Every lasagna filling of $(X,L)$ can be isotoped so that it intersects $Y$ transversely in some tangle $T_{0}$, and that (assuming at least one $X_{i}$ is simply-connected) the corresponding one-dimensional input manifold $B$ is disjoint from $Y$ by a similar argument as in the proof of Proposition \ref{prop:connected_sum}.
\end{proof}

From Proposition \ref{prop:gluing_surjection} we obtain the following corollary:

\begin{corollary}
\label{cor:embeddings}
Let $X\subset\text{int}(X')$ be an inclusion of $4$-manifolds, and suppose that
\begin{enumerate}
    \item At least one of $X$ and $X'\setminus X$ is simply connected.
    \item $\ol{\skein}_{0}^{2,\diamond}(X;L)=0$ for any link $L\subset\del X$.
\end{enumerate}
Then $\ol{\skein}_{0}^{2,\diamond}(X';L')$ vanishes for any link $L'\subset\del X'$.
\end{corollary}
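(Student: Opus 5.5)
The plan is to apply the gluing surjection of Proposition \ref{prop:gluing_surjection} with the separating 3-manifold taken to be $\del X$ itself, pushed slightly into $X'$. First, fix a collar and replace $X$ by a slightly smaller copy so that $X\subset\mathrm{int}(X')$ has a boundary parallel. Set $Y:=\del X$, viewed as a closed separating 3-manifold in $X'$ that is disjoint from $\del X'$. Cutting $(X',L')$ along $Y$ gives the pairs $(X_{1},T_{1})=(X,\emptyset)$ and $(X_{2},T_{2})=(X'\setminus\mathrm{int}(X),L')$. Since $Y$ misses $\del X'$, the point set $P=\del Y\cap L'$ is empty. Consequently the tangles $T_{0}\subset Y$ with $\del T_{0}=P$ are exactly the framed oriented links $T_{0}\subset\del X$.

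Hypothesis (1) says that one of the two pieces is simply connected, so Proposition \ref{prop:gluing_surjection} gives a surjection
\[\bigoplus_{T_{0}\subset\del X}\ol{\skein}_{0}^{2,\diamond}(X;T_{0})\otimes\ol{\skein}_{0}^{2,\diamond}(X'\setminus\mathrm{int}(X);-T_{0}\cup L')\to\ol{\skein}_{0}^{2,\diamond}(X';L').\]
By hypothesis (2), every factor $\ol{\skein}_{0}^{2,\diamond}(X;T_{0})$ vanishes. Hence each summand on the left is zero, the source is zero, and so the target vanishes.

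The only step that needs care is checking that Proposition \ref{prop:gluing_surjection} applies when $Y$ is closed rather than properly embedded with boundary on $\del X'$. I expect this to be straightforward. The proof of that proposition only uses two things: transversality of the skein to $Y$, and the ability to push the input manifolds off $Y$. The latter follows from the simple-connectivity argument of Proposition \ref{prop:connected_sum}, which works verbatim for a closed $Y$. In that argument the cores of the input manifolds that meet $Y$ are isotoped into one side along disks in the simply connected piece.

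A minor bookkeeping point remains, about which side receives the inputs. If $X$ is the simply connected piece, the inputs are pushed into $X'\setminus X$. If instead $X'\setminus X$ is simply connected, they are pushed into $X$. Either way the filling decomposes as a glued filling, which is all the surjectivity requires. Orientation conventions such as $-T_{0}$ versus $T_{0}$ are immaterial, because hypothesis (2) holds for every link in $\del X$.
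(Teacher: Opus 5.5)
Your proposal is correct and is exactly the paper's argument. The paper records the corollary as an immediate consequence of Proposition \ref{prop:gluing_surjection} applied with $Y=\partial X$, so $P=\emptyset$, every $T_{0}$ is a link in $\partial X$, and hypothesis (2) kills every summand of the source of the surjective gluing map. One claim is overstated: the push-off argument of Proposition \ref{prop:connected_sum} carries over verbatim to a closed $Y$ only when $\partial X$ is connected, since an input core arc running between two different components of $\partial X$ cannot in general be pushed off using simple connectivity of one side alone (consider $\{a,b\}\times S^{3}\subset S^{1}\times S^{3}$); the paper relies on Proposition \ref{prop:gluing_surjection} with the same implicit caveat, so this is not a gap relative to the paper.
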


It will also be helpful to consider the above gluing map when one of the inputs is fixed. In particular: suppose $(X,L)$ is a 4-manifold/boundary link pair with $\del X=Y$, let $W$ be a 4-dimensional cobordism from $Y$ to some other 3-manifold $Y'$, and $\Sigma\subset W$ a surface cobordism from $L\subset Y$ to some link $L'\subset Y'$. Then we have an induced map
\begin{align*}
    \ol{\skein}_{0}^{2,\diamond}(W;\Sigma):\ol{\skein}_{0}^{2,\diamond}(X;L)&\to\ol{\skein}_{0}^{2,\diamond}(X\cup_{Y} W;L') \\
    F\mapsto \sf{gl}(F\otimes\Sigma),
\end{align*}
where $\Sigma$ is treated as a lasagna filling of the 4-manifold/boundary link pair $(W,-L\sqcup L')$. One special case of this is where $(W,\Sigma)$ is the mapping cylinder of a diffeomorphism of pairs $\varphi:(Y,L)\to(Y',L')$. In this case we will denote the induced map simply by $\ol{\skein}_{0}^{2,\diamond}(\varphi)$.
\section{2-handles}
\label{sec:2-handles}

\subsection{Braid Group Actions}
\label{subsec:braid_group_actions}

Before discussing the 2-handle attachment formula for $\overline{\skein}_{0}^{2,\diamond}$, we remark on braid group actions on Rozansky-Willis homology. Let $K_{n}$ denote the $n$-cable of the framed link $K$ in the boundary of a 4-manifold $X$, where all parallel strands are oriented in the same direction, then there is a natural action of the braid group $\mathfrak{B}_{n}$ on $\skein_{0}^{2}(X;K_{n})$ that permutes the parallel strands of $K_{n}$. More specifically, for each generator $\sigma_{i}\in{\mathfrak{B}_{n}}$, there is an associated endomorphism $\beta_{i}\in\mathrm{End}({\skein_{0}^{2}(X;K_{n})})$ defined by pulling back the action of $\sigma_{i}$ on an $n$-punctured $D^{2}\times{S^{1}}$ to a tubular neighborhood of $K\subset{\partial{X}}$. When $X=B^{4}$, we remark that this action factors through the symmetric group $\mathfrak{S}_{n}$ by Grigsby--Licata--Wehrli \cite{GLW-schur-weyl}.

\begin{figure}
    \centering
    \includegraphics[width=0.85\linewidth]{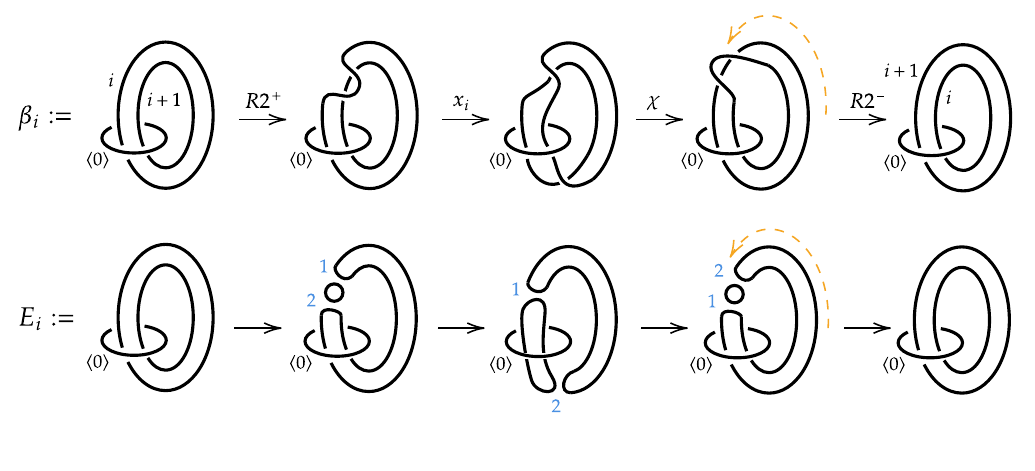}
    \caption{\textbf{TOP:} $\beta_{i}$ movie, \textbf{BOTTOM:} $E_{i}$ movie. Components other than the $i$th and $(i+1)$th omitted.}
    \label{fig:1}
\end{figure}

\begin{proposition}
\label{prop:braid_group_action_factors_through_symmetic_group}
    Let $K_{n}$ be the $n$-cable of an admissible knot $K$ in $\#^{k}_{i=1}(S^{1}\times{S^{2}})_{i}$, then there exists a braid group representation $\rho:\mathfrak{B}_{n}\rightarrow\mathrm{End}(\KhR_{2,\diamond}^{-}(K_{n}))$ that factors through the symmetric group $\mathfrak{S}_{n}$.
\end{proposition}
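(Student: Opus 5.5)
Note first that $K_{n}$ is the cable with all strands oriented the same way, and that $\KhR_{2,\diamond}^{-}$ is a functor on $\Links_{1,\diamond}$. The plan is to define $\rho(\sigma_{i})=\beta_{i}$ as the map induced by the movie in the top of Figure \ref{fig:1}. This movie is the trace of an isotopy in a tubular neighborhood $N(K)\cong S^{1}\times D^{2}$ swapping the $i$th and $(i+1)$st parallel strands. Functoriality of $\KhR_{2,\diamond}^{-}$ under isotopy, i.e.\ under mapping cylinders of diffeomorphisms of $(\#^{k}S^{1}\times S^{2},K_{n})$, then gives an action of $\mathfrak{B}_{n}$: the braid relations hold because the corresponding isotopies of $N(K)$ rel $\partial N(K)$ compose correctly, up to isotopy of the traces.

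\textbf{Reduction to a local relation.} To show that $\rho$ factors through $\mathfrak{S}_{n}$, it suffices to check $\beta_{i}^{2}=\id$. Following Grigsby--Licata--Wehrli \cite{GLW-schur-weyl}, I would express $\beta_{i}$ in terms of the cobordism $E_{i}$ (bottom of Figure \ref{fig:1}): the $i$th and $(i+1)$st strands are merged by a saddle and split back by a saddle. Since the two strands are parallel with the same orientation, this saddle pair is orientable after a local reversal, or one uses the standard cap/cup composite. The target relation is
\[\beta_{i}=\id-E_{i}\quad(\text{up to the sign conventions of Remark \ref{rmk:signs}}),\qquad E_{i}^{2}=2E_{i}\ \text{ or }\ E_{i}^{2}=0,\]
depending on normalization, from which $\beta_{i}^{2}=\id$ follows.

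\textbf{Why the local argument transfers.} The key observation is that the twist region between strands $i$ and $i+1$ lives in a small $3$-ball $D\subset N(K)$. This ball can be chosen disjoint from the attaching spheres/$0$-surgery curves, hence disjoint from the Rozansky projectors in a diagram for $K_{n}$ in $\#^{k}S^{1}\times S^{2}$. Concretely, represent $\KhR_{2,\diamond}^{-}(K_{n})$ by the complex obtained from a diagram of $K_{n}$ with projectors $P_{m,0}$ inserted along each $S^{1}\times S^{2}$ summand (Definition \ref{def:Roz-cobar}). The maps $\beta_{i}$ and $E_{i}$ are then induced by chain maps supported in a disk away from the projectors, namely the Reidemeister II/III chain maps for a crossing change and the saddle maps. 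Because the Bar-Natan category is a planar algebra and tensoring with the projector complex is functorial, any local relation among these maps that holds in $\mathcal{TL}_{2}$ up to homotopy continues to hold after being inserted into the larger diagram. The GLW computation identifies the map of a full crossing-change movie on two parallel strands. That identification is a statement about $(2,2)$-tangle cobordisms in $B^{3}\times I$, so it gives the required relation for $\beta_{i}$ in $\KhR_{2,\diamond}^{-}(K_{n})$. The type $T$ case then follows from the type $O$ case, since by \eqref{eq:KhR_-_T_from_O} the two differ only by a grading shift preserved by these maps.

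\textbf{Main obstacle.} The hard step is justifying that $\beta_{i}$, defined globally by an isotopy of $N(K)$, is genuinely the locally supported map. The isotopy swapping strands may be realized by a movie that passes through the $1$-handle regions, or one whose trace interacts with the projectors via the homotopy equivalences in Lemma \ref{lem:rozprops}. I would handle this by choosing the swap to occur in a short arc of $K$ away from all belt spheres. Isotopy invariance of the cobordism maps in $\Links_{1,\diamond}$ \cite{RSWWZ25} then makes $\beta_{i}$ independent of that choice. A secondary concern is sign ambiguity in the Bar-Natan formalism. Since $\beta_{i}^{2}$ is a composite of the same map twice, a global sign $\pm$ on $\beta_{i}$ squares away, so the relation $\beta_{i}^{2}=\id$ is insensitive to it.
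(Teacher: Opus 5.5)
Your argument has a genuine gap at the step you call the key observation: the endocobordism $\beta_i$ cannot be supported in a small ball $D\subset N(K)$. It exchanges the $i$th and $(i+1)$st components of $K_n$. Now take any cobordism that is the product $K_n\times I$ outside $D\times I$, where $D$ meets $K_n$ in one arc $a_i$ of strand $i$ and one arc $a_{i+1}$ of strand $i+1$. Its boundary on $\partial D\times I$ is $\partial a_i\times I$ (together with $\partial a_{i+1}\times I$), so the sheet starting at $a_i\times\{0\}$ must end at $a_i\times\{1\}$, and no swap is possible. This is exactly why the movie in Figure~\ref{fig:1} (and in Grigsby--Licata--Wehrli) creates a pair of crossings by a Reidemeister~II move and carries one of them once around the whole core of $N(K)$ before cancelling. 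Your proposed fix, ``choosing the swap to occur in a short arc of $K$ away from all belt spheres,'' is therefore impossible. Whenever $K$ runs through an $S^1\times S^2$ summand, the travelling crossing must pass through the Rozansky projector; the belt knot in $S^1\times S^2$ is precisely the case not already covered by \cite{GLW-schur-weyl}. Relatedly, $\beta_i=\id\pm E_i$ is not a local $(2,2)$-tangle identity in $B^3\times I$. In a ball, Reidemeister~II followed by its inverse is isotopic rel boundary to the product and induces the identity. The $E_i$ term arises only because the crossing is transported around the closure. So the planar-algebra argument you describe, even granting every step, computes a different (trivial) map rather than $\beta_i$.

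The missing idea is control of the crossing as it passes through $P_{n,0}$. The paper writes $[\beta_i]=G\circ x_i\circ\chi\circ F$, where $x_i:\delta_{i,i+1}\otimes P_{n,0}\to P_{n,0}\otimes\delta_{i,i+1}$ is the crossing move through the projector. Modelling $P_{n,0}$ as a full-twist mapping telescope \cite{EH17}, $x_i$ is assembled from the Reidemeister maps $\delta_{i,i+1}\otimes\mathrm{FT}^{\otimes k}_{2n}\to\mathrm{FT}^{\otimes k}_{2n}\otimes\delta_{i,i+1}$. Hence it does not decrease the projector homological degree. This is what lets the GLW degree-splitting argument run with two gradings (Reidemeister~II crossings versus projector differentials): the degree-raising part $[\beta_i]_+$ vanishes, giving $[\beta_i]=\id+[E_i]=[\beta_i^{-1}]$. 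Involutivity comes from $\beta_i$ and $\beta_i^{-1}$ having the same formula, not from computing $E_i^2$; your alternative ``$E_i^2=0$'' would give $\beta_i^2=\id-2E_i$, not $\id$. For several summands, ``locality'' in the paper means handling each passage through a projector separately, not avoiding the projectors. Finally, deducing type $T$ from type $O$ by a grading shift does not work here. A same-oriented cable of a knot through a summand is typically $2$-divisible but not null-homologous (for the belt knot, $[K_n]=n\neq 0$). So the type $O$ group is formally zero, and the chain-level argument has to be run directly for the type $T$ complex.
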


\begin{proof}
    If $k=0$ then we are done by \cite{GLW-schur-weyl}, so suppose that $k=1$, and suppose that $K$ is the 0-framed knot in $S^{1}\times{S^{2}}$ that intersects the surgery region geometrically once. Let $D_{K}$ be a crossingless admissible diagram of $K$, and let $D_{K_{n}}$ be the admissible, crossingless diagram of the $n$-cable of $K$. Define endocobordism maps $\beta_{i}$ and $E_{i}$ in $(S^{1}\times{S^{2}})\times{I}$ on $K_{n}$ as described my the movies in Figure \ref{fig:1}.

    We remark that these cobordisms are the same as those discussed in \cite{GLW-schur-weyl}, with the insertion of a \emph{crossing-move} and \emph{finger-move} for $\beta_{i}$ and $E_{i}$ respectively. Passing to Khovanov complexes, the movies for these cobordism maps is given by replacing the surgery regions in Figure \ref{fig:1} with Rozansky projectors. Letting $F$ and $G$ denote the Reidemeister II maps that increase and decrease crossing number respectively, letting $\chi$ denote an isotopy that slides a crossing around, and letting $x_{i}$ and $s_{i}$ denote the crossing-move and finger-move chain maps respectively, we have that the chain map corresponding to $\beta_{i}$ can be written as $[\beta_{i}]=G\circ{x_{i}}\circ{\chi}\circ{F}$. We may then apply the argument presented in \cite{GLW-schur-weyl} directly after verifying that the map $x_{i}$ is non-decreasing in homological degree. 
    
    Let $\delta_{i,i+1}$ denote the $(2n,2n)$-tangle given by a single positive crossing between the $i$th and $(i+1)$th strands, then the complex $\delta_{i,i+1}\otimes{P_{n,0}}$ can be realized as a mapping telescope with each term of the form $\delta_{i,i+1}\otimes{\mathrm{FT^{\otimes{k}}_{2n}}}$ with some possible shifts and for a specified choice of connecting maps \cite{EH17}. In particular, we may write the above as a 2-term complex of the form
    \[
    \delta_{i,i+1}\otimes{P_{n,0}}:=(0\rightarrow\underline{\bigoplus_{j=0}^{\infty}(\delta_{i,i}\otimes{\mathrm{FT}_{2n}^{\otimes{k}}})}\rightarrow\bigoplus_{j=0}^{\infty}(\delta_{i,i+1}\otimes{\mathrm{FT}_{2n}^{\otimes{k}}})\rightarrow{0})
    \]
    
    with underlined term in homological degree 0 and potential shifts omitted. The isotopy $\rho_{i,k}:\delta_{i,i+1}\otimes{\mathrm{FT}_{2n}^{\otimes{k}}}\rightarrow{\mathrm{FT}_{2n}^{\otimes{k}}}\otimes{\delta_{i,i+1}}$ that slides a crossing through some number of full-twists consists entirely of Reidemeister moves, and therefore the induced map on chain complexes is non-decreasing in homological degree. Realizing the crossing move map $x_{i}:\delta_{i,i+1}\otimes{P_{n,0}}\rightarrow{P_{n,0}}\otimes{\delta_{i,i+1}}$ as the map induced on the above mapping cylinder by the collection $\{\rho_{i,k}\}_{k\geq{0}}$, we have that $x_{i}$ is also non-decreasing in homological degree. As promised, we may then apply the same degree splitting argument presented in \cite{GLW-schur-weyl}. Let $D_{U_{n},j}$, $j\in{\{1,...,5\}}$ be the admissible link diagrams in the $\beta_{i}$ movie, then we may regard each diagram as a bicomplex, with differentials corresponding to the crossings introduced by the Reidemeister II move, and differentials of the projector $P_{n,0}$ as a mapping telescope. Define two homological degrees, $i_{1}$ and $i_{2}$, to be homological degrees corresponding to Reidemeister II crossing differentials and $P_{n,0}$ differentials. Thus, the chain map $[\beta_{i}]:[D_{U_{n},1}]\rightarrow{[D_{U_{n},5}]}$ can be written as the sum $[\beta_{i}]=[\beta_{i}]_{0}+[\beta_{i}]_{+}$, an $i_{2}$-degree preserving component and an $i_{2}$-degree increasing component. Since the first and last still in the $\beta_{i}$-movie are supported in $i_{2}$-degree $0$, we have that $[\beta_{i}]_{+}=0$, thus, we may write $[\beta_{i}]=\mathrm{id}+[E_{i}]=[\beta_{i}^{-1}]$ after fixing signs. The extension to an arbitrary framed knot $K$ with an admissible diagram follows from \cite{GLW-schur-weyl} and the above case. Furthermore, by locality, the argument extends to framed knots in $\#^{n}S^{1}\times{S^{2}}$ for arbitrary $n\geq{0}$.
\end{proof}

\subsection{Cabled 1-dimensional inputs skein lasagna}

Let $X$ be a compact oriented four-manifold with boundary $\del X=Y$, let $K=K_{1}\cup\cdots\cup K_{m}\subset Y$ be a framed oriented link, and let $X'$ be the 4-manifold with boundary obtained by attaching $m$ 4-dimensional 2-handles to $Y$ along $K$. Furthermore, let $L\subset Y$ be a (possibly empty) framed oriented link disjoint from $K$ and let $L'\subset Y':=\del X'$ denote the image of $L$ after performing the 2-handle attachments. In this section we will describe how to calculate the 1-dimensional inputs skein lasagna module of $(X',L')$ in terms of a certain cabled 1-dimensional input skein lasagna module associated to the triple $(X,K,L)$.

Given $\mbf{k}^{\pm}=(k_{1}^{\pm},\cdots,k_{m}^{\pm})\in\bb{N}^{m}$, let $K(\mbf{k}^{+},\mbf{k}^{-})\cup L\subset Y$ denote the framed oriented link obtained from $K\cup L$
by replacing each component $K_{i}$ of $K$ with $k_{i}^{+}+k_{i}^{-}$ parallel copies of itself, such that $k_{i}^{-}$ of the copies are given the orientation opposite to the orientation on $K$. For $\mbf{k}^{+},\mbf{k}^{-}\in\bb{N}^{m}$ and $\diamond\in\{O,T\}$, there is a natural action of 
\[\frak{B}_{\mbf{k}^{\pm}}:=\prod_{i=1}^{m}(\frak{B}_{k_{i}^{+}}\times\frak{B}_{k_{i}^{-}})\]
on the 1-dimensional inputs skein lasagna module $\ol{\skein}_{0}^{2,\diamond}\big(X;K(\mbf{k}^{+},\mbf{k}^{-})\cup L\big)$ as in (\cite{MWWhandles}, Section 3). Analogous to the cobordism maps defined in (\cite{MN22}, Section 3.1), we may consider the homomorphisms 
\[\psi_{\diamond,i}^{[j]}:\ol{\skein}_{0}^{2,\diamond}\big(X;K(\mbf{k}^{+},\mbf{k}^{-})\cup L\big)\to \ol{\skein}_{0}^{2,\diamond}\big(X;K(\mbf{k}^{+}+\mbf{e}_{i},\mbf{k}^{-}+\mbf{e}_{i})\cup L\big)\]
induced by (possibly dotted) annuli that cobound oppositely oriented parallel strands for $i\in\{1,\dots,m\}$, $j\in{\{0,1\}}$, where $\psi_{\diamond,i}^{[0]}$ denotes the undotted annulus cobordism (resp. $\psi_{\diamond,i}^{[1]}$ is induced by the dotted annulus cobordism) \footnote{Bundt cake and Bundt cake with a raisin.}. The map $\psi_{\diamond,i}^{[j]}$ changes the bigrading by $(0,2j)$.

Let $R=\bb{Z}$ or $\bb{Z}/2$, and let $H_{2}^{L}(X;R)$ denote the set $\partial^{-1}([L])\subseteq{H_{2}(X,L;R)}$ where $\partial$ is the connecting map in the relative homology long exact sequence. As in \cite{MWWhandles}, we can identify $H_{2}^{L'}(X';R)$ with a subgroup of $R^{m}\oplus H_{2}(X,K\cup L;R)$, and subsequently write any element of $H_{2}^{L'}(X';\bb{Z})$ (respectively, $H_{2}^{L'}(X';\bb{Z}/2)$) as an element of the form $(\bm{\alpha},\eta)$ (resp., $(\ol{\bm{\alpha}},\ol{\eta})$) with $\bm{\alpha}\in\bb{Z}^{m}$, $\eta\in H_{2}(X,K\cup L;\bb{Z})$, (resp,  $\ol{\bm{\alpha}}\in(\bb{Z}/2)^{m}$, $\ol{\eta}\in H_{2}(X,K\cup L;\bb{Z}/2)$).

Given $\bm{\alpha}=(\alpha_{1},\dots,\alpha_{m})\in\bb{Z}^{m}$, we let $\bm{\alpha}^{\pm}=(\alpha_{1}^{\pm},\dots,\alpha_{m}^{\pm})\in\bb{N}^{m}$ be given by $\alpha_{i}^{\pm}=\max\{0,\pm\alpha_{i}\}$, $i=1,\dots,m$. Suppose $\mbf{k}^{+},\mbf{k}^{-}\in\bb{N}^{m}$ are such that: 
\begin{enumerate}
    \item[(C1)] if $R=\bb{Z}$, then $\mbf{k}^{+}-\mbf{k}^{-}=\bm{\alpha}$, and
    \item[(C2)] if $R=\bb{Z}/2$, then $\mbf{k}^{+}-\mbf{k}^{-}\equiv\bm{\alpha}\pmod{2}$.
\end{enumerate}
Let $F_{\bm{\alpha}^{\pm},\mbf{r};R}$ denote the composition
\begin{align}
\label{eq:eta_map}
\begin{split}
    &H_{2}^{K(\mbf{k}^{+},\mbf{k}^{-})\cup L}(X;R)\hookrightarrow H_{2}(X,K(\mbf{k}^{+},\mbf{k}^{-})\cup L;R) \\
    &\qquad\qquad\qquad\qquad\qquad\qquad\to H_{2}(X,\nu(K)\cup L;R)\cong H_{2}(X,K\cup L;R).
\end{split}
\end{align}
For any class $(\bm{\alpha},\eta)\in H_{2}^{L'}(W;\bb{Z})$ (respectively, $(\ol{\bm{\alpha}},\ol{\eta})\in H_{2}^{L'}(W';\bb{Z}/2)$) and any $\mbf{k}^{\pm}\in\bb{N}^{m}$ satisfying (C1) (respectively, (C2)), there exists a unique element $\eta_{\mbf{k}^{\pm}}\in H_{2}^{K(\mbf{k}^{+},\mbf{k}^{-})\cup L}(X;\bb{Z})$ (resp., $\ol{\eta}_{\mbf{k}^{\pm}}\in H_{2}^{K(\mbf{k}^{+},\mbf{k}^{-})\cup L}(X;\bb{Z}/2)$) which is sent to $\eta$ (resp., $\ol{\eta}$) under $F_{\mbf{k}^{\pm};\bb{Z}}$ (resp., $F_{\mbf{k}^{\pm};\bb{Z}/2}$).

\begin{definition}
\label{def:cabled_1d_input_lasagna}
Let $(X,K,L)$ be as above. We define the \emph{type $O$ cabled 1-dimensional inputs skein lasagna module of the triple $(X,K,L)$ at level $\bm{\alpha}$ and in class $\eta$} to be
\begin{equation}
    \ul{\ol{\skein}}_{0}^{2,O,\bm{\alpha}}(X;K,L;\eta):=\Big(\bigoplus_{\mbf{r}\in\bb{N}^{m}}\ol{\skein}_{0}^{2,O}\big(X;K(\bm{\alpha}^{+}+\mbf{r},\bm{\alpha}^{-}+\mbf{r})\cup L;\eta_{\bm{\alpha}^{\pm}+\mbf{r}}\big)\big\{-|\bm{\alpha}|-2|\mbf{r}|\big\}\Big)/\sim,
\end{equation}

where the equivalence relation $\sim$ is the transitive and linear closure of the relations
\begin{align*}
    &\beta\cdot v\sim v, & &\psi_{O,i}^{[0]}(v)\sim 0, & &\psi_{O,i}^{[1]}(v)\sim v
\end{align*}
for all $\beta\in\frak{B}_{\bm{\alpha}^{\pm}+\mbf{r}}$, $v\in \ol{\skein}_{0}^{2,O}(X;K(\bm{\alpha}^{+}+\mbf{r},\bm{\alpha}^{-}+\mbf{r})\cup L;\eta_{\bm{\alpha}^{\pm}+\mbf{r}})\{-|\bm{\alpha}|-2|\mbf{r}|\}$, $i=1,\dots,m$. Similarly, we define the \emph{type $T$ cabled 1-dimensional inputs skein lasagna module of the triple $(X,K,L)$ at level $\ol{\bm{\alpha}}$ and in class $\ol{\eta}$} to be

\begin{equation}
    \ul{\ol{\skein}}_{0}^{2,T,\ol{\bm{\alpha}}}(X;K,L;\ol{\eta}):=\Big(\bigoplus_{\substack{\mbf{k}^{+},\mbf{k}^{-}\in\bb{N}^{m} \\ \mbf{k}^{+}-\mbf{k}^{-}\equiv\ol{\bm{\alpha}}\;(\text{mod }2)}}\ol{\skein}_{0}^{2,T}\big(X;K(\mbf{k}^{+},\mbf{k}^{-})\cup L;\ol{\eta}_{\mbf{k}^{\pm}}\big)\big\{-|\mbf{k}^{+}|-|\mbf{k}^{-}|\big\}\Big)/\sim,
\end{equation}
where the equivalence relation $\sim$ is the transitive and linear closure of the relations
\begin{align*}
    &\beta\cdot v\sim v, & &\psi_{T,i}^{[0]}(v)\sim 0, & &\psi_{T,i}^{[1]}(v)\sim v,
\end{align*}
for all $\beta\in\frak{B}_{\bm{\alpha}^{\pm}+\mbf{r}}$, $v\in \ol{\skein}_{0}^{2,T}(X;K(\mbf{k}^{+},\mbf{k}^{-})\cup L;\ol{\eta}_{\mbf{k}^{\pm}})\{-|\mbf{k}^{+}|-|\mbf{k}^{-}|\}$, $i=1,\dots, m$. Given $(\bm{\alpha},\eta)\in H_{2}^{L'}(X,\bb{Z})$, we set
\[\ul{\ol{\skein}}_{0}^{2,T,\bm{\alpha}}(X;K,L;\eta):=\ul{\ol{\skein}}_{0}^{2,T,\ol{\bm{\alpha}}}(X;K,L;\ol{\eta}),\]
where $\ol{\bm{\alpha}}\in(\bb{Z}/{2})^{m}$ and $\ol{\eta}\in H_{2}(X,K\cup L;\bb{Z}/2)$ denote the mod $2$ reductions of $\bm{\alpha}$ and $\eta$, respectively.
\end{definition}

Note that, by construction (\cite{RSWWZ25}, discussion before Remark 3.5), the invariant $\overline{\skein}_{0}^{2,\diamond}$ recovers Rozansky-Willis homology when $X=\natural^{n}S^{1}\times{B^{3}}$. In particular there is a natural isomorphism
\[\ol{\skein}_{0}^{2,\diamond}(\textstyle{\natural^{n}}S^{1}\times B^{3};L)\cong\KhR_{2,\diamond}^{-}(L)\]
for any link $L\subset\del(\natural^{n}S^{1}\times B^{3})=\#^{n}S^{1}\times S^{2}$. Furthermore, by Proposition \ref{prop:braid_group_action_factors_through_symmetic_group} the braid group action on $\KhR_{2,\diamond}^{-}(L)$ factors through the symmetric group. Given $\mbf{k}^{+},\mbf{k}^{-}\in\bb{N}^{m}$, we write
\[\frak{S}_{\mbf{k}^{\pm}}:=\prod_{i=1}^{m}(\frak{S}_{k_{i}^{+}}\times\frak{S}_{k_{i}^{-}})\]
in analogy with the corresponding notation for the braid group action.

In terms of homology classes, in this case we have that $\eta$ is uniquely determined by $\bm{\alpha}$ (and similarly in the $\diamond=T$ case), and so we drop it from the notation. (See \cite{MWWhandles}, discussion before Section 3.2.)

\begin{definition}
\label{def:cabled_RW}
Let $K\cup L\subset\#^{n}S^{1}\times S^{2}$ be as above. The \emph{type $O$ cabled Rozansky-Willis homology of the pair $(K,L)$ at level $\bm{\alpha}\in\bb{Z}^{m}$} is given by
\[\ul{\KhR}^{-}_{2,O,\bm{\alpha}}(K,L):=\big(\bigoplus_{\mbf{r}\in\bb{N}^{m}}\KhR_{2,O}^{-}\big(K(\bm{\alpha}^{+}+\mbf{r},\bm{\alpha}^{-}+\mbf{r})\cup L\big)\big\{-|\bm{\alpha}|-2|\mbf{r}|\big\}\big)/\sim,\]
where the equivalence relation $\sim$ is the transitive and linear closure of the relations
\begin{align*}
    &\sigma\cdot v\sim v, & &\psi_{O,i}^{[0]}(v)\sim 0, & &\psi_{O,i}^{[1]}(v)\sim v
\end{align*}
for all $\sigma\in\frak{S}_{\bm{\alpha}^{\pm}+\mbf{r}}$, $v\in \KhR_{2,O}^{-}(K(\bm{\alpha}^{+}+\mbf{r},\bm{\alpha}^{-}+\mbf{r})\cup L)\{-|\bm{\alpha}|-2|\mbf{r}|\}$, $i=1,\dots,m$. Similarly, the \emph{type $T$ cabled Rozansky-Willis homology of the pair $(K,L)$ at level $\ol{\bm{\alpha}}\in(\bb{Z}/2)^{m}$} is given by
\[\ul{\KhR}^{-}_{2,T,\ol{\bm{\alpha}}}(K,L):=\Big(\bigoplus_{\substack{\mbf{k}^{+},\mbf{k}^{-}\in\bb{N}^{m} \\ \mbf{k}^{+}-\mbf{k}^{-}\equiv\ol{\bm{\alpha}}\;(\text{mod }2)}}\KhR_{2,T}^{-}\big(K(\mbf{k}^{+},\mbf{k}^{-})\cup L\big)\big\{-|\mbf{k}^{+}|-|\mbf{k}^{-}|\big\}\Big)/\sim,\]
where the equivalence relation $\sim$ is the transitive and linear closure of the relations
\begin{align*}
    &\sigma\cdot v\sim v, & &\psi_{T,i}^{[0]}(v)\sim 0, & &\psi_{T,i}^{[1]}(v)\sim v
\end{align*}
for all $\sigma\in\frak{S}_{\mbf{k}^{\pm}}$, $v\in \KhR_{2,T}^{-}(K(\mbf{k}^{+},\mbf{k}^{-})\cup L)\{-|\mbf{k}^{+}|-|\mbf{k}^{-}|\}$, $i=1,\dots, m$. Given $\bm{\alpha}\in\bb{Z}^{m}$, we set
\[\ul{\KhR}^{-}_{2,T,\bm{\alpha}}(K,L):=\ul{\KhR}^{-}_{2,T,\ol{\bm{\alpha}}}(K,L),\]
where $\ol{\bm{\alpha}}\in(\bb{Z}/2)^{m}$ denotes the mod $2$ reduction of $\bm{\alpha}$.
\end{definition}

Note that $\ul{\KhR}^{-}_{2,\diamond,\bm{\alpha}}(K,L)\neq 0$ only if
\begin{align}
    &\sum_{i=1}^{m}\alpha_{i}[K_{i}]+[L]=0\in H_{1}(\#^{n}(S^{1}\times S^{2});\bb{Z}) &\text{if }\diamond=O, \label{eq:alpha_nullhomologous_condition} \\ 
    &\sum_{i=1}^{m}\ol{\alpha}_{i}[K_{i}]+[L]=0\in H_{1}(\#^{n}(S^{1}\times S^{2});\bb{Z}/2) &\text{if }\diamond=T. \label{eq:alpha_bar_2-divisible_condition}
\end{align}
%

\subsection{The lasso relation}
\label{subsec:lasso}

Let $(X^{\prime},L^{\prime})$ denote the 4-manifold and boundary link pair obtained by attaching some number of $4$-dimensional 2-handles to $(X,L)$. In this section we spell out the additional relation that we must impose on $\ul{\ol{\skein}}_{0}^{2,\diamond}(X;K,L)$ in order to ultimately relate this object to the invariant $\ol{\skein}_{0}^{2,\diamond}(X';L')$.

\begin{figure}
    \centering
    \includegraphics[width=0.8\linewidth]{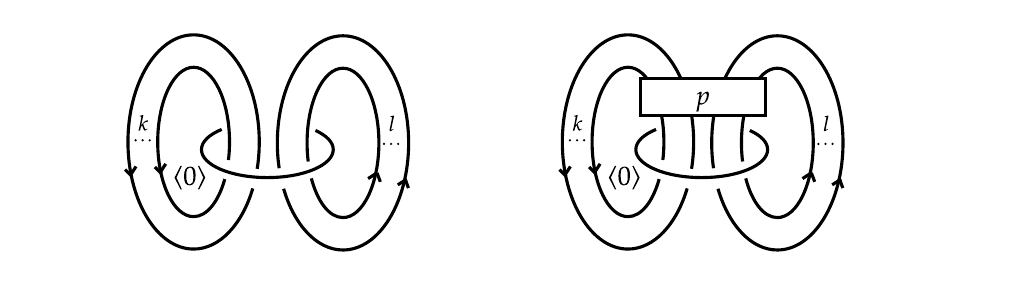}
    \caption{The standard and twisted belt links $\wt{\bbm{1}}(k,l)$ and $\wt{\bbm{1}}(k,l)_{p}$ in $S^{1}\times{S^{2}}$ for $p\in{\mathbb{Z}}$.}
    \label{fig:beltlink}
\end{figure}

We denote by $\belt{k,l}\subset S^{1}\times S^{2}$ the 0-framed standard belt link in $S^{1}\times S^{2}$ with $k$ positively-oriented and $\ell$ negatively-oriented strands (see Figure \ref{fig:beltlink}.) Let $X$ and $K\cup L\subset Y=\del X$ be as in the previous section, and let $i\in\{1,\dots, m\}$. We define a new link $K(i)\subset Y\#S^{1}\times S^{2}$ as follows: first consider the link
\[\wt{K}(i):=K\sqcup\belt{1,1}\subset Y\#S^{1}\times S^{2},\]
where we have a disjoint copy of $K$ inside of $Y\setminus B^{3}\subset Y\#S^{1}\times S^{2}$, along with a disjoint copy of $\belt{1,1}\subset S^{1}\times S^{2}\setminus B^{3}\subset Y\#S^{1}\times S^{2}$. Choose a framed arc $a$ connecting the $i$-th component $K_{i}\subset K$ with the positively oriented component $\belt{1,0}\subset\belt{1,1}$. We then define $K(i)\subset Y\# S^{1}\times S^{2}$ to be the framed link obtained by performing a (framed, oriented) connected sum along $a$. More explicitly, we can write $K(i)=K(i)_{1}\cup\cdots\cup K(i)_{m+1}$ where
\[K(i)_{j}=\threepartdef{K_{i}\#\belt{1,0}}{j=i}{\belt{0,1}}{j=m+1}{K_{j}}{j\neq i,m+1.}\]
By isotoping $a$ if necessary, we can assume that the connect sum region is disjoint from $L\subset Y\setminus B^{3}\subset Y\# S^{1}\times S^{2}$, and hence $K(i)\cup L$ is a well-defined link in $Y\# S^{1}\times S^{2}$, independent of the choice of arc $a$ up to diffeomorphism of pairs.

Next, we construct two cobordisms of pairs
\[(W_{i},\Sigma_{i}),(W'_{i},\Sigma'_{i}):(Y\#(S^{1}\times S^{2}),K(i)\cup L)\to(Y,K\cup L)\]
as follows: let $(Z_{i},S_{i})$ and $(Z'_{i},S'_{i})$ be the cobordisms of pairs given by attaching 4-dimensional 2-handles along parallel copies $\gamma$ and $\gamma'$ of $K(i)_{m+1}=\belt{0,1}$ and $K(i)_{i}=K_{i}\#\belt{1,0}$, respectively, with surfaces $S_{i},S'_{i}$ given by the corresponding traces of $K(i)\cup{L}$ in each cobordism. Since $K(i)_{m+1}$ and $K(i)_{i}$ each have geometric intersection number $1$ with the $S^{1}\times S^{2}$-factor, it follows that $Z_{i}$ and $Z'_{i}$ are both cobordisms from $Y\#S^{1}\times S^{2}$ to $Y$.

We observe that
\[S_{i}|_{Y}=S'_{i}|_{Y}=K\cup U\cup L\subset Y,\]
where $U=(U,0)$ denotes a 0-framed unknot contained in a $3$-ball. The surfaces $S_{i}$ and $S_{i}'$ each consist of a union of cylinders of the following forms:
\begin{enumerate}
    \item Trivial cylinders connecting the components of $(K(i)\cup L)\setminus(K(i)_{m+1}\cup K(i)_{i})$ to their corresponding components in $(K\cup L)\setminus K_{i}$.
    \item In $S_{i}$: a pair of cylinders connecting $K(i)_{m+1}$ to $U$ and $K(i)_{i}$ to $K_{i}$.
    \item In $S_{i}'$: a pair of cylinders connecting $K(i)_{i}$ to $U$ and $K(i)_{m+1}$ to $K_{i}$.
\end{enumerate}
One can see that the above description holds by noting that $U$ can be obtained by sliding $K(i)_{m+1}$ over the 2-handle attached along $\gamma$, as well as by sliding $K(i)_{i}$ over the 2-handle attached along $\gamma'$ (see Figure \ref{fig:S_i_and_S'_i}).
\begin{figure}
    \centering
    \includegraphics[width=16cm]{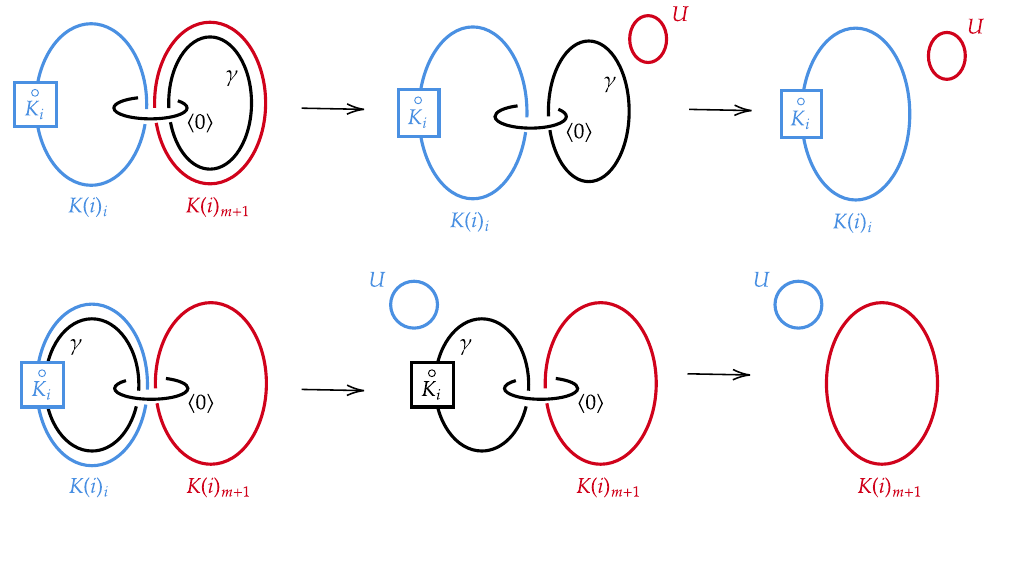}
    \caption{Cancelling one- and two-handles. Here, $\mathring{K}_{i}$ denotes the $(1,1)$-tangle in $Y\setminus B_{3}$ obtained from $(Y,K_{i})$ by deleting a $3$-ball/arc pair.}
    \label{fig:S_i_and_S'_i}
\end{figure}

Let 
\[\big(Y\times[0,1],C\big):\big(Y,K\cup U\cup L\big)\to\big(Y,K\cup L\big)\]
denote the cobordism of pairs given by capping off the unknot component via a disk contained in $B^{3}\times[0,\frac{1}{2}]\subset Y\times[0,1]$, and attaching trivial cylinders to remaining link components. We then define
\begin{align*}
    &(W_{i},\Sigma_{i}):=(Y\times[0,1],C)\circ(Z_{i},S_{i}), &
    &(W'_{i},\Sigma'_{i}):=(Y\times[0,1],C)\circ(Z'_{i},S'_{i}).
\end{align*}
Now given $(\bm{\alpha},\eta)\in H_{2}^{L'}(X';\bb{Z})<\bb{Z}^{m}\oplus H_{2}(X,K\cup L;\bb{Z})$, one can show that there exists a unique element $\eta_{i}\in H_{2}(X\natural S^{1}\times B^{3},K(i)\cup L;\bb{Z})$ such that $\eta$ is sent to $(\eta_{i},[\Sigma_{i}])$ and $(\eta_{i},[\Sigma'_{i}])$ under the natural maps
\begin{align*}
    &H_{2}(X,K\cup L;\bb{Z})\to H_{2}(X\natural S^{1}\times B^{3},K(i)\cup L;\bb{Z})\oplus H_{2}(W_{i},-(K(i)\cup L)\sqcup (K\cup L);\bb{Z}), \\
    &H_{2}(X,K\cup L;\bb{Z})\to H_{2}(X\natural S^{1}\times B^{3},K(i)\cup L;\bb{Z})\oplus H_{2}(W'_{i},-(K(i)\cup L)\sqcup (K\cup L);\bb{Z}),
\end{align*}
respectively. In the mod $2$ homology case, an element $(\ol{\bm{\alpha}},\ol{\eta})\in H_{2}^{L'}(X';\bb{Z}/2)$ gives rise to an analogously characterized element $\ol{\eta}_{i}\in H_{2}(X\natural S^{1}\times B^{3},K(i)\cup L;\bb{Z}/2)$.

Next, we extend the cobordism maps and $K(i)$'s to cables. Let $\mbf{k}^{+},\mbf{k}^{-}\in\bb{N}^{m}$, $\ell^{+},\ell^{-}\in\bb{N}$, and consider the $(m+1)$-tuples $(\mbf{k}^{\pm},\ell^{\pm}):=(k_{1}^{\pm},\dots,k_{m}^{\pm},\ell^{\pm})\in\bb{N}^{m+1}$. Given $i\in\{1,\dots, m\}$, we define
\[K(\mbf{k}^{\pm};\ell^{\pm};i):=K(i)((\mbf{k}^{+},\ell^{+}),(\mbf{k}^{-},\ell^{-}))\subset Y\#S^{1}\times S^{2}.\]
(See Figure \ref{fig:new_K}.)
\begin{figure}
    \centering
    \includegraphics[width=16cm]{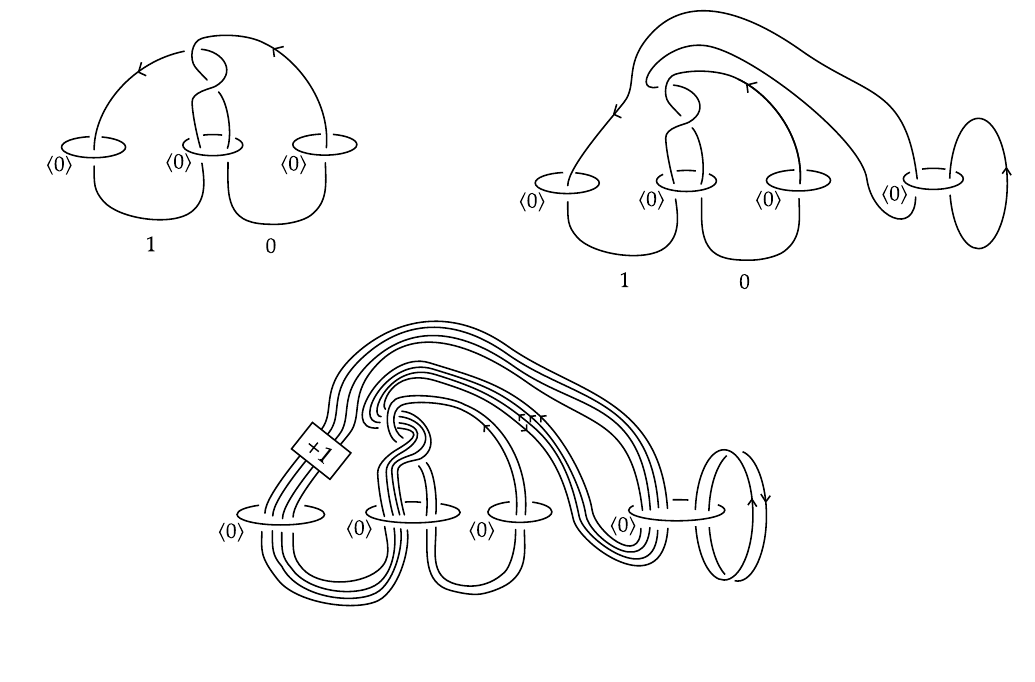}
    \caption{\textbf{Top Left:} A link $K=K_{1}\cup K_{2}\subset\#^{3}S^{1}\times S^{2}$. \textbf{Top Right:} the link $K(1)\subset \#^{4}S^{1}\times S^{2}$. \textbf{Bottom:} A cable $K(\mbf{k}^{\pm};\ell^{\pm};1)\subset \#^{4}S^{1}\times S^{2}$ of $K(1)$.}
    \label{fig:new_K}
\end{figure}
We have analogously defined corbordisms of pairs
\begin{align*}
    &(W_{i},\Sigma_{\mbf{k}^{\pm},\ell^{\pm},i}):\big(Y\#S^{1}\times S^{2},K(\mbf{k}^{\pm};\ell^{\pm};i)\cup L\big) \\
    &\qquad\qquad\qquad\qquad\qquad\qquad\qquad\qquad\qquad\quad\,\to\big(Y,K(\mbf{k}^{+},\mbf{k}^{-})\cup L\big), \\
    &(W'_{i},\Sigma'_{\mbf{k}^{\pm},\ell^{\pm},i}):\big(Y\#S^{1}\times S^{2},K(\mbf{k}^{\pm};\ell^{\pm};i)\cup L\big) \\
    &\qquad\qquad\qquad\qquad\to\big(Y,K(\mbf{k}^{+}+(\ell^{+}-k_{i}^{+})\mbf{e}_{i},\mbf{k}^{-}+(\ell^{-}-k_{i}^{-})\mbf{e}_{i})\cup L\big),
\end{align*}
obtained by appropriately taking framed push-offs of the surface components of the cobordisms $(W_{i},\Sigma_{i})$ and $(W'_{i},\Sigma'_{i})$ defined above, with analogous decompositions
\begin{align}
    &(W_{i},\Sigma_{\mbf{k}^{\pm},\ell^{\pm},i})=(Y\times[0,1],C_{\ell^{\pm}})\circ(Z_{i},S_{\mbf{k}^{\pm},\ell^{\pm},i}), 
    \label{eq:W_i_cabled_decomp} \\
    &(W'_{i},\Sigma'_{\mbf{k}^{\pm},\ell^{\pm},i})=(Y\times[0,1],C_{k_{i}^{\pm}})\circ(Z'_{i},S'_{\mbf{k}^{\pm},\ell^{\pm},i}). \label{eq:W'_i_cabled_decomp}
\end{align}
Now let $\diamond\in\{O,T\}$. For the rest of this section we will denote
\begin{align*}
    &\ol{\skein}_{0}^{2,\diamond}(\mbf{k}^{\pm}):=\ol{\skein}_{0}^{2,\diamond}\big(X;K(\mbf{k}^{+},\mbf{k}^{-})\cup L\big)\big\{-|\mbf{k}^{+}|-|\mbf{k}^{-}|\big\}, \\
    &\ol{\skein}_{0}^{2,\diamond}(\mbf{k}^{\pm};\ell^{\pm};i):=\ol{\skein}_{0}^{2,\diamond}\big(X\natural S^{1}\times B^{3};K(\mbf{k}^{\pm};\ell^{\pm};i)\cup L\big)\big\{-|\mbf{k}^{+}|-|\mbf{k}^{-}|-\ell^{+}-\ell^{-}\big\}.
\end{align*}
Next, consider the maps
\begin{align}
    &\Phi_{\diamond,\mbf{k}^{\pm},\ell^{\pm},i}:=\ol{\skein}_{0}^{2,\diamond}(W_{i},\Sigma_{\mbf{k}^{\pm},\ell^{\pm},i}):\ol{\skein}_{0}^{2,\diamond}(\mbf{k}^{\pm};\ell^{\pm};i)\to\ol{\skein}_{0}^{2,\diamond}(\mbf{k}^{\pm}) \label{eq:Phi} \\ 
    &\Psi_{\diamond,\mbf{k}^{\pm},\ell^{\pm},i}:=\ol{\skein}_{0}^{2,\diamond}(W'_{i},\Sigma'_{\mbf{k}^{\pm},\ell^{\pm},i}):\ol{\skein}_{0}^{2,\diamond}(\mbf{k}^{\pm};\ell^{\pm};i)\to\ol{\skein}_{0}^{2,\diamond}(\mbf{k}^{\pm}+(\ell^{\pm}-k_{i}^{\pm})\mbf{e}_{i}) \label{eq:Psi}
\end{align}
obtained as the gluing maps associated to $(W_{i},\Sigma_{\mbf{k}^{\pm},\ell^{\pm},i})$ and $(W'_{i},\Sigma'_{\mbf{k}^{\pm},\ell^{\pm},i})$, respectively. (See Section \ref{subsec:properties_1d_inputs_skein_lasagna}).

Let $\eta_{\mbf{k}^{\pm},\ell^{\pm},i}\in H_{2}^{K(\mbf{k}^{\pm};\ell^{\pm};i)\cup L}(X\natural S^{1}\times B^{3};\bb{Z})$ be the unique element which is sent to $\eta_{i}\in H_{2}(X\natural (S^{1}\times B^{3}),K(i)\cup L;\bb{Z})$ under the analogue of the map from (\ref{eq:eta_map}). Then one can show that $\eta_{\mbf{k}^{\pm}}$ and $\eta_{\mbf{k}^{\pm}+(\ell^{\pm}-k_{i}^{\pm})\mbf{e}_{i}}$ are sent to $(\eta_{\mbf{k}^{\pm},\ell^{\pm},i},[\Sigma_{\mbf{k}^{\pm},\ell^{\pm},i}])$ and $(\eta_{\mbf{k}^{\pm},\ell^{\pm},i},[\Sigma'_{\mbf{k}^{\pm},\ell^{\pm},i}])$ under the natural maps
\begin{align*}
    &H_{2}^{K(\mbf{k}^{+},\mbf{k}^{-})\cup L}(X;\bb{Z})\to \\
    &\qquad\qquad H_{2}(X\natural S^{1}\times B^{3},K(\mbf{k}^{\pm};\ell^{\pm};i)\cup L;\bb{Z}) \\
    &\qquad\qquad\qquad\oplus H_{2}(W,-(K(\mbf{k}^{\pm};\ell^{\pm};i)\cup L)\sqcup (K(\mbf{k}^{+},\mbf{k}^{-})\cup L);\bb{Z}), \\
    &H_{2}^{K(\mbf{k}^{+}+(\ell^{+}-k_{i}^{+})\mbf{e}_{i},\mbf{k}^{-}+(\ell^{-}-k_{i}^{-})\mbf{e}_{i})\cup L}(X;\bb{Z})\to \\
    &\qquad\qquad H_{2}(X\natural S^{1}\times B^{3},K(\mbf{k}^{\pm};\ell^{\pm};i)\cup L;\bb{Z}) \\
    &\qquad\qquad\qquad\oplus H_{2}(W',-(K(\mbf{k}^{\pm};\ell^{\pm};i)\cup L)\sqcup (K(\mbf{k}^{+}+(\ell^{+}-k_{i}^{+})\mbf{e}_{i},\mbf{k}^{-}+(\ell^{-}-k_{i}^{-})\mbf{e}_{i})\cup L);\bb{Z}),
\end{align*}
respectively, with a similar statement in the mod $2$ homology case. In particular, we see that $\Phi_{\diamond,\mbf{k}^{\pm},\ell^{\pm},i}$ and $\Psi_{\diamond,\mbf{k}^{\pm},\ell^{\pm},i}$ decompose along skein gradings as follows:
\begin{align*}
    &\Phi_{\diamond,\mbf{k}^{\pm},\ell^{\pm},i}^{\eta}:\ol{\skein}_{0}^{2,\diamond}(\mbf{k}^{\pm};\ell^{\pm};i;\eta_{\mbf{k}^{\pm},\ell^{\pm},i})\to\ol{\skein}_{0}^{2,\diamond}(\mbf{k}^{\pm};\eta_{\mbf{k}^{\pm}}) \\
    &\Psi_{\diamond,\mbf{k}^{\pm},\ell^{\pm},i}^{\eta}:\ol{\skein}_{0}^{2,\diamond}(\mbf{k}^{\pm};\ell^{\pm};i;\eta_{\mbf{k}^{\pm},\ell^{\pm},i})\to\ol{\skein}_{0}^{2,\diamond}(\mbf{k}^{\pm}+(\ell^{\pm}-k_{i}^{\pm})\mbf{e}_{i};\eta_{\mbf{k}^{\pm}+(\ell^{\pm}-k_{i}^{\pm})\mbf{e}_{i}}).
\end{align*}
%

\subsection{A 2-handle formula}
\label{subsec:2-h_formula}

In this section, we construct an isomorphism between the 1-dimensional inputs skein lasagna module of a 4-manifold with a 2-handle attached and a quotient of the cabled 1-dimensional inputs skein lasagna module. 

Consider the action of the braid group
\[\frak{B}_{\mbf{k}^{\pm},\ell^{\pm},i}:=\frak{B}_{(\mbf{k}^{\pm},\ell^{\pm})}\cong\frak{B}_{\mbf{k}^{\pm}}\times\frak{B}_{\ell^{+}}\times\frak{B}_{\ell^{-}}\]
on $K(\mbf{k}^{\pm};\ell^{\pm};i)$. We have natural projection maps
\begin{align*}
    &p_{\mbf{k}^{\pm},\ell^{\pm},i}:\frak{B}_{\mbf{k}^{\pm},\ell^{\pm},i}\to\frak{B}_{\mbf{k}^{\pm}}, & &p'_{\mbf{k}^{\pm},\ell^{\pm},i}:\frak{B}_{\mbf{k}^{\pm},\ell^{\pm},i}\to\frak{B}_{\mbf{k}^{\pm}+(\ell^{\pm}-k_{i}^{\pm})\mbf{e}_{i}},
\end{align*}
where $p_{\mbf{k}^{\pm},\ell^{\pm},i}$ is given by the projection onto the $\frak{B}_{\mbf{k}^{\pm}}$ factor, and $p'_{\mbf{k}^{\pm},\ell^{\pm},i}$ is projection onto the factor
\[(\prod_{\substack{j=1 \\ j\neq i}}^{m}\frak{B}_{k_{j}^{+}})\times(\prod_{\substack{j=1 \\ j\neq i}}^{m}\frak{B}_{k_{j}^{-}})\times\frak{B}_{\ell^{+}}\times\frak{B}_{\ell^{-}}\cong\frak{B}_{\mbf{k}^{\pm}+(\ell^{\pm}-k_{i}^{\pm})\mbf{e}_{i}}.\]
Furthermore, for $j=1,\dots,m$, and $k=0,1$, the annulus homomorphisms take the form of maps
\[\psi_{\diamond,j}^{[k]}:\ol{\skein}_{0}^{2,\diamond}(\mbf{k}^{\pm};\ell^{\pm};i)\to\ol{\skein}_{0}^{2,\diamond}(\mbf{k}^{\pm}+\mbf{e}_{j};\ell^{\pm};i),\]
whereas for $j=m+1$, the annulus homomorphisms for $k=0,1$ are of the form
\[\psi_{\diamond,m+1}^{[k]}:\ol{\skein}_{0}^{2,\diamond}(\mbf{k}^{\pm};\ell^{\pm};i)\to\ol{\skein}_{0}^{2,\diamond}(\mbf{k}^{\pm};\ell^{\pm}+1;i).\]
Each of these maps are $\frak{B}_{\mbf{k}^{\pm},\ell^{\pm},i}$-equivariant, where the actions on the codomains of $\psi_{\diamond,j}^{[k]}$, $j=1,\dots,m$ and $\psi_{\diamond,i,m+1}^{[k]}$ respectively, $k=0,1$, are induced by the canonical inclusions
\begin{align*}
    &i_{\mbf{k}^{\pm},\ell^{\pm},i;j}:\frak{B}_{\mbf{k}^{\pm},\ell^{\pm},i}\hookrightarrow\frak{B}_{\mbf{k}^{\pm}+\mbf{e}_{j},\ell^{\pm},i}, & &i_{\mbf{k}^{\pm},\ell^{\pm},i;m+1}:\frak{B}_{\mbf{k}^{\pm},\ell^{\pm},i}\hookrightarrow\frak{B}_{\mbf{k}^{\pm},\ell^{\pm}+1,i}.
\end{align*}
By construction we see that $\Phi_{\diamond,\mbf{k}^{\pm},\ell^{\pm},i}$ and $\Psi_{\diamond,\mbf{k}^{\pm},\ell^{\pm},i}$ are both $\frak{B}_{\mbf{k}^{\pm},\ell^{\pm},i}$-equivariant, where the action of $\frak{B}_{\mbf{k}^{\pm},\ell^{\pm},i}$ on their respective codomains are induced by the projections $p_{\mbf{k}^{\pm},\ell^{\pm},i}$ and $p'_{\mbf{k}^{\pm},\ell^{\pm},i}$. Furthermore, these maps fit into the following commutative diagrams for $i=1,\dots,m$, $j=1,\dots,m$, $j\neq i$, $k=0,1$ (and decompose along skein gradings appropriately):

\begin{center}
\begin{tikzcd}[sep=huge]
\ol{\skein}_{0}^{2,\diamond}(\mbf{k}^{\pm}+\mbf{e}_{j}) & \ol{\skein}_{0}^{2,\diamond}(\mbf{k}^{\pm}+\mbf{e}_{j};\ell^{\pm};i)\arrow[l,"\Phi_{\diamond,\mbf{k}^{\pm}+\mbf{e}_{j},\ell^{\pm},i}"'] \arrow[r,"\Psi_{\diamond,\mbf{k}^{\pm}+\mbf{e}_{j},\ell^{\pm},i}"] & \ol{\skein}_{0}^{2,\diamond}(\mbf{k}^{\pm}+(\ell^{\pm}-k_{i}^{\pm})\mbf{e}_{i}+\mbf{e}_{j}) \\
\ol{\skein}_{0}^{2,\diamond}(\mbf{k}^{\pm})\arrow[u,"\psi_{\diamond,j}^{[k]}"] & \ol{\skein}_{0}^{2,\diamond}(\mbf{k}^{\pm};\ell^{\pm};i)\arrow[l,"\Phi_{\diamond,\mbf{k}^{\pm},\ell^{\pm},i}"'] \arrow[r,"\Psi_{\diamond,\mbf{k}^{\pm},\ell^{\pm},i}"]\arrow[u,"\psi_{\diamond,j}^{[k]}"'] & \ol{\skein}_{0}^{2,\diamond}(\mbf{k}^{\pm}+(\ell^{\pm}-k_{i}^{\pm})\mbf{e}_{i}),\arrow[u,"\psi_{\diamond,j}^{[k]}"'] \\
\ol{\skein}_{0}^{2,\diamond}(\mbf{k}^{\pm}+\mbf{e}_{i}) & \ol{\skein}_{0}^{2,\diamond}(\mbf{k}^{\pm}+\mbf{e}_{i};\ell^{\pm};i)\arrow[l,"\Phi_{\diamond,\mbf{k}^{\pm}+\mbf{e}_{i},\ell^{\pm},i}"'] \arrow[r,"\Psi_{\diamond,\mbf{k}^{\pm}+\mbf{e}_{i},\ell^{\pm},i}"] & \ol{\skein}_{0}^{2,\diamond}(\mbf{k}^{\pm}+(\ell^{\pm}-k_{i}^{\pm})\mbf{e}_{i}) \\
\ol{\skein}_{0}^{2,\diamond}(\mbf{k}^{\pm})\arrow[u,"\psi_{\diamond,i}^{[k]}"] & \ol{\skein}_{0}^{2,\diamond}(\mbf{k}^{\pm};\ell^{\pm};i)\arrow[l,"\Phi_{\diamond,\mbf{k}^{\pm},\ell^{\pm},i}"'] \arrow[r,"\Psi_{\diamond,\mbf{k}^{\pm},\ell^{\pm},i}"]\arrow[u,"\psi_{\diamond,i}^{[k]}"'] & \ol{\skein}_{0}^{2,\diamond}(\mbf{k}^{\pm}+(\ell^{\pm}-k_{i}^{\pm})\mbf{e}_{i}),\arrow[u,"\text{id}"'] \\
\ol{\skein}_{0}^{2,\diamond}(\mbf{k}^{\pm}) & \ol{\skein}_{0}^{2,\diamond}(\mbf{k}^{\pm};\ell^{\pm}+1;i)\arrow[l,"\Phi_{\diamond,\mbf{k}^{\pm},\ell^{\pm}+1,i}"'] \arrow[r,"\Psi_{\diamond,\mbf{k}^{\pm},\ell^{\pm}+1,i}"] & \ol{\skein}_{0}^{2,\diamond}(\mbf{k}^{\pm}+(\ell^{\pm}-k_{i}^{\pm}+1)\mbf{e}_{i}) \\
\ol{\skein}_{0}^{2,\diamond}(\mbf{k}^{\pm})\arrow[u,"\text{id}"] & \ol{\skein}_{0}^{2,\diamond}(\mbf{k}^{\pm};\ell^{\pm};i)\arrow[l,"\Phi_{\diamond,\mbf{k}^{\pm},\ell^{\pm},i}"'] \arrow[r,"\Psi_{\diamond,\mbf{k}^{\pm},\ell^{\pm},i}"]\arrow[u,"\psi_{\diamond,m+1}^{[k]}"'] & \ol{\skein}_{0}^{2,\diamond}(\mbf{k}^{\pm}+(\ell^{\pm}-k_{i}^{\pm})\mbf{e}_{i}).\arrow[u,"\psi_{\diamond,i}^{[k]}"']
\end{tikzcd}
\end{center}
In particular, this implies that the collections of maps $\{\Phi^{\eta}_{\diamond,\mbf{k}^{\pm},\ell^{\pm},i}\}$ and $\{\Psi^{\eta}_{\diamond,\mbf{k}^{\pm},\ell^{\pm},i}\}$ are compatible with the relations defining the cabled 1-dimensional inputs skein lasagna modules, and so induce well-defined maps
\begin{align*}
    &\Phi^{(\bm{\alpha},\eta)}_{O},\Psi^{(\bm{\alpha},\eta)}_{O}:\bigoplus_{i=1}^{m}\ul{\ol{\skein}}^{2,O,(\bm{\alpha},\alpha_{i})}_{0}(X\natural S^{1}\times B^{3};K(i),L;\eta_{i})\to \ul{\ol{\skein}}^{2,O,\bm{\alpha}}_{0}(X;K,L;\eta) \\
    &\Phi^{(\ol{\bm{\alpha}},\ol{\eta})}_{T},\Psi^{(\ol{\bm{\alpha}},\ol{\eta})}_{T}:\bigoplus_{i=1}^{m}\ul{\ol{\skein}}^{2,T,(\ol{\bm{\alpha}},\ol{\alpha}_{i})}_{0}(X\natural S^{1}\times B^{3};K(i),L;\ol{\eta}_{i})\to \ul{\ol{\skein}}^{2,T,\ol{\bm{\alpha}}}_{0}(X;K,L;\ol{\eta})
\end{align*}
between the corresponding cabled 1-dimensional inputs skein lasagna modules for each $(\bm{\alpha},\eta)\in H_{2}^{L'}(X';\bb{Z})$ and $(\ol{\bm{\alpha}},\ol{\eta})\in H_{2}^{L'}(X';\bb{Z}/2)$. With this in mind, we have the following theorem which calculates the 1-dimensional inputs skein lasagna modules of $(X',L')$:

\begin{theorem}
\label{theorem:tortellini_2_handle_formula}
For $\diamond\in\{O,T\}$, the type $\diamond$ $1$-dimensional input skein lasagna modules of $(X',L')$ at level $(\bm{\alpha},\eta)$ can be expressed by the formula
\[\ol{\cal{S}}_{0}^{2,\diamond}(X';L';(\bm{\alpha},\eta))\cong\text{Coeq}\Big(\bigoplus_{i=1}^{m}\ul{\ol{\skein}}^{2,\diamond,(\bm{\alpha},\alpha_{i})}_{0}(X\natural S^{1}\times B^{3};K(i),L;\eta_{i})\doublearrow{\Phi^{(\bm{\alpha},\eta)}_{\diamond}}{\Psi^{(\bm{\alpha},\eta)}_{\diamond}}\ul{\ol{\skein}}^{2,\diamond,\bm{\alpha}}_{0}(X;K,L;\eta)\Big).\]
\end{theorem}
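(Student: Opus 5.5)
We follow the strategy of the 2-handle formula of Manolescu--Walker--Wedrich \cite{MWWhandles}, adapted to 1-dimensional inputs. The approach is to build an explicit map from the cabled module to $\ol{\skein}_{0}^{2,\diamond}(X';L';(\bm{\alpha},\eta))$, show it is surjective, and then identify its kernel as the relations defining the cabled module together with the lasso relation $\Phi\sim\Psi$.

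\textbf{Step 1: the map.} Each 2-handle $D^{2}\times D^{2}$ is attached along $\nu(K_{i})$. Given a class $v\in\ol{\skein}_{0}^{2,\diamond}(X;K(\mbf{k}^{+},\mbf{k}^{-})\cup L)$, we glue in $k_{i}^{+}+k_{i}^{-}$ parallel, oppositely oriented cores $D^{2}\times\{p\}$. This gives a map
\[\Theta:\bigoplus\ol{\skein}_{0}^{2,\diamond}(\mbf{k}^{\pm})\to\ol{\skein}_{0}^{2,\diamond}(X';L').\]
We check that $\Theta$ respects the defining relations of the cabled module.
\begin{itemize}
\item The braid relation holds because braids of parallel cores are isotopic rel boundary inside $D^{2}\times D^{2}$.
\item The annulus relations $\psi^{[0]}\sim 0$ and $\psi^{[1]}\sim\mathrm{id}$ hold because an annulus capped by two cores is a sphere bounding a $4$-ball in the handle. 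Neck-cutting as in \cite{MN22} then evaluates the undotted sphere to $0$ and the dotted sphere to $1$.
\item The lasso relation holds because both $\Theta\circ\Phi_{\diamond}$ and $\Theta\circ\Psi_{\diamond}$ are represented by the same filling of $X'$. Concretely, $K(i)$ in $X\natural S^{1}\times B^{3}$, followed by the cores of $X'$, gives two handle-slid pictures (Figure \ref{fig:S_i_and_S'_i}) of one skein. This skein passes through the 1-handle of the enlarged input, and the lasso isotopy of Figure \ref{fig:lasso} identifies the two. To make this precise, we apply the enclosement relation to the input $X\natural S^{1}\times B^{3}$ union the 2-handle attached along $\gamma$ (respectively $\gamma'$). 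This union is diffeomorphic to $X$ with a cancelling 1/2-pair, so gluing maps factor through $W_{i}$ and $W'_{i}$.
\end{itemize}
In the $T$ case, the gradings match because $|\mbf{k}^{+}|-|\mbf{k}^{-}|\equiv\ol{\alpha}\pmod 2$.

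\textbf{Step 2: surjectivity.} Take any filling of $X'$. We isotope the input manifolds off the cocores by general position: the cores of the inputs are 1-complexes and the cocores are 2-disks in a 4-manifold, so they can be made disjoint. We then shrink along the handles so that the inputs lie in $X$. Next we make the skein transverse to the cocores. A standard isotopy then makes the skein intersect each handle in parallel core disks, so that it meets $D^{2}\times D^{2}$ in $D^{2}\times\{\text{points}\}$. The filling is therefore in the image of $\Theta$, and the skein-grading bookkeeping via $\eta_{\mbf{k}^{\pm}}$ matches (C1)/(C2).

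\textbf{Step 3: injectivity, the main obstacle.} We must show that every relation in $\ol{\skein}_{0}^{2,\diamond}(X')$ between fillings in normal form is generated by the listed moves. There are three types of relations.
\begin{itemize}
\item \emph{Isotopies.} A generic isotopy of the skein in $X'$ crosses the cocore through births and deaths of intersection pairs (giving annulus moves), braids of intersection points (giving braid moves), and finger moves. This is as in \cite{MWWhandles}.
\item \emph{Enclosements.} An enclosement in $X'$ uses a 1-handlebody $B'\subset X'$ that may meet the cocores. Here we first isotope $B'$ so that its core misses the cocore. The new phenomenon is that the isotopy of the input manifold itself may sweep through a cocore. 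When a 1-handle of an input core passes once over the cocore of the $i$-th 2-handle, the intersection pattern changes by exactly the lasso move. One side of this move is the picture $\Phi$ and the other is $\Psi$, where $\belt{1,1}$ records the strands linking that 1-handle.
\item \emph{Decomposing the isotopy.} We decompose a generic isotopy of $\Gamma(B)$ relative to the cocores into such elementary crossings. This is a codimension-counting argument: 1-complex versus 2-disk in a 4-manifold, over a one-parameter family.
\end{itemize}
The hardest point is verifying that the cabling of $\belt{1,1}$ by $\ell^{\pm}$ strands, together with the equivariance diagrams above, accounts for all such crossings. We expect to handle this by reducing to a single strand and then using the commutative diagrams to propagate the identification to all cables.

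Finally, because the colimit and the coequalizer commute, the quotient is exactly the stated $\mathrm{Coeq}$.
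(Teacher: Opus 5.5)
Your outline matches the paper's: cap off with core-parallel disks, invert by pushing input cores off the cocores and normalizing the skein, then analyze generic one-parameter families, where the only new event beyond \cite{MN22} is a piece of an input crossing a cocore $G_i$. The gap is the step that carries the whole theorem. You need to show that such a crossing is equivalent to an instance of $\Phi_{\diamond,\mbf{k}^{\pm},\ell^{\pm},i}(\wt{F})\sim\Psi_{\diamond,\mbf{k}^{\pm},\ell^{\pm},i}(\wt{F})$ for an actual filling $\wt{F}$ of $(X\natural S^{1}\times B^{3},K(\mbf{k}^{\pm};\ell^{\pm};i)\cup L)$. You assert this (``the intersection pattern changes by exactly the lasso move'') and then defer it. Your proposed fix would also not work. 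The commutative diagrams only say that $\Phi$ and $\Psi$ intertwine the annulus maps and braid actions, which is why they descend to the cabled modules. Since $\ol{\skein}_{0}^{2,\diamond}(\mbf{k}^{\pm};\ell^{\pm};i)$ is in general not spanned by images of annulus maps from smaller cables, no single-strand case can generate the lasso relation at level $(\mbf{k}^{\pm},\ell^{\pm})$.

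The missing idea (Lemma \ref{lem:B0}) is that the crossing event produces the correct cable directly, so no reduction is needed.
\begin{enumerate}
\item Near the event, the skein meets $H_i$ in half-disks ending on the input on either side of it, plus $a^{\pm}$ full core-parallel disks that cross $G_i$ throughout.
\item First use the enclosement relation to swallow the full disks into the input; otherwise the local picture is not of $\Phi$/$\Psi$ form. This leaves $k_i^{\pm}$ half-disks on one side and $\ell^{\pm}$ on the other.
\item Then choose embeddings $i_t:X\natural S^{1}\times B^{3}\hookrightarrow X'$, fixed on a collar-shrunk copy of $X$, whose new $1$-handle contains the piece of input traversing $H_i$. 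Note this is an ambient handle running alongside the input, not the input's own handle.
\item Pulling $F_t$ back along $i_t$ yields a single filling $\wt{F}$ with boundary $K(\mbf{k}^{\pm};\ell^{\pm};i)\cup L$. The family then becomes the isotopy from $\wt{f}(\Phi(\wt{F}))$ to $\wt{f}(\Psi(\wt{F}))$.
\item You also need $\wt{F}$ to be independent of these choices up to a one-parameter family, or the inverse map is not well defined.
\end{enumerate}
Relatedly, your Step 1 appeal to enclosement does not apply for general $X$. Neither $X\natural S^{1}\times B^{3}$ nor its union with the handle along $\gamma$ is a $1$-handlebody, so it cannot serve as an input. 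What the lasso relation actually needs is that the two embeddings of $X\natural S^{1}\times B^{3}$ complementary to $W_i\cup Z$ and $W'_i\cup Z$ are ambient isotopic (Lemma \ref{lem:isotopy_of_embeddings}). This follows by sliding $\gamma$ over $K_i$ to obtain a curve isotopic to $\gamma'$.
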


As a corollary, we spell out the above theorem in the special case where $X=\natural^{n}S^{1}\times B^{3}$ is a 1-handlebody:

\begin{corollary}
\label{cor:tortellini_1_2_handlebody}
Let $(X,K,L)$ and $(X',L')$ be as above with $X=\natural^{n}S^{1}\times B^{3}$. Then for $\diamond\in\{O,T\}$, the type $\diamond$ $1$-dimensional input skein lasagna module of $(X',L')$ at level $\bm{\alpha}$ can be expressed by the formula
\[\ol{\cal{S}}_{0}^{2,\diamond}(X';L';\bm{\alpha})\cong\text{Coeq}\Big(\bigoplus_{i=1}^{m}\ul{\KhR}^{-}_{2,\diamond,(\bm{\alpha},\alpha_{i})}(\#^{n+1}S^{1}\times S^{2};K(i),L)\doublearrow{\Phi^{\bm{\alpha}}_{\diamond}}{\Psi^{\bm{\alpha}}_{\diamond}}\ul{\KhR}^{-}_{2,\diamond,\bm{\alpha}}(\#^{n}S^{1}\times S^{2};K,L)\Big).\]
\end{corollary}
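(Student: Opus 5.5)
The corollary should follow directly from Theorem \ref{theorem:tortellini_2_handle_formula} by specializing $X=\natural^{n}S^{1}\times B^{3}$ and identifying every cabled 1-dimensional inputs skein lasagna module that appears with the corresponding cabled Rozansky--Willis homology. Accordingly, I would first apply Theorem \ref{theorem:tortellini_2_handle_formula} with $X=\natural^{n}S^{1}\times B^{3}$. Then $X\natural S^{1}\times B^{3}=\natural^{n+1}S^{1}\times B^{3}$ is again a 1-handlebody, with boundary $\#^{n+1}S^{1}\times S^{2}$, and $K(i)\cup L$ lies in this boundary.

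The key step is termwise. For every $\mbf{k}^{\pm}$ (and, on the source side, every $(\mbf{k}^{\pm},\ell^{\pm})$), I would invoke the natural isomorphism $\ol{\skein}_{0}^{2,\diamond}(\natural^{n}S^{1}\times B^{3};J)\cong\KhR_{2,\diamond}^{-}(J)$ recalled before Definition \ref{def:cabled_RW}, i.e.\ item (\ref{item:1}) of Theorem \ref{theorem:main}. I would apply it to the cables $J=K(\mbf{k}^{+},\mbf{k}^{-})\cup L$ and $J=K(\mbf{k}^{\pm};\ell^{\pm};i)\cup L$. For a 1-handlebody $H_{2}(X,K\cup L)$ carries no extra information, so the class $\eta$ (resp.\ $\eta_{i}$) is determined by $\bm{\alpha}$ (resp.\ $(\bm{\alpha},\alpha_{i})$), as noted in the paper. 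The skein-graded pieces indexed by $\eta_{\mbf{k}^{\pm}}$ are therefore the whole of $\KhR^{-}_{2,\diamond}$ of the cable; they are zero exactly when the homological conditions (\ref{eq:alpha_nullhomologous_condition})--(\ref{eq:alpha_bar_2-divisible_condition}) fail, matching the convention $\KhR^{-}_{2,\diamond}=0$ in that case. The grading shifts $\{-|\mbf{k}^{+}|-|\mbf{k}^{-}|\}$ agree on both sides by definition.

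Next I would check that the relations agree under these isomorphisms. Naturality of the identification with respect to cobordisms in $\partial X\times I$ carries the annulus maps $\psi^{[j]}_{\diamond,i}$ on lasagna modules to the annulus cobordism maps on $\KhR^{-}_{2,\diamond}$. It likewise carries the braid group action to the action of $\KhR^{-}_{2,\diamond}$ on cables, and Proposition \ref{prop:braid_group_action_factors_through_symmetic_group} says this action factors through $\frak{S}_{\mbf{k}^{\pm}}$. Hence quotienting by $\frak{B}$ and by $\frak{S}$ give the same relation, so $\ul{\ol{\skein}}^{2,\diamond,\bm{\alpha}}_{0}(X;K,L)\cong\ul{\KhR}^{-}_{2,\diamond,\bm{\alpha}}(K,L)$, and similarly for each $K(i)$.

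The step requiring the most care is transporting $\Phi_{\diamond}$ and $\Psi_{\diamond}$. These are gluing maps for the 2-handle cobordisms $W_{i},W_{i}'$ from $\natural^{n+1}S^{1}\times B^{3}$ to $\natural^{n}S^{1}\times B^{3}$, each a cancelling 1/2-handle pair followed by an unknot cap. Under item (\ref{item:1}) of Theorem \ref{theorem:main}, they must correspond to well-defined maps on $\KhR^{-}_{2,\diamond}$, which I would define to be $\Phi^{\bm{\alpha}}_{\diamond}$ and $\Psi^{\bm{\alpha}}_{\diamond}$. The plan is to note that the glued manifold $X\cup W_{i}$ is again diffeomorphic to $\natural^{n}S^{1}\times B^{3}$, so item (\ref{item:1}) converts the gluing map into a linear map between Rozansky--Willis homologies. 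This map is exactly the 2-handle cobordism map from the list of elementary cobordisms in \cite[Proposition 6.3]{RSWWZ25}, composed with a cap. Since the identification is natural and passing to coequalizers commutes with the termwise isomorphisms, the coequalizer of Theorem \ref{theorem:tortellini_2_handle_formula} becomes the stated one.
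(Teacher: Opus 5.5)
Your proposal is correct and follows essentially the paper's route: the paper gives no separate argument, and the corollary is just Theorem \ref{theorem:tortellini_2_handle_formula} specialized through the natural isomorphism $\ol{\skein}_{0}^{2,\diamond}(\natural^{n}S^{1}\times B^{3};J)\cong\KhR_{2,\diamond}^{-}(J)$, together with two further inputs: the observation that $\eta$ is determined by $\bm{\alpha}$, and Proposition \ref{prop:braid_group_action_factors_through_symmetic_group}, which turns the braid relations into symmetric group relations. One small correction concerns a side remark that carries no weight in the argument: only $W_{i}$ is an elementary cancelling 2-handle attachment. In general $W_{i}'$ is the elementary one precomposed with the diffeomorphism $\varphi_{\mbf{k}^{\pm},\ell^{\pm},i}$ (Proposition \ref{prop:Psi_relate_to_Phi}). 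Since you only need $\Psi^{\bm{\alpha}}_{\diamond}$ to be the $\KhR^{-}_{2,\diamond}$-map of the cobordism $(W_{i}',\Sigma_{i}')$, this does not affect the proof.
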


The rest of this section is devoted to the proof of Theorem \ref{theorem:tortellini_2_handle_formula}. Before we define the relevant maps, the following lemma will prove useful:

\begin{lemma}
\label{lem:isotopy_of_embeddings}
Let:
\begin{enumerate}
    \item $W_{i}$ and $W_{i}'$ be the cobordisms from $Y\# S^{1}\times S^{2}$ to $Y$ given by 2-handle attachments along push-offs $\gamma$ and $\gamma'$ of $K(i)_{m+1}$ and $K(i)_{i}$, respectively, as described above.
    \item $Z$ be the 2-handle cobordism from $Y$ to $Y'$ induced by attaching $2$-handles along the link $K\subset Y$.
    \item $i_{0},i_{1}:X\natural (S^{1}\times B^{3})\hookrightarrow X'$ be the (codimension-0) embeddings induced by the decompositions
    \begin{align}
    \label{eq:embeddings_isotopy_1}
        &X'=X\natural (S^{1}\times B^{3})\cup_{Y\#(S^{1}\times S^{2})}W_{i}\cup_{Y}Z, & &X'=X\natural (S^{1}\times B^{3})\cup_{Y\#(S^{1}\times S^{2})}W'_{i}\cup_{Y}Z,
    \end{align}
    respectively.
\end{enumerate}
Then there exists an ambient isotopy $F:X'\times[0,1]\to X'$ inducing an isotopy of embeddings $i_{t}:X\natural (S^{1}\times B^{3})\hookrightarrow X'$, $t\in[0,1]$, interpolating between $i_{0}$ and $i_{1}$.
\end{lemma}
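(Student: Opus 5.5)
The plan is to reduce the statement to a standard handle-cancellation argument inside a collar of $Y$. First observe that both decompositions in (\ref{eq:embeddings_isotopy_1}) agree outside a neighborhood of the added $1$-handle $h^{1}$ (the $S^{1}\times B^{3}$ summand), the $2$-handle $h^{2}$ attached along $\gamma$ (resp.\ $\gamma'$), and the $2$-handle $h_{i}$ of $Z$ attached along $K_{i}$. In each case the union $N:=h^{1}\cup h^{2}\cup h_{i}$, glued to a collar $Y\times[0,1]$, is a submanifold $M\subset X'$ with a fixed identification $M\cup_{Y}X=X'$ (after also adding the remaining $2$-handles of $Z$, which are untouched). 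So it suffices to produce an isotopy of $M$, rel its outer boundary and away from the other handles, carrying the image of $X\natural S^{1}\times B^{3}$ under $i_{0}$ to that under $i_{1}$.

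The key steps are as follows. (1) Describe $M$ by a relative Kirby diagram on $Y$: a dotted circle (or $1$-handle pair of balls) for $h^{1}$, the curve $K_{i}$ (framed as in $K$), and the curve $\gamma$ (a meridional longitude $\belt{0,1}$ through $h^{1}$) for $W_{i}$, respectively $\gamma'=K_{i}\#\belt{1,0}$ for $W'_{i}$; all other components of $K$ are attached after and can be ignored. (2) Note that in the $W_{i}$ picture, $K_{i}$ runs through the $1$-handle once via the connected sum, while $\gamma$ goes through it once; in the $W'_{i}$ picture the roles are swapped. Then check, as in Figure \ref{fig:S_i_and_S'_i}, that the two diagrams are related by a handle slide of $h_{i}$ over $h^{2}$ (equivalently, sliding $K_{i}\#\belt{1,0}$ over $\gamma$ yields $K_{i}$ disjoint from the $1$-handle, and $\gamma$ itself becomes the curve $\gamma'$ parallel to $K(i)_{i}$). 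Handle slides are realized by isotopies of the attaching maps, hence by ambient isotopies of $X'$ fixing $X$ away from a collar. (3) Track the image of $X\natural S^{1}\times B^{3}=X\cup h^{1}$: in both decompositions it is $X\cup h^{1}$ together with the collar; the slide only moves the attaching region of $h_{i}$ across $h^{2}$, which lies outside $X\cup h^{1}$. Thus the slide isotopy $F_{t}$ restricted to $X\cup h^{1}$ gives $i_{t}$, with $i_{0}$ the original inclusion; one then verifies $F_{1}\circ i_{0}=i_{1}$ up to a further isotopy supported in a collar, using uniqueness of collars and that the identification $W_{i}\cup Z\cong W'_{i}\cup Z$ is the one induced by the slide. (4) Finally extend $F$ to all of $X'$ by the isotopy extension theorem.

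The main obstacle is step (3): making precise that the diffeomorphism $X\natural S^{1}\times B^{3}\cup W_{i}\cup Z\cong X\natural S^{1}\times B^{3}\cup W'_{i}\cup Z$ implicit in writing both as ``$X'$'' is the one produced by the slide, and not some other diffeomorphism (e.g.\ one differing by a twist on the $1$-handle). I would handle this by fixing both decompositions as subsets of a single model $X'$, where $h^{1}\cup h^{2}$ is a cancelling pair (so $X\cup h^{1}\cup h^{2}\cong X$ via cancellation), and checking that the cancellation diffeomorphisms for $\gamma$ and $\gamma'$ differ by the slide; since both $\gamma$ and $\gamma'$ meet the belt sphere of $h^{1}$ geometrically once, the cancellation is canonical up to isotopy, which pins down the identification.
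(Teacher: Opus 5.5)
Your proposal has a genuine gap: the handle slide in step (2) is the wrong one, and the move the lemma actually depends on is missing.

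\textbf{The slide in step (2) does not connect the two decompositions.} Sliding the $K_i$-handle $h_i$ (attached along $K(i)_i=K_i\#\belt{1,0}$) over the cancelling handle $h^2$ (attached along $\gamma$) changes only the attaching circle of $h_i$, turning it into $K_i\subset Y\setminus B^3$. The curve $\gamma$ is untouched, so your parenthetical claim that ``$\gamma$ itself becomes $\gamma'$'' does not follow. This slide only carries the $W_i\cup Z$ picture to the intermediate configuration where $2$-handles are attached to $X\natural(S^1\times B^3)$ along $K\cup\gamma$. That is essentially the paper's preliminary isotopy $G$ (and symmetrically $G'$ on the $W_i'$ side).

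\textbf{The missing step.} The paper's proof is built around the reverse slide: slide the cancelling handle along $\gamma$ over the $K_i$-handle of $Z$. The resulting curve $\tilde\gamma$ is isotopic to $\gamma'$. This gives an ambient isotopy $H$ of $X'$ from the $K\cup\gamma$ configuration to the $K\cup\gamma'$ configuration, and $F$ is the concatenation of $G$, $H$, and the reverse of $G'$.

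\textbf{Why step (3) cannot fill the gap.} Your isotopy is supported away from $X\cup h^1$, so it is stationary there. You are therefore claiming that $i_0$ and $i_1$ differ only by a collar adjustment that never crosses the $K_i$-handle, i.e.\ that they are isotopic inside $X\natural(S^1\times B^3)\cup W_i\cong X$. In general they are not even homotopic there.
\begin{itemize}
\item Let $t$ be the generator of the new $1$-handle. Then $\gamma$ represents $t^{\pm1}$, while $\gamma'$ represents a conjugate of $[K_i]\,t$.
\item Hence $i_{0*}(t)=1$, whereas $i_{1*}(t)$ is conjugate to $[K_i]^{-1}$.
\item Up to conjugation, both maps restrict to the identity on $\pi_1(X)$. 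So a homotopy between them would force $[K_i]=1$ in $\pi_1(X)$.
\end{itemize}
For the same reason, $\gamma$ and $\gamma'$ are not isotopic in $Y\#S^1\times S^2$. Your claim that the cancellation is ``canonical up to isotopy'' therefore gives no comparison between cancelling along $\gamma$ and along $\gamma'$ without using $Z$. The two embeddings become isotopic only after the $K_i$-handle kills $[K_i]$, so the isotopy must pass over that handle. That is exactly what sliding $\gamma$ over $K_i$ does.
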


\begin{proof}
Let $f:W_{i}\cup_{Y}Z\hookrightarrow X'$ and $f':W'_{i}\cup_{Y}Z\hookrightarrow X'$ be the embeddings induced by (\ref{eq:embeddings_isotopy_1}). Consider the cobordisms $\wt{Z}$ and $\wt{Z}'$ from $Y\#S^{1}\times S^{2}$ to $Y'$ given by $2$-handle attachments along the links $K\cup L\cup\gamma$ and $K\cup L\cup\gamma'$, respectively, where we consider $K\cup L$ as a link in $Y\#S^{1}\times S^{2}$ via $K\cup L\subset Y\setminus B^{3}\subset Y\#S^{1}\times S^{2}$. Let  $g:\wt{Z}\hookrightarrow X'$ and $g':\wt{Z}'\hookrightarrow X'$ be the embeddings induced by the analogous decompositions
\begin{align}
\label{eq:embeddings_isotopy_2}
    &X'=X\natural (S^{1}\times B^{3})\cup_{Y\#(S^{1}\times S^{2})}\wt{Z}, & &X'=X\natural (S^{1}\times B^{3})\cup_{Y\#(S^{1}\times S^{2})}\wt{Z'}.
\end{align}
Note that there exist ambient isotopies $G,G':X'\times[0,1]\to X'$ taking $f(W_{i}\cup_{Y}Z)$ to $g(\wt{Z})$ and $f'(W'_{i}\cup_{Y}Z)$ to $g'(\wt{Z}')$, respectively. 

Finally, let $\wt{\gamma}\subset Y\#S^{1}\times S^{2}$ be the result of handle-sliding $\gamma$ over $K_{i}$, which we observe is isotopic to $\gamma'$ (see Figure \ref{fig:handleslide_gamma_over_K_i}).
\begin{figure}
    \centering
    \includegraphics[width=17cm]{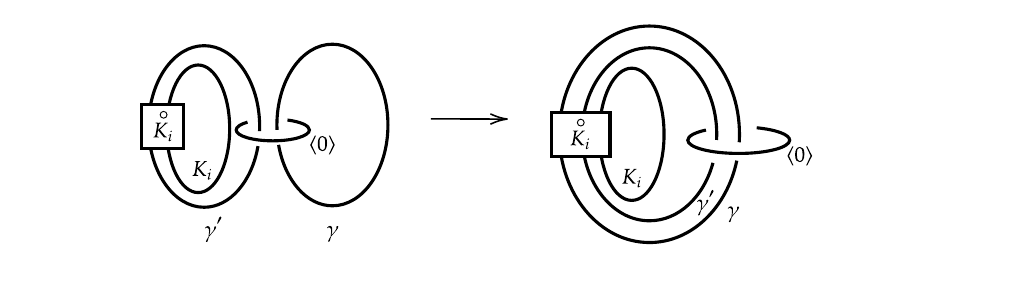}
    \caption{Handlesliding $\gamma$ over $K_{i}$.}
    \label{fig:handleslide_gamma_over_K_i}
\end{figure}
This handleslide composed with the isotopy from $\wt{\gamma}$ to $\gamma'$ induces an ambient isotopy $H:X'\times[0,1]\to X'$ taking $g(\wt{Z})$ to $g'(\wt{Z}')$. The desired ambient isotopy is then given by
\pagebreak
\begin{align*}
    F:X'\times[0,1]&\to X' \\
    (x,t)&\mapsto\threepartdef{G(x,3t)}{t\in[0,\frac{1}{3}],}{H(x,3t-1)}{t\in[\frac{1}{3},\frac{2}{3}],}{G'(x,3-3t)}{t\in[\frac{2}{3},1].}
\end{align*}
By taking complements, we see immediately that $F$ induces an isotopy of embeddings $\{i_{t}\}_{t\in[0,1]}$ via $i_{t}:=F_{t}\circ i_{0}$, as desired.
\end{proof}

We are now ready to prove Theorem \ref{theorem:tortellini_2_handle_formula}. For simplicity we treat the case where $\diamond=O$; the case $\diamond=T$ is proved similarly and is left to the interested reader. Fix a class $(\bm{\alpha},\eta)\in H_{2}^{L'}(X';\bb{Z})$. For each $\mbf{k}^{+},\mbf{k}^{-}\in\bb{N}^{m}$ such that $\mbf{k}^{+}-\mbf{k}^{-}=\bm{\alpha}$, we define a map
\[\wt{f}_{O,\mbf{k}^{\pm},\eta}:\ol{\skein}_{0}^{2,O}\big(\mbf{k}^{\pm};\eta_{\mbf{k}^{\pm}}\big)\big\{-|\mbf{k}^{+}|-|\mbf{k}^{-}|\big\}\to\ol{\skein}_{0}^{2,O}(X',L';(\bm{\alpha},\eta))\]
as follows: let
\[v=\sum_{k=1}^{N}a_{k}[F_{k}]\in\ol{\skein}_{0}^{2,O}\big(\mbf{k}^{\pm};\eta_{\mbf{k}^{\pm}}\big)\]
be a $\bb{Q}$-linear combination of some collection of type $O$ 1-dimensional input lasagna fillings $\{F_{k}\}=\{(\Sigma_{k},\{(B_{k,j},L_{k,j},v_{k,j})\})\}$ of $(X,K(\mbf{k}^{+},\mbf{k}^{-})\cup L)$. For each $k$, we have an identification
\[\del\Sigma_{k}=\sqcup_{j}(-L_{k,j})\sqcup K(\mbf{k}^{+},\mbf{k}^{-})\cup L.\]
By capping off the components of $\del\Sigma_{k}$ corresponding to the components of $K(\mbf{k}^{+},\mbf{k}^{-})$ with $k_{i}^{+}$ (respectively, $k_{i}^{-}$) positively-oriented (respectively, negatively oriented) core-parallel disks of the $i$-th 2-handle for each $i=1,\dots, m$, we obtain a 1-dimensional input lasagna filling $\wt{f}_{O,\mbf{k}^{\pm},\eta}(F_{k})$ of $(X',L')$. We then set $\wt{f}_{O,\mbf{k}^{\pm},\eta}(v):=\sum_{k=1}^{N}a_{k}[\wt{f}_{O,\mbf{k}^{\pm},\eta}(F_{k})]$. Now let
\[\wt{f}^{(\bm{\alpha},\eta)}_{O}:=\bigoplus_{\substack{\mbf{k}^{\pm}\in\bb{N}^{m} \\ \mbf{k}^{+}-\mbf{k}^{-}=\bm{\alpha}}}\wt{f}_{O,\mbf{k}^{\pm},\eta}.\]
To show that $\wt{f}^{(\bm{\alpha},\eta)}_{O}$ descends to a map
\[f^{(\bm{\alpha},\eta)}_{O}:\text{Coeq}(\Phi_{O}^{(\bm{\alpha},\eta)},\Psi_{O}^{(\bm{\alpha},\eta)})\to\ol{\skein}_{0}^{2,O}(X',L';(\bm{\alpha},\eta)),\]
it suffices to check that $\wt{f}^{(\bm{\alpha},\eta)}_{O}$ respects the braiding, annulus, and lasso relations. The fact that $\wt{f}^{(\bm{\alpha},\eta)}_{O}$ respects the braiding and annulus relations follows along the same lines as in (\cite{MN22}, Theorem 1.1), so we will omit the proof here.

For the lasso relation, let $\mbf{k}^{+},\mbf{k}^{-}\in\bb{N}^{m}$ and $\ell^{+},\ell^{-}\in\bb{N}$ be such that $\mbf{k}^{+}-\mbf{k}^{-}=\bm{\alpha}$ and $\ell^{+}-\ell^{-}=\alpha_{i}$. Let $\wt{F}=(\wt{\Sigma},\{(\wt{B}_{j},\wt{L}_{j},\wt{v}_{j})\})$ be a type $O$ 1-dimensional input lasagna filling of $(X\natural S^{1}\times B^{3},K(\mbf{k}^{\pm};\ell^{\pm};i)\cup L)$. Furthermore, let $\Phi^{(\bm{\alpha},\eta)}_{O}(F)$ (respectively, $\Psi^{(\bm{\alpha},\eta)}_{O}(F)$) denote the lasagna filling of $(X,K(\mbf{k}^{+},\mbf{k}^{-})\cup L)$ (respectively, $(X,K(\mbf{k}^{+}+(\ell^{+}-k_{i}^{+})\mbf{e}_{i},\mbf{k}^{-}+(\ell^{-}-k_{i}^{-})\mbf{e}_{i})\cup L)$) obtained by gluing the relative cobordism $(W_{i},\Sigma_{\mbf{k}^{\pm},\ell^{\pm},i})$ (respectively, $(W_{i},\Sigma_{\mbf{k}^{\pm},\ell^{\pm},i})$) to $\wt{F}$. Via the diffeotopy from Lemma \ref{lem:isotopy_of_embeddings} we obtain an isotopy from $\wt{f}^{(\bm{\alpha},\eta)}_{O}(\Phi^{(\bm{\alpha},\eta)}_{O}(\wt{F}))$ and $\wt{f}^{(\bm{\alpha},\eta)}_{O}(\Psi^{(\bm{\alpha},\eta)}_{O}(\wt{F}))$ as lasagna fillings of $(X',L')$. Hence $(\wt{f}^{(\bm{\alpha},\eta)}_{O}\circ\Phi^{(\bm{\alpha},\eta)}_{O})(v)=(\wt{f}^{(\bm{\alpha},\eta)}_{O}\circ\Psi^{(\bm{\alpha},\eta)}_{O})(v)$ for all $v\in\bigoplus_{i=1}^{m}\ul{\ol{\skein}}^{2,O,(\bm{\alpha},\alpha_{i})}_{0}(X\natural S^{1}\times B^{3};K(i),L;\eta_{i})$, as desired.

Next, we define an inverse
\[(f^{(\bm{\alpha},\eta)}_{O})^{-1}:\ol{\skein}_{0}^{2,O}(X',L';(\bm{\alpha},\eta))\to\text{Coeq}(\Phi^{(\bm{\alpha},\eta)}_{O},\Psi^{(\bm{\alpha},\eta)}_{O})\]
as follows: Given a type $O$ 1-dimensional input lasagna filling $F$ of $(X',L')$ with surface $\Sigma$ in skein grading $(\bm{\alpha},\eta)$, isotope the input 1-handlebodies of $F$ to be inside $\text{int}(X)\subset X'$, and isotope the surface $\Sigma$ so that its intersection with the 2-handles is precisely a union of core parallel disks. By removing these 2-handles along with these core-parallel disks, we obtain a lasagna filling $(f^{(\bm{\alpha},\eta)}_{O})^{-1}(F)$ of $(X,K(\mbf{k}^{+},\mbf{k}^{-})\cup L)$ for some $\mbf{k}^{\pm}\in\bb{N}^{m}$ satisfying $\mbf{k}^{+}-\mbf{k}^{-}=\bm{\alpha}$.

In order to show that $(f^{(\bm{\alpha},\eta)}_{O})^{-1}$ is well-defined, let 
\[\{F_{t}\}_{t\in[0,1]}=\{(\Sigma_{t},\{(B_{j,t},L_{j,t},v_{j,t})\}_{j\in{J}})\}_{t\in[0,1]}\]
be a 1-parameter family of type $O$ 1-dimensional input lasagna fillings of $(X',L')$ such that for each $k\in\{0,1\}$ the following holds:
\begin{enumerate}[label=(C-\arabic*)]
    \item \label{cond:isotopy_1} $\{B_{j,k}\}_{j\in J}\subset\text{int}(X)\subset X'$. 
    \item \label{cond:isotopy_2} The intersection of $\Sigma_{k}$ with the 2-handles is precisely a union of core parallel disks. 
\end{enumerate}
Let $G_{1},\dots,G_{m}$ denote the co-cores of the 2-handles, so that $\Sigma_{k}\cap{G_{i}}$ consists of $k_{i,k}^{+}$ positively-oriented (respectfully, $k_{i,k}^{-}$ negatively-oriented) points for some $k_{i,k}^{\pm}\in\bb{N}$, for $k=0,1$ and for each $i\in\{1,\dots,m\}$. Define submanifolds $Y=\bigcup_{t\in{[0,1]}}(\{t\}\times{\Sigma_{t})}$ and $B=\bigsqcup_{j\in{J}}\bigcup_{t\in{[0,1]}}(\{t\}\times{B_{j,t}})$, then $Y$ and $B$ generically intersect $[0,1]\times{G_{i}}$ in a 1- and 3-manifold respectively. Deformation retracting $B$ to its core yields a 1-parameter family of 1-complexes $\Gamma(B)=\bigsqcup_{j\in{J}}\bigcup_{t\in{[0,1]}}(\{t\}\times{\Gamma(B_{j,t})})$ intersecting $[0,1]\times{G_{i}}$ in a finite number of points (see Remark \ref{rem:core_1d_input}). After a possible perturbation of $\Gamma(B)$ and $Y$ rel $\partial$, we may assume that $\Gamma_{i}:=(Y\cup \Gamma(B))\cap([0,1]\times{G_{i}})$ is an embedded graph in $[0,1]\times G_{i}\approx[0,1]\times D^{2}$ such that if $\pi_{i}$ denotes the composition
\[\Gamma_{i}\hookrightarrow{[0,1]\times G_{i}}\twoheadrightarrow[0,1]\]
of the inclusion map followed by projection onto the first factor, then $\pi_{i}$ is a local diffeomorphism outside of finitely many critical points of the forms depicted in Figure \ref{fig:cocorecrits}.
\begin{figure}
    \centering
    \includegraphics[width=0.95\linewidth]{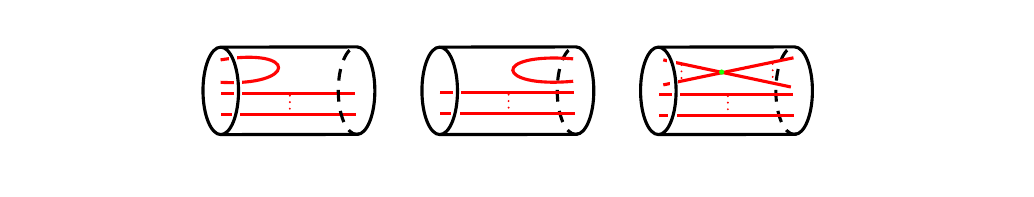}
    \caption{(From left to right) The cap, cup and switch critical points in $[0,1]\times{G_{i}}$.}
    \label{fig:cocorecrits}
\end{figure}

Similar to (\cite{MN22}, Proof of Theorem 1.1), in the intervals between the critical points, the intersections between $\Sigma_{t}$ and $G_{i}$ change by an isotopy of $G_{i}$ and hence correspond to the braid group action, and passing through the cup and cap critical points correspond to the $\psi_{O,i}^{[j]}$ maps. In the 1-dimensional inputs setting, we also have the appearance of switch critical points, which correspond to pieces of 1-dimensional inputs passing through $G_{i}$. 

The topology of the 1-dimensional inputs $\{B_{j,t}\}$ and surfaces $\{\Sigma_{j,t}\}$ may apriori be arbitrarily complicated. However we have the following lemma which allows us to greatly simplify the topology of the situation in the presence of a switch point:

\begin{lemma}
\label{lem:B0}
Let $\{F_{t}\}_{t\in[0,1]}=\{(\Sigma_{t},\{(B_{j,t},L_{j,t},v_{j,t})\}_{j\in J})\}_{t\in[0,1]}$ be a 1-parameter family of type $O$ 1-dimensional input lasagna fillings of $(X',L')$ satisfying conditions \ref{cond:isotopy_1} and \ref{cond:isotopy_2}, such that it has a single switch critical point with respect to $\pi_{i}:\Gamma_{i}\to[0,1]$ for some $i\in\{1,\dots,m\}$ and no other critical points. Furthermore, let $\mbf{k}^{+},\mbf{k}^{-}\in\bb{N}^{m}$ and $\ell^{+},\ell^{-}\in\bb{N}$ be such that:
\begin{itemize}
    \item $\mbf{k}^{+}-\mbf{k}^{-}=\bm{\alpha}$ and $\ell^{+}-\ell^{-}=\alpha_{i}$.
    \item For $k=0,1$ and $j\neq i$, $\Sigma_{k}\cap{G_{j}}$ consists of $k_{j}^{+}$ positively-oriented points and $k_{j}^{-}$ negatively-oriented points.
    \item $\Sigma_{0}\cap{G_{i}}$ consists of $k_{i}^{+}$ positively-oriented points and $k_{i}^{-}$ negatively-oriented points.
    \item $\Sigma_{1}\cap{G_{i}}$ consists of $\ell^{+}$ positively-oriented points and $\ell^{-}$ negatively-oriented points.
\end{itemize}
Then the following statements are true:
\begin{enumerate}
    \item There exists a type $O$ 1-dimensional inputs lasagna filling $\wt{F}$ of $(X\natural S^{1}\times B^{3},K(\mbf{k}^{\pm};\ell^{\pm};i)\cup L)$ such that under the enclosement relation $\{F_{t}\}_{[t\in[0,1]}$ is equivalent to the 1-parameter family of lasagna fillings $\{\wt{F}'_{t}\}_{t\in[0,1]}=\{(\wt{\Sigma}'_{t},\{(\wt{B}'_{j,t},\wt{L}'_{j,t},\wt{v}'_{j,t})\}_{j\in J})\}_{t\in[0,1]}$ induced by the isotopy of lasagna fillings from $\wt{f}_{O}^{(\bm{\alpha},\eta)}(\Phi_{O}^{(\bm{\alpha},\eta)}(\wt{F}))$ to $\wt{f}_{O}^{(\bm{\alpha},\eta)}(\Psi_{O}^{(\bm{\alpha},\eta)}(\wt{F}))$ furnished by Lemma \ref{lem:isotopy_of_embeddings}.
    \item Any two lasagna fillings $\wt{F}_{0}$ and $\wt{F}_{1}$ of $(X\natural S^{1}\times B^{3},K(\mbf{k}^{\pm};\ell^{\pm};i)\cup L)$ satisfying (1) are related by a 1-parameter family of lasagna fillings $\{\wt{F}_{t}\}_{t\in[0,1]}$ of $(X\natural S^{1}\times B^{3},K(\mbf{k}^{\pm};\ell^{\pm};i)\cup L)$.
\end{enumerate}
\end{lemma}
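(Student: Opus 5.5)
Near the switch critical point $t_{0}$ we will localize, then enclose everything away from a neighbourhood of the co-core into one 1-handlebody, and show that the local model is exactly the lasso isotopy of Lemma \ref{lem:isotopy_of_embeddings}.

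\textbf{Local picture at the switch.} Since $\pi_{i}$ has a single switch critical point and no cups or caps, the switch is a point where an edge of $\Gamma(B)$ (a core circle of some input $B_{j_0,t}$) passes through $[0,1]\times G_{i}$. On either side of $t_{0}$ the intersection $\Gamma(B)\cap(\{t\}\times G_i)$ is empty. The surface points of $\Sigma_t\cap G_i$ change from $(k_i^{+},k_i^{-})$ to $(\ell^{+},\ell^{-})$. This happens because the sheets of $\Sigma$ that were attached to the relevant portion of $\partial B_{j_0,t}$ get swept across the co-core.

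Using the isotopy extension theorem we fix a small neighbourhood $N$ of the 2-handle $h_i$ together with a tubular neighbourhood of the core arc of $B_{j_0,t_0}$ passing through it. Outside $N$ the family is a product up to ambient isotopy.

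\textbf{Construction of $\wt F$ (part (1)).} At time $t_0-\varepsilon$, consider the 1-handlebody $B' := X\natural S^1\times B^3$. Here the $S^1\times B^3$ summand is a neighbourhood of the core circle of $B_{j_0}$ that will cross $G_i$. We take $B'$ to contain $\mathrm{int}(X)$ together with all inputs $B_{j,t}$, after shrinking $X$ slightly off the handles.

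We apply the enclosement relation to $F_{t_0-\varepsilon}$ with respect to $B'$. This produces a filling $\wt F$ whose input is the whole of $B'$ and whose boundary link is $\Sigma\cap\partial B'$. Pushing the remaining surface pieces into a collar, we check that this boundary link is $K(\mbf{k}^{\pm};\ell^{\pm};i)\cup L$. The $k_i^{\pm}$ sheets through $h_i$ become the cable of $K(i)_i=K_i\#\belt{1,0}$, and the $\ell^{\pm}$ sheets that will end up through $h_i$ become the cable of $\belt{0,1}$. This identification is the main obstacle: one must see, via Figure \ref{fig:S_i_and_S'_i}, that the sheets wrapping around the moving core circle are exactly belt-parallel strands, with the correct orientations and framings.

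With this identification, $F_{t_0-\varepsilon}$ is $\wt f(\Phi(\wt F))$ and $F_{t_0+\varepsilon}$ is $\wt f(\Psi(\wt F))$. The family in between is ambiently isotopic, rel the inputs, to the handleslide isotopy $F$ constructed in Lemma \ref{lem:isotopy_of_embeddings}: the switch is precisely the core circle sliding over $K_i$. We then extend by the product isotopy outside $N$, which gives the claimed equivalence of 1-parameter families.

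\textbf{Uniqueness (part (2)).} The filling $\wt F$ depends on three choices: the neighbourhood $N$, the arc $a$ defining the connected sum, and the time $t_0\pm\varepsilon$ at which we enclose. Different choices of $t$ within the switch-free interval are related by the restricted isotopy, which is a 1-parameter family in $X\natural S^1\times B^3$ because nothing crosses the handles there. Different choices of $N$ and $a$ are related by isotopies of the embedding $X\natural S^1\times B^3\hookrightarrow X'$ that are supported away from the handle cores, and these pull back to families of fillings. Composing these gives $\{\wt F_t\}$.
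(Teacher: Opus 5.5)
There is a genuine gap at your central step. You apply the enclosement relation to $F_{t_0-\varepsilon}$ with $B'=X\natural S^1\times B^3$ and call $B'$ a 1-handlebody. Enclosement only allows enlarging inputs to 4-dimensional 1-handlebodies, but in Lemma~\ref{lem:B0} (which feeds Theorem~\ref{theorem:tortellini_2_handle_formula}) $X$ is an arbitrary compact 4-manifold. For general $X$, $\partial B'=Y\#S^1\times S^2$ is not a connected sum of copies of $S^1\times S^2$, so there is no group $\KhR^{-}_{2,O}(\partial B',\Sigma\cap\partial B')$ for the enclosed label to live in. At best your route covers the case $X=\natural^n S^1\times B^3$ of Corollary~\ref{cor:tortellini_1_2_handlebody}.

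Moreover, enclosement produces a filling of $(X',L')$ with one large input. The lemma instead asks for a filling of $(X\natural S^1\times B^3,K(\mbf{k}^{\pm};\ell^{\pm};i)\cup L)$, which is obtained by \emph{restriction}, not enclosement. The paper keeps the original inputs. It takes a 1-parameter family of embeddings $i_t\colon X\natural S^1\times B^3\hookrightarrow X'$, with $i_t|_X$ the inclusion of $X$ enlarged by a collar, and with the 1-handle of $i_t$ containing a neighbourhood of $B_{j_0,t}\cap H_i$, so every input lies in $\mathrm{im}(i_t)$. Pulling back $F_t\cap\mathrm{im}(i_t)$ gives fillings of $X\natural S^1\times B^3$ that vary only by isotopy, hence a single $\wt F$. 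The complements of $\mathrm{im}(i_t)$ at the two ends carry exactly the cobordisms defining $\Phi$ and $\Psi$. This gives the equivalence of whole 1-parameter families for arbitrary $X$, rather than a comparison at the single times $t_0\pm\varepsilon$.

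Your identification of the boundary link also misses a case, and it is the real obstacle rather than orientations and framings. Some sheets of $\Sigma_t$ pass through $H_i$ as full core-parallel disks without ever meeting the moving input (the paper's $a^{\pm}$ disks $D^{\pm}_i$). These are counted in both $k_i^{\pm}$ and $\ell^{\pm}$. Their boundaries on $\partial(X\natural S^1\times B^3)$ are plain push-offs of $K_i$ that do not run over the new $S^1\times S^2$ summand, so the resulting link is not $K(\mbf{k}^{\pm};\ell^{\pm};i)\cup L$ unless $a^{\pm}=0$.

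The paper's first step is a local enclosement that enlarges $B_{j_0,t}$ in the co-core direction inside $H_i$ to swallow these disks (Figure~\ref{fig:swallowing_sheets}). Afterwards every sheet in $H_i$ is a left or right half-disk partly bounded by $\partial B_{j_0,t}$. Only then do the $k_i^{\pm}$ left half-disks give the cable of $K(i)_i$ and the $\ell^{\pm}$ right half-disks the cable of $K(i)_{m+1}$.

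This normalization is also what part (2) must compare. Two choices of isotopies putting the family into this normal form give a 2-parameter family of fillings of $(X',L')$, which restricts through $i_t$ to a path between the two $\wt F$'s. Your claim that different choices of $N$ are related by isotopies supported away from the handle cores does not address these choices and is not justified.
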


\begin{proof}
Let $H_{i}$ denote the $i$-th 2-handle, and fix a homeomorphism $H_{i}\cong D^{2}\times D^{2}$ where $D^{2}\times\{0\}$ corresponds to the core and $\{0\}\times D^{2}$ corresponds to the co-core. Furthermore, let $t_{0}\in[0,1]$ and $j_{0}\in J$ be such that the intersection of the one parameter family of cores $\Gamma(B)$ with $[0,1]\times G_{i}$ lies in $\Gamma(B_{j_{0},t_{0}})$. By perturbing if necessary, we can assume that $\Gamma(B_{j_{0},t_{0}})\cap G_{i}$ consists of a single point $p\in \Gamma(B_{j_{0},t_{0}})\cong\vee^{n}S^{1}$ away from the basepoint, i.e., $p$ is contained in a single wedge summand. Via an isotopy of one-parameter families, we can assume that there exists $\epsilon>0$ and some $a^{+},a^{-}\in\bb{N}$ such that the intersection $(Y\cup B)\cap([0,1]\times H_{i})$ is of the following form:
\begin{itemize}
    \item For each $t\in(0,\frac{\epsilon}{2})$:
    \begin{itemize}
        \item $B\cap(\{t\}\times H_{i})=\emptyset$.
        \item $Y\cap(\{t\}\times H_{i})=\Sigma_{t}\cap H_{i}$ is homeomorphic to a union of $k_{i}^{+}$ positively oriented and $k_{i}^{-}$ negatively oriented core-parallel disks.
    \end{itemize}
    \item For $t\in(\epsilon,1-\epsilon)$:
    \begin{itemize}
        \item $B\cap(\{t\}\times H_{i})=B_{j_{0},t}\cap H_{i}\cong[0,1]\times B^{3}$.
        \item $L_{j_{0},t}\cap H_{i}$ is homeomorphic to the identity braid in $\del B_{j_{0},t}\cap H_{i}\cong[0,1]\times S^{2}$ with $k_{i}^{+}+\ell^{-}-a^{+}-a^{-}$ positively oriented strands and $k_{i}^{-}+\ell^{+}-a^{+}-a^{-}$ negatively oriented strands.
        \item $\Sigma_{t}\cap H_{i}$ is homeomorphic to a disjoint union of the following pieces:
        \begin{itemize}
            \item $k_{i}^{+}-a^{+}$ positively oriented half disks $\{D^{+}_{L,i}\}_{i=1}^{k_{i}^{+}-a^{+}}$.
            \item $k_{i}^{-}-a^{-}$ negatively oriented half disks $\{D^{-}_{L,i}\}_{i=1}^{k_{i}^{-}-a^{-}}$.
            \item $\ell^{+}-a^{+}$ positively oriented half disks $\{D^{+}_{R,i}\}_{i=1}^{\ell^{+}-a^{+}}$.
            \item $\ell^{-}-a^{-}$ negatively oriented half disks $\{D^{-}_{R,i}\}_{i=1}^{\ell^{-}-a^{-}}$.
            \item $a^{+}$ positively oriented disks $\{D^{+}_{i}\}_{i=1}^{a^{+}}$
            \item $a^{-}$ negatively oriented disks $\{D^{-}_{i}\}_{i=1}^{a^{-}}$
        \end{itemize}
        \item for $t\in(\epsilon,t_{0}-\epsilon)$:
        \begin{itemize}
            \item $B\cap(\{t\}\times G_{i})=B_{j_{0},t}\cap G_{i}=\emptyset$.
            \item $\Sigma_{t}\cap G_{i}$ consists of a single point in each of the disks in $\{D^{+}_{L,i}\}_{i=1}^{k_{i}^{-}-a^{-}}\cup\{D^{-}_{L,i}\}_{i=1}^{k_{i}^{-}-a^{-}}\cup\{D^{+}\}_{i=1}^{a^{+}}\cup\{D^{-}\}_{i=1}^{a^{-}}$.
        \end{itemize}
        \item for $t\in(t_{0}-\epsilon,t_{0}+\epsilon)$:
        \begin{itemize}
            \item $B\cap(\{t\}\times G_{i})=B_{j_{0},t}\cap G_{i}\cong D^{2}$.
            \item $\Sigma_{t}\cap G_{i}$ onsists of a single point in each of the disks in $\{D^{+}\}_{i=1}^{a^{+}}\cup\{D^{-}\}_{i=1}^{a^{-}}$.
        \end{itemize}
        \item for $t\in(t_{0}+\epsilon,1-\epsilon)$:
        \begin{itemize}
            \item $B\cap(\{t\}\times G_{i})=B_{j_{0},t}\cap G_{i}=\emptyset$.
            \item $\Sigma_{t}\cap G_{i}$ consists of a single point in each of the disks in $\{D^{+}_{R,i}\}_{i=1}^{\ell^{-}-a^{-}}\cup\{D^{-}_{R,i}\}_{i=1}^{\ell^{-}-a^{-}}\cup\{D^{+}\}_{i=1}^{a^{+}}\cup\{D^{-}\}_{i=1}^{a^{-}}$.
        \end{itemize}
    \end{itemize}
    \item For each $t\in(1-\frac{\epsilon}{2},1)$:
    \begin{itemize}
        \item $B\cap(\{t\}\times H_{i})=\emptyset$.
        \item $Y\cap(\{t\}\times H_{i})=\Sigma_{t}\cap H_{i}$ is homeomorphic to a union of $\ell^{+}$ positively oriented and $\ell^{-}$ negatively oriented core-parallel disks.
    \end{itemize}
\end{itemize}
See Figure \ref{fig:1d_input_passing_through} for a depiction of the above. 

We proceed in two steps. First, observe that we can use the enclosement relation to ensure that $a^{+}=a^{-}=0$ (see Figure \ref{fig:swallowing_sheets}).

Next, consider the decomposition $X\natural S^{1}\times B^{3}=X\cup_{S^{0}\times S^{2}}D^{1}\times S^{2}$, and let $i_{t}:X\natural(S^{1}\times B^{3})\hookrightarrow X'$ be a 1-parameter family of embeddings such that:
\begin{itemize}
    \item for all $t\in[0,1]$, $i_{t}|_{X}$ is given by the inclusion $X\hookrightarrow X\cup_{\del X}(\del X\times[0,1])\approx X$ for some fixed collar neighborhood of $\del X$.
    \item $i_{t}|_{D^{1}\times S^{2}}$ contains a neighborhood of $B_{j_{0},t}\cap H_{i}$ for all $t\in(\epsilon,1-\epsilon)$.
\end{itemize}
By construction we have that $B\subset\cup_{t\in[0,1]}\{t\}\times\im(i_{t})$. Let $F'_{t}$ be the one parameter family of lasagna fillings of $(X\natural S^{1}\times B^{3},K(\mbf{k}^{\pm};\ell^{\pm};i)\cup L)$ obtained as the set of preimages under $i_{t}$ of the intersection of $F_{t}\cap\im(i_{t})$ for all $t\in[0,1]$. As these lasagna fillings only vary by an isotopy as $t$ ranges from $0$ to $1$, we can make a continuous identification of the family $\{F'_{t}\}_{t\in[0,1]}$ with the fixed lasagna filling $\wt{F}:=F'_{0}$. By inspection, we see that $\wt{F}$ satisfies (1) as in the statement of the lemma. 

Furthermore, (2) holds by the following observation: Let $\wt{F}_{0}$ and $\wt{F}_{1}$ be two lasagna fillings of $(X\natural S^{1}\times B^{3},K(\mbf{k}^{\pm};\ell^{\pm};i)\cup L)$ furnished by (1), with corresponding 1-parameter families $\{\wt{F}'_{0,t}\}_{t\in[0,1]}$ and $\{\wt{F}'_{1,t}\}_{t\in[0,1]}$, respectively, of lasagna fillings of $(X',L')$. Then any two choices of isotopies of one-parameter families arranging for the intersection $(Y\cup B)\cap([0,1]\times H_{i})$ to be of the above prescribed form induce a 2-parameter family of lasagna fillings $\{\wt{F}'_{s,t}\}_{(s,t)\in[0,1]^{2}}$ connecting $\{\wt{F}'_{0,t}\}_{t\in[0,1]}$ and $\{\wt{F}'_{1,t}\}_{t\in[0,1]}$, and hence yield a one-parameter family $\{\wt{F}_{s}\}_{s\in[0,1]}$ connecting $\wt{F}_{0}$ and $\wt{F}_{1}$, as desired.

\begin{center}
\begin{figure}
    \includegraphics[width=16cm]{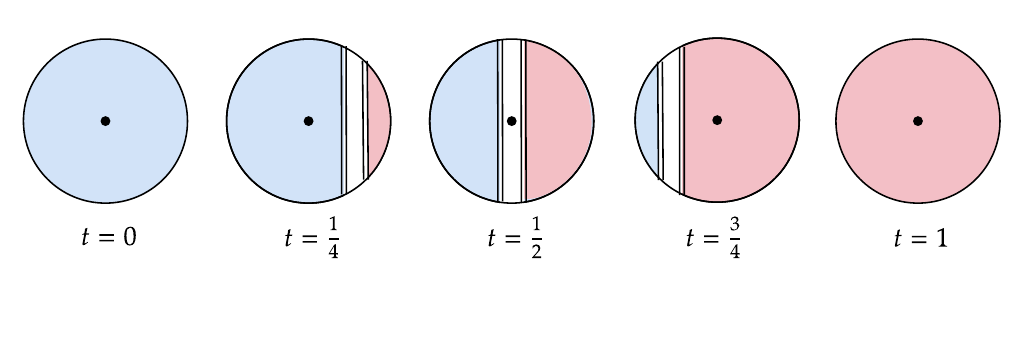}
    \caption{A schematic of a piece of $1$-dimensional input passing through a co-core of the $2$-handle $H_{i}\approx D^{2}\times D^{2}$ after projecting down to the core disk $D^{2}\times\{0\}$, as in the proof of Lemma \ref{lem:B0}. The co-core $G_{i}$ is represented by the central dot $\{0\}\times\{0\}$, the piece of $1$-dimensional input passing through $H_{i}$ is depicted by the white strip, the half disks $D^{\pm}_{L,i}$ and $D^{\pm}_{R,i}$ are shown in blue and red, respectively, and the disks $D^{\pm}_{i}$ are not depicted.}
    \label{fig:1d_input_passing_through}
\end{figure}
\end{center}
\begin{center}
\begin{figure}
    \includegraphics[width=15cm]{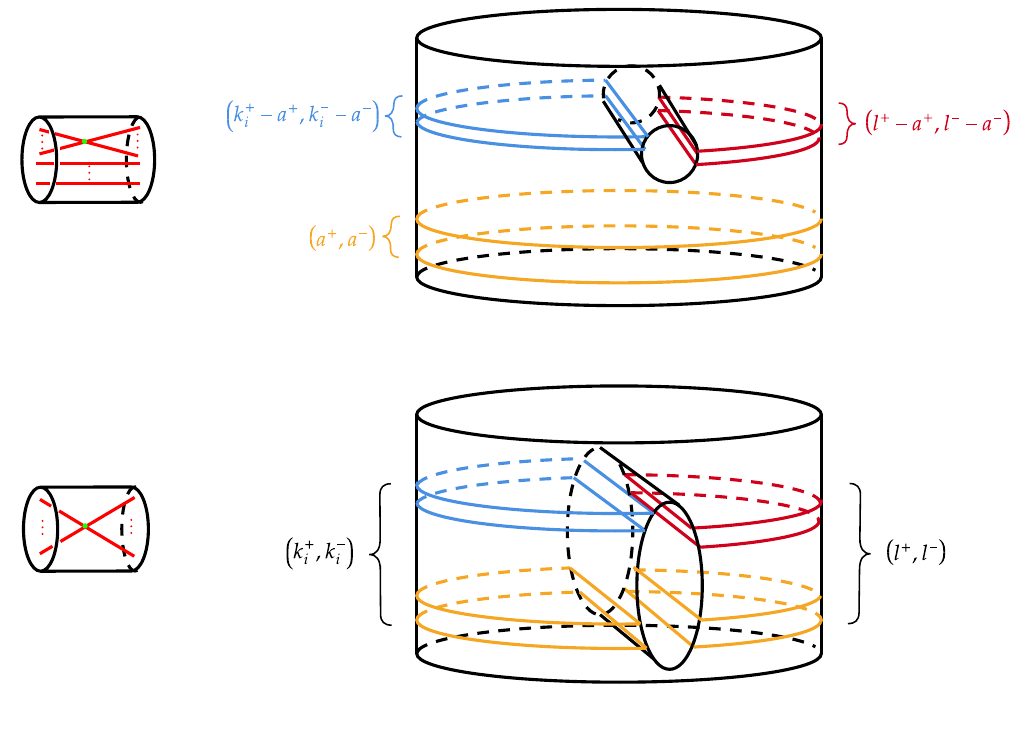}
    \caption{Using the enclosement relation to subsume the remaining sheets into the ``switch" of the switch critical point, as in the proof of Lemma \ref{lem:B0}.}
    \label{fig:swallowing_sheets}
\end{figure}
\end{center}

\end{proof}

\begin{proof}[Proof of Theorem \ref{theorem:tortellini_2_handle_formula}]
Lemma \ref{lem:B0} along with the preceding discussion implies that $(f^{(\bm{\alpha},\eta)}_{O})^{-1}$ is well-defined, and the reader can check that $(f^{(\bm{\alpha},\eta)}_{O})$ and $(f^{(\bm{\alpha},\eta)}_{O})^{-1}$ as defined above are indeed inverses of each other.
\end{proof}

\subsection{\texorpdfstring{Properties of $\Phi$ and $\Psi$}{Properties of Φ and Ψ}}
\label{subsec:Phi_Psi_properties}

In this section we establish some properties of the maps $\Phi_{\diamond}^{(\bm{\alpha},\eta)}$ and $\Psi_{\diamond}^{(\bm{\alpha},\eta)}$ from Theorem \ref{theorem:tortellini_2_handle_formula}, with components defined by (\ref{eq:Phi}) and (\ref{eq:Psi}), respectively. 

Let $i\in\{1,\dots,m\}$, and fix $\mbf{k}^{+},\mbf{k}^{-}\in\bb{N}^{m}$, $\ell^{+},\ell^{-}\in\bb{N}$ such that 
\begin{itemize}
    \item $k_{i}^{+}-k_{i}^{-}=\ell^{+}-\ell^{-}$ if $\diamond=O$,
    \item $k_{i}^{+}-k_{i}^{-}\equiv \ell^{+}-\ell^{-}\pmod{2}$ if $\diamond=T$.
\end{itemize}
First, note that the decompositions (\ref{eq:W_i_cabled_decomp}) and (\ref{eq:W'_i_cabled_decomp}) give us factorizations
\begin{align*}
    &\Phi_{\diamond,\mbf{k}^{\pm},\ell^{\pm},i}=\ol{\skein}_{0}^{2,\diamond}(Y\times[0,1],C_{\ell^{\pm}})\circ\ol{\skein}_{0}^{2,\diamond}(Z_{i},S_{\mbf{k}^{\pm},\ell^{\pm},i}), \\
    &\Psi_{\diamond,\mbf{k}^{\pm},\ell^{\pm},i}=\ol{\skein}_{0}^{2,\diamond}(Y\times[0,1],C_{k_{i}^{\pm}})\circ\ol{\skein}_{0}^{2,\diamond}(Z'_{i},S'_{\mbf{k}^{\pm},\ell^{\pm},i}).
\end{align*}
First, we calculate the maps $\ol{\skein}_{0}^{2,\diamond}(Z_{i},S_{\mbf{k}^{\pm},\ell^{\pm},i})$ and $\ol{\skein}_{0}^{2,\diamond}(Z'_{i},S'_{\mbf{k}^{\pm},\ell^{\pm},i})$. Observe that $Z_{i}$ is given by an \emph{elementary} cancelling 2-handle attachment, i.e., a 2-handle attachment along a longitude in an $S^{1}\times S^{2}$-summand. In the special case where $X=\natural^{n}S^{1}\times B^{3}$, the cobordism $(Z_{i},S_{\mbf{k}^{\pm},\ell^{\pm},i})$ corresponds to the elementary morphism (v) of (\cite{RSWWZ25}, Proposition 6.3). Hence
\begin{equation}
\label{eq:Z_i_equals_counit}
    \ol{\skein}_{0}^{2,\diamond}(Z_{i},S_{\mbf{k}^{\pm},\ell^{\pm},i})=\KhR_{2,\diamond}^{-}(Z_{i},S_{\mbf{k}^{\pm},\ell^{\pm},i})=\epsilon
\end{equation}
where
\begin{equation}
\label{eq:counit_iota}
    \epsilon:\KhR_{2,\diamond}^{-}\big(K(\mbf{k}^{\pm};\ell^{\pm};i)\big)\to\KhR_{2,\diamond}^{-}\big(K(\mbf{k}^{+},\mbf{k}^{-})\big)\otimes\KhR_{2}\big(U(\ell^{+},\ell^{-})\big)
\end{equation}
is the counit map which deletes the projector (see Remark \ref{def:Roz-cobar}).

Next, note that the 2-handle cobordism $(Z'_{i},S'_{\mbf{k}^{\pm},\ell^{\pm},i})$ is (in general) not an elementary cancelling 2-handle attachment. However, we can decompose it as a composition of a diffeomorphism of the boundary manifold/link pair followed by an elementary canceling 2-handle attachment.

\begin{lemma}
\label{lem:diff_of_pairs_Psi_swap}
There exists a diffeomorphism of pairs
\[\varphi_{i}:(Y\#S^{1}\times S^{2},K(i)\cup L)\xrightarrow{\approx} (Y\#S^{1}\times S^{2},K(i)\cup L)\]
which swaps $K(i)_{i}$ and $K(i)_{m+1}$, respecting framings, restricts to the identity on all other components of $K(i)\cup L$.
\end{lemma}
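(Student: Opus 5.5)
We will build $\varphi_{i}$ as a composition of two explicit diffeomorphisms, each realizing a handle-slide type move inside the $S^{1}\times S^{2}$ summand. Recall $K(i)_{i}=K_{i}\#\belt{1,0}$ and $K(i)_{m+1}=\belt{0,1}$. Both curves meet the $S^{1}\times S^{2}$ factor geometrically once; the second is a standard longitude with reversed orientation.

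First, isolate a local model. Choose the connected-sum arc $a$ and a small neighborhood $N$ of the union of $a$ with the $S^{1}\times S^{2}$ summand. Then $Y\#S^{1}\times S^{2}=(Y\setminus B^{3})\cup_{S^{2}} (S^{1}\times S^{2}\setminus B^{3})$. In this decomposition $K(i)_{i}$ is the band sum of the arc $\mathring{K}_{i}\subset Y\setminus B^{3}$ (as in Figure \ref{fig:S_i_and_S'_i}) with an arc running once over the $S^{1}$ factor. Meanwhile $K(i)_{m+1}$ is a closed longitude $S^{1}\times\{p\}$ lying entirely in the $S^{1}\times S^{2}\setminus B^{3}$ piece, parallel (up to orientation) to the $\belt{1,0}$ strand.

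Second, use the standard fact, already implicit in Figure \ref{fig:handleslide_gamma_over_K_i} and the proof of Lemma \ref{lem:isotopy_of_embeddings}, that the 3-manifold diffeomorphism induced by sliding the 1-handle foot of $S^{1}\times S^{2}$ along a closed curve acts on curves by band-summing with that curve. Concretely, consider the 1-handle of $S^{1}\times S^{2}$, with attaching balls $B_{\pm}$. Drag one attaching ball $B_{+}$ along the path $\mathring{K}_{i}$ (traversed from one endpoint of the connected sum region around back to the other), through $Y\setminus B^{3}$, carrying the link along. This is an isotopy of the attaching data, so it induces a diffeomorphism $\varphi'$ of $Y\#S^{1}\times S^{2}$. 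The longitude $K(i)_{m+1}$ passes through the 1-handle once, so it is carried to a curve that follows $\mathring{K}_{i}$ and then runs over the handle. This is exactly $K_{i}\#\belt{1,0}$ up to orientation.

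The same slide would also drag $K(i)_{i}$ along. To control this, we perform the slide using a parallel push-off path in a tubular neighborhood of $K_{i}$, and handle the two components in a ``swap'' fashion. After the slide, $K(i)_{i}$ becomes a curve that traverses $\mathring{K}_{i}$ twice in opposite directions. That curve is isotopic (by retracting the doubled arc, the lightbulb-type trick in $S^{1}\times S^{2}$) to a plain longitude. A final diffeomorphism of the $S^{1}\times S^{2}$ summand (the orientation-reversing-on-$S^{1}$ hyperelliptic flip, composed with a reflection of $S^{2}$ so that it is orientation preserving) fixes orientations so that the images carry the original orientations of $K(i)_{m+1}$ and $K(i)_{i}$. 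All other components of $K(i)\cup L$ lie in $Y\setminus N$, and we will choose the dragging path and the isotopies supported in $N$, disjoint from them. This gives the identity on those components.

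The main obstacle is framings. We must check that the image of each swapped component carries the correct framing. Here $K(i)_{m+1}$ is 0-framed, while $K(i)_{i}$ carries the framing of $K_{i}$. Under the slide, framings add as in a handle-slide, the longitude contributes zero, and since the path follows the framed push-off of $K_{i}$ the framings are transported. If a discrepancy by full twists remains, we will absorb it using the Gluck twist diffeomorphism of the $S^{1}\times S^{2}$ summand from Remark \ref{rmk:gluck_twist_maps}. That diffeomorphism changes the framing on curves passing once through the summand by $\pm1$ per twist and fixes the rest, so an appropriate power corrects the framings.
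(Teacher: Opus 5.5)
\textbf{Comparison with the paper's route.} The paper does no sliding. It re-presents $Y\#S^{1}\times S^{2}$ so that the two feet of the $3$-dimensional $1$-handle $D=[-1,1]\times S^{2}$ sit at two diametrically opposite points $w,z$ of $K_{i}$. In this model $K(i)_{i}$ and $K(i)_{m+1}$ are the two symmetric halves of $K_{i}$, each closed up through the handle. Then $\varphi_{i}$ is the half-rotation $\lambda\mapsto\lambda+\pi$ of $\nu(K_{i})\cong D^{2}\times S^{1}$, tapered to the identity across a collar of $\partial\nu(K_{i})$ and glued to the flip $(t,z)\mapsto(-t,1/z)$ of $D$. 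This rotation preserves the framing curve $\{1\}\times S^{1}$, so framings are respected by construction.

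Your slide/retract/$\mathsf{inv}$ sequence can plausibly realize the \emph{unframed} oriented swap. For that you must place the dragged strand of $K(i)_{m+1}$ on the far side of the strip you collapse, since it runs parallel to that strip along the push-off. Also, after the slide the image of $K(i)_{m+1}$ is $K_{i}'\#\belt{0,1}$, not $K_{i}\#\belt{1,0}$ ``up to orientation''. It is $\mathsf{inv}$ that reverses only the passage through the handle.

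\textbf{The gap: framings.} Framing compatibility is the actual content of the lemma, and you leave it open. You never compute the discrepancy, and your fallback does not work as stated. A power $\tau^{k}$ of the Gluck twist of the summand changes the framings of \emph{both} swapped components at once, by amounts fixed by $k$. For instance, it changes them by $+k$ and $-k$ relative to the product framing when their arcs cross the twisting sphere at the two fixed points of the rotation. So one power can only cancel a pair of discrepancies $(d_{1},d_{2})$ satisfying one specific relation, and you would have to prove your composite produces such a pair.

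The phrase ``framings add as in a handle-slide'' does not supply this. It relies on linking numbers, which are undefined for curves essential in $H_{1}$ of the summand. Two further effects are also untracked:
\begin{itemize}
\item Making $\mathsf{inv}$ the identity near the connected-sum sphere introduces extra rotations of the two arcs of the image of $K(i)_{m+1}$ that cross it.
\item The strip retraction preserves framings only if the framing is compatible with the strip.
\end{itemize}
Either do this bookkeeping explicitly, or switch to the symmetric model above, where no correction is needed.
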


\begin{proof}
Consider the decomposition of $Y\#S^{1}\times S^{2}$ given as follows: let $\nu(K_{i})$ denote a tubular neighborhood of $K_{i}\subset Y$, and fix an identification $\nu(K_{i})=D^{2}\times S^{1}$ so that $K_{i}\subset\nu(K_{i})$ is identified with the core circle $\{0\}\times S^{1}$, and the longitude $\{1\}\times S^{1}$ agrees with the framing of $K_{i}$. Let $w$ and $z$ be the two diametrically opposite basepoints on $K_{i}$ corresponding to $\{0\}\times\{1\}$ and $\{0\}\times\{-1\}$ under the above identification, and consider the 3-ball neighborhoods $\nu(w)$ and $\nu(z)$ of $w$ and $z$ in $\nu(K_{i})$ which under the above identification correspond to $\epsilon$-balls of some fixed radius $\epsilon\in(0,1)$ centered at $w$ and $z$, respectively. Then we can write
\begin{equation}
\label{eq:Y_connect_sum_S^1xS^2_decomp}
    Y\#S^{1}\times S^{2}=A\cup_{S^{1}\times S^{1}}B\cup_{S^{1}\times S^{1}}C\cup_{S^{0}\times S^{2}}D
\end{equation}
where
\begin{enumerate}
    \item $A=Y\setminus\mathring{\nu}(K_{i})$,
    \item $B=\del\nu(K_{i})\times[0,1]\cong S^{1}\times S^{1}\times[0,1]$,
    \item $C=\nu(K_{i})\setminus(\mathring{\nu}(w)\cup\mathring{\nu}(z))\cong (D^{2}\times S^{1})\setminus(S^{0}\times B^{3})$,
    \item $D=[-1,1]\times S^{2}$.
\end{enumerate}
Furthermore, we can identify $(K(i)\cup L)\setminus(K(i)_{i}\cup K(i)_{m+1})$ with its image in $A$, and identify:
\begin{itemize}
    \item $K(i)_{i}$ with the union of the tangles 
    \begin{align*}
        &T_{i}:=(\{0\}\times[\tfrac{\pi}{2},\tfrac{3\pi}{2}])\setminus(S^{0}\times B^{3})\subset C, & &T'_{i}:=[-1,1]\times\{a\}\subset D.
    \end{align*}
    \item $K(i)_{m+1}$ with the union of the tangles 
    \begin{align*}
        &T_{m+1}:=(\{0\}\times[-\tfrac{\pi}{2},\tfrac{\pi}{2}])\setminus(S^{0}\times B^{3})\subset C, & &T'_{m+1}:=[-1,1]\times\{b\}\subset D,
    \end{align*}
\end{itemize}
where $a,b\in S^{2}$ denote the north and south poles, respectively (see Figure \ref{fig:decomposition}). Consider the diffeomorphism
\begin{align*}
    f:B&\to B \\
    (\mu,\lambda,s)&\mapsto(\mu,\lambda+s\pi,s)
\end{align*}
where $\mu$ and $\lambda$ denote the meridional and longitudinal coordinates on $\del\nu(K_{i})\cong S^{1}\times S^{1}=(\bb{R}/2\pi\bb{Z})^{2}$, and $s$ denotes the coordinate of the $[0,1]$-factor. Note that $f$ fixes the incoming boundary $\del\nu(K_{i})\times\{0\}$ and acts by a $\pi$ rotation in the longitudinal direction of the outgoing boundary $\del\nu(K_{i})\times\{1\}$ (see Figure \ref{fig:decomposition}). Next, let
\begin{align*}
    g:C&\to C & h:D&\to D \\
    (x,\lambda)&\mapsto(x,\lambda+\pi), & (t,z)&\mapsto(-t,\tfrac{1}{z}),
\end{align*}
where we interpret the map $z\mapsto\frac{1}{z}$ on $S^{2}=\bb{C}P^{1}$ as reflection across the equator. Note that $g$ and $h$ extend to diffeomorphisms of pairs
\begin{align*}
    g:(C,T_{i}\cup T_{m+1})&\to (C,T_{i}\cup T_{m+1}) & h:(D,T'_{i}\cup T'_{m+1})&\to(D,T'_{i}\cup T'_{m+1})
\end{align*}
which exchange $T_{i}$ and $T_{m+1}$ (respectively, exchange $T'_{i}$ and $T'_{m+1}$. We then define $\varphi_{i}:Y\#S^{1}\times S^{2}\to Y\#S^{1}\times S^{2}$ by
\begin{align*}
    &\varphi_{i}|_{A}=\id_{A} & &\varphi_{i}|_{B}=f & &\varphi_{i}|_{C}=g & &\varphi_{i}|_{D}=h
\end{align*}
under the decomposition (\ref{eq:Y_connect_sum_S^1xS^2_decomp}). By construction we see that $\varphi_{i}$ induces a diffeomorphism of pairs $(Y\#S^{1}\times S^{2},K(i)\cup L)\xrightarrow{\approx} (Y\#S^{1}\times S^{2},K(i)\cup L)$ that satisfies the desired properties as in the statement of the lemma.

\begin{center}
\begin{figure}
    \includegraphics[width=14cm]{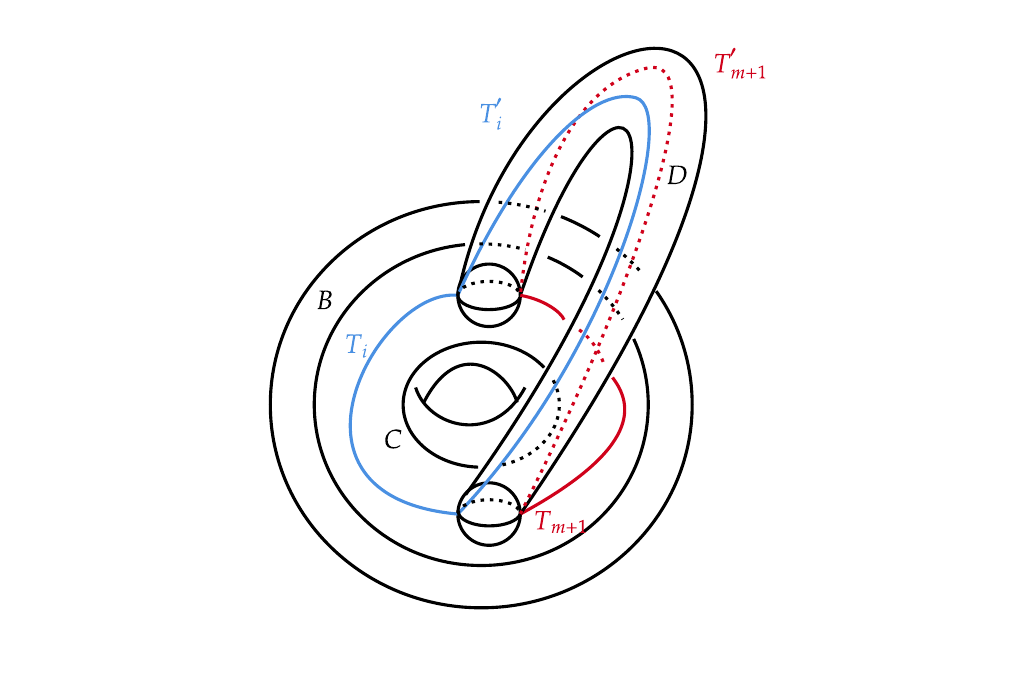}
    \caption{The decomposition of the union of $\nu(K_{i})$ and a $4$-dimensional $1$-handle as specified by (\ref{eq:Y_connect_sum_S^1xS^2_decomp}).}
    \label{fig:decomposition}
\end{figure}
\end{center}

\end{proof}

From Lemma \ref{lem:diff_of_pairs_Psi_swap} we immediately obtain a diffeomorphism of pairs
\begin{equation*}
    \varphi_{\mbf{k}^{\pm},\ell^{\pm},i}:\big(Y\#S^{1}\times S^{2},K(\mbf{k}^{\pm};\ell^{\pm};i)\cup L\big)\xrightarrow{\approx} \big(Y\#S^{1}\times S^{2},K(\mbf{k}^{\pm}+(\ell^{\pm}-k_{i}^{\pm})\mbf{e}_{i};k_{i}^{\pm};i)\cup L\big)
\end{equation*}
which sends:
\begin{itemize}
    \item $K(i)_{i}(k_{i}^{+},k_{i}^{-})$ to $K(i)_{m+1}(k_{i}^{+},k_{i}^{-})$,
    \item $K(i)_{m+1}(\ell^{+},\ell^{-})$ to $K(i)_{i}(\ell^{+},\ell^{-})$, and
    \item every other component of $K(\mbf{k}^{\pm};\ell^{\pm};i)\cup L$ to themselves,
\end{itemize}
and which is well-defined up to composition with a diffeomorphism of pairs which braids cable components.

\begin{proposition}
\label{prop:Psi_relate_to_Phi}
We have an equality of maps
\[\ol{\skein}_{0}^{2,\diamond}(Z'_{i},S'_{\mbf{k}^{\pm},\ell^{\pm},i})=\ol{\skein}_{0}^{2,\diamond}(Z_{i},S_{\mbf{k}^{\pm}+(\ell^{\pm}-k_{i}^{\pm})\mbf{e}_{i},k_{i}^{\pm},i})\circ\ol{\skein}_{0}^{2,\diamond}(\varphi_{\mbf{k}^{\pm},\ell^{\pm},i}).\]
Hence in particular, we have that
\[\Psi_{\diamond,\mbf{k}^{\pm},\ell^{\pm},i}=\Phi_{\diamond,\mbf{k}^{\pm}+(\ell^{\pm}-k_{i}^{\pm})\mbf{e}_{i},k_{i}^{\pm},i}\circ\ol{\skein}_{0}^{2,\diamond}(\varphi_{\mbf{k}^{\pm},\ell^{\pm},i}).\]
\end{proposition}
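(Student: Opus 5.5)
The plan is to realize the equality at the level of cobordisms of pairs, and then invoke functoriality of the gluing maps $\ol{\skein}_{0}^{2,\diamond}(W;\Sigma)$ from Section \ref{subsec:properties_1d_inputs_skein_lasagna}. Concretely, I will show that the relative cobordism $(Z'_{i},S'_{\mbf{k}^{\pm},\ell^{\pm},i})$ is diffeomorphic rel boundary to the composite of two pieces. The first is the mapping cylinder of $\varphi_{\mbf{k}^{\pm},\ell^{\pm},i}$. The second is $(Z_{i},S_{\mbf{k}^{\pm}+(\ell^{\pm}-k_{i}^{\pm})\mbf{e}_{i},k_{i}^{\pm},i})$. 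Here the identification on the incoming end is the identity of $(Y\#S^{1}\times S^{2},K(\mbf{k}^{\pm};\ell^{\pm};i)\cup L)$, and on the outgoing end the identity of $(Y,K(\mbf{k}^{\pm}+(\ell^{\pm}-k_{i}^{\pm})\mbf{e}_{i})\cup U(k_{i}^{\pm})\cup L)$. Since lasagna gluing maps depend only on the diffeomorphism type of the glued pair rel boundary, and gluing is associative, this gives the first equality.

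The second equality follows by post-composing with the capping cobordism. Compare the decompositions (\ref{eq:W_i_cabled_decomp}) and (\ref{eq:W'_i_cabled_decomp}): both $\Psi_{\diamond,\mbf{k}^{\pm},\ell^{\pm},i}$ and $\Phi_{\diamond,\mbf{k}^{\pm}+(\ell^{\pm}-k_{i}^{\pm})\mbf{e}_{i},k_{i}^{\pm},i}$ cap off the same unknot cable $U(k_{i}^{\pm})$ with $(Y\times[0,1],C_{k_{i}^{\pm}})$.

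For the cobordism identification, the key point is that the $2$-handle of $Z'_{i}$ is attached along $\gamma'$, a push-off of $K(i)_{i}$. The diffeomorphism $\varphi_{i}$ of Lemma \ref{lem:diff_of_pairs_Psi_swap} carries $K(i)_{i}$ to $K(i)_{m+1}$ with framings, so it carries $\gamma'$ to a framed push-off of $K(i)_{m+1}$, which I take to be $\gamma$. A $2$-handle attachment is determined up to diffeomorphism by the framed attaching circle. Hence $\varphi_{i}$ extends over the handle to a diffeomorphism $Z'_{i}\to Z_{i}$ which restricts to $\varphi_{i}$ on the incoming boundary.

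Next I check the surface. The trace $S'_{i}$ consists of the cylinders from $K(i)_{i}$ to $U$ and from $K(i)_{m+1}$ to $K_{i}$. Under the extension these go to cylinders from $K(i)_{m+1}$ to $U$ and from $K(i)_{i}$ to $K_{i}$, which is exactly $S_{i}$. The cabled version follows by taking framed push-offs componentwise, with $\ell^{\pm}$ and $k_{i}^{\pm}$ interchanged as recorded in the definition of $\varphi_{\mbf{k}^{\pm},\ell^{\pm},i}$.

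The main obstacle is controlling the outgoing end. The extended diffeomorphism restricts on $Y$ to some self-diffeomorphism $\bar\varphi$ of $(Y,K\cup U\cup L)$. I must show that $\bar\varphi$ is isotopic, as a map of pairs, to one acting trivially up to braiding of cable strands. On $A=Y\setminus\mathring{\nu}(K_{i})$ it is the identity by construction. The pieces $B$, $C$, $D$ collapse after surgery on $\gamma$ to a neighborhood of $K_{i}$ together with a ball containing $U$. There the induced map is a $\pi$-rotation along $K_{i}$ composed with the $f$-twist on the collar $B$, which is isotopic to the identity rel $A$ because $f$ interpolates to the identity on the collar.

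Any residual braiding of the parallel strands of $K_{i}$ or of $U(k_{i}^{\pm})$ acts trivially after passing to the relevant cabled modules, by the braiding relation $\beta\cdot v\sim v$ and Proposition \ref{prop:braid_group_action_factors_through_symmetic_group}. This matches the ambiguity already noted for $\varphi_{\mbf{k}^{\pm},\ell^{\pm},i}$. I would verify the collapse by drawing the Kirby picture of Figure \ref{fig:decomposition} after surgery, and confirm that framings agree using the framing-preserving clause of Lemma \ref{lem:diff_of_pairs_Psi_swap}.
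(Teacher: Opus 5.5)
Your proposal is correct and follows essentially the paper's proof. The paper forms the composite of the mapping cylinder of $\varphi_{\mbf{k}^{\pm},\ell^{\pm},i}$ with the $2$-handle cobordism along $\gamma$, then collapses the collar so that the handle ends up attached along $\varphi^{-1}(\gamma)=\gamma'$; this is exactly your ``extend $\varphi_i$ over the handle'' step, and the second identity follows by post-composing with $(Y\times[0,1],C_{k_i^{\pm}})$ as you say.

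Your extra paragraph on the outgoing end goes beyond the paper, whose collar-collapse argument does not discuss how the outgoing boundary is identified with $Y$. If you keep it, ``$f$ interpolates to the identity on the collar'' is not by itself a justification. The real input is that a diffeomorphism of the solid torus $\nu(K_i)$ rel boundary is isotopic to the identity, plus an argument (braiding relation, neck-cutting for the split unknots) that the resulting loop of links acts trivially.
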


\begin{proof}
Consider the cobordism $(Z''_{i},S''_{\mbf{k}^{\pm},\ell^{\pm},i})$ obtained as the composition of the mapping cylinder of $\varphi_{\mbf{k}^{\pm},\ell^{\pm},i}$ and the cobordism $(Z_{i},S_{\mbf{k}^{\pm}+(\ell^{\pm}-k_{i}^{\pm})\mbf{e}_{i},k_{i}^{\pm},i})$ given by a 2-handle cobordism given by attachment along $\gamma$. Observe that we can write
\[Z''_{i}=\big((Y\#S^{1}\times S^{2})\times[0,1]\big)\cup_{\varphi_{\mbf{k}^{\pm},\ell^{\pm},i}}\big((Y\#S^{1}\times S^{2})\times[0,1]\big)\cup_{\nu(\gamma)}(D^{2}\times D^{2})\]
By performing a deformation retraction of the second factor of $(Y\#S^{1}\times S^{2})\times[0,1]$ onto $(Y\#S^{1}\times S^{2})\times\{0\}$, we see that
\[Z''_{i}=\big((Y\#S^{1}\times S^{2})\times[0,1]\big)\cup_{\nu(\gamma')}(D^{2}\times D^{2})=Z'_{i},\]
as desired.
\end{proof}

For the moment, let us specialize to the case where $X=\natural^{n}S^{1}\times B^{3}$. By (\ref{eq:Z_i_equals_counit}) and Proposition \ref{prop:Psi_relate_to_Phi} we have that
\begin{equation}
\label{eq:Z'_i_connected_sums_S_1xS_2}
    \KhR_{2,\diamond}^{-}(Z'_{i},S'_{\mbf{k}^{\pm},\ell^{\pm},i})=\epsilon\circ\KhR_{2,\diamond}^{-}(\varphi_{\mbf{k}^{\pm},\ell^{\pm},i}).
\end{equation}

In order to compute the map $\KhR_{2,\diamond}^{-}(\varphi_{\mbf{k}^{\pm},\ell^{\pm},i})$ in practice, i.e., using diagrammatics,
it will be helpful to be able to replace $\varphi_{\mbf{k}^{\pm},\ell^{\pm},i}$ instead with a diffeomorphism of pairs $\psi_{\mbf{k}^{\pm},\ell^{\pm},i}$ satisfying the same abstract properties as $\varphi_{\mbf{k}^{\pm},\ell^{\pm},i}$. Before spelling out the necessary conditions on $\psi_{\mbf{k}^{\pm},\ell^{\pm},i}$, the following lemma will be useful: 

\begin{lemma}
\label{lem:diffeos_identity_on_homology}
Let $L\subset\#^{n}S^{1}\times S^{2}$ be a link, and suppose
\begin{align*}
    &f:(\#^{n}S^{1}\times S^{2},L)\xrightarrow{\cong}(\#^{n}S^{1}\times S^{2},f(L)) & &g:(\#^{n}S^{1}\times S^{2},L)\xrightarrow{\cong}(\#^{n}S^{1}\times S^{2},g(L))
\end{align*}
are diffeomorphisms of pairs such that $f(L)=g(L)$. Furthermore, suppose the
composite map
\[g^{-1}\circ f:(\#^{n}S^{1}\times S^{2},L)\xrightarrow{\cong}(\#^{n}S^{1}\times S^{2},L)\]
satisfies the following properties:
\begin{enumerate}
    \item $(g^{-1}\circ f)|_{\nu(L)}=\id$ for some tubular neighborhood $\nu(L)$ of $L$.
    \item $g^{-1}\circ f$ acts trivially on $H_{*}(\#^{n}S^{1}\times S^{2};\bb{Z})$.
\end{enumerate}
Then
\[\KhR_{2,\diamond}^{-}(f)=\KhR_{2,\diamond}^{-}(g)\]
for $\diamond\in\{O,T\}$.
\end{lemma}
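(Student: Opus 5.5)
By functoriality, $\KhR_{2,\diamond}^{-}(f)=\KhR_{2,\diamond}^{-}(g)\circ\KhR_{2,\diamond}^{-}(g^{-1}\circ f)$. So it suffices to treat the case $g=\id$: if $h:=g^{-1}\circ f$ is a self-diffeomorphism of the pair $(\#^{n}S^{1}\times S^{2},L)$ satisfying (1) and (2), then $\KhR_{2,\diamond}^{-}(h)=\id$. Here $\KhR_{2,\diamond}^{-}(h)$ is the map induced by the mapping cylinder of $h$, which is a morphism in $\Links_{1,\diamond}$. The plan is to show that $h$ is isotopic to the identity rel $\nu(L)$, up to diffeomorphisms that visibly act trivially.

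First, set $M:=\#^{n}S^{1}\times S^{2}\setminus\mathring{\nu}(L)$. By (1), $h$ restricts to a diffeomorphism of $M$ that is the identity on $\partial M$. We then use the known structure of the mapping class group of $\#^{n}S^{1}\times S^{2}$ (Laudenbach). It is generated by:
\begin{itemize}
\item slides of one $S^{1}\times S^{2}$ summand over another,
\item inversions of a summand,
\item permutations of summands,
\item sphere twists (Dehn twists along the essential $S^{2}$'s).
\end{itemize}
The slides, inversions and permutations are detected by the action on $H_{1}$ and $H_{2}$. Sphere twists act trivially on homology, and the kernel of the action on homology is generated by them.

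To do this relative to $L$, work in the setting of the Kirby diagram. Condition (2) together with (1) should let us write $h$, up to isotopy rel $\nu(L)$, as a composition of sphere twists along spheres $S\subset M$, i.e.\ belt spheres of the $1$-handles, possibly after handleslides in the complement of $L$. The main obstacle is exactly this relative statement. Isotopy classes rel $L$ may contain more than the absolute mapping class group sees, for instance point-pushing / link-pushing maps. I would try to handle these by noting that any such extra isotopy in the complement, once we forget $L$, is an ambient isotopy of the link in $\#^{n}S^{1}\times S^{2}$ that returns $L$ to itself pointwise. Its induced map should then be the identity by the movie-move/functoriality invariance of $\KhR_{2,\diamond}^{-}$ established in \cite{RSWWZ25}: isotopic cobordisms rel boundary induce equal maps, and the mapping cylinder of an isotopic-to-identity diffeomorphism of pairs is isotopic rel boundary to the identity cylinder.

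It then remains to show that a sphere twist $\tau_{S}$ along a belt sphere $S$ of an $S^{1}\times S^{2}$ summand induces the identity. Diagrammatically, $\tau_{S}$ inserts a full twist $\FT$ on all strands of $L$ passing through the corresponding surgery region, i.e.\ through the Rozansky projector $P_{k,0}$. The induced map is the Gluck twist map $\tau_{\diamond}^{\pm1}$ of Remark \ref{rmk:gluck_twist_maps}, now going from $P_{k,0}$ back to itself, since the framings and the link are preserved. We must show this composite $P_{k,0}\to P_{k,0}\otimes\FT^{\pm1}\simeq P_{k,0}$ is homotopic to the identity.

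For this I would use Lemma \ref{lem:rozprops}. Both maps are compatible with the counit $\epsilon$, and $P_{k,0}$ is counital idempotent with $\epsilon\otimes\id\simeq\id\otimes\epsilon$. By the standard uniqueness argument for maps of counital idempotents, any endomorphism of $P_{k,0}$ commuting with $\epsilon$ is homotopic to the identity, because $\mathrm{Hom}(P_{k,0},P_{k,0})\simeq\mathrm{Hom}(P_{k,0},\bbm{1})$ via postcomposition with $\epsilon$. Checking compatibility with $\epsilon$ amounts to the fact that the full twist is absorbed on through-degree-zero diagrams via Reidemeister moves. Signs are only defined up to the ambiguity of Remark \ref{rmk:signs}, and are fixed in the $\frak{gl}_{2}$ setting.

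Finally, combining the decomposition step with this computation gives $\KhR_{2,\diamond}^{-}(h)=\id$, hence $\KhR_{2,\diamond}^{-}(f)=\KhR_{2,\diamond}^{-}(g)$. The type $T$ case follows from the type $O$ case by the grading shift (\ref{eq:KhR_-_T_from_O}), since $h$ preserves framings and hence writhe.
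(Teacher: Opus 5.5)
Your overall route matches the paper's. You reduce to $h=g^{-1}\circ f$ by functoriality, use Laudenbach to write $h$ as a product of sphere (Gluck) twists, show that these act trivially, and conclude.

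\textbf{The step checking that the twists act trivially fails.} After invoking (1) you say the twists are along spheres $S\subset M$, i.e.\ disjoint from $L$. You then instead analyze a twist along a belt sphere that $L$ passes through. Such a twist is never a self-map of the framed pair $(\#^{n}S^{1}\times S^{2},L)$: it inserts $\FT^{\pm1}$ on the strands crossing $S$ and changes their framings. In the local model it carries $\belt{k,\ell}_{q}$ to $\belt{k,\ell}_{q\pm1}$ (cf.\ Remark \ref{rmk:gluck_twist_maps}). So your assertion that ``the framings and the link are preserved'' is false, and such a twist can never satisfy (1).

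Consequently the composite $P_{k,0}\to P_{k,0}\otimes\FT^{\pm1}\simeq P_{k,0}$ is not $\KhR_{2,\diamond}^{-}$ of any diffeomorphism covered by the lemma. It is essentially the equivalence defining $\tau_{\diamond}^{\pm1}$ followed by an inverse equivalence. Your counital-idempotent uniqueness argument is correct as algebra, but it says nothing about $\KhR_{2,\diamond}^{-}(h)$.

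What is actually needed is the easy case the paper uses. Using (1), the paper takes the Gluck twists to be supported away from $L$. After a diffeomorphism, such a (non-separating) sphere is the belt sphere of a summand that $L$ does not pass through. The local projector is then the empty projector $P_{0,0}$ (Lemma \ref{lem:rozprops}(3)), and the twist induces the identity. No idempotent argument is required.

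\textbf{Your reduction steps also claim more than your arguments give.}
\begin{itemize}
\item Laudenbach identifies the subgroup generated by sphere twists, $(\bb{Z}/2)^{n}$, with the kernel of $\mathrm{Mod}(\#^{n}S^{1}\times S^{2})\to\mathrm{Out}(F_{n})$, not with the kernel of the action on $H_{*}$. For $n\geq 3$, the partial conjugation $x_{1}\mapsto x_{2}x_{1}x_{2}^{-1}$ (drag the first $1$-handle once around the second) acts trivially on homology but is not a product of sphere twists.
\item Invariance of cobordism maps under isotopy rel boundary only handles isotopies through diffeomorphisms of pairs. Suppose $h$ fixes $\nu(L)$ but is isotopic to $\id$ only in $\mathrm{Diff}(\#^{n}S^{1}\times S^{2})$. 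Then its mapping cylinder is the trace of a loop of links, which need not be isotopic rel boundary to $L\times I$. So your ``link-pushing'' classes are not disposed of by movie moves.
\end{itemize}
The paper's proof makes both reductions in a single sentence (Laudenbach, then ``by assumption (1) we can assume that $h$ is supported away from $L$''). On these two points you are therefore not departing from the paper, but the justifications you offer do not establish them.

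\textbf{The reduction of type $T$ to type $O$ is incomplete.} Deriving $\diamond=T$ from $\diamond=O$ via (\ref{eq:KhR_-_T_from_O}) only covers null-homologous $L$. For $L$ that is merely $2$-divisible (e.g.\ $\belt{2,0}$), the type $O$ group is set to zero by convention. The argument should be run directly in the type $T$ theory, as the paper's uniform argument does.
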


\begin{proof}
Via an isotopy, we can assume $L$ and $f(L)=g(L)$ are admissible. Assumption (2) combined with a classical theorem of Laudenbach \cite{Lau74} implies that $g^{-1}\circ f$ must be isotopic to some (possibly empty) composition $h$ of Gluck twist diffeomorphisms, and by assumption (1) we can assume that $h$ is supported away from $L$. Hence $h\circ g^{-1}\circ f\sim\id$. Furthermore, by the description of $h$ we can conclude that it induces the identity map on $\KhR_{2,\diamond}^{-}$. Hence by the functoriality of $\KhR_{2,\diamond}^{-}$ under diffeomorphisms (\cite{RSWWZ25}, Theorem 7.6) it follows that
\begin{align*}
    \id&=\KhR_{2,\diamond}^{-}(h\circ g^{-1}\circ f)=\KhR_{2,\diamond}^{-}(h)\circ\big(\KhR_{2,\diamond}^{-}(g)\big)^{-1}\circ\KhR_{2,\diamond}^{-}(f) \\
    &=\big(\KhR_{2,\diamond}^{-}(g)\big)^{-1}\circ\KhR_{2,\diamond}^{-}(f)\implies\KhR_{2,\diamond}^{-}(f)=\KhR_{2,\diamond}^{-}(g),
\end{align*}
as desired.
\end{proof}

We now return to our task at hand:

\begin{proposition}
\label{prop:decompose_diffeomorphism}
Let
\begin{equation*}
    \psi_{\mbf{k}^{\pm},\ell^{\pm},i}:\big(Y\#S^{1}\times S^{2},K(\mbf{k}^{\pm};\ell^{\pm};i)\cup L\big)\xrightarrow{\approx} \big(Y\#S^{1}\times S^{2},K(\mbf{k}^{\pm}+(\ell^{\pm}-k_{i}^{\pm})\mbf{e}_{i};k_{i}^{\pm};i)\cup L\big)
\end{equation*}
be a diffeomorphism of pairs which sends
\begin{itemize}
    \item $K(i)_{i}(k_{i}^{+},k_{i}^{-})$ to $K(i)_{m+1}(k_{i}^{+},k_{i}^{-})$,
    \item $K(i)_{m+1}(\ell^{+},\ell^{-})$ to $K(i)_{i}(\ell^{+},\ell^{-})$, and
    \item every other component of $K(\mbf{k}^{\pm};\ell^{\pm};i)\cup L$ to themselves,
\end{itemize}
and suppose $\psi_{\mbf{k}^{\pm},\ell^{\pm},i}$ decomposes as
\[\psi_{\mbf{k}^{\pm},\ell^{\pm},i}=\psi'_{\mbf{k}^{\pm},\ell^{\pm},i}\circ\sf{inv}_{n+1}\]
where:
\begin{enumerate}
    \item The map
    \[\sf{inv}_{n+1}:\#^{n+1}S^{1}\times S^{2}\to\#^{n+1}S^{1}\times S^{2}\]
    denotes the diffeomorphism which inverts the final $S^{1}\times S^{2}$ summand by reflecting both the $S^{1}$- and $S^{2}$-factors (see \cite{RSWWZ25} Proposition 5.5 (iii)).
    \item The map
    \begin{align*}
        &\psi'_{\mbf{k}^{\pm},\ell^{\pm},i}:\big(\#^{n+1}S^{1}\times S^{2},\sf{inv}_{n+1}(K(\mbf{k}^{\pm};\ell^{\pm};i)\cup L)\big) \\
        &\qquad\qquad\qquad\qquad\to\big(\#^{n+1}S^{1}\times S^{2},K(\mbf{k}^{\pm}+(\ell^{\pm}-k_{i}^{\pm})\mbf{e}_{i};k_{i}^{\pm};i)\cup L\big)
    \end{align*}
    is a diffeomorphism of pairs whose underlying diffeomorphism of $\#^{n+1}S^{1}\times S^{2}$ acts trivially on $H_{*}(\#^{n}S^{1}\times S^{2};\bb{Z})$.
\end{enumerate}
Then
\[\KhR_{2,\diamond}^{-}(\varphi_{\mbf{k}^{\pm},\ell^{\pm},i})=\KhR_{2,\diamond}^{-}(\psi_{\mbf{k}^{\pm},\ell^{\pm},i})\]
for $\diamond\in\{O,T\}$.
\end{proposition}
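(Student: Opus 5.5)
Our plan is to reduce the statement to Lemma \ref{lem:diffeos_identity_on_homology} with $f=\varphi_{\mbf{k}^{\pm},\ell^{\pm},i}$ and $g=\psi_{\mbf{k}^{\pm},\ell^{\pm},i}$. Both are diffeomorphisms of pairs with the same source and the same target link, so $f(L)=g(L)$. It therefore suffices to arrange, after modifying one of them by an isotopy of pairs, that $g^{-1}\circ f$ is the identity on a tubular neighborhood of the link and acts trivially on $H_{*}(\#^{n+1}S^{1}\times S^{2};\bb{Z})$.

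\textbf{Step 1: neighborhood condition.} Both maps send each component of $K(\mbf{k}^{\pm};\ell^{\pm};i)\cup L$ to the same target component, preserving framings and orientations. The only ambiguity is a braiding of cable components, which by the remark after Lemma \ref{lem:diff_of_pairs_Psi_swap} is already built into the well-definedness of $\varphi_{\mbf{k}^{\pm},\ell^{\pm},i}$. So, after precomposing $\psi$ with such a braiding and performing an isotopy of pairs, $g^{-1}\circ f$ fixes each component pointwise. Since framings agree, we can further arrange that it is the identity on a tubular neighborhood $\nu$ of the link. On $\KhR^{-}_{2,\diamond}$, the braiding factor is harmless because of Proposition \ref{prop:braid_group_action_factors_through_symmetic_group}, or can simply be absorbed into the choice of $\varphi$. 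Isotopies of pairs induce equal maps by functoriality (\cite{RSWWZ25}, Theorem 7.6).

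\textbf{Step 2: homology condition.} Here we use the decomposition $\psi=\psi'\circ\sf{inv}_{n+1}$, where $\psi'$ acts trivially on homology. It then suffices to show that $\varphi_{i}$ acts on $H_{*}$ exactly as $\sf{inv}_{n+1}$ does: identity on the summands coming from $Y$ and $-1$ on the $H_{1}$ and $H_{2}$ classes of the new $S^{1}\times S^{2}$ factor. This can be read off from the explicit construction in Lemma \ref{lem:diff_of_pairs_Psi_swap}.
\begin{itemize}
\item The map $\varphi_{i}$ is the identity on $A$ and is isotopic to the identity on the remaining generators of $Y$.
\item On $D=[-1,1]\times S^{2}$, the map $h(t,z)=(-t,1/z)$ reverses the $[-1,1]$ direction. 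Hence it sends the class of the circle running through the new $1$-handle to its negative, since that circle closes up through $C$, where $g$ is a rotation.
\item The map $h$ reflects the $S^{2}$ across the equator, reversing orientation on the belt sphere $\{0\}\times S^{2}$.
\item The twist $f$ on $B$ is a half-rotation in the longitudinal direction, so it does not affect the classes of $Y$.
\end{itemize}
Hence $(g^{-1}\circ f)_{*}=\sf{inv}_{n+1,*}^{-1}\circ\psi'^{-1}_{*}\circ\varphi_{i,*}=\id$ on $H_{1}$ and $H_{2}$, and trivially on $H_{0}$ and $H_{3}$, since everything is orientation preserving.

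\textbf{Step 3: conclusion.} With both conditions in hand, Lemma \ref{lem:diffeos_identity_on_homology} gives $\KhR^{-}_{2,\diamond}(\varphi)=\KhR^{-}_{2,\diamond}(\psi)$.

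\textbf{Main obstacle.} I expect the hardest point to be the homology bookkeeping in Step 2. One must check that $\varphi_{i}$ negates the new $H_{1}$ generator rather than, say, sending it to $\pm$ itself plus a multiple of $[K_{i}]$. The worry is that the swap passes the new generator through $\nu(K_{i})$, which could add a multiple of $[K_{i}]$.

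I would first compute on explicit cycles: the core circle $c=T'_{i}\cup T_{m+1}$-type loop and the belt sphere. The hope is that the half-twist $f$, combined with the rotation $g$, sends $c$ to a loop homologous to $-c$ in $H_{1}$. The reason is that the two arcs of $K_{i}$ in $C$ are exchanged, and their difference is null-homologous, as it bounds in $\nu(K_{i})$.

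If a correction term $[K_{i}]$ appears, I would compose $\psi'$ with a slide of the $S^{1}\times S^{2}$ summand over $K_{i}$. That slide is still homologically trivial on the remaining summands, but it would have to be absorbed into the hypotheses. The cleaner path is to verify that no such term occurs.
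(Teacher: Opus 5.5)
\textbf{Gap in Step~2: $\varphi_i$ does not act on homology as $\sf{inv}_{n+1}$.} Your route is the paper's: apply Lemma~\ref{lem:diffeos_identity_on_homology} to $f=\varphi_{\mbf{k}^{\pm},\ell^{\pm},i}$ and $g=\psi_{\mbf{k}^{\pm},\ell^{\pm},i}$. Step~1 is fine. The claim in Step~2 fails whenever $[K_i]\neq 0$ in $H_1(Y;\bb{Z})$; the correction term you worried about is really there.

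Orient $K(i)_i$ and $K(i)_{m+1}$ so that their arcs $T_i,T_{m+1}\subset C$ follow $K_i$, and put $x=[K(i)_i]$, $y=[K(i)_{m+1}]$.
\begin{enumerate}
\item The strands $T'_i,T'_{m+1}$ cross $D\simeq S^{2}$ in opposite directions, so $x+y=[K_i]$.
\item This class is represented by a longitude on $\partial\nu(K_i)\times\{0\}$, so $\varphi_i$ fixes it.
\item Since $\varphi_i$ exchanges the two components, $\varphi_{i,*}(y)=x=[K_i]-y$.
\item By contrast, $\sf{inv}_{n+1,*}(y)=-y$, because $K(i)_{m+1}=\belt{0,1}$ is the core of the last summand.
\end{enumerate}
Your heuristic breaks exactly here. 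The map $g$ exchanges $T_i$ and $T_{m+1}$, so what appears in $[\varphi_i(c)]+[c]$ is the \emph{sum} of the two arcs, i.e.\ $K_i$ itself, not a difference bounding in $\nu(K_i)$. Dually, your $H_2$ bullet is also wrong. With $S=\{0\}\times S^{2}\subset D$, a class $\sigma$ from the first $n$ summands is sent by $\varphi_i$ to $\sigma\pm(\sigma\cdot[K_i])[S]$, whereas $\sf{inv}_{n+1}$ fixes it.

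\textbf{Repair: compare $\varphi_{i,*}$ with $\psi_*$ directly, not with $\sf{inv}_{n+1,*}$.}
\begin{enumerate}
\item Both maps are the identity on $\ker(\cdot\,S)\subset H_1$, i.e.\ on the classes carried by $A$. For $\varphi_i$ this is because $\varphi_i|_A=\id$. For $\psi$ it is because $\sf{inv}_{n+1}$ is supported in the last summand and $\psi'$ fixes these classes by hypothesis~(2).
\item Both send $y\mapsto x$ (equivalently $x\mapsto y$, since $x+y\in\ker(\cdot\,S)$). This holds because both carry the oriented components of the $i$-th and $(m+1)$-st cables to the same targets.
\item Since $\ker(\cdot\,S)$ and $y$ generate $H_1$, we get $(\psi^{-1}\circ\varphi_i)_*=\id$ on $H_1$.
\item It is then the identity on $H_2$, because orientation-preserving diffeomorphisms preserve the perfect pairing $H_2\times H_1\to\bb{Z}$. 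It is trivially the identity on $H_0$ and $H_3$.
\end{enumerate}
Now the lemma applies.

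If you read hypothesis~(2) as triviality on all of $H_*(\#^{n+1}S^1\times S^2)$, the same computation shows the hypotheses force $[K_i]=0$. Only then is $\varphi_{i,*}=\sf{inv}_{n+1,*}$ true.

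This repair uses $k_i^{+}+k_i^{-}+\ell^{+}+\ell^{-}>0$. When all four vanish, nothing pins down $\psi_*(y)$, and that case needs a separate argument.

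The paper's one-line proof makes the same unqualified claim, that $\sf{inv}_{n+1}\circ\varphi$ acts trivially on homology. It needs this repair too.
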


\begin{proof}
From the description of $\varphi_{i}$ in the proof of Lemma \ref{lem:diff_of_pairs_Psi_swap} and our assumptions on $\psi_{\mbf{k}^{\pm},\ell^{\pm},i}$, we see that $(\psi^{-1}_{\mbf{k}^{\pm},\ell^{\pm},i}\circ\varphi_{\mbf{k}^{\pm},\ell^{\pm},i})|_{\nu(K(\mbf{k}^{\pm};\ell^{\pm};i)\cup L)}=\id$. Furthermore, the maps $\sf{inv}_{n+1}\circ\varphi_{\mbf{k}^{\pm},\ell^{\pm},i}$ and $\psi'_{\mbf{k}^{\pm},\ell^{\pm},i}$ both act trivially on $H_{*}(\#^{n}S^{1}\times S^{2};\bb{Z})$, and hence $\psi^{-1}_{\mbf{k}^{\pm},\ell^{\pm},i}\circ\varphi_{\mbf{k}^{\pm},\ell^{\pm},i}$ acts trivially on $H_{*}(\#^{n}S^{1}\times S^{2};\bb{Z})$ as well. The result then follows from Lemma \ref{lem:diffeos_identity_on_homology}.
\end{proof}

We can therefore replace (\ref{eq:Z'_i_connected_sums_S_1xS_2}) with the equation
\begin{equation}
\label{eq:Z'_i_connected_sums_S_1xS_2_redux}
    \KhR_{2,\diamond}^{-}(Z'_{i},S'_{\mbf{k}^{\pm},\ell^{\pm},i})=\epsilon\circ\KhR_{2,\diamond}^{-}(\psi_{\mbf{k}^{\pm},\ell^{\pm},i}).
\end{equation}
where $\psi_{\mbf{k}^{\pm},\ell^{\pm},i}$ is as in Proposition \ref{prop:decompose_diffeomorphism}.

\begin{example}
\label{ex:psi_i_S^2xD^2}
Let $U=(U,0)\subset\del B^{4}$ be the $0$-framed unknot. We can take $\psi_{\mbf{k}^{\pm},\ell^{\pm},i}$ as in Proposition \ref{prop:decompose_diffeomorphism} to be given by $\sf{inv}$ followed by a planar isotopy as pictured in Figure \ref{fig:psi_i_S_2xD_2}.
\begin{center}
\begin{figure}
    \includegraphics[width=15cm]{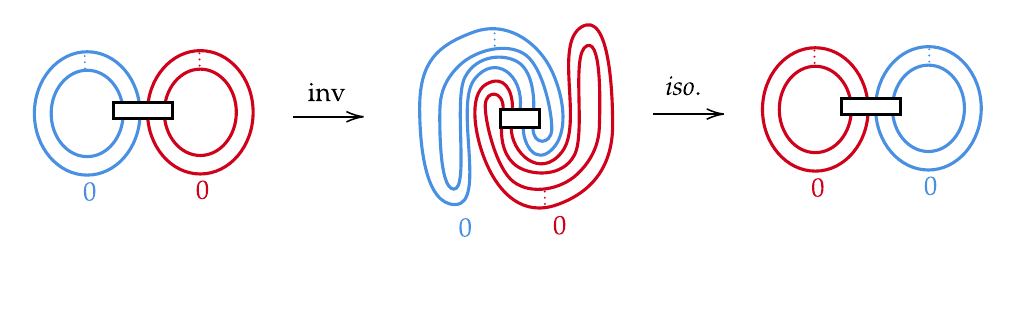}
    \caption{A depiction of $\psi_{k^{\pm},\ell^{\pm},1}$ for the $0$-framed unknot $U$.}
    \label{fig:psi_i_S_2xD_2}
\end{figure}
\end{center}
\end{example}

\begin{example}
\label{ex:psi_i_D(p)}
Let $(U,p)\subset\del B^{4}$ be the $p$-framed unknot for some $p\in\bb{Z}$. Then we can take $\psi_{\mbf{k}^{\pm},\ell^{\pm},i}$ as in Proposition \ref{prop:decompose_diffeomorphism} to be given by the sequence of moves pictured in Figure \ref{fig:psi_i_D(p)}.
\begin{center}
\begin{figure}
    \includegraphics[width=16cm]{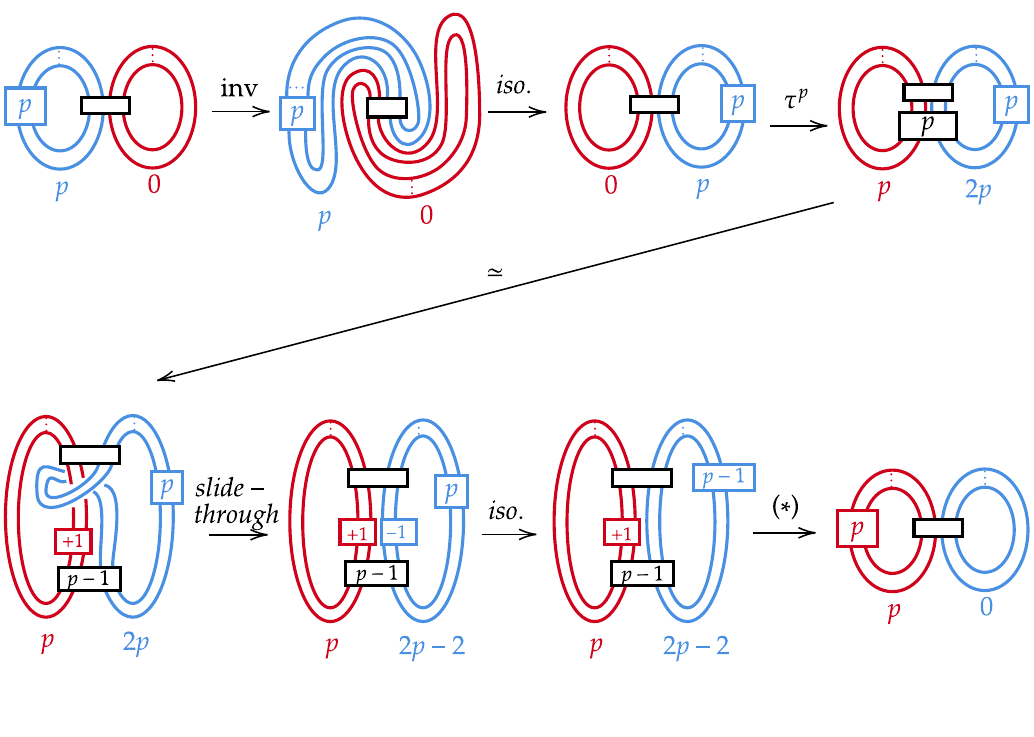}
    \caption{A depiction of $\psi_{k^{\pm},\ell^{\pm},1}$ for the $p$-framed unknot $(U,p)$. The map $(*)$ is given by repeating steps $4$--$6$ $|p-1|$ times. Depicted is the case where $p\geq 0$; the case $p<0$ can be obtained by changing all of the crossings in the bottom left picture, and changing the $+1$-twists to $-1$-twists and vice-versa in the first three pictures on the bottom.}
    \label{fig:psi_i_D(p)}
\end{figure}
\end{center}
\end{example}

We now return to the case of more general $X$, and calculate the map $\ol{\skein}_{0}^{2,\diamond}(Y\times[0,1],C_{\ell^{\pm}})$. Consider the identification
\[\ol{\skein}_{0}^{2,\diamond}\big(X;K(\mbf{k}^{+},\mbf{k}^{-})\cup U(\ell^{+},\ell^{-})\cup L\big)\cong\ol{\skein}_{0}^{2,\diamond}\big(X;K(\mbf{k}^{+},\mbf{k}^{-})\cup L\big)\otimes\KhR_{2}\big(U(\ell^{+},\ell^{-})\big)\]
induced by the identification of pairs
\[\big(X,K(\mbf{k}^{+},\mbf{k}^{-})\cup U(\ell^{+},\ell^{-})\cup L\big)\cong \big(X,K(\mbf{k}^{+},\mbf{k}^{-})\cup L)\big)\natural\big(B^{4},U(\ell^{+},\ell^{-})\big)\]
and the canonical isomorphism
\[\ol{\skein}_{0}^{2,\diamond}\big(B^{4};U(\ell^{+},\ell^{-})\big)\cong\KhR_{2}\big(U(\ell^{+},\ell^{-})\big).\]
Then we can write
\pagebreak
\begin{align*}
    \ol{\skein}_{0}^{2,\diamond}(Y\times[0,1],C_{\ell^{\pm}})=\text{id}\otimes\varepsilon_{\ell^{\pm}}:\ol{\skein}_{0}^{2,\diamond}\big(X,K(\mbf{k}^{+},\mbf{k}^{-})\cup L\big)\otimes&\KhR_{2}\big(U(\ell^{+},\ell^{-})\big) \\
    &\to\ol{\skein}_{0}^{2,\diamond}\big(X,K(\mbf{k}^{+},\mbf{k}^{-})\cup L\big)
\end{align*}
where
\begin{equation}
\label{eq:counit_epsilon}
    \varepsilon_{\ell^{\pm}}:\KhR_{2}\big(U(\ell^{+},\ell^{-})\big)\to\bb{Q}
\end{equation}
is the counit map associated to the cobordism which caps off all of the components with disks. More concretely, $\varepsilon_{\ell^{\pm}}$ factors as
\[\KhR_{2}\big(U(\ell^{+},\ell^{-})\big)\xrightarrow{\cong}(\KhR_{2}(U))^{\otimes(\ell^{+}+\ell^{-})}\xrightarrow{\varepsilon^{\otimes(\ell^{+}+\ell^{-})}}\bb{Q}\]
where $\varepsilon:\KhR_{2}(U)\to\bb{Q}$ is the (Frobenius) counit map which sends $\mbf{1}\mapsto 0$ and $\mbf{X}\mapsto 1$. Similarly we have that
\[\ol{\skein}_{0}^{2,\diamond}(Y\times[0,1],C_{k_{i}^{\pm}})=\text{id}\otimes\varepsilon_{k_{i}^{\pm}}.\]
Putting this all together, in the case where $X=\natural^{n}S^{1}\times B^{3}$ we can write
\begin{align}
\label{eq:Phi_Psi_decomps_1_hbody}
    &\Phi_{\diamond,\mbf{k}^{\pm},\ell^{\pm},i}=(\id\otimes\varepsilon_{\ell^{\pm}})\circ\epsilon, & &\Psi_{\diamond,\mbf{k}^{\pm},\ell^{\pm},i}=(\id\otimes\varepsilon_{k_{i}^{\pm}})\circ\epsilon\circ\KhR_{2,\diamond}^{-}(\psi_{\mbf{k}^{\pm},\ell^{\pm},i})
\end{align}
where $\psi_{\mbf{k}^{\pm},\ell^{\pm},i}$ is as in Proposition \ref{prop:decompose_diffeomorphism}.

It will be helpful to establish some further algebraic properties of the collections of maps $\{\Phi_{\diamond,\mbf{k}^{\pm},\ell^{\pm},i}\}$ and $\{\Psi_{\diamond,\mbf{k}^{\pm},\ell^{\pm},i}\}$. The following proposition posits a symmetry property between the $\Phi$ and $\Psi$ maps:

\begin{proposition}
\label{prop:symmetry_Phi_Psi}
Let $K\subset\#^{n} S^{1}\times S^{2}$, $\mbf{k}^{\pm}\in\bb{N}$, $\ell^{\pm}\in\bb{N}$, $i\in\{1,\dots, m\}$. Then 
\[\text{im}(\Phi_{\diamond,\mbf{k}^{\pm},\ell^{\pm},i},\Psi_{\diamond,\mbf{k}^{\pm},\ell^{\pm},i})=\text{im}(\Psi_{\diamond,\mbf{k}^{\pm}+(\ell^{\pm}-k_{i}^{\pm})\mbf{e}_{i},k_{i}^{\pm},i},\Phi_{\diamond,\mbf{k}^{\pm}+(\ell^{\pm}-k_{i}^{\pm})\mbf{e}_{i},k_{i}^{\pm},i})\]
as subgroups of
\[\KhR_{2,\diamond}^{-}\big(K(\mbf{k}^{+},\mbf{k}^{-})\big)^{\frak{S}_{\mbf{k}^{\pm}}}\oplus\KhR_{2,\diamond}^{-}\big(K(\mbf{k}^{+}+(\ell^{+}-k_{i}^{+})\mbf{e}_{i},\mbf{k}^{-}+(\ell^{-}-k_{i}^{-})\mbf{e}_{i})\big)^{\frak{S}_{\mbf{k}^{\pm}+(\ell^{\pm}-k_{i}^{\pm})\mbf{e}_{i}}}.\]
\end{proposition}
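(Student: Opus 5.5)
The map $(\Phi,\Psi)$ with indices $(\mbf{k}^{\pm},\ell^{\pm})$ and the swapped map $(\Psi,\Phi)$ with indices $(\mbf{k}'^{\pm},k_i^{\pm})$, where $\mbf{k}'^{\pm}:=\mbf{k}^{\pm}+(\ell^{\pm}-k_{i}^{\pm})\mbf{e}_{i}$, should differ only by precomposition with an isomorphism. That isomorphism is $\KhR_{2,\diamond}^{-}(\varphi_{\mbf{k}^{\pm},\ell^{\pm},i})$, induced by the swap diffeomorphism of Lemma \ref{lem:diff_of_pairs_Psi_swap}. Precomposing with an isomorphism does not change the image of a map into a direct sum, which gives the equality.

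Concretely, write $\varphi:=\varphi_{\mbf{k}^{\pm},\ell^{\pm},i}$. It is a diffeomorphism of pairs from $K(\mbf{k}^{\pm};\ell^{\pm};i)$ to $K(\mbf{k}'^{\pm};k_i^{\pm};i)$, and $\KhR^{-}_{2,\diamond}(\varphi)$ is an isomorphism by functoriality under diffeomorphisms \cite[Theorem 7.6]{RSWWZ25}. Note that $\mbf{k}'^{\pm}+(k_i^{\pm}-\ell^{\pm})\mbf{e}_i=\mbf{k}^{\pm}$. Proposition \ref{prop:Psi_relate_to_Phi} gives
\[\Psi_{\diamond,\mbf{k}^{\pm},\ell^{\pm},i}=\Phi_{\diamond,\mbf{k}'^{\pm},k_i^{\pm},i}\circ\KhR^{-}_{2,\diamond}(\varphi).\]
Applying the same proposition with $(\mbf{k}'^{\pm},k_i^{\pm})$ in place of $(\mbf{k}^{\pm},\ell^{\pm})$ gives
\[\Psi_{\diamond,\mbf{k}'^{\pm},k_i^{\pm},i}=\Phi_{\diamond,\mbf{k}^{\pm},\ell^{\pm},i}\circ\KhR^{-}_{2,\diamond}(\varphi'),\]
where $\varphi'=\varphi_{\mbf{k}'^{\pm},k_i^{\pm},i}$ runs in the reverse direction.

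The key step is to show that $\varphi'\circ\varphi$ induces, after passing to $\frak{S}$-invariants, the identity on $\KhR^{-}_{2,\diamond}(K(\mbf{k}^{\pm};\ell^{\pm};i))$. I would compute $\varphi'\circ\varphi$ from the explicit construction in Lemma \ref{lem:diff_of_pairs_Psi_swap}. On $C$ the rotations compose to $\lambda\mapsto\lambda+2\pi$, which is the identity. On $D$, $h\circ h=\id$. On the collar $B$, the two half twists compose to a full Dehn twist $(\mu,\lambda,s)\mapsto(\mu,\lambda+2\pi s,s)$ along the torus $\del\nu(K_i)$. This map is the identity near the link and acts trivially on $H_*$. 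Lemma \ref{lem:diffeos_identity_on_homology} then shows it induces the same map as the identity, so $\KhR(\varphi')\circ\KhR(\varphi)=\id$. The choice of $\varphi$ is well-defined only up to braiding the cable strands. That ambiguity disappears on $\frak{S}$-invariants, using Proposition \ref{prop:braid_group_action_factors_through_symmetic_group} and the equivariance of $\Phi$ and $\Psi$ established earlier.

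With this, $(\Psi',\Phi')\circ\KhR(\varphi)=(\Phi'\circ\KhR(\varphi'\varphi),\,\Phi'\circ\ldots)$. More precisely, $\Psi_{\mbf{k}'}\circ\KhR(\varphi)=\Phi_{\mbf{k}}$ and $\Phi_{\mbf{k}'}\circ\KhR(\varphi)=\Psi_{\mbf{k}}$. Therefore the pair map $(\Phi_{\mbf{k}},\Psi_{\mbf{k}})$ equals $(\Psi_{\mbf{k}'},\Phi_{\mbf{k}'})\circ\KhR(\varphi)$, and since $\KhR(\varphi)$ is bijective the images coincide.

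The main obstacle is verifying that the double swap is trivial on homology, or alternatively that it is a composite of Gluck twists and braidings. A Dehn twist along the boundary torus of $\nu(K_i)$ could act nontrivially. If the direct argument fails, the fallback is to absorb the discrepancy into a Gluck twist $\tau^{p}$ from Remark \ref{rmk:gluck_twist_maps}, or into cable braiding, both of which act trivially on the relevant invariants.
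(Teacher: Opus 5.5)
Your proof is correct and has the same core as the paper's one-line argument. By Proposition \ref{prop:Psi_relate_to_Phi}, the pair $(\Phi_{\diamond,\mbf{k}^{\pm},\ell^{\pm},i},\Psi_{\diamond,\mbf{k}^{\pm},\ell^{\pm},i})$ equals the swapped pair at level $(\mbf{j}^{\pm},k_i^{\pm})$, where $\mbf{j}^{\pm}:=\mbf{k}^{\pm}+(\ell^{\pm}-k_i^{\pm})\mbf{e}_i$, precomposed with the isomorphism $\KhR^{-}_{2,\diamond}(\varphi)$.

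Where you differ is the double-swap step, and it can be dropped. The proof of Proposition \ref{prop:Psi_relate_to_Phi} only uses that the diffeomorphism exchanges $K(i)_i$ and $K(i)_{m+1}$ compatibly with framings, so that it carries $\gamma'$ to $\gamma$. Nothing specific to the $\varphi_i$ of Lemma \ref{lem:diff_of_pairs_Psi_swap} enters. So at level $(\mbf{j}^{\pm},k_i^{\pm})$ you may take $\varphi^{-1}$ itself as the swap. This gives $\Psi_{\diamond,\mbf{j}^{\pm},k_i^{\pm},i}\circ\KhR^{-}_{2,\diamond}(\varphi)=\Phi_{\diamond,\mbf{k}^{\pm},\ell^{\pm},i}$ at once, and you never need to know what $\varphi'\circ\varphi$ does.

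This matters because your detour rests on the weakest ingredient. The composite $\varphi'\circ\varphi$ is the longitudinal torus twist along $\partial\nu(K_i)$. On $\pi_1(Y\#S^1\times S^2)\cong\pi_1(Y)*\langle t\rangle$ it fixes $\pi_1(Y)$ and sends $t\mapsto\Lambda t\Lambda^{-1}$, where $\Lambda$ is the class of $K_i$. For $n\ge 2$ and $K_i$ not null-homotopic, this is a nontrivial element of $\mathrm{Out}(F_{n+1})$ that still acts trivially on homology. Hence it is not isotopic to a product of Gluck twists, and the Laudenbach step in the proof of Lemma \ref{lem:diffeos_identity_on_homology} does not cover it. Your claim $\KhR^{-}_{2,\diamond}(\varphi'\circ\varphi)=\id$ is therefore not really justified by that lemma.

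If you want to keep your route, note that you only need $\Phi_{\diamond,\mbf{k}^{\pm},\ell^{\pm},i}\circ\KhR^{-}_{2,\diamond}(\varphi'\circ\varphi)=\Phi_{\diamond,\mbf{k}^{\pm},\ell^{\pm},i}$, and this does hold. The twist is supported away from $\gamma$, the link, and the capping disks, so it extends over $W_i$ as a diffeomorphism of pairs. On the outgoing $Y$ it becomes a twist about the boundary of the solid torus $\nu(K_i)$. That twist is isotopic to the identity by rotating along the core, which preserves the cables of $K_i$ setwise. Using $\varphi^{-1}$ directly, as above, is simpler.
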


\begin{proof}
Follows immediately from Proposition \ref{prop:Psi_relate_to_Phi}.
\end{proof}

Next, we show that the lasso relation subsumes the annulus relations:

\begin{proposition}
\label{prop:lasso_subsumes_bundt_cake}
Let $v\in\ol{\skein}_{0}^{2,\diamond}(X;K(\mbf{k}^{+},\mbf{k}^{-})\cup L)$. Then for $\diamond\in\{O,T\}$ and every $i=1,\dots, m$ there exist elements $v',v''\in\ol{\skein}_{0}^{2,\diamond}(X\natural S^{1}\times B^{3};K(\mbf{k}^{\pm};k_{i}^{\pm}+1;i)\cup L)$ such that:
\begin{align*}
    &\Phi_{\diamond,\mbf{k}^{\pm},k_{i}^{\pm}+1,i}(v')=0, & &\Psi_{\diamond,\mbf{k}^{\pm},k_{i}^{\pm}+1,i}(v')=\psi_{\diamond,i}^{[0]}(v), \\
    &\Phi_{\diamond,\mbf{k}^{\pm},k_{i}^{\pm}+1,i}(v'')=v, & &\Psi_{\diamond,\mbf{k}^{\pm},k_{i}^{\pm}+1,i}(v'')=\psi_{\diamond,i}^{[1]}(v).
\end{align*}
\end{proposition}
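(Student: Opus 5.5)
Our plan is to construct $v'$ and $v''$ geometrically, as images of $v$ under a single cobordism map into $X\natural S^{1}\times B^{3}$ with two different decorations, and then evaluate $\Phi$ and $\Psi$ using the factorizations (\ref{eq:W_i_cabled_decomp}), (\ref{eq:W'_i_cabled_decomp}) and Proposition \ref{prop:Psi_relate_to_Phi}.

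\textbf{Construction.} Consider the boundary connected sum $X\natural S^{1}\times B^{3}$. Glue to $v$ the following filling of the $1$-handle region. Take the cobordism from $\big(Y,K(\mbf{k}^{+},\mbf{k}^{-})\cup L\big)$ to $\big(Y\#S^{1}\times S^{2},K(\mbf{k}^{\pm};k_{i}^{\pm}+1;i)\cup L\big)$ given by the product of $X$ with the $1$-handle, i.e.\ the "inverse" of the cancelling $2$-handle $Z_{i}$. On the surface level it does three things.
\begin{enumerate}
\item The $k_{i}^{\pm}$ parallel strands of $K_{i}$ are tubed through the connected-sum region to become the $K(i)_{i}$-cable.
\item The $(m+1)$-st cable $K(i)_{m+1}$ is produced from $k_{i}^{\pm}$ parallel cups.
\item One extra pair of oppositely oriented strands of $K(i)_{m+1}$ is produced by an annulus, undotted for $v'$ and dotted for $v''$.
\end{enumerate}
Concretely, we would write $v'$ (resp.\ $v''$) as $\psi^{[0]}_{\diamond,m+1}$ (resp.\ $\psi^{[1]}_{\diamond,m+1}$) applied to an element $w\in\ol{\skein}_{0}^{2,\diamond}(\mbf{k}^{\pm};k_{i}^{\pm};i)$. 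Here $w$ is chosen with $\Phi(w)=v$ (up to the counit normalization) and $\Psi(w)=v$, using a "trivial lasso": $w$ is $v$ connect-summed with the standard unit class of $\belt{k_{i}^{\pm},k_{i}^{\pm}}$ obtained by dotting appropriately. Since both $\Phi$ and $\Psi$ end with capping unknots via $\varepsilon$, we choose the dots on the caps so that $\varepsilon_{k_{i}^{\pm}}$ evaluates to $1$.

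\textbf{Evaluation.} We then use the commutative squares displayed before Theorem \ref{theorem:tortellini_2_handle_formula}, specifically the last pair of rows. These give
\[
\Phi_{\diamond,\mbf{k}^{\pm},k_{i}^{\pm}+1,i}\circ\psi^{[j]}_{\diamond,m+1}=\Phi_{\diamond,\mbf{k}^{\pm},k_{i}^{\pm},i},
\qquad
\Psi_{\diamond,\mbf{k}^{\pm},k_{i}^{\pm}+1,i}\circ\psi^{[j]}_{\diamond,m+1}=\psi^{[j]}_{\diamond,i}\circ\Psi_{\diamond,\mbf{k}^{\pm},k_{i}^{\pm},i}.
\]
The first commuting square, with the identity on the left, needs to be refined: the capping counit $\varepsilon$ applied to an undotted annulus gives $0$, and applied to a dotted one gives $1$. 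This is precisely the mechanism that yields $\Phi(v')=0$ and $\Phi(v'')=v$. So we will recheck that square carefully via $\varepsilon\circ(\text{annulus})$, i.e.\ the sphere/dotted-sphere evaluation in $\KhR_{2}(U(1,1))$. On the $\Psi$ side the annulus survives and becomes $\psi^{[j]}_{\diamond,i}$ applied to $\Psi(w)=v$, which gives the two right-hand identities.

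\textbf{Main obstacle.} The main obstacle is producing $w$ with $\Phi(w)=\Psi(w)=v$ when $k_{i}^{\pm}>0$. For $\Phi$ this is direct from (\ref{eq:Z_i_equals_counit}), since the counit is left inverse to the "unit/cup" insertion. For $\Psi$ it requires the swap diffeomorphism $\varphi$ of Lemma \ref{lem:diff_of_pairs_Psi_swap}. When $\ell^{\pm}=k_{i}^{\pm}$, Proposition \ref{prop:Psi_relate_to_Phi} gives $\Psi=\Phi\circ\ol{\skein}(\varphi)$, and we must check that $\varphi$ fixes $w$ up to the $\frak{S}$-action. We would try to arrange this by making $w$ symmetric under the swap, using the symmetric construction (a lasso bounding both cables). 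If that fails, we would instead build $w$ geometrically as the restriction of the isotopy in Lemma \ref{lem:isotopy_of_embeddings} applied to the filling obtained by capping $v$ with core disks. That is, we would use the lasso move itself for $v$ with one extra sheet pair, and extract $v',v''$ via the decomposition in Lemma \ref{lem:B0}.
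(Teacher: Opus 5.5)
\textbf{Verdict.} Your reduction is sound, but the step it reduces to, constructing $w$, is left open, and the constructions you sketch for it do not work.

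\textbf{What is correct.} The reduction is in fact how the paper's construction decomposes. Suppose $w\in\ol{\skein}_{0}^{2,\diamond}(\mbf{k}^{\pm};k_{i}^{\pm};i)$ satisfies $\Phi_{\diamond,\mbf{k}^{\pm},k_{i}^{\pm},i}(w)=\Psi_{\diamond,\mbf{k}^{\pm},k_{i}^{\pm},i}(w)=v$. Then $v'=\psi^{[0]}_{\diamond,m+1}(w)$ and $v''=\psi^{[1]}_{\diamond,m+1}(w)$ work. Under $\Phi$ the extra annulus is capped to an undotted (resp.\ dotted) sphere, and you are right that the square with $\id$ must read $0$ in the undotted case. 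Under $\Psi$ the annulus becomes $\psi^{[j]}_{\diamond,i}$. In the paper, the surface $P'$ (an annulus on two push-offs of $K(i)_{m+1}$, tubed to an input ball labelled $u\in\{\mbf{1},\mbf{X}\}$) is your $\psi^{[j]}_{\diamond,m+1}$, and gluing $P_{k_{i}^{\pm}}\cup S$ to $v$ is your $w$. The gap is that you never actually produce $w$, which is the whole geometric content of the proposition.

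\textbf{Why items (1) and (2) fail.} As literally described, they cannot be realized. In the $1$-handle cobordism $X''$ from $Y$ to $Y\#S^{1}\times S^{2}$, strands of $K(i)_{i}$ and of $K(i)_{m+1}$ each run once over the $1$-handle, in opposite directions. So $[K_{i}]=[K(i)_{i}]+[K(i)_{m+1}]$ in $H_{1}(X'')$, with $[K(i)_{m+1}]\neq 0$. Hence a strand of $K(i)_{m+1}$ bounds no cup, and a strand of $K_{i}$ cobounds no cylinder with a strand of $K(i)_{i}$.

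\textbf{The missing idea.} What is needed is the pair of pants $P\subset X''$ with $\del P=-K_{i}\sqcup K(i)_{i}\sqcup K(i)_{m+1}$: the trace of $K_{i}$ plus a band running through the $1$-handle between its two feet. Take it in $k_{i}^{\pm}$ parallel copies, with no input and no dots. With it both identities are immediate and need no analysis of $\varphi$. After the cancelling $2$-handle of $\Phi$ (resp.\ $\Psi$), the $K(i)_{m+1}$-leg (resp.\ $K(i)_{i}$-leg) becomes an unknot that gets capped. A pair of pants with one leg capped is the trivial cylinder, so $\Phi(w)=\Psi(w)=v$. Your symmetry route through $\varphi$ would in any case only give $\Psi(w)$ up to a braid acting on $v$, whereas the statement is an equality on the nose.

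\textbf{The dots go the wrong way.} If $w$ were $v$ merged with a dotted-annulus class, $\Phi(w)$ would be $v$ with a dot on every strand of $K_{i}$. What returns $v$ is the undotted annulus. It evaluates to $\mbf{1}\otimes\mbf{X}+\mbf{X}\otimes\mbf{1}$ on the two resulting unknots; the first is merged into $K_{i}$ and the second is capped, so the second term dies under $\varepsilon$. Tubing $K_{i}$ into such an annulus is one way to build $P$.

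\textbf{Other justifications that are unavailable.} $P_{n,0}$ is counital but has no unit, so ``the counit is left inverse to the unit insertion'' cannot be used. The fallback through Lemmas \ref{lem:isotopy_of_embeddings} and \ref{lem:B0} cannot help either. Those lemmas give isotopies of fillings of $X'$. The proposition asserts equalities in $\ol{\skein}_{0}^{2,\diamond}(X;K(\mbf{k}^{+},\mbf{k}^{-})\cup L)$ and $\ol{\skein}_{0}^{2,\diamond}(X;K(\mbf{k}^{+}+\mbf{e}_{i},\mbf{k}^{-}+\mbf{e}_{i})\cup L)$, before any $2$-handles are attached.
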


\begin{proof}
Let $\diamond\in\{O,T\}$, and let $X''$ denote the 4-dimensional 1-handle cobordism from $Y$ to $Y\#S^{1}\times S^{2}$ which attaches a one handle along $\nu(w)\sqcup\nu(z)\subset\nu(K_{i})$, using the notation from the proof of Lemma \ref{lem:diff_of_pairs_Psi_swap}. We construct for each $u\in\KhR_{2,\diamond}(U)$ a (type $\diamond$) lasagna filling $F_{u}$ for the pair
\[\big(X'',-(K(\mbf{k}^{+},\mbf{k}^{-})\cup L)\sqcup(K(\mbf{k}^{\pm};k_{i}^{\pm}+1;i)\cup L)\big)\]
with precisely one input $B^{4}$. Fix a 4-ball $B\subset\text{int}(X'')$, and let $U\subset\del B$ denote the $0$-framed unknot. Consider the (framed, oriented) pair of pants surface $P\subset X''\setminus\mathring{B}$ with boundary
\[\del P=-K_{i}\sqcup(K(i)_{i}\cup K(i)_{m+1})\subset -Y\sqcup (Y\#S^{1}\times S^{2})\subset\del(X''\setminus\mathring{B})\]
as pictured in Figure \ref{fig:lasso_subsumes_bundt_1}. Let $P_{k_{i}^{\pm}}$ denote the surface obtained by taking $k_{i}^{+}$ positively-oriented and $k_{i}^{-}$ negatively-oriented disjoint push-offs of $P$. Similarly, let $\delta,\delta'\subset Y\#S^{1}\times S^{2}$ be two push-offs of $K(i)_{m+1})$, and let $P'\subset X''\setminus\mathring{B}$ be the pair of pants surface with boundary
\[\del P'=U\sqcup(-\delta\cup \delta')\subset S^{3}\sqcup (Y\#S^{1}\times S^{2})\subset\del(X''\setminus\mathring{B}),\]
constructed as follows: let $A\subset Y\#S^{1}\times S^{2}$ denote the natural annulus co-bounded by $\delta,\delta'$. Push the interior of $A$ into $\text{int}(X''\setminus B)$, remove a disk from its interior and attach a tube in $X''$ connecting it to $U\subset\del B$.

After possibly perturbing $P'$ we can assume that $P_{k_{i}^{\pm}}\cap P'=\emptyset$. Finally, let $S\subset X''\setminus\mathring{B}$ be the surface obtained as the union of the traces of the remaining components of $K(\mbf{k}^{+},\mbf{k}^{-})\cup L$. The desired lasagna filling is then given by
\[F_{u}=(P_{k_{i}^{\pm}}\cup P'\cup S,(B,U,u)),\qquad u\in\KhR_{2,\diamond}(U).\]
Using $F_{u}$, we can construct a map
\[f_{u}:\ol{\skein}_{0}^{2,\diamond}\big(X;K(\mbf{k}^{+},\mbf{k}^{-})\cup L\big)\to\ol{\skein}_{0}^{2,\diamond}\big(X\natural S^{1}\times B^{3};K(\mbf{k}^{\pm};k_{i}^{\pm}+1;i)\cup L\big)\]
via $f_{u}:=\sf{gl}(-\otimes[F_{u}])$ where $\sf{gl}$ denotes the gluing map from Proposition \ref{prop:gluing_surjection}. Given $v\in\ol{\skein}_{0}^{2,\diamond}(X;K(\mbf{k}^{+},\mbf{k}^{-})\cup L)$, the elements $v',v''$ as in the statement of the proposition are given by $f_{\mbf{1}}(v)$, $f_{\mbf{X}}(v)$, respectively. Observe that
\begin{align*}
    &(\Phi_{\diamond,\mbf{k}^{\pm},k_{i}^{\pm}+1,i}\circ f_{\mbf{1}})(v)=0 & &(\Phi_{\diamond,\mbf{k}^{\pm},k_{i}^{\pm}+1,i}\circ f_{\mbf{X}})(v)=v
\end{align*}
for any $v\in\ol{\skein}_{0}^{2,\diamond}(X;K(\mbf{k}^{+},\mbf{k}^{-})\cup L)$,
since any lasagna filling $F$ of $(X;K(\mbf{k}^{+},\mbf{k}^{-})\cup L)$ is sent under $(\Phi_{\diamond,\mbf{k}^{\pm},k_{i}^{\pm}+1,i}\circ f_{\mbf{1}})$ and $(\Phi_{\diamond,\mbf{k}^{\pm},k_{i}^{\pm}+1,i}\circ f_{\mbf{X}})$ to the union of $F$ with an un-dotted and dotted sphere, respectively (see Figure \ref{fig:lasso_subsumes_bundt_2}). On the other hand, by construction we see that
\begin{align*}
    &(\Psi_{\diamond,\mbf{k}^{\pm},k_{i}^{\pm}+1,i}\circ f_{\mbf{1}})(v)=\psi^{[0]}_{\diamond,i}(v) & &(\Psi_{\diamond,\mbf{k}^{\pm},k_{i}^{\pm}+1,i}\circ f_{\mbf{X}})(v)=\psi^{[1]}_{\diamond,i}(v)
\end{align*}
as desired.
\begin{center}
\begin{figure}
    \includegraphics[width=16cm]{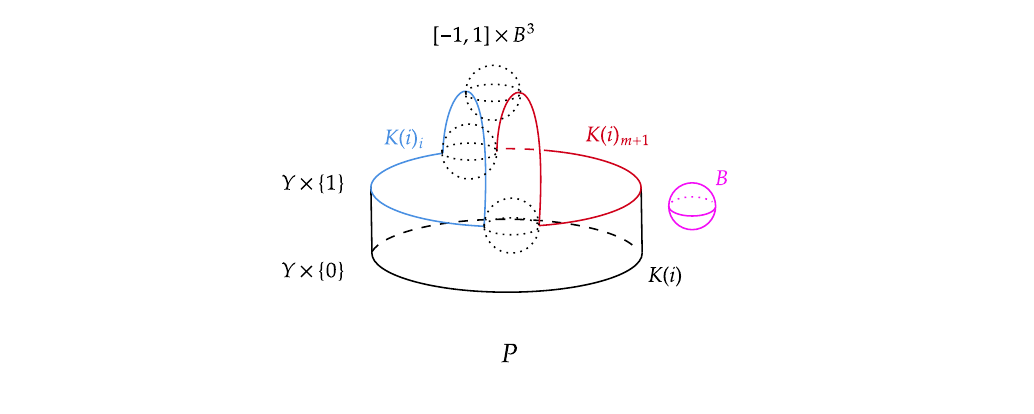}
    \caption{The surface $P$ used in the proof of Proposition \ref{prop:lasso_subsumes_bundt_cake}.}
    \label{fig:lasso_subsumes_bundt_1}
\end{figure}
\begin{figure}
    \includegraphics[width=11cm]{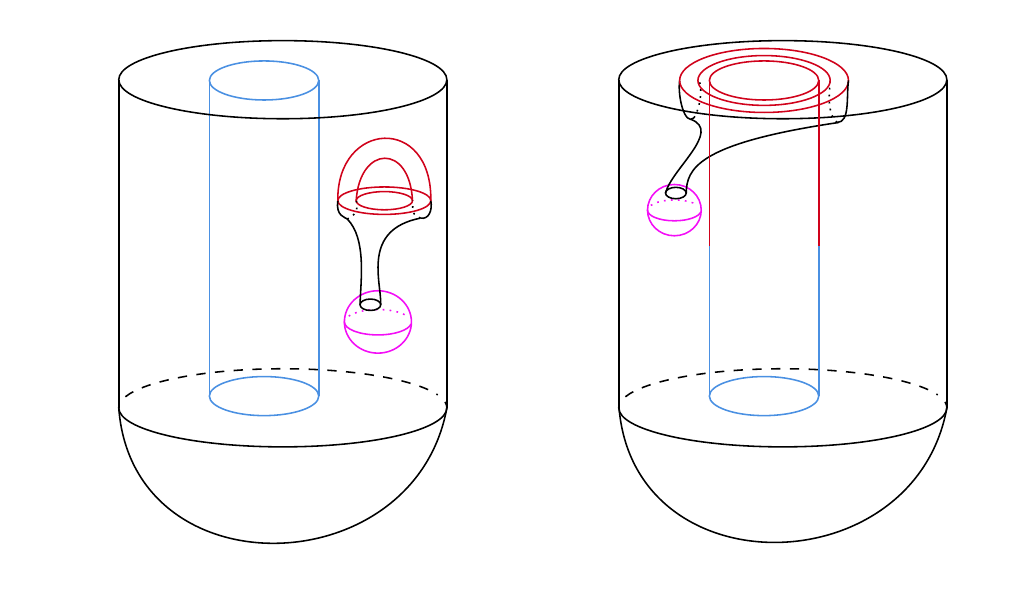}
    \caption{The cobordisms $(W_{i},\Sigma_{\mbf{k}^{\pm},k_{i}^{\pm}+1,i})$ and $(W'_{i},\Sigma'_{\mbf{k}^{\pm},k_{i}^{\pm}+1,i})$, respectively, after gluing in the lasagna filling $F_{u}$ as defined in the proof of Proposition \ref{prop:lasso_subsumes_bundt_cake}.}
    \label{fig:lasso_subsumes_bundt_2}
\end{figure}
\end{center}
\end{proof}

Next, we specialize to the $\diamond=T$ theory. In this setting, the lasso relation includes two new sets of relations, which we call the \emph{disoriented} annulus relations. Let $i\in\{1,\dots,m\}$, let $\mbf{k}^{\pm}\in\bb{N}^{m}$, and let $B$ be a 4-dimensional tubular neighborhood of a push off of $K_{i}\times\{\frac{1}{2}\}\subset Y\times[0,1]$. Let $\belt{2,0}\subset\del B\cong S^{1}\times S^{2}$ denote the belt link with two positively oriented strands, and consider the surface $\Sigma_{+}$ in $Y\times[0,1]\setminus B$ with boundary
\[\del\Sigma_{+}=\big(-(K(\mbf{k}^{+},\mbf{k}^{-})\cup L)\times\{0\}\big)\sqcup\big(-\belt{2,0}\big)\sqcup\big((K(\mbf{k}^{+}+2\mbf{e}_{i},\mbf{k}^{-})\cup L)\times\{1\}\big)\]
which consists of:
\begin{itemize}
    \item trivial cylinders connecting the components of $K(\mbf{k}^{+},\mbf{k}^{-})\cup L\subset Y\times\{0\}$ to their corresponding components in the sublink $K(\mbf{k}^{+},\mbf{k}^{-})\cup L\subset K(\mbf{k}^{+}+2\mbf{e}_{i},\mbf{k}^{-})\cup L\subset Y\times\{1\}$.
    \item two cylinders parallel to $K_{i}\times[\frac{1}{2},1]$ connecting $\belt{2,0}\subset\del B$ to the remaining two positively-oriented components of $K(\mbf{k}^{+}+2\mbf{e}_{i},\mbf{k}^{-})\cup L\subset Y\times\{1\}$.
\end{itemize}
For each $u\in\KhR_{2,T}^{-}(\belt{2,0})$ consider the type $T$ 1-dimensional inputs lasagna filling
\[F^{+}_{u}:=(\Sigma_{+},(B,\belt{2,0},v))\]
of the pair
\[\big(Y\times[0,1],-(K(\mbf{k}^{+},\mbf{k}^{-})\cup L)\sqcup(K(\mbf{k}^{+}+2\mbf{e}_{i},\mbf{k}^{-})\cup L)\big).\]
Similar to the proof of Proposition \ref{prop:lasso_subsumes_bundt_cake}, we can glue this lasagna filling to the boundary of a lasagna filling of $(X,K(\mbf{k}^{+},\mbf{k}^{-})\cup L)$ to obtain a lasagna filling of $(X,K(\mbf{k}^{+}+2\mbf{e}_{i},\mbf{k}^{-})\cup L)$, defining a map
\[\psi^{+,u}_{T,i}:\ol{\skein}_{0}^{2,T}(X;K(\mbf{k}^{+},\mbf{k}^{-})\cup L)\to\ol{\skein}_{0}^{2,T}(X;K(\mbf{k}^{+}+2\mbf{e}_{i},\mbf{k}^{-})\cup L)\]
for each $u\in\KhR_{2,T}^{-}(\belt{2,0})$ with grading shift equal to the bigrading of $u$. By replacing $\belt{2,0}$ with $\belt{0,2}$ in the above discussion, we can similarly define a map
\[\psi^{-,u}_{T,i}:\ol{\skein}_{0}^{2,T}(X;K(\mbf{k}^{+},\mbf{k}^{-})\cup L)\to\ol{\skein}_{0}^{2,T}(X;K(\mbf{k}^{+},\mbf{k}^{-}+2\mbf{e}_{i})\cup L)\]
for each $u\in\KhR_{2,T}^{-}(\belt{0,2})$, again with grading shift equal to the bigrading of $u$.

Next, consider the maps $\varepsilon_{+}$ and $\varepsilon_{-}$ obtained as the compositions of the maps
\begin{align*}
    &\KhR_{2,T}^{-}\big(\belt{2,0}\big)\xrightarrow{\epsilon}\KhR_{2}\big(U(2,0)\big)\xrightarrow{\varepsilon^{\otimes 2}}\bb{Q}, \\
    &\KhR_{2,T}^{-}\big(\belt{0,2}\big)\xrightarrow{\epsilon}\KhR_{2}\big(U(0,2)\big)\xrightarrow{\varepsilon^{\otimes 2}}\bb{Q},
\end{align*}
respectively. The proof of the following proposition is very similar to that of Proposition \ref{prop:lasso_subsumes_bundt_cake}, so we leave it as an exercise to the motivated reader:

\begin{proposition}
\label{prop:disoriented_bundt_cake}
Let $v\in\ol{\skein}_{0}^{2,T}(X;K(\mbf{k}^{+},\mbf{k}^{-})\cup L)$. Then for every $i=1,\dots, m$, the following statements are true:
\begin{enumerate}
    \item For each $u_{+}\in\KhR_{2,T}^{-}(\belt{2,0})$ there exists an element $w_{u_{+}}\in\ol{\skein}_{0}^{2,\diamond}(X\natural S^{1}\times B^{3};K(\mbf{k}^{\pm};k_{i}^{\pm}+1\pm 1;i)\cup L)$ such that:
    \begin{align*}
        &\Phi_{T,\mbf{k}^{\pm},k_{i}^{\pm}+1\pm 1,i}(w_{u_{+}})=\varepsilon_{+}(u_{+})\cdot v, & &\Psi_{T,\mbf{k}^{\pm},k_{i}^{\pm}+1\pm 1,i}(w_{u_{+}})=\psi_{T,i}^{+,u_{+}}(v).
    \end{align*}
    \item For each $u_{-}\in\KhR_{2,T}^{-}(\belt{0,2})$ there exists an element $w_{u_{-}}\in\ol{\skein}_{0}^{2,\diamond}(X\natural S^{1}\times B^{3};K(\mbf{k}^{\pm};k_{i}^{\pm}+1\mp 1;i)\cup L)$ such that:
    \begin{align*}
        &\Phi_{T,\mbf{k}^{\pm},k_{i}^{\pm}+1\mp 1,i}(w_{u_{-}})=\varepsilon_{-}(u_{-})\cdot v, & &\Psi_{T,\mbf{k}^{\pm},k_{i}^{\pm}+1\mp 1,i}(w_{u_{-}})=\psi_{T,i}^{-,u_{-}}(v).
    \end{align*}
\end{enumerate}
\end{proposition}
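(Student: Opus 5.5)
I will mimic the proof of Proposition \ref{prop:lasso_subsumes_bundt_cake}, replacing the $0$-dimensional input $(B,U,u)$ by the $1$-dimensional input $(B,\belt{2,0},u_{+})$ used to define $\psi^{+,u_{+}}_{T,i}$. I treat part (1); part (2) is the same argument with $\belt{0,2}$, reversing the orientations of the two extra sheets.

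\textbf{Step 1: the filling.} Let $X''$ be the $1$-handle cobordism from $Y$ to $Y\#S^{1}\times S^{2}$, attached along $\nu(w)\sqcup\nu(z)\subset\nu(K_{i})$ as in the proof of Lemma \ref{lem:diff_of_pairs_Psi_swap}. As before, take the pushed-off pairs of pants $P_{k_{i}^{\pm}}$ running from $K_{i}(k_{i}^{+},k_{i}^{-})$ to the $K(i)_{i}$ and $K(i)_{m+1}$ strands, together with the trivial traces $S$ of the other components. Add an input $1$-handlebody $B\cong S^{1}\times B^{3}$, a neighborhood of a push-off of the circle $K(i)_{m+1}$ (equivalently $K_{i}$) lying in $\text{int}(X'')$. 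Add two sheets: annuli running from $\belt{2,0}\subset\del B$ to two new positively oriented parallel copies of $K(i)_{m+1}$, chosen disjoint from $P_{k_{i}^{\pm}}$. The result is a type $T$ filling $G_{u_{+}}$ whose boundary link is $K(\mbf{k}^{\pm};k_{i}^{\pm}+1\pm1;i)\cup L$, with two extra $+$ strands on the $(m+1)$-st component; parity keeps it $2$-divisible. Define $w_{u_{+}}:=\sf{gl}(v\otimes[G_{u_{+}}])$. The grading shift equals that of $u_{+}$, which is what the statement requires.

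\textbf{Step 2: computing $\Phi$.} Gluing $(W_{i},\Sigma)$ attaches a $2$-handle along $\gamma\parallel K(i)_{m+1}$, which cancels the $1$-handle. It also caps the components parallel to $K(i)_{m+1}$, and so caps the two extra sheets. After cancellation, $B$ together with its two capped annuli lies in a $4$-ball neighborhood of the cocore region. The picture is then $v$ disjoint union a filling of $(B^{4},\emptyset)$ with input $(B,\belt{2,0},u_{+})$ and capping disks.

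By the enclosement relation, this closed piece evaluates to the cobordism map applied to $u_{+}$. That cobordism is the $2$-handle attached along the belt circle, which gives the counit $\epsilon$ by (\ref{eq:Z_i_equals_counit}), followed by capping the two unknots, which gives $\varepsilon^{\otimes2}$. Hence the piece contributes $\varepsilon_{+}(u_{+})$, and $\Phi(w_{u_{+}})=\varepsilon_{+}(u_{+})\cdot v$.

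\textbf{Step 3: computing $\Psi$.} Gluing $(W'_{i},\Sigma')$ attaches the handle along $\gamma'\parallel K(i)_{i}$ and caps the components parallel to $K(i)_{i}$. These are now the $k_{i}^{\pm}$ sheets, which cap off to undo the $P$ pants exactly as in Proposition \ref{prop:lasso_subsumes_bundt_cake}. The $K(i)_{m+1}$ strands, now including the two extra sheets, are slid over the handle to become parallel copies of $K_{i}$. The resulting picture in $Y\times[0,1]$ glued to $v$ is isotopic to $F^{+}_{u_{+}}$, with $B$ a tubular neighborhood of a push-off of $K_{i}\times\{\tfrac12\}$ and the two sheets being the cylinders parallel to $K_{i}\times[\tfrac12,1]$. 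Therefore $\Psi(w_{u_{+}})=\psi^{+,u_{+}}_{T,i}(v)$.

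\textbf{Main obstacle.} The delicate step is Step 3. After the handle slide and cancellation, one must check that the image of $B$ and its two sheets is isotopic, as an embedded $1$-handlebody with belt link, to the standard $F^{+}_{u_{+}}$ configuration. In particular the identification $\del B\cong S^{1}\times S^{2}$ must not pick up a nontrivial twist. Any discrepancy is a Gluck twist on $\belt{2,0}$, and such a twist acts on $\KhR^{-}_{2,T}$ by the isomorphisms $\tau^{p}_{T}$ of Remark \ref{rmk:gluck_twist_maps}. I would first try to show by the explicit decomposition in Lemma \ref{lem:diff_of_pairs_Psi_swap} that the framing is preserved. Failing that, I would absorb the twist by reparametrizing $u_{+}$, noting that $\varepsilon_{+}\circ\tau^{p}_{T}=\varepsilon_{+}$ because the counit is compatible with $P_{n,0}\simeq P_{n,0}\otimes\FT^{p}$.
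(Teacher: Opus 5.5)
Your proposal is correct and is the argument the paper intends: the paper leaves this proposition as an exercise modeled on Proposition~\ref{prop:lasso_subsumes_bundt_cake}, and you make the expected substitution, replacing $(B,U,u)$ and the pants $P'$ by the input $(B,\belt{2,0},u_{+})$ and two same-oriented annuli. One caveat: $B$ must surround a push-off of $K(i)_{m+1}$ specifically, because in $X''$ this circle runs over the $1$-handle once and so is not isotopic to a push-off of $K_i$ (despite your parenthetical), and the $K_i$ choice would break Step 2. Your Step 3 framing worry resolves with no Gluck twist: take $B=\nu(K(i)_{m+1})\times[t_1,t_2]$ in a collar, with the belt link and sheets given by framing push-offs; since $\nu(K(i)_{m+1})$ misses $\gamma'$, $B$ can be pushed past that handle, and then the same framed slide that sends $K(i)_{m+1}(\ell^{+},\ell^{-})$ to $K_{i}(\ell^{+},\ell^{-})$ in the definition of $\Psi$ carries it onto the configuration defining $F^{+}_{u_{+}}$, so your fallback via $\tau^{p}_{T}$ is unnecessary.
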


One can show (e.g., by comparing with the corresponding calculation in Section 4.1 in \cite{WillisS1xS2}) that the maps $\varepsilon_{\pm}$ vanish in bidegrees $\neq (0,2)$, and in bigrading $(0,2)$ the maps $\varepsilon_{\pm}$ induce isomorphisms
\begin{align*}
    &\KhR_{2,T}^{-,0,2}\big(\belt{2,0}\big)\underset{\cong}{\xrightarrow{\varepsilon_{+}}}\bb{Q}, & &\KhR_{2,T}^{-,0,2}\big(\belt{0,2}\big)\underset{\cong}{\xrightarrow{\varepsilon_{-}}}\bb{Q}.
\end{align*}
Define $\mbf{X}_{+}\in\KhR_{2,T}^{-,0,2}(\belt{2,0})$ and $\mbf{X}_{-}\in\KhR_{2,T}^{-,0,2}(\belt{0,2})$, respectively, as the preimages of $1\in\bb{Q}$ under the above isomorphisms. Proposition \ref{prop:disoriented_bundt_cake} then implies the relations
\begin{align}
\label{eq:disoriented_annulus_relations}
    &\psi_{T,i}^{+,u_{+}}(v)\sim\psi_{T,i}^{-,u_{-}}(v)\sim 0, & &\psi_{T,i}^{+,\mbf{X}_{+}}(v)\sim\psi_{T,i}^{-,\mbf{X}_{-}}(v)\sim v,
\end{align}
in the coequalizer from Theorem \ref{theorem:tortellini_2_handle_formula} for all $u_{+}\in\KhR_{2,T}^{-,h,q}(\belt{2,0})$ and $u_{-}\in\KhR_{2,T}^{-,h,q}(\belt{0,2})$ such that $(h,q)\neq(0,2)$.

\section{1-, 3-, \& 4-Handles}
\label{sec:13and4handles}

In this section we provide formulae for handle attachments of index $\neq{2}$. Let $X$ admit a handle decomposition $X_{1}\subset{X_{2}}\subset{X_{3}}\subset{X_{4}}=X$ where $X_{i+1}$ is obtained from $X_{i}$ by attaching the index $(i+1)$ handles of $X$. Let $L$ denote a link in the boundary $\partial{X_{1}}$, and let $K$ denote the framed attaching link of the 2-handles of $X_{2}$. For the pair $(X_{1},L)$, and in analogy with the 0-dimensional inputs invariant evaluated on the pair $(B^{4},L)$, the 1-dimensional inputs skein lasagna module recovers the Rozansky-Willis homology of $L\subset{\sqcup^{m}\#^{n}S^{1}\times{S^{2}}}$ on pairs $(X_{1},L)$ where $\partial{X_{1}}=\sqcup^{m}\#^{n}S^{1}\times{S^{2}}$.
See (\cite{RSWWZ25}, discussion before Remark 3.5): As previously stated there is a natural isomorphism
\begin{equation}
\label{eq:tortellini_1_h-body}
    \ol{\skein}_{0}^{2,\diamond}(\textstyle{\natural^{n}}S^{1}\times B^{3};L)\cong\KhR_{2,\diamond}^{-}(\#^{n}S^{1}\times{S^{2}};L)
\end{equation}
given by $[(L\times{I},B,v)]\mapsto{v}$, where $(L\times{I},B,v)$ is the 1-dimensional inputs filling with input manifold $B$ diffeomorphic to a smaller copy of $X_{1}$ embedded in the original, and $L\times{I}$ the product cobordism in the relative handlebody $X_{1}\setminus{\text{int}(B)}$.

\begin{remark}
    In the 0-dimensional inputs setting, the inclusion map $(X,L)\hookrightarrow{(X\cup{h_{1}},L)}$, where $h_{1}$ is a 1-handle attached such that $L$ is disjoint from the cocore, induces an isomorphism (Lemma 4.1 of \cite{MWWhandles}). We remark that the analogous statement for the 1-dimensional inputs skein lasagna module is not true on the nose, as an arbitrary 1-dimensional inputs lasagna filling in $X\cup{h_{1}}$ may now have an input manifold that intersect the cocore of $h_{1}$ non-trivially, and therefore it may not be possible to perform an isotopy so that the resulting filling contained entirely in $X$, disjoint from the 1-handle.
\end{remark}

\begin{remark} (3-handle attachments)
When relating $\overline{\skein}_{0}^{2,\diamond}(X_{2})$ and $\overline{\skein}_{0}^{2,\diamond}(X_{3})$, possibly with additional boundary link data, we obtain a formula nearly identical in spirit to that of \cite[Theorem 3.7]{MWWhandles} as the argument does not depend on the dimension of the input manifold. For completeness we sketch the argument here.

Given $X_{2}$ as above with a framed link $L$ in the boundary, let $S\subset{\partial{X_{2}}}$ be an embedded 2-dimensional sphere that does not intersect $L$. Write $X^{\prime}=X_{2}\cup{Z}$ where $Z$ is the cobordism corresponding to attaching a 3-handle along $S$, then there is a product cobordism $I\times{L}$ in $Z$ mapping $L$ to the link $L^{\prime}$ in the outgoing boundary of $Z$. Letting $\gamma$ denote the oriented equator of the sphere $S$, Manolescu-Walker-Wedrich define cobordism maps

\[\Delta^{\pm}:(\partial{X_{2}},\gamma\cup{L})\rightarrow{(\partial{X_{2},L^{\prime})}} 
\]

obtained by first obtaining cobordisms corresponding to pushing the ``top'' and ``bottom'' hemispheres of $S$ into the collar neighborhood $I\times{\partial{X_{2}}}$ and taking their union with $I\times{L}$ in the same collar. The orientations of $\Delta^{+}$ and $\Delta^{-}$ are chosen so that they induce the chosen orientation on the boundary $\gamma$. We then consider the corresponding lasagna gluing maps for pairs $(I\times{\partial{X_{2}}},\Delta^{\pm})$, denoted $\Psi_{I\times{\partial{X_{2}}},\Delta^{\pm}}$ and obtain the following.
\end{remark}

\begin{prop}\label{prop:3-handles}
Let $(X_{3},L')$ be obtained from $(X_{2},L)$ via a 3-handle attachment along an embedded 2-sphere $S\subset\del X_{2}\setminus L$ as above. The 1-dimensional inputs skein lasagna module of $(X_{3},L')$ at level $\bm{\alpha}'\in H_{2}^{L'}(X_{3};R)$ is isomorphic to the coequalizer
\[\ol{\cal{S}}_{0}^{2,\diamond}(X_{3};L';\bm{\alpha}')\cong\text{Coeq}\Big(\bigoplus_{\substack{\bm{\alpha}\in H_{2}^{L}(X_{2};R) \\ \pi(\bm{\alpha})=\bm{\alpha}'}}\ol{\skein}^{2,\diamond}_{0}(X_{2},L\cup \gamma;\bm{\alpha})\doublearrow{\Psi_{I\times{\partial{X_{2}}},\Delta^{+}}}{\Psi_{I\times{\partial{X_{2}}},\Delta^{-}}}\bigoplus_{\substack{\bm{\alpha}\in H_{2}^{L}(X_{2};R) \\ \pi(\bm{\alpha})=\bm{\alpha}'}}\ol{\skein}^{2,\diamond}_{0}(X_{2},L;\bm{\alpha})\Big),\]
where 
\[\pi:H_{2}^{L}(X;R)\to H_{2}^{L'}(X';R)\cong H_{2}^{L}(X;R)/\<[S]\>\]
denotes the canonical projection map, and the coefficient ring $R$ is taken to be $\bb{Z}$ if $\diamond=O$ and $\bb{Z}/2$ if $\diamond=T$. 
\end{prop}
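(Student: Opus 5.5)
We follow the strategy of \cite[Theorem 3.7]{MWWhandles}, checking at each step that nothing depends on the inputs being $0$-dimensional. First we define a map from the direct sum on the right to $\ol{\skein}_{0}^{2,\diamond}(X_{3};L';\bm{\alpha}')$. Any lasagna filling of $(X_{2},L;\bm{\alpha})$ is sent to the filling of $X_{3}=X_{2}\cup Z$ obtained by gluing on the product $(Z,I\times L)$. Since the product cobordism preserves skein classes modulo $[S]$, this map lands in level $\pi(\bm{\alpha})$. It is compatible with isotopy, multilinearity and enclosement, because these relations are local to $X_{2}\subset X_{3}$. To see that it factors through the coequalizer, we take a filling $F$ of $(X_{2},L\cup\gamma)$ and glue on $\Delta^{+}$ or $\Delta^{-}$, then the product through $Z$. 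The two results differ only by replacing the upper hemisphere pushed into the collar with the lower one. Inside $X_{3}$ these two disks are isotopic rel $\gamma$ by sweeping across the core $D^{3}$ of the $3$-handle, and this isotopy can be chosen to avoid the input manifolds of $F$, which lie in $\mathrm{int}(X_{2})$. So the two images agree.

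\textbf{Surjectivity.} Take any filling of $(X_{3},L')$. We isotope its input manifolds off the $3$-handle. Their cores $\Gamma(B_{j})$ are $1$-complexes, and the cocore of a $3$-handle is $1$-dimensional, so in the $4$-manifold general position makes them disjoint. We can then push the cores, and hence the inputs, radially out of $D^{3}\times D^{1}$ into $X_{2}$. Next we make the skein transverse to the cocore arc. A neighborhood of the skein then meets the handle in finitely many parallel copies of the core $D^{3}$ minus small disks; more precisely, the skein meets the handle in copies of $S$-parallel pieces. Each point of intersection with the cocore can be removed by the standard tubing move: push the surface off the handle, replacing a disk with the complementary hemisphere pushed into the collar. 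After this the filling lies in $X_{2}$ together with the product region, which shows surjectivity.

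\textbf{Injectivity.} Let $\{F_{t}\}$ be a generic $1$-parameter family of fillings of $X_{3}$ whose endpoints lie in $X_{2}$. First we make the swept inputs avoid the cocore for all $t$. The family of cores sweeps out a $2$-complex, and the cocore arc is $1$-dimensional, so in the $5$-dimensional $X_{3}\times[0,1]$ general position gives disjointness. This is the point where $1$-dimensional inputs could have caused trouble, and dimension counting is exactly what rescues it. The remaining events are the moments where the skein passes through the cocore arc. Each such event changes the filling by swapping $\Delta^{+}$ for $\Delta^{-}$ along the equator $\gamma$ of a small sphere parallel to $S$. This is precisely the coequalizing relation, exactly as in \cite{MWWhandles}.

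Tracking the skein gradings then gives the sum over lifts $\bm{\alpha}$ with $\pi(\bm{\alpha})=\bm{\alpha}'$. We expect the main obstacle to be the general-position step for the family of input cores. There we must check that isotoping the cores also drags the full input handlebodies along, compatibly with the enclosement relation. This should follow as in Remark~\ref{rem:core_1d_input}, since each input deformation retracts onto its core and can therefore be shrunk into an arbitrarily small neighborhood of it.
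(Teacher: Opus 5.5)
Your approach is the same as the paper's. The paper invokes the proof of \cite[Theorem 3.7]{MWWhandles} and adds two ingredients: an input manifold can be isotoped off the cocore of the $3$-handle into $\mathrm{int}(X_{2})$, which is your dimension count on the cores; and an analogue of \cite[Lemma 2.1]{MWWhandles} holds for $1$-dimensional input fillings, which is where your closing remark about compatibility with enclosement enters.

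One step is misdescribed, though the error is harmless. In the surjectivity paragraph you have imported the $2$-handle picture. The cocore of a $4$-dimensional $3$-handle is an arc, so a generic $2$-dimensional skein misses it entirely, since $2+1<4$. Consequently:
\begin{itemize}
\item there are no intersection points with the cocore to remove, and no tubing move is needed;
\item a surface cannot meet the handle in copies of the $3$-dimensional core $D^{3}$.
\end{itemize}
The correct step is shorter. Once general position makes both the input cores and the skein disjoint from the cocore, push the whole filling radially off $D^{3}\times D^{1}$ into $X_{2}$ with a collar attached along the attaching region; that union is diffeomorphic to $X_{2}$, which gives surjectivity. The skein meets the cocore only at isolated times in a generic $1$-parameter family. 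You use this correctly in the injectivity paragraph to produce the $\Delta^{+}$ versus $\Delta^{-}$ relation, so the surjectivity paragraph as written contradicts your own later picture.

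There is a small miscount in the injectivity paragraph as well. The set to compare with the swept cores in $X_{3}\times[0,1]$ is the cocore times $[0,1]$, which is $2$-dimensional, not the $1$-dimensional arc. The count $2+2<5$ still gives disjointness, so the conclusion stands.
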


\begin{proof}
    The proof follows from an argument identical to the argument presented in the proof of Theorem 3.7 in \cite{MWWhandles}. To apply the argument, we require the following topological statements. Given a 1-dimensional inputs lasagna filling $F^{\prime}$ for the pair $(X_{3},L^{\prime})$, the 1-dimensional input manifold of $F^{\prime}$ may be isotoped away from the cocore of the attached 3-handle and be made to lie in $\text{int}(X_{2})$, hence, the filling $F^{\prime}$ may be isotoped away from the 1-dimensional cocore of the attached 3-handle. Next, we remark that an analogue of \cite[Lemma 2.1]{MWWhandles} holds for 1-dimensional input fillings, with the caveat that there are many choices for the homotopy type of a fixed input manifold for each connected component of $X_{2}$. Despite this, the same argument presented in \cite{MWWhandles} follows.
\end{proof}

We are now ready to prove Theorem \ref{theorem:main}:

\begin{proof}[Proof of Theorem \ref{theorem:main}]
The attachment of a $4$-handle does not affect $\overline{\skein}_{0}^{2,\diamond}$ by the same reasoning as for the $0$-dimensional inputs skein lasagna modules. The theorem then follows from the results of this section along with the main result of Section \ref{sec:2-handles}.
\end{proof}

\section{Calculations}
\label{sec:calculations}

In this section we use Corollary \ref{cor:tortellini_1_2_handlebody} to compute $\ol{\skein}^{2,\diamond}_{0}(X;L)$ for various $4$-manifolds $X$ built out of $1$-and $2$-handles.

\subsection{\texorpdfstring{$\mathbf{S^{2}\times D^{2}}$}{S2xD2}}
\label{subsec:S^2xD^2}

Recall that $S^{2}\times D^{2}$ can be constructed by attaching a 2-handle along the zero-framed unknot $U=(U,0)\subset\del B^{4}$. Fix $\alpha\in\bb{Z}$, and let $r,s\in\bb{N}$. Note that $U(1)$\footnote{not to be confused with the other $U(1)$.} can be identified with the belt link $\belt{1,1}\subset S^{1}\times S^{2}$, and hence we have the following identification:
\[U(\alpha^{\pm}+r;\alpha^{\pm}+s;1)=\belt{\alpha^{+}+r,\alpha^{+}+r}\cup\belt{\alpha^{-}+s,\alpha^{+}+s}\subset S^{1}\times S^{2}.\]
From (\ref{eq:Phi_Psi_decomps_1_hbody}) we have that
\begin{align*}
    &\Phi_{\diamond,\alpha^{\pm}+r,\alpha^{\pm}+s,1}=(\text{id}\otimes\varepsilon_{\alpha^{\pm}+s})\circ\epsilon, &
    &\Psi_{\diamond,\alpha^{\pm}+r,\alpha^{\pm}+s,1}=(\id\otimes\varepsilon_{\alpha^{\pm}+r})\circ\epsilon\circ\KhR_{2,\diamond}^{-}(\psi_{\alpha^{\pm}+r,\alpha^{\pm}+s,1}).
\end{align*}
where $\psi_{\alpha^{\pm}+r,\alpha^{\pm}+s,1}$ is as in Example \ref{ex:psi_i_S^2xD^2}. Observe that we can rewrite the map $\Psi_{\diamond,\alpha^{\pm}+r,\alpha^{\pm}+s,1}$ as
\[\Psi_{\diamond,\alpha^{\pm}+r,\alpha^{\pm}+s,1}=(\varepsilon_{\alpha^{\pm}+r}\otimes\id)\circ(\id\otimes g)\circ\epsilon.\]
where
\[g:\KhR_{2}\big(U(\alpha^{-}+s,\alpha^{+}+s)\big)\{-|\alpha|-2s\}\xrightarrow{\cong}\KhR_{2}\big(U(\alpha^{+}+s,\alpha^{-}+s)\big)\{-|\alpha|-2s\}\]
is the canonical isomorphism given by
\begin{align*}
    &\cal{V}^{\otimes(\alpha^{-}+s)}\otimes\cal{V}^{\otimes(\alpha^{+}+s)}\xrightarrow{\cong}\cal{V}^{\otimes(\alpha^{+}+s)}\otimes\cal{V}^{\otimes(\alpha^{-}+s)}, & &\cal{V}:=\KhR_{2}(U)\{-1\}.
\end{align*}
From Corollary \ref{cor:tortellini_1_2_handlebody} and Proposition \ref{prop:lasso_subsumes_bundt_cake} it follows that
\[\ol{\skein}^{2,O}_{0}(S^{2}\times D^{2};\alpha)\cong\Big(\bigoplus_{r=0}^{\infty}\KhR_{2}\big(U(\alpha^{+}+r,\alpha^{-}+r)\big)^{\frak{S}_{\alpha^{\pm}+r}}\{-|\alpha|-2r\}\Big)/\sim\]
where $\sim$ is the transitive and linear closure of the relations
\[\big[(\id\otimes\varepsilon_{\alpha^{\pm}+s})(v)\big]\sim\big[(\varepsilon_{\alpha^{\pm}+r}\otimes\id)(v)\big]\]
for all $r,s\in\bb{N}$ and all $v\in\text{im}(F_{\alpha^{\pm}+r,\alpha^{\pm}+s})$, where
\[F_{\alpha^{\pm}+r,\alpha^{\pm}+s}:=(\text{id}\otimes g)\circ\epsilon.\]
Finally, note that $\ol{\cal{S}}^{2,\diamond}_{0}(S^{2}\times D^{2})$ can be given the structure of a $\bb{Q}$-algebra via the gluing
\[S^{2}\times D^{2}\cup_{S^{2}\times I}S^{2}\times D^{2}\approx S^{2}\times D^{2}\]
as in the case of the ordinary skein lasagna module (see \cite{MN22}, Section 5).

\begin{proposition}
\label{prop:tortellini_S^2xD^2}
We have algebra isomorphisms
\begin{align*}
    &\ol{\cal{S}}^{2,O}_{0}(S^{2}\times D^{2})\cong\bb{Q}[A_{0},A_{0}^{-1}] & &\ol{\cal{S}}^{2,T}_{0}(S^{2}\times D^{2})\cong\bb{Q}[A_{0}]/(A_{0}^{2}-1)
\end{align*}
where $A_{0}^{\pm 1}$ is concentrated in tri-degree $(0,0,\pm 1)$. In particular,
\begin{align*}
    &\ol{\cal{S}}^{2,O}_{0}(S^{2}\times D^{2};\alpha)\cong\bb{Q} & &\ol{\cal{S}}^{2,T}_{0}(S^{2}\times D^{2};\ol{\alpha})\cong\bb{Q}
\end{align*}
for each $\alpha\in\bb{Z}$ (respectively, $\ol{\alpha}\in\bb{Z}/2$), concentrated in bi-degree $(h,q)=(0,0)$.
\end{proposition}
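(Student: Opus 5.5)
Fix a skein grading $\alpha$ and start from the presentation just derived. By Corollary \ref{cor:tortellini_1_2_handlebody} and Proposition \ref{prop:lasso_subsumes_bundt_cake}, $\ol{\skein}^{2,O}_{0}(S^{2}\times D^{2};\alpha)$ is the quotient of $\bigoplus_{r}\KhR_{2}(U(\alpha^{+}+r,\alpha^{-}+r))^{\frak{S}}\{-|\alpha|-2r\}$ by the relations $[(\id\otimes\varepsilon_{\alpha^{\pm}+s})(v)]\sim[(\varepsilon_{\alpha^{\pm}+r}\otimes\id)(v)]$ for $v\in\operatorname{im}(F_{\alpha^{\pm}+r,\alpha^{\pm}+s})$.

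\textbf{Step 1: reduce to the minimal level.} This is where I expect the real work to lie. Set $n=|\alpha|+2r$, so every summand is a quotient of $\cal{V}^{\otimes n}$. By Lemma \ref{lem:rozprops}(1) and the cobar description of $P_{n,0}$ in Definition \ref{def:Roz-cobar}, in homological degree $0$ the map $\epsilon\colon\KhR^{-}_{2,\diamond}(\belt{a,b}\cup\belt{c,d})\to\KhR_2(U(a,b))\otimes\KhR_2(U(c,d))$ is given by the sum over crossingless matchings of saddles. Its image therefore consists of elements that pair strands of the first belt with strands of the second through cup/cap-type tensors, e.g. $\mbf{1}\otimes\mbf{X}+\mbf{X}\otimes\mbf{1}$ for a matched pair. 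I would first treat the case $\alpha=0$ and $r=s=1$ by hand, then use the lasso relation with $s=0$. For $\alpha=0$ and $s=0$, $\varepsilon_{0}$ is the identity, so the relation reads $v_{\text{restricted}}\sim\varepsilon_{r}$-contraction, identifying each level-$r$ element with a scalar in level $0$.

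\textbf{Step 2: show the surviving class is nonzero.} The previous step should show that every class is equivalent to a multiple of the empty (level $\alpha^{\pm}$, $r=0$) generator. A consistency argument is then needed: define a functional on each level, namely $\varepsilon_{\alpha^{\pm}+r}$ applied after symmetrization, suitably normalized, and check that it is compatible with both $\Phi$ and $\Psi$. That compatibility follows from coassociativity of $\epsilon$ and the Frobenius counit. It gives a surjection onto $\bb{Q}$ in bidegree $(0,0)$ and hence $\ol{\skein}^{2,O}_{0}(S^{2}\times D^{2};\alpha)\cong\bb{Q}$.

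\textbf{Step 3: use the surjection from the ordinary module.} Independently, $S^{2}\times D^{2}$ is simply connected, so Proposition \ref{prop:surjections_lasagna_simply_connected} gives a surjection $\skein_{0}^{2}(S^{2}\times D^{2})\cong\bb{Q}[A_{0}^{\pm1},A_{1}]\twoheadrightarrow\ol{\skein}^{2,O}_{0}(S^{2}\times D^{2})$. This surjection is an algebra map, since the gluing defining the products is compatible with it. Steps 1 and 2 then amount to showing that $A_{1}\mapsto 0$ and that $A_{0}^{k}$ survives, which gives $\bb{Q}[A_{0}^{\pm1}]$.

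\textbf{Step 4: the type $T$ case.} Apply $f_{O,T}$, which is surjective by Proposition \ref{prop:surjections_lasagna_simply_connected}. The disoriented annulus relations \eqref{eq:disoriented_annulus_relations} identify level $\alpha$ with level $\alpha\pm2$, after noting that the grading shift $\frac12\alpha^{2}$ is compensated by the renormalization. This yields $A_{0}^{2}=1$. Nonvanishing again follows from the counit functional, now taken modulo $2$. I expect the main obstacle to be Step 1, specifically the explicit computation of $\operatorname{im}(\epsilon)$ in homological degree $0$ for belt links, together with the sign issues noted in Remark \ref{rmk:signs}.
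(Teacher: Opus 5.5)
Your overall architecture matches the paper's. You use the surjection $f_{0,\diamond}$ from $\skein_{0}^{2}(S^{2}\times D^{2})\cong\bb{Q}[A_{0}^{\pm 1},A_{1}]$, a counit functional to show the $A_{0}^{k}$ survive, and the disoriented annulus relations to get $A_{0}^{2}=1$. Step 2 is fine, and easier than you make it: for $v\in\im(F_{\alpha^{\pm}+r,\alpha^{\pm}+s})$ both sides of the relation map to $(\varepsilon_{\alpha^{\pm}+r}\otimes\varepsilon_{\alpha^{\pm}+s})(v)$, and $\varepsilon$ visibly respects the symmetric and annulus relations.

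\textbf{The gap is in Step 1}, which is the actual content of the proof. The lasso relation with $s=0$ gives $u\sim\varepsilon_{r}(u)\cdot 1$ only for $u\in\im(\epsilon)$, not for every element of level $r$. For example, at $\alpha=0$, $r=1$ there is a single crossingless matching on two points. So $\im(\epsilon)$ is spanned by $\mbf{1}\otimes\mbf{X}+\mbf{X}\otimes\mbf{1}$ and $\mbf{X}\otimes\mbf{X}$, which are exactly the images of the annulus maps. The class $\mbf{1}\otimes\mbf{X}\in\ker(\varepsilon_{1})$ is therefore not touched. Your $r=s=1$ hand computation could at best say something about level $1$; for general $r$ you rely on the $s=0$ claim. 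As written, you have no mechanism producing an element of $\im(F)$ whose $\Phi$-side is a nonzero multiple of a given $x\in\ker(\varepsilon_{\alpha^{\pm}+r})$ while its $\Psi$-side vanishes. Hence the assertion that every level-$r$ element becomes a scalar at level $0$ is unjustified, and Step 3's conclusion $A_{1}\mapsto 0$ does not follow.

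What is missing is the construction the paper uses.
\begin{enumerate}
\item By linearity and the symmetric relations, it suffices to kill $x_{r,k^{\pm}}=(\mbf{1}^{\otimes k^{+}}\otimes\mbf{X}^{\otimes(\alpha^{+}+r-k^{+})})\otimes(\mbf{1}^{\otimes k^{-}}\otimes\mbf{X}^{\otimes(\alpha^{-}+r-k^{-})})$ with $k^{+}+k^{-}>0$.
\item The dotted annulus relation gives $x_{r,k^{\pm}}\sim x_{r',k^{\pm}}$, so you may first raise the level until $r\geq 2k^{\pm}-\alpha^{\pm}$.
\item Take $s=r$, with the second belt decorated entirely by $\mbf{X}$. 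Choose a crossingless matching $\delta$ that nests $2k^{+}$ \emph{like-oriented} positive strands of the first belt into $k^{+}$ pairs, and $2k^{-}$ negative strands into $k^{-}$ pairs. Decorate these pairs by $\mbf{1}$ and all other pairs by $\mbf{X}$.
\item For $v=(g\circ\nu_{\delta})(\cdots)\in\im(F_{\alpha^{\pm}+r,\alpha^{\pm}+r})$, the $\Phi$-side is $(\mbf{1}\otimes\mbf{X}+\mbf{X}\otimes\mbf{1})^{\otimes k^{+}}\otimes\mbf{X}^{\otimes\cdots}$, and similarly on the negative strands. The symmetric relations identify this with $2^{k^{+}+k^{-}}x_{r,k^{\pm}}$.
\item The $\Psi$-side is $0$, since $\varepsilon^{\otimes 2}(\mbf{1}\otimes\mbf{X}+\mbf{X}\otimes\mbf{1})=0$.
\end{enumerate}
Pairing like-oriented strands is exactly what the annulus maps cannot do. It is available because $P_{n,0}$ contains all crossingless matchings, but only once the cable is large enough, which is why the level must be raised first. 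The same construction also fits your choice $s=0$ when $\alpha=0$, since the second belt is then empty.

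Step 4 relies on Step 1 for the vanishing of $A_{1}$, so it inherits the gap. As a minor point, there is no $\tfrac{1}{2}\alpha^{2}$ shift to compensate there, because the intersection form of $S^{2}\times D^{2}$ is zero.
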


\begin{proof}[Proof of Proposition \ref{prop:tortellini_S^2xD^2}]
Recall from (\cite{MN22}, Theorem 1.2) that we have a $\bb{Q}$-algebra isomorphism
\[\skein^{2}_{0}(S^{2}\times D^{2})\cong\bb{Q}[A_{0},A_{0}^{-1},A_{1}]\]
where
\begin{align*}
    &A_{0}=\big[\big(\mbf{X}\big)\otimes\big(1\big)\big], & &A_{0}^{-1}=\big[\big(1\big)\otimes\big(\mbf{X}\big)\big], & &A_{1}=\big[\big(\mbf{1}\big)\otimes\big(1\big)\big],
\end{align*}
with $[\cdot]$ denoting their corresponding equivalence classes in $\ul{\KhR}_{2}(U)$. We claim that the surjections $f_{0,\diamond}:\skein^{2}_{0}(S^{2}\times D^{2})\twoheadrightarrow\ol{\skein}^{2,\diamond}_{0}(S^{2}\times D^{2})$, $\diamond\in\{O,T\}$ from Proposition \ref{prop:surjections_lasagna_simply_connected} can be identified with the $\bb{Q}$-algebra homomorphisms
\begin{align*}
    &\bb{Q}[A_{0},A_{0}^{-1},A_{1}]\twoheadrightarrow\bb{Q}[A_{0},A_{0}^{-1},A_{1}]/(A_{1})\cong\bb{Q}[A_{0},A_{0}^{-1}], & &\diamond=O, \\
    &\bb{Q}[A_{0},A_{0}^{-1},A_{1}]\twoheadrightarrow\bb{Q}[A_{0},A_{0}^{-1},A_{1}]/(A_{0}^{2}-1,A_{1})\cong\bb{Q}[A_{0}]/(A_{0}^{2}-1), & &\diamond=T.
\end{align*}
We first treat the case $\diamond=O$. Now for each $k^{+},k^{-}\in\bb{N}$, let
\[h_{k^{\pm}}:\KhR_{2}\big(U(k^{+},k^{-})\big)\{-k^{+}-k^{-}\}\to\ul{\KhR}_{2}(U)\]
denote the canonical map, and let $\varepsilon_{k^{\pm}}$ be the Frobenius counit map as in (\ref{eq:counit_epsilon}). By inspection, we see that:
\begin{itemize}
    \item The sub-algebra $\bb{Q}[A_{0},A_{0}^{-1}]\subset\bb{Q}[A_{0},A_{0}^{-1},A_{1}]$ is generated as a $\bb{Q}$-vector space by all elements of the form $h_{k^{\pm}}(v)$ where $\varepsilon_{k^{\pm}}(v)=1$.
    \item The ideal $(A_{1})\subset\bb{Q}[A_{0},A_{0}^{-1},A_{1}]$ is generated as a $\bb{Q}$-vector space by all elements of the form $h_{k^{\pm}}(v)$ where $\varepsilon_{k^{\pm}}(v)=0$.
\end{itemize}
To prove the theorem, it suffices to show that the following hold for every $\alpha\in\bb{Z}$:
\begin{enumerate}
    \item For every $r\in\bb{N}$ and $x\in\ker(\varepsilon_{\alpha^{\pm}+r})$, there exists some $s\in\bb{N}$ and some element $v\in\im(F_{\alpha^{\pm}+r,\alpha^{\pm}+s})$ such that:
    \begin{enumerate}
        \item $(\id\otimes\varepsilon_{\alpha^{\pm}+s})(v)\sim Cx$ under the symmetry and annulus relations for some $C\in\bb{Q}$.
        \item $(\varepsilon_{\alpha^{\pm}+r}\otimes\id)(v)=0$.
    \end{enumerate}
    \item For any $r,s\in\bb{N}$ and any element $w\in\im(F_{\alpha^{\pm}+r,\alpha^{\pm}+s})$ the following implication holds:
    \begin{equation}
    \begin{split}
    \label{eq:w_S^2xD^2}
        &(\id\otimes\varepsilon_{\alpha^{\pm}+s})(w)=\big(\mbf{X}^{\otimes(\alpha^{+}+r)}\big)\otimes\big(\mbf{X}^{\otimes(\alpha^{-}+r)}\big) \\
        &\qquad\qquad\qquad\qquad\implies(\varepsilon_{\alpha^{\pm}+r}\otimes\id)(w)=\big(\mbf{X}^{\otimes(\alpha^{+}+s)}\big)\otimes\big(\mbf{X}^{\otimes(\alpha^{-}+s)}\big)+x
    \end{split}
    \end{equation}
    for some $x\in\ker(\varepsilon_{\alpha^{\pm}+s})$.
\end{enumerate}
To show that (1) holds, by linearity it suffices to prove the result in the case where 
\[x=x_{r,k^{\pm}}:=(\mbf{1}^{\otimes k^{+}}\otimes\mbf{X}^{\otimes(\alpha^{+}+r-k^{+})})\otimes(\mbf{1}^{\otimes k^{-}}\otimes\mbf{X}^{\otimes(\alpha^{-}+r-k^{-})})\in\ker(\varepsilon_{\alpha^{\pm}+r})\]
for some $0\le k^{\pm}\le\alpha^{\pm}+r$ such that at least one of $k^{+}$ and $k^{-}$ is non-zero. Note that for a fixed pair $k^{\pm}\in\bb{N}$ we have that $x_{r,k^{\pm}}\sim x_{r',k^{\pm}}$ under the annulus relations for all $r,r'\in\bb{N}$ such that $r,r'\geq k^{\pm}-\alpha^{\pm}$. So without loss of generality it suffices to show the corresponding result under the extra hypothesis that $r\geq 2k^{\pm}-\alpha^{\pm}$.

Recall from Definition \ref{def:Roz-cobar} that the image of the counit map
\[\epsilon:\KhR_{2,O}^{-}\big(\belt{n,n}\big)\to\KhR_{2}\big(U(n,n)\big)\]
for $n\in\bb{N}$ can be identified with the image of the sum of saddle maps $\bigoplus_{\delta\in\cal{B}_{2n}}\nu_{\delta}$, where $\nu_{\delta}$ denotes the map
\[\KhR_{2}\big(\Tr(\delta\otimes \delta^{t})\big)\{n\}\to\KhR_{2}\big(U(n,n)\big)\]
induced by the canonical saddle cobordism $q^{n}(\delta\otimes\delta^{t})\to \bbm{1}_{2n}$.

Consider the matching $\delta_{r,k^{\pm}}$ of $2|\alpha|+4r$ points as pictured in Figure \ref{fig:matching}, and identify $\Tr(\delta_{r,k^{\pm}}\otimes\delta_{r,k^{\pm}}^{t})$ with the $|\alpha|+2r$-component unlink $U_{|\alpha|+2r}$ by letting the $j$-th component be the component formed by the matched pairs given as follows:
\begin{itemize}
    \item $\{j,2k^{+}-j+1\}$ for $1\le j\le k^{+}$,
    \item $\{j+k^{+},2|\alpha|+4r-j-k^{+}+1\}$ for $k^{+}+1\le j\le \alpha^{+}+r-k^{+}$,
    \item $\{j+k^{+},2\alpha^{+}+2r-k^{+}+2k^{-}-j+1\}$ for $\alpha^{+}+r-k^{+}+1\le j\le\alpha^{+}+r-k^{+}+k^{-}$,
    \item $\{j+k^{+}+k^{-},2|\alpha|+4r-k^{+}-k^{-}-j+1\}$ for $j=\alpha^{+}+r-k^{+}+k^{-}+1\le j\le|\alpha|+2r-k^{+}-k^{-}$,
    \item $\{j+\alpha^{-}+r+k^{+}-k^{-},2|\alpha|+\alpha^{-}+5r-k^{+}-k^{-}-j+1\}$ for $|\alpha|+2r-k^{+}-k^{-}+1\le j\le|\alpha|+2r-k^{+}$,
    \item $\{j+|\alpha|+2r-k^{+}+k^{-},3|\alpha|+6r-k^{+}-j+1\}$ for $|\alpha|+2r-k^{+}+1\le j\le|\alpha|+2r$.
\end{itemize}
\begin{center}
\begin{figure}
    \includegraphics[width=18cm]{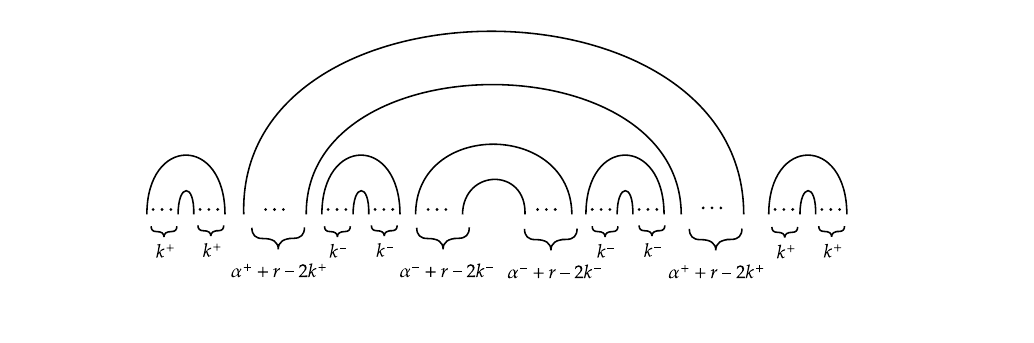}
    \caption{The matching $\delta_{r,k^{\pm}}$}
    \label{fig:matching}
\end{figure}
\end{center}
Let
\begin{align*}
    v:=&(g\circ\nu_{\delta_{k,r^{\pm}}})\big(\mbf{1}^{\otimes k^{+}}\otimes\mbf{X}^{\otimes(\alpha^{+}+r-2k^{+})}\otimes\mbf{1}^{\otimes k^{-}}\otimes\mbf{X}^{\otimes(\alpha^{-}+r+k^{+}-k^{-})}\big) \\
    =&g\Big(\big((\mbf{1}\otimes\mbf{X}+\mbf{X}\otimes\mbf{1})^{\otimes k^{+}}\otimes\mbf{X}^{\otimes(\alpha^{+}+r-2k^{+})}\big)\otimes\big((\mbf{1}\otimes\mbf{X}+\mbf{X}\otimes\mbf{1})^{\otimes k^{-}} \\
    &\qquad\qquad\qquad\qquad\qquad\qquad\otimes\mbf{X}^{\otimes(\alpha^{-}+r-2k^{-})}\big)\otimes\big(\mbf{X}^{\otimes(\alpha^{-}+r)}\big)\otimes\big(\mbf{X}^{\otimes(\alpha^{+}+r)}\big)\Big) \\
    =&\big((\mbf{1}\otimes\mbf{X}+\mbf{X}\otimes\mbf{1})^{\otimes k^{+}}\otimes\mbf{X}^{\otimes(\alpha^{+}+r-2k^{+})}\big)\otimes\big((\mbf{1}\otimes\mbf{X}+\mbf{X}\otimes\mbf{1})^{\otimes k^{-}} \\
    &\qquad\qquad\qquad\qquad\qquad\qquad\otimes\mbf{X}^{\otimes(\alpha^{-}+r-2k^{-})}\big)\otimes\big(\mbf{X}^{\otimes(\alpha^{+}+r)}\big)\otimes\big(\mbf{X}^{\otimes(\alpha^{-}+r)}\big),
\end{align*}
so that in particular we have that $v\in\im(g\circ\epsilon)=\im(F_{\alpha^{\pm}+r,\alpha^{\pm}+r})$ by the preceding discussion. Then
\begin{align*}
    (\id\otimes\varepsilon_{\alpha^{\pm}+r})(v)&=\big((\mbf{1}\otimes\mbf{X}+\mbf{X}\otimes\mbf{1})^{\otimes k^{+}}\otimes\mbf{X}^{\otimes(\alpha^{+}+r-2k^{+})}\big) & & \\
    &\qquad\qquad\otimes\big((\mbf{1}\otimes\mbf{X}+\mbf{X}\otimes\mbf{1})^{\otimes k^{-}}\otimes\mbf{X}^{\otimes(\alpha^{-}+r-2k^{-})}\big) & & \\
    &\sim2^{k^{+}+k^{-}}x_{r,k^{\pm}}, & &(\text{symmetry relations})
\end{align*}
whereas $(\varepsilon_{\alpha^{\pm}+r}\otimes\id)(v)=0$, as desired.

For (2), suppose $w\in\im(F_{\alpha^{\pm}+r,\alpha^{\pm}+s})$ is such that the hypothesis of (\ref{eq:w_S^2xD^2}) holds. Then
\[w=\big(\mbf{X}^{\otimes(\alpha^{+}+r)}\big)\otimes\big(\mbf{X}^{\otimes(\alpha^{-}+r)}\big)\otimes\big(\mbf{X}^{\otimes(\alpha^{+}+s)}\big)\otimes\big(\mbf{X}^{\otimes(\alpha^{-}+s)}\big)+\wt{x}\]
for some $\wt{x}\in\ker(\id\otimes\varepsilon_{\alpha^{\pm}+s})$, and hence the conclusion of (\ref{eq:w_S^2xD^2}) holds for $x=(\varepsilon_{\alpha^{\pm}+r}\otimes\id)(\wt{x})\in\ker(\varepsilon_{\alpha^{\pm}+s})$, which is what was to be shown.

For $\diamond=T$, consider the disoriented annulus maps
\begin{align*}
    &\psi_{T}^{+,\mbf{X}_{+}}:\KhR_{2}\big(U(k^{+},k^{-})\big)\to\KhR_{2}\big(U(k^{+}+2,k^{-})\big) \\
    &\psi_{T}^{-,\mbf{X}_{-}}:\KhR_{2}\big(U(k^{+},k^{-})\big)\to\KhR_{2}\big(U(k^{+},k^{-}+2)\big)
\end{align*}
from Proposition \ref{prop:disoriented_bundt_cake}. We see that
\[\psi_{T}^{\pm,\mbf{X}_{\pm}}(v)=A_{0}^{\pm 2}\cdot v\qquad\text{for all }v\in\ol{\skein}_{0}^{2,T}(S^{2}\times D^{2}),\]
and hence the disoriented annulus relations $\psi_{T}^{\pm,\mbf{X}_{\pm}}(v)\sim v$ from (\ref{eq:disoriented_annulus_relations}) imply the relation $A_{0}^{2}-1=0$. That this is the only additional relation follows from a similar argument as for (1) and (2) above in the $\diamond=O$ case, which we leave as an exercise for the inspired reader. 
\end{proof}

We have the following corollary:

\begin{corollary}\label{cor:sumsS2xD2}
Let $m_{1},\dots,m_{k}\in\bb{N}$ for some $k\in\bb{N}$, and let $D_{\std}=\#_{i=1}^{k}\natural_{j=1}^{m_{i}}S^{2}\times D^{2}$. Then we have algebra isomorphisms
\begin{align*}
    &\ol{\cal{S}}^{2,O}_{0}(D_{\std})\cong\bb{Q}[A_{0,i,j},A_{0,i,j}^{-1}\;|\;1\le i\le k, 1\le j\le m_{i}] \\
    &\ol{\cal{S}}^{2,T}_{0}(D_{\std})\cong\bb{Q}[A_{0,i,j}\;|\;1\le i\le k, 1\le j\le m_{i}]/(A^{2}_{0,i,j}-1\;|\;1\le i\le k, 1\le j\le m_{i})
\end{align*}
where each $A_{0,i,j}^{\pm 1}$ is concentrated in tri-degree $(0,0,\pm e_{i,j})$, with $e_{i,j}$ denoting the second homology class of $\#_{i=1}^{k}\natural_{j=1}^{m_{i}}S^{2}\times D^{2}$ represented by the $j$-th core sphere of the $i$-th connected summand (compare \cite{RSWWZ25}, Example 2.2).
\end{corollary}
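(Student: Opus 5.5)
The plan is to reduce to Proposition \ref{prop:tortellini_S^2xD^2} using the connected sum and boundary connected sum formulas of Proposition \ref{prop:connected_sum}. Each summand $S^{2}\times D^{2}$ is simply connected, so every boundary connected sum and connected sum appearing in $D_{\std}$ has a simply connected piece, and the hypotheses of Proposition \ref{prop:connected_sum} hold at every stage. By induction on the number of summands (first on $j$ inside each $\natural_{j=1}^{m_{i}}$, then on $i$ for the outer $\#$), one obtains a natural isomorphism of trigraded vector spaces
\[\ol{\cal{S}}^{2,\diamond}_{0}(D_{\std})\cong\bigotimes_{i=1}^{k}\bigotimes_{j=1}^{m_{i}}\ol{\cal{S}}^{2,\diamond}_{0}(S^{2}\times D^{2}).\]
Proposition \ref{prop:tortellini_S^2xD^2} then identifies each factor with $\bb{Q}[A_{0},A_{0}^{-1}]$ (type $O$) or $\bb{Q}[A_{0}]/(A_{0}^{2}-1)$ (type $T$). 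The tensor product of these is the stated Laurent polynomial ring, respectively the stated quotient.

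Next I would track the gradings. Under the Mayer--Vietoris identification $H_{2}(D_{\std})\cong\bigoplus_{i,j}H_{2}(S^{2}\times D^{2})$, the core sphere of the $(i,j)$-th summand maps to $e_{i,j}$. In the neck-cutting argument, the skein class of a glued filling is the sum of the classes of its pieces, so $A_{0,i,j}^{\pm1}$ sits in tri-degree $(0,0,\pm e_{i,j})$. For type $T$, the same statement holds with $\bb{Z}/2$ coefficients.

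The remaining point is that the isomorphism respects the algebra structures. The product on $\ol{\cal{S}}^{2,\diamond}_{0}(D_{\std})$ comes from gluing $D_{\std}\cup_{\Sigma\times I}D_{\std}\approx D_{\std}$ along a collar of the union of the boundary $S^{2}$'s, as in \cite{MN22}, Section 5. I would choose this gluing region to be a disjoint union of the regions used for the individual $S^{2}\times D^{2}$ factors, arranged to lie away from the 3-spheres and 3-balls along which the (boundary) connected sums are formed.

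I expect this step to be the main obstacle. It requires the neck-cutting isomorphism of Proposition \ref{prop:connected_sum} to commute with gluing. Concretely, given two fillings, cutting each along the connected-sum neck and then gluing summand-wise should agree with gluing first and then cutting. I would argue this by isotoping both fillings so that near the neck they are trivial cylinders (possibly empty) and their input manifolds avoid the neck, which Proposition \ref{prop:connected_sum} allows via simple connectivity. The two operations are then supported in disjoint regions and commute at the level of fillings, so the identification is multiplicative and the product structure is the tensor product of the factor algebras. This yields the claimed algebra isomorphisms for both $\diamond=O$ and $\diamond=T$.
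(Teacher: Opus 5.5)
Your identification of the underlying trigraded vector spaces matches the paper: induct with Proposition \ref{prop:connected_sum}, apply Proposition \ref{prop:tortellini_S^2xD^2} to each factor, and add skein gradings under Mayer--Vietoris.

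The gap is in the multiplicative part, because the product you propose to check against does not exist. The homeomorphism $D_{\std}\cup_{\Sigma\times I}D_{\std}\approx D_{\std}$ is false whenever the gluing region $R=\Sigma\times I$ has $c\geq 2$ components. With your choice of one $S^{2}\times I$ per core sphere, that happens as soon as $\sum_{i}m_{i}\geq 2$.

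To see this, Mayer--Vietoris gives a surjection from $H_{1}(D_{\std}\cup_{R}D_{\std};\bb{Z})$ onto $\ker\big(H_{0}(R)\to H_{0}(D_{\std})\oplus H_{0}(D_{\std})\big)\cong\bb{Z}^{c-1}$. On the other hand $H_{1}(D_{\std};\bb{Z})=0$. Geometrically, once the first pair of regions is identified the space is connected. Every further identification is then a self-gluing along $S^{2}\times I$, which amounts to attaching a $1$-handle and a $3$-handle. So your ``product'' lands in a different $4$-manifold.

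For $k\geq 2$ no choice of region can repair this. The region must meet each of the $k$ boundary components of $D_{\std}$, so it is necessarily disconnected. For $k=1$ you could instead glue along a connected region, using $\natural^{m}S^{2}\times D^{2}\cong P_{m}\times I$ with $P_{m}=S^{3}\setminus\sqcup_{m+1}B^{3}$. However, that region meets the boundary-sum necks, so your disjoint-support argument would not apply verbatim.

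The paper defines the algebra structure differently, by transport. The collar inclusion $\del D_{\std}\times I\hookrightarrow D_{\std}$ induces an isomorphism $\ol{\skein}^{2,\diamond}_{0}(\del D_{\std}\times I)\cong\ol{\skein}^{2,\diamond}_{0}(D_{\std})$. The source carries the algebra structure given by stacking in the $I$-direction (\cite{RSWWZ25}, Example 2.3). With that product in place, the paper simply invokes Propositions \ref{prop:tortellini_S^2xD^2} and \ref{prop:connected_sum}.

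To complete your argument, adopt this product, equivalently the action of the collar on $\ol{\skein}^{2,\diamond}_{0}(D_{\std})$, and verify that neck-cutting is multiplicative for it.
\begin{itemize}
\item For the interior connected-sum necks, your disjointness argument works: those $3$-spheres can be taken away from the collar, and the collar splits as a disjoint union over the $k$ boundary components.
\item For the boundary-sum necks, which do meet the collar, compatibility needs a separate check. The paper treats this step as immediate rather than spelling it out.
\end{itemize}
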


\begin{proof}
The fact that $\ol{\skein}^{2,\diamond}_{0}(D_{\std})$ carries an algebra structure for $\diamond\in\{O,T\}$ follows from the observation that the inclusion map $i:\del D_{\std}\times I\hookrightarrow D_{\std}$ as a collar neighborhood of the boundary induces an isomorphism
\[\ol{\skein}^{2,\diamond}_{0}(i):\ol{\skein}^{2,\diamond}_{0}(\del D_{\std}\times I)\xrightarrow{\cong}\ol{\skein}^{2,\diamond}_{0}(D_{\std})\]
and that $\ol{\skein}^{2,\diamond}_{0}(\del D_{\std}\times I)$ carries a natural algebra structure under stacking along the $I$-direction (see \cite{RSWWZ25}, Example 2.3). The desired result then follows by Proposition \ref{prop:tortellini_S^2xD^2} combined with Proposition \ref{prop:connected_sum}.
\end{proof}

We also have the following analogue of (\cite{RSWWZ25}, Theorem 2.7):

\begin{prop}
\label{prop:D_st_w_link}
Let $D_{\std}=\#_{i=1}^{k}\natural^{m_{i}}S^{2}\times D^{2}$ as above, and let $L\subset\del X=\sqcup_{i=1}^{k}\#^{m_{i}}S^{1}\times S^{2}$ be an admissible link. Then for $\diamond\in\{O,T\}$ and each $\bm{\alpha}\in H_{2}^{L}(X;\bb{Z})$ there exists a canonical isomorphism
\begin{equation*}
    \ol{\skein}_{0}^{2,\diamond}(D_{\std};L;\bm{\alpha})\cong\KhR_{2,\diamond}^{+}(\del D_{\std},L)\otimes\ol{\skein}_{0}^{2,\diamond}(D_{\std};\emptyset;\bm{\alpha}-\bm{\alpha}_{L}), \\
\end{equation*}
where $\bm{\alpha}_{L}\in H_{2}^{L}(D_{\std};\bb{Z})$ is the unique class having trivial intersection with the cocores.
\end{prop}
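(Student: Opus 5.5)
The plan is to mimic the proof of \cite[Theorem 2.7]{RSWWZ25}, with Corollary \ref{cor:tortellini_1_2_handlebody} as the engine. First we reduce to a single summand. By Proposition \ref{prop:connected_sum} (each $\natural^{m_i}S^2\times D^2$ is simply connected) and the monoidality of $\KhR^{+}_{2,\diamond}$ under disjoint union and connected sum of the boundary pieces, it suffices to treat $X=\natural^{m}S^2\times D^2$ with boundary $\#^{m}S^1\times S^2$. Here $X$ is obtained from $B^4$ by attaching $2$-handles along the $0$-framed $m$-component unlink $U_m$. Since $L$ is admissible, we may isotope it to lie in the complement of the surgery picture, so that it is a link in $S^3\setminus\nu(U_m)$. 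Its intersections with the cocores (equivalently, its linking with the components of $U_m$) determine $\bm{\alpha}_L$.

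Next we apply Corollary \ref{cor:tortellini_1_2_handlebody} with $n=0$, so the cabled Rozansky--Willis homology is just a cabled $\KhR_2$:
\[
\ol{\skein}^{2,\diamond}_0(X;L;\bm{\alpha})\cong\mathrm{Coeq}\big(\Phi_\diamond,\Psi_\diamond\big)\ \text{on}\ \ul{\KhR}_{2,\diamond,\bm{\alpha}}(U_m,L).
\]
The key geometric input is that a cable $U_m(\mathbf{k}^{+},\mathbf{k}^{-})\cup L\subset S^3$ is the closure of a braid-like tangle: $L$ cut open along the spanning disks of $U_m$, together with parallel strands. The Rozansky projector description of $\KhR^{+}_{2,\diamond}(\#^m S^1\times S^2,L)$ arises exactly as a colimit over inserting full twists and caps on these strands, as in \cite{SZ2024,RSWWZ25}.

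Concretely, we follow the $0$-dimensional inputs argument of \cite{SZ2024,RSWWZ25}. After the annulus relations and the symmetric-group invariants, the cabled module at level $\bm{\alpha}$ is identified with $\KhR^{+}_{2,\diamond}(L)\otimes\skein^{2}_0(X;\emptyset;\bm{\alpha}-\bm{\alpha}_L)$. In this identification the $L$-strands passing through the $i$-th handle get absorbed into a dual projector $P^{\vee}$, and the remaining parallel sheets contribute the $S^2\times D^2$ factor $\bb{Q}[A_0^{\pm1},A_1]$. We then push the lasso relation through this identification. By the formula \eqref{eq:Phi_Psi_decomps_1_hbody} and Proposition \ref{prop:lasso_subsumes_bundt_cake}, $\Phi$ and $\Psi$ differ only by the swap diffeomorphism $\psi$ of Example \ref{ex:psi_i_S^2xD^2} followed by counits. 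The $L$-strands are untouched by $\psi$ up to isotopy supported near the belt, and Lemma \ref{lem:diffeos_identity_on_homology} lets us ignore the Gluck twists. So the lasso relations act only on the second tensor factor, where they are exactly the relations computed in Proposition \ref{prop:tortellini_S^2xD^2}. This yields $\KhR^{+}_{2,\diamond}(\del X,L)\otimes\ol{\skein}^{2,\diamond}_0(X;\emptyset;\bm{\alpha}-\bm{\alpha}_L)$. For $\diamond=T$ we additionally check that the disoriented annulus relations \eqref{eq:disoriented_annulus_relations} also act only on the $A_0$ factor.

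The main obstacle is the previous step: showing that the lasso relation does not interact with the $L$-factor. A lasso filling could a priori mix strands of $L$ with the cable sheets through the switch point. To control this, I would use the enclosement relation, as in Figure \ref{fig:swallowing_sheets}, to enclose all $L$-strands passing a given handle inside the $1$-dimensional input. Then $\Phi$ and $\Psi$ on that input differ by a diffeomorphism that is the identity near $L$, and naturality of the counit $\epsilon$ (Lemma \ref{lem:rozprops}) finishes the argument. Canonicity of the resulting isomorphism would come from building it out of gluing maps, as in Proposition \ref{prop:gluing_surjection}.
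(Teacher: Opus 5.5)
Your proposal follows the same route as the paper's proof, and your preliminary reduction to a single summand via Proposition \ref{prop:connected_sum} is harmless but not needed. The shared route is to insert dual projectors on the $L$-strands and slide the cable belts off them as in \cite[Theorem 2.7]{RSWWZ25}; the belt-sliding isomorphisms commute with $\Phi$ and $\Psi$, so the lasso relations act only on the empty-link factor, and the coequalizer becomes $\KhR^{+}_{2,\diamond}(\del D_{\std},L)\otimes\ol{\skein}_{0}^{2,\diamond}(D_{\std};\emptyset;\bm{\alpha}-\bm{\alpha}_{L})$. One thing to drop is the final appeal to the enclosement relation: it does not apply, since $L$ lies in $\del X$ and the lasso relation in Corollary \ref{cor:tortellini_1_2_handlebody} is already purely algebraic; the compatibility you need is just naturality of belt-sliding with respect to the counit $\epsilon$ and the swap diffeomorphism $\varphi_{i}$, which is the identity near $L$.
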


\begin{proof}
As in the proof of (\cite{RSWWZ25}, Theorem 2.7), we can add dual projectors (see Remark \ref{rmk:dual_projector}) near the belts and ``slide" the belts off of the projectors. The belt-sliding isomorphisms commute with the lasso relation, and hence for $\diamond\in\{O,T\}$ we obtain an isomorphism
\[\ol{\cal{S}}_{0}^{2,\diamond}(D_{\std};L;\bm{\alpha})\cong\KhR_{2,\diamond}^{-}(\sqcup_{i=1}^{k}S^{3},L^{\circ}\otimes(\otimes_{j=1}^{m}P^{\vee}_{\ell_{j},0}))\otimes\ol{\cal{S}}_{0}^{2,\diamond}(D_{\std};\emptyset;\bm{\alpha}-\bm{\alpha}_{L}),\]
where $L^{\circ}\subset\sqcup_{i=1}^{k}S^{3}\setminus\sqcup_{j=1}^{m}B^{3}$ denotes the tangle obtained from $L$ obtained by removing neighborhoods of the surgery regions, and $P^{\vee}_{\ell_{j},0}$ denotes a dual projector which is inserted in the $j$th surgery region for $j=1,\dots, m$. The result then follows from the observation that
\begin{align*}
    \KhR_{2,\diamond}^{-}(\sqcup_{i=1}^{k}S^{3},L^{\circ}\otimes(\otimes_{j=1}^{m}P^{\vee}_{\ell_{j},0}))
    &\cong\KhR_{2,\diamond}(\sqcup_{i=1}^{k}S^{3},L^{\circ}\otimes(\otimes_{j=1}^{m}P^{\vee}_{\ell_{j},0})) \\
    &\cong\KhR_{2,\diamond}^{+}(\del D_{\std},L).
\end{align*}
\end{proof}

\subsection{\texorpdfstring{$\mathbf{D(p)}$, $p\neq 0$}{D(p), p≠0}}
\label{subsec:non_zero_disk_bundles}

Let $D(p)$ denote the $D^{2}$-bundle over $S^{2}$ with Euler number $p\in\bb{Z}$. As calculated in (\cite{MN22}, Theorem 1.3), we have that
\begin{equation}
\label{eq:D(p)_lasagna_vanishing_p>0}
    \skein_{0,0,*}^{2}(D(p);0)=0
\end{equation}
for $p>0$, and for $p<0$ it was shown that
\begin{equation}
\label{eq:D(p)_lasagna_p<0}
    \skein_{0,0,*}^{2}(D(p);0)\cong\bb{Q}
\end{equation}
concentrated in $q$-degree $0$. One can say more about the former case. Indeed, (\cite{RW24}, Theorem 1.4) implies that $\skein_{0}^{2}(D(p))$ vanishes in all tri-gradings. As mentioned in the introduction, these same statements hold for $1$-dimensional inputs lasagna:

\begin{proposition}
\label{prop:tortellini_D(p)}
We have the following isomorphisms:
\begin{align*}
    &\ol{\skein}_{0,}^{2,\diamond}(D(p))=0\qquad\text{for }p>0, &
    &\ol{\skein}_{0,0,*}^{2,\diamond}(D(p);0)\cong\bb{Q}\qquad\text{for }p<0,
\end{align*}
where the latter is concentrated in $q$-degree $0$.
\end{proposition}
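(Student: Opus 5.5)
The plan is to use the surjections $f_{0,\diamond}:\skein_{0}^{2}(D(p))\twoheadrightarrow\ol{\skein}_{0}^{2,\diamond}(D(p))$ from Proposition \ref{prop:surjections_lasagna_simply_connected}. They apply because $D(p)$ is simply connected, being $B^{4}$ with one 2-handle attached along the $p$-framed unknot. These maps respect the tri-grading, up to the shift by $\tfrac12\alpha^{2}$ in the type $T$ case.

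\emph{The case $p>0$.} By (\cite{RW24}, Theorem 1.4) the module $\skein_{0}^{2}(D(p))$ vanishes in every tri-grading. The surjection $f_{0,O}$ then forces $\ol{\skein}_{0}^{2,O}(D(p))=0$. The map $f_{0,T}=f_{O,T}\circ f_{0,O}$ is surjective and hits every grading $(h,q,\ol\alpha)$, so $\ol{\skein}_{0}^{2,T}(D(p))=0$ as well.

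\emph{The case $p<0$, upper bound.} At skein class $0$ the shift $\tfrac12\alpha^{2}$ vanishes, so $f_{0,\diamond}$ surjects $\skein_{0,0,*}^{2}(D(p);0)\cong\bb{Q}$ (concentrated in $q=0$, by (\ref{eq:D(p)_lasagna_p<0})) onto $\ol{\skein}_{0,0,*}^{2,\diamond}(D(p);0)$. The target is therefore either $0$ or $\bb{Q}$ in $q$-degree $0$.

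\emph{The case $p<0$, nonvanishing.} This is the main step, and I would carry it out via Corollary \ref{cor:tortellini_1_2_handlebody}. Following \cite{MN22}, the generator of $\skein_{0,0,0}^{2}(D(p);0)$ is represented by the $r=0$ term, namely the empty cable with label $1\in\bb{Q}$. Its image in the cabled Rozansky--Willis group at level $0$ is the class of $\mbf{X}$-type elements. I would construct a linear functional $\lambda$ on $\bigoplus_{r}\KhR_{2}(U_{p}(r,r))\{-2r\}$ in homological degree $0$ and check that it kills all three kinds of relation. For the symmetry and annulus relations I would take the functional used by Manolescu--Neithalath for $p<0$ (compare \cite{RW24}). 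In homological degree $0$ it factors through the Frobenius counit $\varepsilon_{r^{\pm}}$ of the unlink obtained from the negative cable. For the lasso relation, (\ref{eq:Phi_Psi_decomps_1_hbody}) shows that $\Phi$ and $\Psi$ are both of the form $(\id\otimes\varepsilon)\circ\epsilon$, with $\Psi$ precomposed by $\KhR^{-}_{2,\diamond}(\psi)$, where $\psi$ is the diffeomorphism of Example \ref{ex:psi_i_D(p)}. Composing with $\lambda$, I expect both sides to reduce to a total Frobenius counit applied after $\epsilon$ on $\KhR^{-}_{2,\diamond}(U(r^{\pm};s^{\pm};1))$. Such a counit should be invariant under $\KhR^{-}_{2,\diamond}(\psi)$ in homological degree $0$, because $\psi$ consists of Reidemeister moves and twist maps that act trivially on the degree-$0$ part of the cobar model of $P_{n,0}$. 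Then $\lambda\circ\Phi=\lambda\circ\Psi$, so $\lambda$ descends to the coequalizer. Since $\lambda$ takes the value $1$ on the generator, the generator survives.

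The obstacle is exactly this compatibility $\lambda\circ\Phi=\lambda\circ\Psi$. The twist region of Example \ref{ex:psi_i_D(p)} (the $|p-1|$ repeated steps) could in principle introduce homologically degree-raising corrections. I would control these by the argument already used in Proposition \ref{prop:braid_group_action_factors_through_symmetic_group}: Reidemeister and full-twist maps are non-decreasing in the projector degree, so only their degree-preserving component affects $\lambda$. I would also use $p<0$ to ensure that the negative full twists contribute nothing to $h=0$ beyond the identity on the $\mbf{X}$-component. The $T$ case follows by the same functional after the grading shift (\ref{eq:KhR_-_T_from_O}), once one checks that the disoriented annulus relations (\ref{eq:disoriented_annulus_relations}) are compatible with $\lambda$ at class $0$.
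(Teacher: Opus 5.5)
\textbf{Main gap: the lasso compatibility.} Your argument for $p>0$ matches the paper, and so does the type-$O$ upper bound (surjecting $\skein^{2}_{0,0,*}(D(p);0)\cong\bb{Q}$ via $f_{0,O}$). The gap is the step you flag yourself, $\lambda\circ\Phi=\lambda\circ\Psi$, and the mechanism you propose does not deliver it.
\begin{itemize}
\item Write $k=k^{+}+k^{-}$. On the $\Phi$-side, $\epsilon$ lands in $\KhR_{2}(T(k,pk)_{k^{+},k^{-}})\otimes\KhR_{2}(U(\ell^{+},\ell^{-}))$, which is not an unlink. On the $\Psi$-side, $\psi_{k^{\pm},\ell^{\pm},1}$ has moved the $p$ twists onto the other cable. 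To reduce both sides to one ``total Frobenius counit after $\epsilon$'', you would need that projecting the twists away after $\epsilon$ agrees with untwisting by $\tau_{\diamond}^{-p}$ before $\epsilon$. Nothing in the proposal shows this.
\item The degree argument borrowed from Proposition~\ref{prop:braid_group_action_factors_through_symmetic_group} points the wrong way. A map that is non-decreasing in projector degree can send the negative-projector-degree part of a degree-$0$ cycle into the term $\ul{C}$, which is exactly what $\epsilon$ reads. Those components can only be discarded once you know the relevant classes are represented in projector degree $0$.
\end{itemize}

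\textbf{The missing ingredient} is Lemma~\ref{lem:v_diamond,k_pm,p}. In bidegree $(0,k^{+}+k^{-})$, the counit $\epsilon:\KhR_{2,\diamond}^{-}(\belt{k^{+},k^{-}}_{p})\to\KhR_{2,\diamond}(T(k,pk)_{k^{+},k^{-}})$ is an isomorphism of one-dimensional spaces. At chain level the generators $x_{\delta}$ sit in the $\ul{C}$-term and all map to the all-$0$, all-$\mbf{X}$ generator. This lets the paper define the functional as $\wt{\varepsilon}_{\diamond,k^{\pm},p}=\varepsilon_{k^{\pm}}\circ\epsilon\circ\tau_{\diamond}^{-p}\circ\epsilon^{-1}$, which equals $1$ on $v_{\diamond,k^{\pm},p}$. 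Lasso compatibility then becomes commutativity of a square of maps induced by $\psi_{k^{\pm},\ell^{\pm},1}$, the counits and the Gluck twists, so it is naturality rather than a degree estimate. One-dimensionality upgrades this to $\Phi(w)=v_{\diamond,k^{\pm},p}\Rightarrow\Psi(w)=v_{\diamond,\ell^{\pm},p}$.

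\textbf{Second gap: type $T$.} The $\bb{Z}/2$-class $\ol{0}$ is not the integral class $0$. It receives the $f_{0,T}$-images of $\skein_{0}^{2}(D(p);2j)$ for every $j\in\bb{Z}$.
\begin{itemize}
\item For $j\neq 0$ the cable writhe $w=p(k^{+}-k^{-})^{2}=4pj^{2}$ is nonzero. So the shift $\tfrac12 w$ moves nonzero ordinary homological degrees into type-$T$ degree $0$, and ``at skein class $0$ the shift vanishes'' does not reduce you to $\skein^{2}_{0,0,*}(D(p);0)$.
\item These summands are genuinely nonzero at the cable level: they contain $v_{T,k^{\pm},p}$ with $k^{+}\neq k^{-}$.
\item What handles them is the cabled model over all $k^{+}\equiv k^{-}\pmod 2$. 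Each summand is one-dimensional in the relevant bidegree and spanned by $v_{T,k^{\pm},p}$. These elements are all identified with $1$ by the annulus relations and the disoriented annulus relations (\ref{eq:disoriented_annulus_relations}).
\item Your $\lambda$ must likewise be defined on the $k^{+}\neq k^{-}$ summands. Since (\ref{eq:KhR_T_invariant_under_orientations}) is non-canonical, ``the same functional after the grading shift'' is not yet a definition. The paper instead applies $\varepsilon_{k^{\pm}}$ on $U(k^{+},k^{-})$, which makes sense for every orientation.
\end{itemize}
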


We will dedicate the remainder of this section to proving Proposition \ref{prop:tortellini_D(p)}. The first statement follows from (\ref{eq:D(p)_lasagna_vanishing_p>0}) and Proposition \ref{prop:gluing_surjection}. For the rest of this section, let $p<0$. In order to prove the second statement, it suffices to show that the surjection
\[f_{0,\diamond}:\skein_{0,0,0}^{2}(D(p);0)\twoheadrightarrow\ol{\skein}_{0,0,0}^{2,\diamond}(D(p);0)\]
from Proposition \ref{prop:surjections_lasagna_simply_connected} in tri-degree $(0,0,0)$ is an isomorphism for $\diamond\in\{O,T\}$. Let $(U,p)$ denote the $p$-framed unknot, and note that a $(k^{+},k^{-})$-cable of $(U,p)$ is given by
\[(U,p)(k^{+},k^{-})=T(k^{+}+k^{-},p(k^{+}+k^{-}))_{k^{+},k^{-}},\]
where $T(k^{+}+k^{-},p(k^{+}+k^{-}))_{k^{+},k^{-}}$ denotes the corresponding torus link, with each component having framing $p$ and with the subscripts denoting that $k^{+}$ of the strands are oriented positively, and $k^{-}$ of the strands are oriented negatively. Next, let $\belt{k^{+},k^{-}}_{p}\subset S^{1}\times S^{2}$ denote the $p$-twisted standard belt link, as described in Section \ref{subsec:conventions}. Note that we can identify 
\[(U,p)(1)=\belt{1,0}_{p}\cup\belt{0,1}\subset S^{1}\times S^{2},\]
and so consequently for any $k^{\pm},\ell^{\pm}\in\bb{N}$ we have an identification
\[(U,p)(k^{\pm};\ell^{\pm};1)=\belt{k^{+},k^{-}}_{p}\cup\belt{\ell^{+},\ell^{-}}\subset S^{1}\times S^{2}.\]
As in (\ref{eq:Phi_Psi_decomps_1_hbody}) we can identify $\Phi_{\diamond,k^{\pm},\ell^{\pm},1}$ and $\Psi_{\diamond,k^{\pm},\ell^{\pm},1}$ with the compositions of maps as pictured in Figure \ref{fig:D(p)_calc_1}.

\begin{center}
\begin{figure}
    \includegraphics[width=14cm]{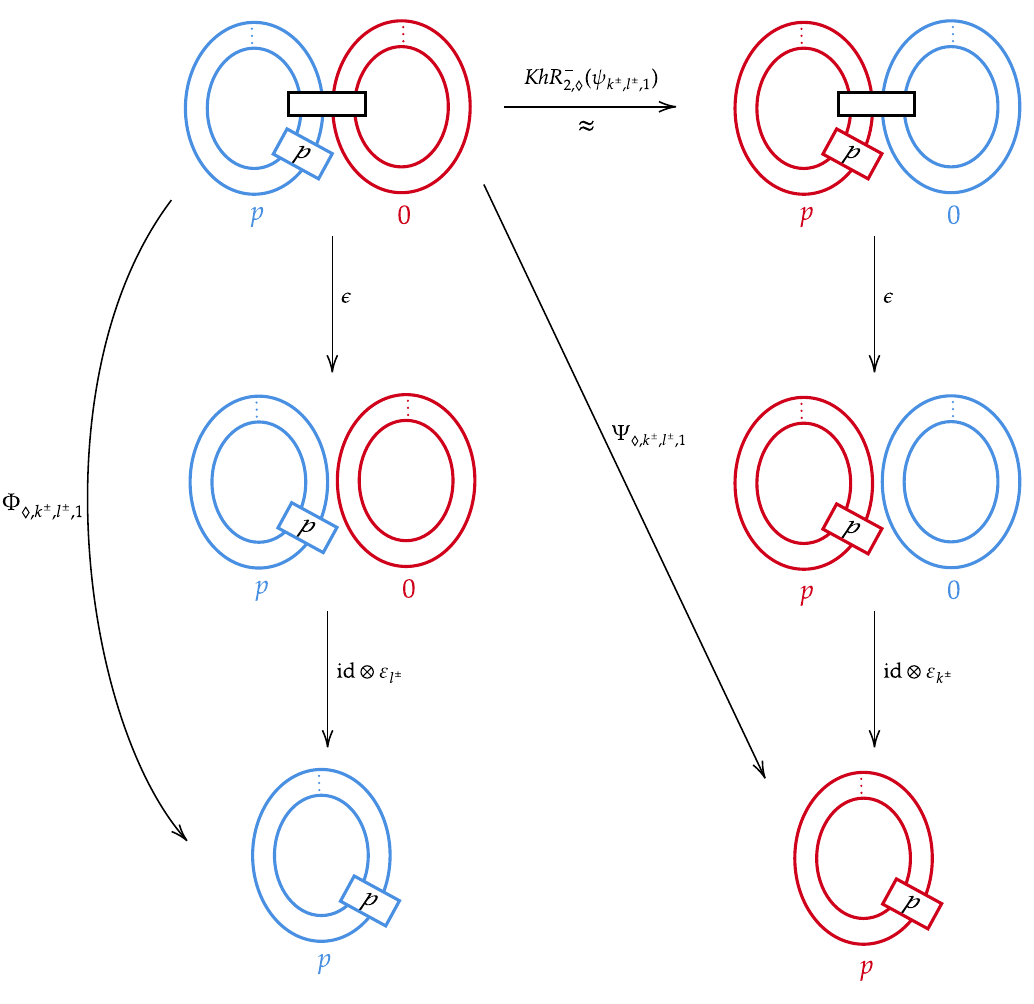}
    \caption{The relevant diagram for computing $\Phi_{\diamond,k^{\pm},\ell^{\pm},1}$ and $\Psi_{\diamond,k^{\pm},\ell^{\pm},1}$ for $D(p)$.}
    \label{fig:D(p)_calc_1}
\end{figure}
\end{center}

For all $k^{+},k^{-}\in\bb{N}$ such that
\begin{itemize}
    \item $k^{+}=k^{-}$ if $\diamond=O$,
    \item $k^{+}\equiv k^{-}\pmod{2}$ if $\diamond=T$,
\end{itemize}
we define $v_{\diamond,k^{\pm},p}\in\KhR_{2,\diamond}^{0,k^{+}+k^{-}}(T(k^{+}+k^{-},p(k^{+}+k^{-}))_{k^{+},k^{-}})$ to be the element
\begin{equation}
\label{eq:v_diamond,k_pm_p}
    v_{\diamond,k^{\pm},p}:=
    {
	\left\{
		\begin{array}{ll}
			\underbrace{(\psi^{[1]}\circ\cdots\circ\psi^{[1]})}_{k}(1) & \mbox{if } k^{+}=k^{-}=:k, \\
			\underbrace{(\psi^{[1]}\circ\cdots\circ\psi^{[1]})}_{k^{-}}\underbrace{(\psi_{T}^{+,\mbf{X}_{+}}\circ\cdots\circ\psi_{T}^{+,\mbf{X}_{+}})}_{\frac{1}{2}(k^{+}-k^{-})}(1) & \mbox{if } k^{+}>k^{-}, \\
            \underbrace{(\psi^{[1]}\circ\cdots\circ\psi^{[1]})}_{k^{+}}\underbrace{(\psi_{T}^{-,\mbf{X}_{-}}\circ\cdots\circ\psi_{T}^{-,\mbf{X}_{-}})}_{\frac{1}{2}(k^{-}-k^{+})}(1) & \mbox{if } k^{+}<k^{-},
		\end{array}
	\right.
    }
\end{equation}
where $1\in\KhR_{2}^{0,0}(S^{3},\emptyset)\cong\bb{Q}$ and $\psi_{T}^{\pm,\mbf{X}_{\pm}}$ are the disoriented annulus maps from Proposition \ref{prop:disoriented_bundt_cake} (note that the last two cases only occur when $\diamond=T$). We claim that $\ol{\skein}_{0,0,0}^{2,\diamond}(D(p);0)\cong\bb{Q}$ is generated by the images of the elements $v_{T,k^{\pm},p}$, with each element (after fixing a global sign) corresponding to $1\in\bb{Q}$.

\begin{lemma}
\label{lem:v_diamond,k_pm,p}
Let $\diamond\in O,T$, and let $k^{+},k^{-}\in\bb{N}$ be such that:
\begin{itemize}
    \item $k^{+}=k^{-}$ if $\diamond=O$,
    \item $k^{+}\equiv k^{-}\pmod{2}$ if $\diamond=T$.
\end{itemize}
The following statements are true:
\begin{enumerate}
    \item The counit map
    \[\epsilon:\KhR_{2,\diamond}^{-}\big(\belt{k^{+},k^{-}}_{p}\big)\to\KhR_{2,\diamond}\big(T(k^{+}+k^{-},p(k^{+}+k^{-}))_{k^{+},k^{-}}\big)\]
    is an isomorphism in bi-degree $(0,k^{+}+k^{-})$.
    \item Let $\wt{\varepsilon}_{\diamond,k^{\pm},p}$ denote the composition of maps
    \begin{align*}
        &\KhR_{2,\diamond}^{0,k^{+}+k^{-}}\big(T(k^{+}+k^{-},p(k^{+}+k^{-}))_{k^{+},k^{-}}\big)\xrightarrow{(\epsilon)^{-1}}\KhR_{2,\diamond}^{-,0,k^{+}+k^{-}}\big(\belt{k^{+},k^{-}}_{p}\big) \\
        &\qquad\qquad\qquad\qquad\xrightarrow{\tau_{\diamond}^{-p}}\KhR_{2,\diamond}^{-,0,k^{+}+k^{-}}\big(\belt{k^{+},k^{-}}\big)\xrightarrow{\epsilon}\KhR_{2,\diamond}^{0,k^{+}+k^{-}}\big(U(k^{+},k^{-})\big)\xrightarrow{\varepsilon_{k^{\pm}}}\bb{Q}.
    \end{align*}
    Then $\wt{\varepsilon}_{\diamond,k^{\pm},p}$ induces an isomorphism $\KhR_{2,\diamond}^{0,k^{+}+k^{-}}(T(k^{+}+k^{-},p(k^{+}+k^{-}))_{k^{+},k^{-}})\cong\bb{Q}$, and
    \[(\wt{\varepsilon}_{\diamond,k^{\pm},p})^{-1}(1)=v_{\diamond,k^{\pm},p}\in\KhR_{2,\diamond}^{-,0,k^{+}+k^{-}}(T(k^{+}+k^{-},p(k^{+}+k^{-}))_{k^{+},k^{-}})\]
    where $v_{\diamond,k^{\pm},p}$ is as in (\ref{eq:v_diamond,k_pm_p}).
\end{enumerate}
\end{lemma}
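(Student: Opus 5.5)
Part (1) is a statement about the cobar model of $P_{n,0}$ in Definition \ref{def:Roz-cobar}, with $n=k^{+}+k^{-}$. Close $\belt{k^{+},k^{-}}_{p}$ into a diagram in which the surgery region carries $P_{n,0}$. Then $\KhR_{2,\diamond}^{-}(\belt{k^{+},k^{-}}_{p})$ is the homology of the total complex of the cobar resolution $\cdots\to C^{\otimes 2}\to \ul{C}$, with each term closed up by the $p$-twisted trace. The counit $\epsilon$ is projection onto the homological-degree-$0$ column followed by the saddle maps $\nu_{\delta}$. By Lemma \ref{lem:rozprops}(1) every term of $P_{n,0}$ has through-degree $0$. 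So each closed-up term is a sum of shifted unlinks, or of unlinks tensored with the simple $(n,n)$-twist closures.

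My plan is a degree count in the bigrading $(0,n)$. In the type $O$ case $k^{+}=k^{-}$, and the shift between $\KhR_{2,O}$ and $\KhR_{2,T}$ in (\ref{eq:KhR_-_T_from_O}) is then trivial. The terms $C^{\otimes j}$ with $j\ge 1$ sit in homological degree $\le -1$ (the cobar complex is cohomologically negative). After the $q$-shifts $q^{n}$ carried by each $\nu_{\delta}$, I expect their closures to have no homology in $q$-degree $n$ and $h$-degree $0$ or $\pm1$. Then the spectral sequence for the cobar filtration collapses in bidegree $(0,n)$, so $\epsilon$ is an isomorphism there.

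The place I expect trouble is the mixed-orientation case ($\diamond=T$, $k^{+}\ne k^{-}$), where the grading shifts by $\tfrac12 w$ matter. Rather than count degrees directly, I would first apply the Gluck twist maps $\tau_{\diamond}^{-p}$ of Remark \ref{rmk:gluck_twist_maps}. This reduces to $p=0$, where $\belt{k^{+},k^{-}}$ has the trivial closure. There I would compare with Willis's computation \cite{WillisS1xS2}, Section 4.1, as the paper already does for $\varepsilon_{\pm}$. One checks that both sides are one-dimensional in bidegree $(0,n)$ and that $\epsilon$ is nonzero on a generator.

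For part (2), I would first show that $\wt{\varepsilon}_{\diamond,k^{\pm},p}$ is an isomorphism. Each factor is an isomorphism or nonzero: $\epsilon^{-1}$ by (1), $\tau^{-p}$ by construction, and $\epsilon$ in the untwisted case again by (1) with $p=0$. The last factor $\varepsilon_{k^{\pm}}$ is nonzero on the image, because at $p=0$ the image of $\epsilon$ contains $\mbf{X}^{\otimes n}$ (take $\delta$ to be the matching with all cups, as in the proof of Proposition \ref{prop:tortellini_S^2xD^2}). A one-dimensional source then forces an isomorphism. It remains to evaluate $\wt{\varepsilon}$ on $v_{\diamond,k^{\pm},p}$.

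For this I would write each $\psi^{[1]}$ and $\psi_{T}^{\pm,\mbf{X}_{\pm}}$ as gluing a dotted annulus, respectively a belt input labelled $\mbf{X}_{\pm}$, and use naturality of $\epsilon$ and $\tau$ with respect to cobordisms away from the surgery region. Composing with the capping cobordism $\varepsilon_{k^{\pm}}$ then closes each annulus into a dotted torus-like surface, giving a dotted sphere of value $1$, or closes $\mbf{X}_{\pm}$ to $\varepsilon_{\pm}(\mbf{X}_{\pm})=1$. This gives $\wt{\varepsilon}(v)=1$ up to the global sign of Remark \ref{rmk:signs}. The main obstacle is the degree-vanishing step in (1). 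If the direct count fails, my fallback is the full-twist approximation: replace $P_{n,0}$ by its mapping telescope of $\FT_{n}^{k}$, as in the proof of Proposition \ref{prop:braid_group_action_factors_through_symmetic_group}, and use the known stable $q$-degree-$n$, $h=0$ Khovanov homology of torus links.
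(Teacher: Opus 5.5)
\textbf{Your argument for (1) fails, and in fact points the wrong way.} Take the balanced case $k^{+}=k^{-}=r$, $n=2r$. In bidegree $(0,n)$ the degree-$0$ cobar column contributes exactly one cycle $x_{\delta}$ for each $\delta\in\mathcal{B}_{2r}$. This is the all-$0$ resolution with every circle labelled $\mbf{X}$; at $p=0$ it is just the top generator of the crossingless unlink $\Tr(\delta\otimes\delta^{t})$. Every $\nu_{\delta}$ sends $x_{\delta}$ to the same class $y$ on the target.

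Suppose, as you expect, the columns $C^{\otimes j}$ with $j\ge 2$ contributed nothing in total degrees $-1,0,1$ at this $q$-degree. Then the spectral sequence would collapse to $\KhR^{-,0,n}_{2,\diamond}(\belt{r,r}_{p})\cong\bb{Q}^{|\mathcal{B}_{2r}|}$. In that case $\epsilon$ would have a kernel of dimension $|\mathcal{B}_{2r}|-1$, which is positive for $r\ge 2$, so collapse would disprove (1).

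What you actually need is the opposite: the differential out of the $C^{\otimes 2}$ column must hit this bidegree, making every difference $x_{\delta}-x_{\delta'}$ a boundary. Then all the $x_{\delta}$ become a single class $x$. The reverse containment is automatic, since $\epsilon$ is a chain map with $\epsilon(x_{\delta})=y$ for all $\delta$. You must also check that $y$ survives and spans $\KhR^{0,n}_{2,\diamond}$ of the torus link, which your main plan never addresses for $p\neq 0$. This explicit chain-level computation, done in Rozansky--Willis conventions via (\ref{eq:conventions}), is the paper's proof of (1).

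\textbf{Your mixed-orientation step does not work either.} Applying $\tau_{\diamond}^{-p}$ replaces the source by the untwisted belt. But the target of $\epsilon$ on $\belt{k^{+},k^{-}}_{p}$ is the torus link $T(n,pn)_{k^{+},k^{-}}$, not the unlink, and $\tau$ does not intertwine the two counits. So the $p=0$ statement says nothing about (1) for $p\neq 0$.

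The paper avoids this issue. By (\ref{eq:KhR_T_invariant_under_orientations}), type $T$ bigradings do not depend on orientations, so one may reorient to $(r,r)$ with $r=\tfrac{1}{2}(k^{+}+k^{-})$. There $T(2r,2rp)_{r,r}$ and $\belt{r,r}_{p}$ have writhe zero, and type $T$ coincides with type $O$. Your full-twist fallback does not rescue this: it would need the approximation isomorphisms to be natural with respect to $\epsilon$, which is not available in the $\mathfrak{gl}_{2}$ setting (see Question \ref{Q:gl2roz}).

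Your plan for (2) is sound once (1) is known, and it is essentially the paper's commutative ladder. The rows are the torus link, twisted belt, untwisted belt and unlink, connected by $\psi^{\pm,\mbf{X}_{\pm}}_{T}$ and $\psi^{[1]}$. Commutativity plus the one-dimensionality from (1) already suffices. One correction: the annuli on the belt side run around the $S^{1}$-factor, so they are not away from the surgery region. The squares involving $\epsilon$ commute because they are images of commuting squares of cobordisms in $\Links_{1,\diamond}$, and those involving $\tau$ commute by naturality of the Gluck twist maps.
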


\begin{proof}

We first prove (1). To facilitate computations we will restate everything in terms of Rozansky-Willis conventions.
Let $r=\frac{1}{2}(k^{+}+k^{-})$. Note that we have isomorphisms
\begin{align*}
    &\KhR_{2,\diamond}^{-,0,k^{+}+k^{-}}\big(\belt{k^{+},k^{-}}_{p}\big)\cong\KhR_{2,O}^{-,0,2r}\big(\belt{r,r}_{p}\big)\cong\Kh_{RW}^{0,-2r}\big(\belt{r,r}_{|p|}\big), \\
    &\KhR_{2,\diamond}^{0,k^{+}+k^{-}}\big(T(k^{+}+k^{-},p(k^{+}+k^{-}))_{k^{+},k^{-}}\big)\cong\KhR_{2,O}^{0,2r}\big(T(2r,2rp)_{r,r}\big)\cong\Kh^{0,-2r}\big(T(2r,2r|p|)_{r,r}\big),
\end{align*}
where the first isomorphism in each line follows from (\ref{eq:KhR_T_invariant_under_orientations}) and the observation that the writhes of $T(2r,2rp)_{r,r}$ and $\belt{r,r}_{p}$ are equal to zero, whereas the second isomorphism in each line follows from the isomorphism given by (\ref{eq:conventions}).

It therefore suffices to show that the corresponding counit map
\begin{equation}
\label{eq:D(p)_lemma_counit}
    \epsilon:\Kh_{RW}^{0,-2r}\big(\belt{r,r}_{|p|}\big)\to\Kh^{0,-2r}\big(T(2r,2r|p|)_{r,r}\big)
\end{equation}
in Rozansky-Willis homology is an isomorphism. One can check that the chain group $\text{CKh}^{0,-2r}(T(2r,2r|p|)_{r,r}$ is generated by the element $y$ corresponding to the all-$0$ resolution with all circles labeled $\mbf{X}$. One can check that $y$ survives after taking homology and hence
\[Kh^{0,-2r}\big(T(2r,2r|p|)_{r,r}\big)\cong\bb{Q}.\]
Next, let $\FT^{|p|}_{2r}$ denote the $|p|$-full-twist braid on $2r$ strands, and let $\cal{B}_{2r}$ denote the set of crossingless matchings of $2r$ points (see Definition \ref{def:Roz-cobar}). Interpreting $\FT^{|p|}_{2r}$ as a $(0,4r)$-tangle and $(\delta\otimes\delta^{t})$ as a $(4r,0)$ tangle for each $\delta\in\cal{B}_{2r}$, we can make an identification of chain groups
\[\text{CKh}_{RW}^{0,-2r}(\belt{r,r}_{|p|})=\bigoplus_{\delta\in\cal{B}_{2r}}\text{CKh}^{0,-r}\big(\FT^{|p|}_{2r}\otimes(\delta\otimes\delta^{t})\big),\]
under which we see that $\text{CKh}_{RW}^{0,-2r}(\belt{r,r}_{|p|})$ is generated by the collection of elements $x_{\delta}\in\text{CKh}^{0,-r}\big(\FT^{|p|}_{2r}\otimes(\delta\otimes\delta^{t})\big)$ where $x_{\delta}$ corresponds to the all-$0$ resolution with all circles labeled $\mbf{X}$. One can check that $\del x_{\delta}=0$ for all $\delta\in\cal{B}_{2r}$. Furthermore, the image of the differential in the relevant bigrading is generated by the collection of elements $\{x_{\delta}-x_{\delta'}\}_{\delta,\delta'\in\cal{B}_{2r}}$, and hence all the $x_{\delta}$ generators are identified together after taking homology. Denote this generator by $x\in\Kh_{RW}^{0,-2r}(\belt{r,r}_{|p|})$.

We can identify $\epsilon$ at the chain level with the sum of maps
\[\bigoplus_{\delta\in\cal{B}_{2r}}\wt{\nu}_{\delta}:\text{CKh}^{0,-r}\big(\FT^{|p|}_{2r}\otimes(\delta\otimes\delta^{t})\big)\to\text{CKh}^{0,-2r}\big(T(2r,2r|p|)_{r,r}\big)\]
induced by the saddle cobordism maps $\nu_{\delta}$ as in Definition \ref{def:Roz-cobar}. One can check that each of the $x^{\delta}$ generators are mapped to $y$ under the above map, and so after passing to homology we see that $\epsilon(x)=y$, hence the map in (\ref{eq:D(p)_lemma_counit}) is an isomorphism.

Next, we prove (2). Without loss of generality, assume $k^{+}\geq k^{-}$; the case $k^{+}\le k^{-}$ is analogous. Let:
\begin{itemize}
    \item $A(a,b):=\KhR_{2,\diamond}^{0,a+b}(T(a+b,p(a+b))_{a,b})$,
    \item $B(a,b):=\KhR_{2,\diamond}^{-,0,a+b}(\belt{a,b}_{p})$,
    \item $C(a,b):=\KhR_{2,\diamond}^{-,0,a+b}(\belt{a,b})$,
    \item $D(a,b):=\KhR_{2}^{0,a+b}(U(a,b))$,
\end{itemize}
and consider the following diagram:

\begin{center}
\begin{tikzcd}
\KhR_{2,\diamond}^{0,0}(\emptyset)\arrow[r, "\psi_{\diamond}^{+,\mbf{X}_{+}}"] & A(2,0)\arrow[r, "\psi_{\diamond}^{+,\mbf{X}_{+}}"] & \cdots\arrow[r, "\psi_{\diamond}^{+,\mbf{X}_{+}}"] & A(k^{+}-k^{-},0)\arrow[r, "\psi_{\diamond}^{[1]}"] & \cdots \arrow[r, "\psi_{\diamond}^{[1]}"] & A(k^{+},k^{-}) \\
\KhR_{2,\diamond}^{0,0}(\emptyset)\arrow[r, "\psi_{\diamond}^{+,\mbf{X}_{+}}"]\arrow[u, "\epsilon"]\arrow[d, "\tau_{\diamond}^{-p}"] & B(2,0)\arrow[u, "\epsilon"]\arrow[d, "\tau_{\diamond}^{-p}"]\arrow[r, "\psi_{\diamond}^{+,\mbf{X}_{+}}"] & \cdots\arrow[r, "\psi_{\diamond}^{+,\mbf{X}_{+}}"] & B(k^{+}-k^{-},0)\arrow[u, "\epsilon"]\arrow[d, "\tau_{\diamond}^{-p}"]\arrow[r, "\psi_{\diamond}^{[1]}"] & \cdots\arrow[r, "\psi_{\diamond}^{[1]}"] & B(k^{+},k^{-})\arrow[u, "\epsilon"]\arrow[d, "\tau_{\diamond}^{-p}"] \\
\KhR_{2,\diamond}^{0,0}(\emptyset)\arrow[d, "\epsilon"]\arrow[r, "\psi_{\diamond}^{+,\mbf{X}_{+}}"] & C(2,0)\arrow[d, "\epsilon"]\arrow[r, "\psi_{\diamond}^{+,\mbf{X}_{+}}"] & \cdots\arrow[r, "\psi_{\diamond}^{+,\mbf{X}_{+}}"] & C(k^{+}-k^{-},0)\arrow[d, "\epsilon"]\arrow[r, "\psi_{\diamond}^{[1]}"] & \cdots\arrow[r, "\psi_{\diamond}^{[1]}"] & C(k^{+},k^{-})\arrow[d, "\epsilon"] \\
\KhR_{2,\diamond}^{0,0}(\emptyset)\arrow[r, "\psi_{\diamond}^{+,\mbf{X}_{+}}"]\arrow[dd, "\varepsilon_{0,0}", near start] & D(2,0)\arrow[ddl, "\varepsilon_{2,0}", near start]\arrow[r, "\psi_{\diamond}^{+,\mbf{X}_{+}}"] & \cdots\arrow[r, "\psi_{\diamond}^{+,\mbf{X}_{+}}"] & D(k^{+}-k^{-},0)\arrow[ddlll, "\varepsilon_{k^{+}-k^{-},0}", near start]\arrow[r, "\psi^{[1]}"] & \cdots\arrow[r, "\psi_{\diamond}^{[1]}"] & D(k^{+},k^{-})\arrow[ddlllll, "\varepsilon_{k^{+},k^{-}}", near start]\\
 & & & & & \\
\bb{Q}. & & & & &
\end{tikzcd}
\end{center}

The fact that this diagram is the image of a commuting diagram in $\Links_{1,\diamond}$ under the functor $\KhR_{2,\diamond}^{-}$ implies that the diagram commutes, but apriori only up to sign (see Remark \ref{rmk:signs}). However, a coherent choice of sign can be made in $\frak{gl}_{2}$-webs. Here $\psi_{T}^{+,\mbf{X}}$ denotes the disoriented annulus map from Proposition \ref{prop:disoriented_bundt_cake} (note that these only appear when $k^{+}\neq k^{-}$, i.e., when $\diamond=T$). The maps in the first column are all (tautologically) isomorphisms, and by inspection the same is true for all of the maps in the fourth row. Furthermore by tracing through the proof of (\cite{MN22}, Proposition 6.1) we can conclude that all the maps in the first and third rows are isomorphisms. By naturality of the Gluck twist isomorphism, the maps in the second rows are isomorphisms as well. By (1), all of the maps from the second to the first row are isomorphisms. Hence every arrow in the diagram is an isomorphism, from which (2) immediately follows.
\end{proof}

\begin{proof}[Proof of Proposition \ref{prop:tortellini_D(p)}]
Let $k^{\pm},\ell^{\pm}\in\bb{N}$ be such that
\begin{itemize}
    \item $k^{+}=k^{-}$ and $\ell^{+}=\ell^{-}$ if $\diamond=O$,
    \item $k^{+}\equiv k^{-}\pmod{2}$ and $\ell^{+}\equiv \ell^{-}\pmod{2}$ if $\diamond=T$.
\end{itemize}
Let $k:=k^{+}+k^{-}$, $\ell:=\ell^{+}+\ell^{-}$. Consider the following diagram:

\begin{center}
\begin{tikzcd}
\KhR_{2,\diamond}^{-,0,k+\ell}\big(\belt{k^{+},k^{-}}_{p}\cup\belt{\ell^{+},\ell^{-}}\big) \arrow[d, "\epsilon"] \arrow[r, "\KhR_{2,\diamond}^{-}(\psi_{k^{\pm},\ell^{\pm},1})", "\cong"'] & \KhR_{2,\diamond}^{-,0,k+\ell}\big(\belt{\ell^{+},\ell^{-}}_{p}\cup\belt{k^{+},k^{-}}\big) \arrow[d, "\epsilon"] \\
\KhR_{2}^{0,k}\big(T(k,pk)_{k^{+},k^{-}}\big)\otimes\KhR_{2}^{0,2s}\big(U(\ell^{+},\ell^{-})\big)\arrow[d, "\id\otimes\varepsilon_{s}"] & \KhR_{2}^{0,\ell}\big(T(\ell,p\ell)_{\ell^{+},\ell^{-}}\big)\otimes\KhR_{2}^{0,k}\big(U(k^{+},k^{-}))\big)\arrow[d, "\id\otimes\varepsilon_{r}"] \\
\KhR_{2}^{0,k}\big(T(k,pk)_{k^{+},k^{-}}\big)\arrow[d, "\wt{\varepsilon}_{O,k^{\pm},p}"] & \KhR_{2}^{0,\ell}\big(T(\ell,p\ell)_{\ell^{+},\ell^{-}}\big)\arrow[d, "\wt{\varepsilon}_{O,\ell^{\pm},p}"] \\
\bb{Q}\arrow[r, "="] & \bb{Q}
\end{tikzcd}
\end{center}

By similar argument as in the proof of Lemma \ref{lem:v_diamond,k_pm,p}, the above diagram commutes. If $w\in\KhR_{2,\diamond}^{-,0,k+\ell}(\belt{k^{+},k^{-}}_{p}\cup\belt{\ell^{+},\ell^{-}})$ is such that $\Phi_{\diamond,k^{\pm},\ell^{\pm},1}(w)=v_{\diamond,k^{\pm},p}$, then by Lemma \ref{lem:v_diamond,k_pm,p}, commutativity of the above diagram, and by the description of the maps $\Phi_{\diamond,k^{\pm},\ell^{\pm},1}$ and $\Psi_{\diamond,k^{\pm},\ell^{\pm},1}$ from Figure \ref{fig:D(p)_calc_1}, we must have that $\Psi_{\diamond,k^{\pm},\ell^{\pm},1}(w)=v_{\diamond,\ell^{\pm},p}$. This proves the desired result.
\end{proof}

\subsection{\texorpdfstring{$\mathbf{B^{4}}$}{B4}}
\label{subsec:B^{4}}

By \ref{eq:tortellini_1_h-body} we have an isomorphism 
\[\ol{\skein}^{2,\diamond}_{0}(B^{4})\cong\KhR_{2,\diamond}^{-}(S^{3},\emptyset)\cong\bb{Q}\]
concentrated in tri-degree $(0,0,0)$ for $\diamond\in\{O,T\}$. In this section we will (re-)calculate $\ol{\skein}^{2,\diamond}_{0}(B^{4};\emptyset)$ using the Kirby diagram of $B^{4}$ with a single 1-handle and a $p$-framed canceling 2-handle, as pictured in Figure \ref{fig:B4andSigma_gxD^2_kirby_diagrams}.

First consider the case where $\diamond=O$. Using the canceling 1- and 2-handle pair description of $B^{4}$, Corollary \ref{cor:tortellini_1_2_handlebody} furnishes an isomorphism
\[\ol{\skein}^{2,\diamond}_{0}(B^{4};\emptyset)\cong\Big(\bigoplus_{r=0}^{\infty}\KhR_{2,O}^{-}\big(\belt{r,r}_{p})\big)\{-2r\}\Big)/\sim,\]
where $\sim$ denotes the equivalence relation generated by the symmetric, bundt cake, and lasso relations. Note that we can identify $\Phi_{O,r,s,1}$ and $\Psi_{O,r,s,1}$ with the compositions of maps as pictured in Figure \ref{fig:B^4_calc_1}, and where $\psi_{r,s,1}$ is the diffeomorphism pictured in Figure \ref{fig:psi_i_B^4}.

\begin{center}
\begin{figure}
    \includegraphics[width=14cm]{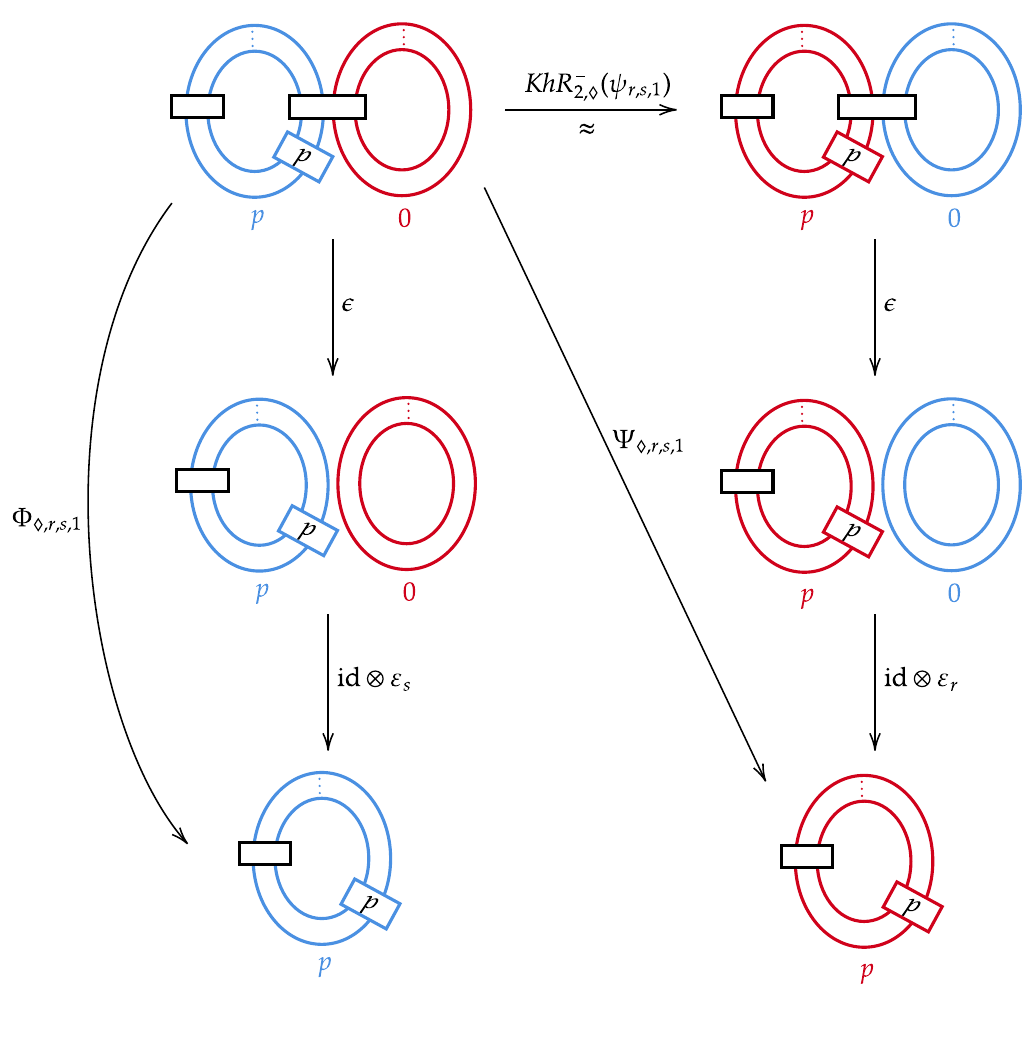}
    \caption{The maps $\Phi_{O,r,s,1}$ and $\Psi_{O,r,s,1}$ for the $p$-framed longitude of $S^{1}\times S^{2}$.}
    \label{fig:B^4_calc_1}
\end{figure}
\end{center}
\begin{center}
\begin{figure}
    \includegraphics[width=15cm]{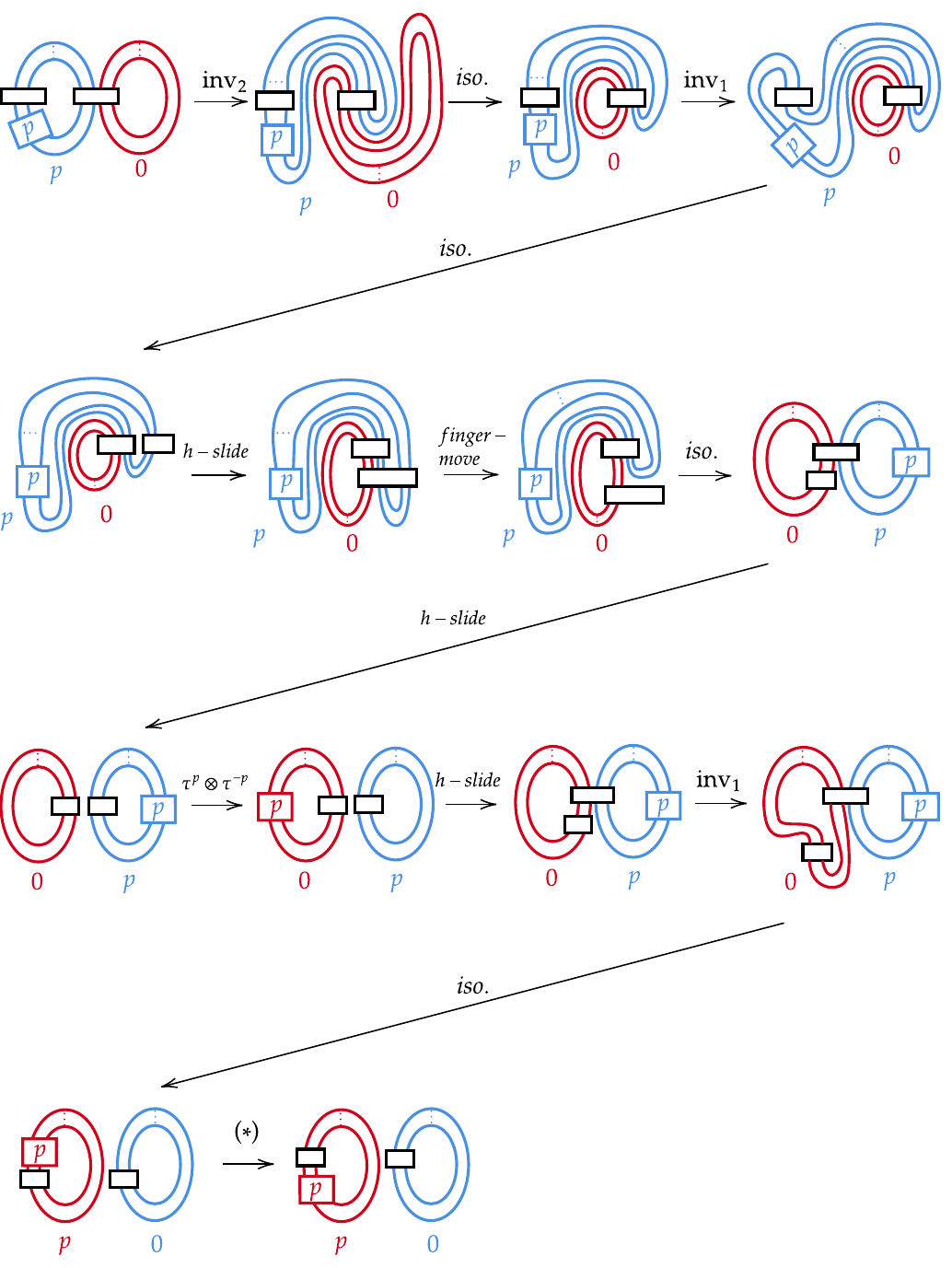}
    \caption{A diffeomorphism of pairs. The map $(*)$ is homotopy equivalence given by pulling the $p$-full twist $\mathrm{FT}^{\otimes{p}}_{2n}$ through a Rozansky projector.
    }
    \label{fig:psi_i_B^4}
\end{figure}
\end{center}

We can simplify these maps as follows. Let $L_{r,s,p}$ be the link featured in the upper left hand corner of Figure \ref{fig:B^4_calc_1}, and consider the isomorphism
\begin{align*}
    &\KhR_{2,O}^{-}\big(\#^{2}S^{1}\times S^{2},L_{r,s,p}\big)\{-2r-2s\} \\
    &\qquad\qquad\underset{\cong}{\xrightarrow{f}}\KhR_{2,O}^{-}\big(S^{1}\times S^{2},\belt{r,r}\big)\{-2r\}\otimes\KhR_{2,O}^{-}\big(S^{1}\times S^{2},\belt{s,s}\big)\{-2s\}
\end{align*}
obtained as the composition of isomorphisms as in Figure \ref{fig:B^4_calc_2}. We then have that
\begin{align*}
    &\Phi_{O,r,s,1}=\Phi_{O,r,s,1}'\circ f & &\Psi_{O,r,s,1}=\Psi_{O,r,s,1}'\circ f
\end{align*}
where $\Phi_{O,r,s,1}'$ and $\Psi_{O,r,s,1}'$ are the maps pictured in Figure \ref{fig:B^4_calc_3}.

\begin{center}
\begin{figure}
    \includegraphics[width=14cm]{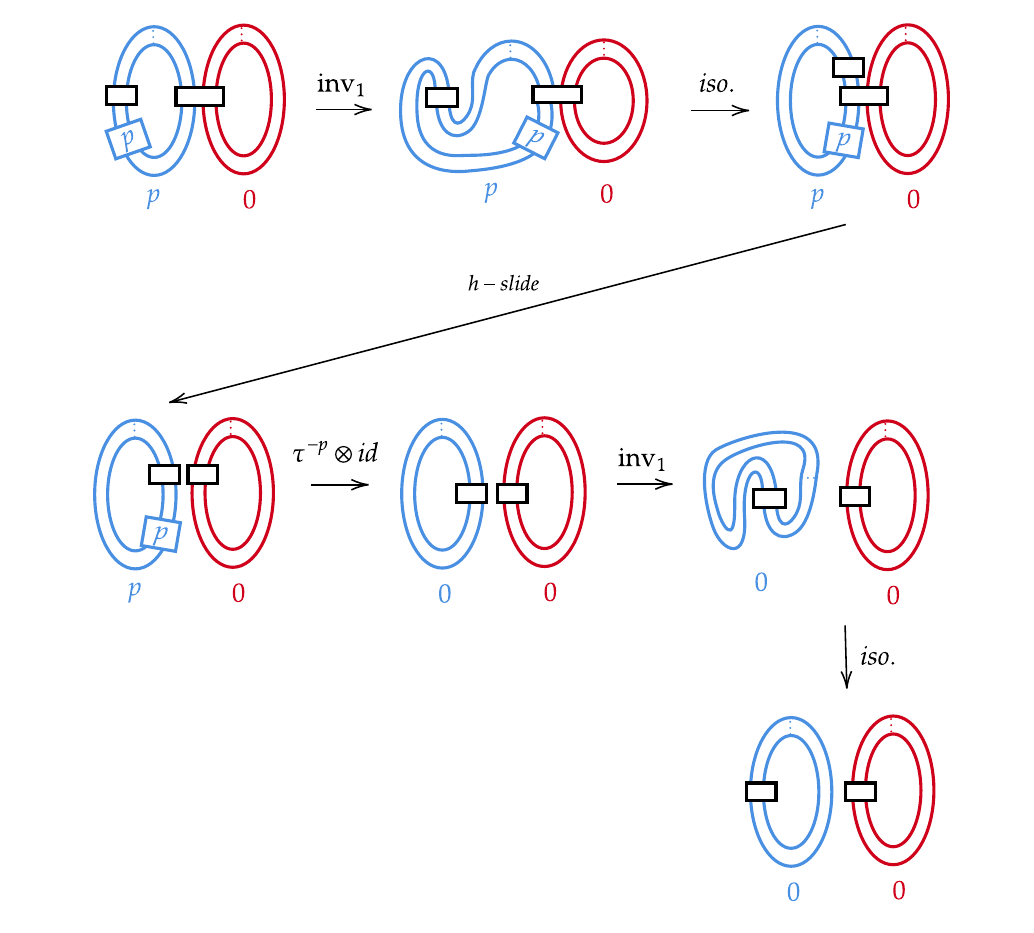}
    \caption{A sequence of isomorphisms interpolating between $L_{r,s,p}$ and $\belt{r,r}\sqcup\belt{s,s}$.}
    \label{fig:B^4_calc_2}
\end{figure}
\end{center}
\begin{center}
\begin{figure}
    \includegraphics[width=14cm]{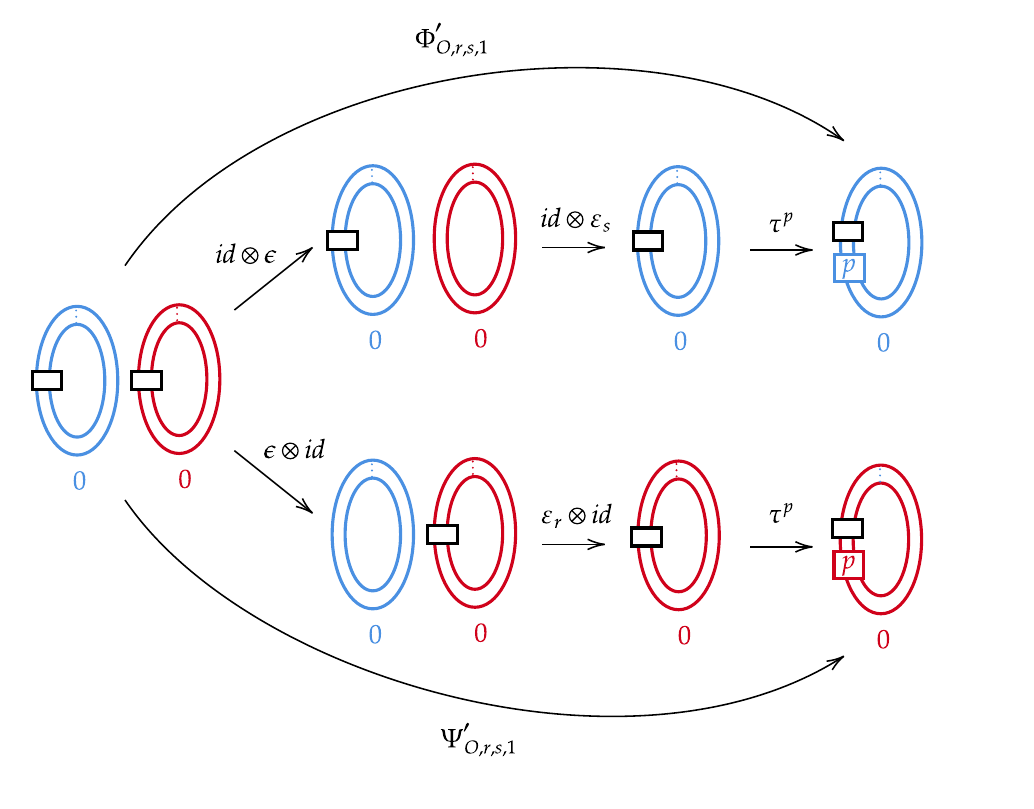}
    \caption{Alternate maps $\Phi'_{O,r,s,1}$ and $\Psi'_{O,r,s,1}$.}
    \label{fig:B^4_calc_3}
\end{figure}
\end{center}

We claim that under the above isomorphism, $\ol{\skein}^{2,O}_{0}(B^{4};\emptyset)$ can be identified with the one-dimensional $\bb{Q}$-vector space generated by the sequence of elements
\[\wt{v}_{O,r,p}:=(\epsilon)^{-1}(v_{O,r,p})\in\KhR_{2,O}^{-,0,2r}\big(S^{1}\times S^{2},\belt{r,r}_{p}\big),\qquad\qquad r\geq 0,\]
where $v_{O,r,p}\in\KhR_{2}^{0,2r}(T(2r,2rp)_{r,r})$ is defined as in (\ref{eq:v_diamond,k_pm_p}). Before we proceed, we shall consider the following lemma.

\begin{lemma}
\label{lem:B^4}
The following statements are true for each $r\in\bb{N}$ and $p\in\bb{Z}$:
\begin{enumerate}
    \item There exists a canonical isomorphism
    \[\KhR_{2,O}^{-,0,2r}\big(S^{1}\times S^{2},\belt{r,r}_{p}\big)\underset{\cong}{\xrightarrow{f_{O,r,p}}}\bb{Q}\]
    such that the following diagram commutes:
    \begin{equation}
    \label{eq:D(p)_lemma_cd}
    \begin{tikzcd}
    \KhR_{2,O}^{-,0,2r}\big(S^{1}\times S^{2},\belt{r,r}_{p}\big) \arrow[rd, "f_{O,r,p}"', "\cong"] \arrow[rr, "\psi^{[1]}_{O}", "\cong"'] & & \KhR_{2,O}^{-,0,2r+2}\big(S^{1}\times S^{2},\belt{r+1,r+1}_{p}\big) \arrow[ld, "f_{O,r+1,p}", "\cong"'] \\
    & \bb{Q}. &
    \end{tikzcd}
    \end{equation}
    \item We have a short exact sequence
    \begin{equation}
    \label{eq:D(p)_SES}
        \bigoplus_{(h,q)\neq(0,2r)}\KhR_{2,O}^{-,h,q}\big(S^{1}\times S^{2},\belt{r,r}_{p}\big)\xrightarrow{i}\KhR_{2,O}^{-}\big(S^{1}\times S^{2},\belt{r,r}_{p}\big)\xrightarrow{\varepsilon'_{O,r,p}}\bb{Q}
    \end{equation}
    where $i$ denotes the canonical inclusion map and $\varepsilon'_{O,r,p}:=\varepsilon_{r}\circ\epsilon\circ\tau_{O}^{-p}$.
\end{enumerate}
\end{lemma}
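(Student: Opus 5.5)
Both parts reduce to Lemma \ref{lem:v_diamond,k_pm,p}, specialized to $\diamond=O$ and $k^{+}=k^{-}=r$, together with the Gluck twist isomorphisms $\tau_{O}^{p}$ of Remark \ref{rmk:gluck_twist_maps}.

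For (1), the plan is to define
\[f_{O,r,p}:=\wt{\varepsilon}_{O,r^{\pm},p}\circ\epsilon=\varepsilon_{r}\circ\epsilon\circ\tau_{O}^{-p}\]
on $\KhR_{2,O}^{-,0,2r}(\belt{r,r}_{p})$. By Lemma \ref{lem:v_diamond,k_pm,p}(1), the counit $\epsilon$ is an isomorphism in bidegree $(0,2r)$. By Lemma \ref{lem:v_diamond,k_pm,p}(2), $\wt{\varepsilon}_{O,r^{\pm},p}$ is an isomorphism onto $\bb{Q}$. Hence $f_{O,r,p}$ is an isomorphism, and it is canonical because every constituent map is. I will also record that $f_{O,r,p}(\wt{v}_{O,r,p})=1$.

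Commutativity of (\ref{eq:D(p)_lemma_cd}) should follow from the large commutative ladder in the proof of Lemma \ref{lem:v_diamond,k_pm,p}(2), restricted to the $\psi^{[1]}_{O}$ columns from $(r,r)$ to $(r+1,r+1)$. The rows $B$, $C$, $D$ of that ladder give
\[\varepsilon_{r+1}\circ\epsilon\circ\tau_{O}^{-p}\circ\psi^{[1]}_{O}=\varepsilon_{r+1}\circ\psi^{[1]}\circ\epsilon\circ\tau_{O}^{-p}.\]
It then remains to check that $\varepsilon_{r+1}\circ\psi^{[1]}=\varepsilon_{r}$ on $\KhR_{2}(U(r,r))$. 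This holds because capping a dotted annulus gives a dotted sphere, which evaluates to $1$.

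Since both $f$'s are isomorphisms, $\psi^{[1]}_{O}$ is then an isomorphism in these bidegrees. Signs are only determined up to a global choice (Remark \ref{rmk:signs}), so I will fix them using the $\mathfrak{gl}_{2}$ lift, as in the previous lemma.

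For (2), I need exactness, meaning $\varepsilon'_{O,r,p}$ is surjective with kernel exactly the sum of the bigraded pieces other than $(0,2r)$.
\begin{itemize}
\item \emph{Surjectivity and the $(0,2r)$ piece.} The map $\varepsilon'_{O,r,p}$ is homogeneous. Here $\tau_{O}^{-p}$ and $\epsilon$ preserve bidegree, and $\varepsilon_{r}$ is nonzero only in bidegree $(0,2r)$ after the shift conventions, since $\varepsilon$ kills $\mbf{1}$. So $\varepsilon'_{O,r,p}$ vanishes off bidegree $(0,2r)$. On bidegree $(0,2r)$ it equals $f_{O,r,p}$, which is an isomorphism by (1). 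Both surjectivity and the injectivity of $\varepsilon'_{O,r,p}$ restricted to the $(0,2r)$ piece follow.
\item \emph{Kernel.} The kernel is therefore exactly the image of $i$.
\end{itemize}

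The main obstacle is the degree bookkeeping. I need to confirm that $\varepsilon_{r}$ on $\KhR_{2}(U(r,r))$, including the $\{-2r\}$ conventions, is supported only in the single bidegree matching $(0,2r)$ on the projector side. I also need to check that the bidegree-$(0,2r)$ part of $\KhR_{2,O}^{-}(\belt{r,r}_{p})$ is one-dimensional. The latter is supplied by Lemma \ref{lem:v_diamond,k_pm,p}(1) through the torus link computation.
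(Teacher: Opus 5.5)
Your proof has a gap: you apply Lemma \ref{lem:v_diamond,k_pm,p} at the same value of $p$ as in Lemma \ref{lem:B^4}. That lemma is proved in Section \ref{subsec:non_zero_disk_bundles} under the standing assumption $p<0$. Lemma \ref{lem:B^4}, by contrast, is asserted for every $p\in\bb{Z}$, and the $B^{4}$ computation needs it for an arbitrary framing $p$ of the canceling 2-handle.

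For $p>0$ the input you rely on is false. Take $r=1$, $p=1$. The link $T(2,2)_{1,1}$ with both framings equal to $1$ has a writhe-zero diagram whose mirror is the positive Hopf link. Hence $\KhR_{2,O}^{0,2}(T(2,2)_{1,1})\cong\Kh^{0,-2}(\text{positive Hopf link})=0$. On the other hand, $\tau_{O}^{1}$ gives $\KhR_{2,O}^{-,0,2}(\belt{1,1}_{1})\cong\KhR_{2,O}^{-,0,2}(\belt{1,1})\cong\bb{Q}$. This has three consequences:
\begin{itemize}
\item The counit onto the torus link is not an isomorphism in bidegree $(0,2r)$.
\item The map $\wt{\varepsilon}_{O,(r,r),p}$, which involves the inverse of that counit, is not defined.
\item Your one-dimensionality check ``via the torus link computation'' breaks down.
\end{itemize}
This is exactly the mechanism that kills the lasagna module of $D(p)$ for $p>0$. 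As written, your argument only covers $p<0$.

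The repair is the reduction the paper makes first. Write $\varepsilon'_{O,r,p}=(\varepsilon_{r}\circ\epsilon)\circ\tau_{O}^{-p}$, where $\epsilon$ is the counit onto the unlink $U(r,r)$, and use that $\tau_{O}$ is a bigraded isomorphism commuting with $\psi^{[1]}_{O}$. Everything then reduces to showing that $\varepsilon_{r}\circ\epsilon\colon\KhR_{2,O}^{-,0,2r}(\belt{r,r})\to\bb{Q}$ is an isomorphism.

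The paper proves this through Khovanov's arc algebra. There $\KhR_{2,O}^{-,0,2r}(\belt{r,r})\cong HH_{0}(H^{r})_{2r}\cong Z(H^{r})_{0}\cong\bb{Q}$, spanned by the class of an all-$\mbf{X}$ generator, which $\epsilon$ sends to $\mbf{X}^{\otimes 2r}$.

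You can instead keep your route by invoking Lemma \ref{lem:v_diamond,k_pm,p} at one auxiliary $p_{0}<0$. On $\KhR_{2,O}^{-,0,2r}(\belt{r,r})$ you have $\varepsilon_{r}\circ\epsilon=\wt{\varepsilon}_{O,(r,r),p_{0}}\circ\epsilon_{T}\circ\tau_{O}^{p_{0}}$, where $\epsilon_{T}$ is the counit from $\belt{r,r}_{p_{0}}$ onto $T(2r,2rp_{0})_{r,r}$. This is a composite of isomorphisms.

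Once that is in place, the rest of your proposal works for all $p$:
\begin{itemize}
\item Commutativity of (\ref{eq:D(p)_lemma_cd}) follows from naturality of $\tau_{O}$ and $\epsilon$ under $\psi^{[1]}$, together with $\varepsilon_{r+1}\circ\psi^{[1]}=\varepsilon_{r}$ (a capped dotted annulus is a dotted sphere). This is a valid substitute for the paper's check that $h$ sends $x_{r}\mapsto x_{r+1}$.
\item Your homogeneity argument for (2) is a cleaner version of the paper's part (b).
\end{itemize}
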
 

\begin{proof}
By naturality of the Gluck twist isomorphism $\tau_{O}^{p}$, it suffices to prove the result for $p=0$. For (1), note that we have an isomorphism
\begin{equation}
\label{eq:D_p_lemma_isomorphism}
    \KhR_{2,O}^{-,0,2r}\big(S^{1}\times S^{2},\belt{r,r}\big)\underset{\cong}{\xrightarrow{h}}HH_{0}(H^{r}\otimes\bb{Q})_{2r},
\end{equation}
where $H^{r}$ denotes Khovanov's arc algebra as defined in \cite{Kho02}, and $HH_{0}(H^{r}\otimes\bb{Q})_{2r}$ denotes the piece of the zeroth Hochschild homology of the (rationalized) arc algebra in quantum grading $q=2r$. The isomorphism $h$ is furnished by (\ref{eq:conventions}) along with the discussion in (\cite{MN22}, Section 6). By tracing through the proof of (\cite{MN22}, Theorem 6.3) provided originally by (\cite{Kho04}, Theorems 3 and 4), we have that 
\[HH_{0}(H^{r})_{2r}\cong Z(H^{r})^{\vee}_{2r}\cong Z(H^{r})_{0}\cong\bb{Q}\]
is generated by a single element $x_{r}$, where:
\begin{itemize}
    \item If $r=0$, $x_{0}=1\in HH_{0}(H^{0}))_{0}\cong HH_{0}(\bb{Q})\cong\bb{Q}$.
    \item If $r\geq 1$, then $x_{r}\in HH_{0}(H^{r})_{2r}$ is the unique element which is represented by any all $\mbf{X}$-labeled generator of $H^{r}$.
\end{itemize}
We then set $f_{O,r,0}$ to be the composition of these isomorphisms. Furthermore, $h$ intertwines the dotted annulus homomorphism
\[\psi_{O}^{[1]}:\KhR_{2,O}^{-,0,2r}\big(S^{1}\times S^{2},\belt{r,r}\big)\to\KhR_{2,O}^{-,0,2r+2}\big(S^{1}\times S^{2},\belt{r+1,r+1}\big)\]
with the map
\begin{align*}
    \phi:HH_{0}(H^{r}\otimes\bb{Q})_{2r}&\xrightarrow{\cong} HH_{0}(H^{r+1}\otimes\bb{Q})_{2r+2} \\
    x_{r}&\mapsto x_{r+1},
\end{align*}
which implies commutativity of (\ref{eq:D(p)_lemma_cd}). For (2), it suffices to show that:
\begin{enumerate}[label=(\alph*)]
    \item The map
    \[\KhR_{2,O}^{-,0,2r}\big(S^{1}\times S^{2},\belt{r,r}_{p}\big)\xrightarrow{\varepsilon_{r}\circ\epsilon}\bb{Q}\]
    induces an isomorphism.
    \item The map $\varepsilon_{r}\circ\epsilon$ vanishes on $\KhR_{2,O}^{-,h,q}\big(S^{1}\times S^{2},\belt{r,r}\big)$ for all $(h,q)\neq (0,2r)$.
\end{enumerate}
For (a), let 
\begin{equation}
\label{eq:wt_v_diamond_r_0}
    \wt{v}_{O,r,0}:=f_{O,r,0}^{-1}(1)\in\KhR_{2,O}^{-,0,2r}\big(S^{1}\times S^{2},\belt{r,r}\big).
\end{equation}
By the description of $x_{r}$, we see that $\wt{v}_{O,r,0}$ is sent to the all $\mbf{X}$-labeled generator of $\KhR_{2}^{0,2r}(U(r,r))$ under the counit map $\epsilon$, hence
\[(\varepsilon_{r}\circ\epsilon)(\wt{v}_{O,r,0})=\varepsilon_{r}\big(\mbf{X}^{\otimes 2r}\big)=1.\]
For (b), note that since $\KhR_{2}(U(r,r))$ is supported in homological degree zero it follows that the counit map $\epsilon$ is the zero map for all homological degrees $\neq 0$. Hence, it suffices to consider the case $h=0$. We claim that any element
\[v\in\KhR_{2,O}^{-,0,q}\big(S^{1}\times S^{2},\belt{r,r}\big)\]
with $q<2r$ must be represented by a linear combination of generators, each of which contains at least one $\mbf{1}$ label. Indeed, by the above discussion, any element represented by an all $\mbf{X}$-labeled generator would be equivalent to the element $\wt{v}_{O,r,0}$ in $q$-degree $2r$, contradicting the assumption that $v$ is supported in $q$-degree $q<2r$. Hence 
\[\epsilon(v)\in\ker\big(\KhR_{2}^{0,2r}\big(U(r,r)\big)\xrightarrow{\varepsilon_{r}}\bb{Q}\big)\]
and so $(\varepsilon_{r}\circ\epsilon)(v)=0$.
\end{proof}

In view of Lemma \ref{lem:B^4}, it suffices to show the following:
\begin{enumerate}
    \item For every $r\in\bb{N}$ and $x\in\ker(\varepsilon'_{O,r,0})$, there exists some $s\in\bb{N}$ and some element 
    \[v\in\KhR_{2,O}^{-,0,2r}\big(S^{1}\times S^{2},\belt{r,r}\big)\otimes\KhR_{2,O}^{-,0,2s}\big(S^{1}\times S^{2},\belt{s,s}\big)\]
    such that $\Phi'_{O,r,s,1}(v)=x$ and $\Psi'_{O,r,s,1}(v)=0$.
    \item For any $r,s\in\bb{N}$ and any element
    \[w\in\KhR_{2,O}^{-}\big(S^{1}\times S^{2},\belt{r,r}\big)\{-2r\}\otimes\KhR_{2,O}^{-}\big(S^{1}\times S^{2},\belt{s,s}\big)\{-2s\}\]
    the following implication holds:
    \begin{equation}
    \label{eq:w_B^4}
        \Phi'_{O,r,s,1}(w)=\wt{v}_{O,r,p}\implies\Psi'_{O,r,s,1}(w)=\wt{v}_{O,s,p}+x
    \end{equation}
    for some $x\in\ker(\varepsilon'_{O,r,0})$.
\end{enumerate}
For (1), one can check that $v=x\otimes\wt{v}_{O,r,0}$ satisfies the desired property. For (2), note that if $w$ is such that $\Phi'_{O,r,s,1}(w)=\wt{v}_{O,r,p}$, then we must have $w=\wt{v}_{O,r,0}\otimes\wt{v}_{O,s,0}$, hence (\ref{eq:w_B^4}) holds with $x=0$.

\subsection{\texorpdfstring{$\mathbf{\Sigma_{g}\times D^{2}}$}{ΣgxD2}}
\label{subsec:Sigma_gxD^2}

By Theorem \ref{theorem:tortellini_2_handle_formula} and observing that the Rozansky projector is supported in non-positive homological degree, one may immediately obtain $\overline{\skein}_{0}^{2,\diamond}$ vanishing results for manifolds represented by Kirby diagrams satisfying restrictive properties.

\begin{proposition}\label{thrm:crossingless0framed}
    Let $X$ be a 4-manifold with a handle decomposition given by a Kirby diagram consisting of a link $L_{0}\cup L_{1}$, where $L_{0}$ is a crossingless diagram of the $0$-framed unlink, and $L_{1}$ is a crossingless unlink with dotted components representing the 1-handles of $X$. Then in homological degree $h\geq{1}$ we have 
    \[\overline{\skein}_{0,h\geq{1},*}^{2,\diamond}(X)=0.\]
\end{proposition}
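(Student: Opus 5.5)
The plan is to use Corollary \ref{cor:tortellini_1_2_handlebody}, since $X$ is a $1$- and $2$-handlebody. That corollary exhibits $\overline{\skein}_{0}^{2,\diamond}(X;\bm{\alpha})$ as a quotient (a coequalizer) of the cabled Rozansky--Willis homology $\ul{\KhR}^{-}_{2,\diamond,\bm{\alpha}}(\#^{n}S^{1}\times S^{2};K,\emptyset)$. Here $K=L_{0}$ is viewed as a link in the boundary $\#^{n}S^{1}\times S^{2}$ of the $1$-handlebody determined by $L_{1}$. The cabled group is in turn a quotient of
\[\bigoplus_{\mbf{k}^{\pm}}\KhR_{2,\diamond}^{-}\big(K(\mbf{k}^{+},\mbf{k}^{-})\big)\big\{-|\mbf{k}^{+}|-|\mbf{k}^{-}|\big\}.\]
Two observations then finish the argument. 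First, quotients preserve vanishing in a given bidegree. Second, the shift $\{\cdot\}$ is purely a $q$-shift and does not move homological degree. It therefore suffices to show that $\KhR_{2,\diamond}^{-,h,*}(K(\mbf{k}^{+},\mbf{k}^{-}))=0$ for every $h\geq 1$ and every cable.

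For the vanishing I would first arrange the diagram. The diagram $L_{0}\cup L_{1}$ is crossingless, and each $0$-framed component of $L_{0}$ passes through some of the dotted circles. After a planar isotopy, I would put the diagram in admissible form for the Rozansky--Willis construction. Each dotted circle becomes a surgery region, the strands of $L_{0}$ passing through it become parallel strands, and no crossings appear. Because every component is $0$-framed with blackboard framing, taking the $(k^{+}_{j},k^{-}_{j})$-cable yields another crossingless admissible diagram with writhe $0$. Consequently the type $T$ grading shift in \eqref{eq:KhR_-_T_from_O} is trivial, and it is enough to treat $\diamond=O$.

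The cable $K(\mbf{k}^{+},\mbf{k}^{-})$ thus has a diagram whose complex consists of the crossingless planar part tensored with a Rozansky projector $P_{n_{j},0}$ in each surgery region. By Definition \ref{def:Roz-cobar}, each $P_{n,0}$ is a cobar complex $\cdots\to C^{\otimes 2}\to\ul{C}$ supported in non-positive homological degrees. The planar part is crossingless and so contributes only in degree $0$. The total complex computing $\KhR_{2,O}^{-}$ is therefore supported in degrees $h\leq 0$, and its homology vanishes for $h\geq 1$.

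The main obstacle is keeping the conventions straight. With the $\pm$ and dual versions, one must confirm that the complex for $\KhR^{-}$ really is built from $P_{n,0}$ (the cobar, non-positive version) rather than from $P^{\vee}_{n,0}$. This is the content of Remark \ref{rmk:dual_projector} together with the definition of $\KhR^{-}_{2,\diamond}$ as a dual of the mirror. I would check it directly: $\KhR^{+}$ uses the bar construction in non-negative degrees, and passing to the mirror and dualizing with $h\mapsto -h$ gives support in $h\leq 0$. A second point is that a linked crossingless configuration (with $L_{0}$ going through dotted circles) does not introduce crossings when the diagram is put in admissible form. This holds because strands through a dotted circle are simply routed into its surgery box.
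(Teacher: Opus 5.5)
Your argument is the same as the paper's. You pass to the cabled colimit of Corollary~\ref{cor:tortellini_1_2_handlebody}, use that all the relations are homogeneous in $h$, and observe that a crossingless admissible diagram of each cable gives a complex built from the cobar projectors $P_{n,0}$, hence supported in $h\le 0$.

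The step that is not actually established is your ``second point'', which the paper's proof also asserts in a single line: that once the dotted circles become surgery regions, the cables of $L_0$ admit crossingless admissible diagrams. Moving the strands through a dotted circle into parallel position can force crossings, and sometimes no crossingless admissible diagram exists. To see this, fill the boxes of a crossingless admissible diagram with parallel strands; the result is a disjoint union of simple closed planar curves. Each such curve meets the transverse segment of each box with algebraic intersection in $\{-1,0,1\}$, so it represents $0$ or $\pm1$ in that $S^{1}\times S^{2}$ summand. Now take $L_1=D$ and $L_0=C$ to be two planar ellipses meeting in four points with alternating crossings. Each is crossingless on its own, so the hypotheses hold as literally stated. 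But $C\cup D$ is the $(2,4)$ torus link, so $C$ represents $\pm 2\in H_1(S^{1}\times S^{2})$. Hence neither $C$ nor any nonempty cable of it has a crossingless admissible diagram, and the support argument gives no conclusion. For instance, the cabled half-twist between the two passes of $C$ through the box already contributes $2r^{2}$ crossings of each sign to $C(r,r)$.

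The proposition therefore needs an extra hypothesis: the diagram obtained by replacing the dotted circles with surgery regions must itself be crossingless and admissible. This is exactly what the paper checks for the standard diagram of $\Sigma_g\times D^2$ in Corollary~\ref{cor:surfacebundle}, where the $0$-framed unknot passes through each dotted circle twice with opposite orientations. Under that hypothesis your proof is complete, because cabling a crossingless diagram with blackboard framing $0$ keeps it crossingless with writhe $0$.

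One smaller point concerns $\diamond=T$. You cannot literally reduce to $\diamond=O$ via \eqref{eq:KhR_-_T_from_O}, since type $T$ cables need only be $2$-divisible, not null-homologous. However, the degree-support argument applies verbatim to the type $T$ complexes, so nothing is lost.
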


\begin{proof}
    For such a Kirby diagram, each cable $L_{0}(r,r)$ admits an admissible diagram with no crossings and all 0 framed components. After replacing the components of $L_{1}$ with Rozansky projectors in the diagram, we note that the resulting Khovanov complex is supported entirely in non-positive homological degree. Thus, the cabling colimit of Corollary \ref{cor:tortellini_1_2_handlebody} vanishes for any $h\geq{1}$. 
\end{proof}

\begin{corollary}\label{cor:surfacebundle}
Let $\Sigma_{g}$ denote the orientable surface of genus $g\geq 0$. Then
\[\overline{\skein}_{0,h\geq{1},*}^{2,\diamond}(\Sigma_{g}\times{D^{2})}=0.\]
\end{corollary}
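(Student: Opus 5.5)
The plan is to reduce Corollary \ref{cor:surfacebundle} directly to Proposition \ref{thrm:crossingless0framed} by exhibiting a Kirby diagram of $\Sigma_{g}\times D^{2}$ of the required form. The first step is to write down the standard handle decomposition of $\Sigma_{g}\times D^{2}$ induced from the decomposition of $\Sigma_{g}$ into one $0$-handle, $2g$ one-handles and one $2$-handle. Thickening gives a $4$-dimensional decomposition with $2g$ one-handles (dotted unknots) and a single $0$-framed $2$-handle. In the usual picture, the attaching curve of the $2$-handle runs over the $1$-handles according to the word $\prod_{j}[a_{j},b_{j}]$.

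The second step is to redraw this diagram so that the attaching curve, together with the dotted circles, forms a \emph{crossingless} diagram. To do this, I would use the dotted-circle notation for $\Sigma_{g}\times D^{2}$ in which the $2g$ dotted unknots appear as $g$ pairs $(a_{j},b_{j})$. The $0$-framed attaching circle passes through each dotted circle of a pair twice, with opposite orientations, and this is drawn as a planar curve weaving around the dotted circles. Any apparent crossings between the attaching curve and the dotted circles can be removed, because passing a $2$-handle strand through a dotted circle is recorded as the strand going through the disk spanned by that circle, not as a crossing. Equivalently, in the Rozansky--Willis picture each dotted component is replaced by a surgery region (a box), and the attaching curve is a planar arc system entering and leaving these boxes. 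Once the boxes are filled with Rozansky projectors, the remaining diagram outside the boxes has no crossings.

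The third step is to check the framing. Since the curve is planar and has no self-crossings, its writhe is $0$, so the blackboard framing agrees with the $0$-framing of the product. With this in place, Proposition \ref{thrm:crossingless0framed} applies: every cable $L_{0}(r,r)$ (and, in type $T$, every cable $L_{0}(k^{+},k^{-})$) is crossingless outside the projector boxes. The whole Khovanov complex is therefore supported in non-positive homological degrees, because by Lemma \ref{lem:rozprops} and Definition \ref{def:Roz-cobar} the projectors $P_{n,0}$ live in non-positive degrees. Hence the cabled groups of Corollary \ref{cor:tortellini_1_2_handlebody} vanish in degrees $h\ge 1$, and so does the coequalizer. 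The case $g=0$ is the same argument with $X=S^{2}\times D^{2}$, and it is consistent with Proposition \ref{prop:tortellini_S^2xD^2}.

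The main obstacle is the second step. The attaching word $\prod[a_{j},b_{j}]$ must be realized by a curve that is planar relative to the dotted circles, with no strand-over-strand crossings and no extra twisting that would change the framing. If the direct picture produces crossings among parallel passes through the same dotted circle, I would first try to remove them by isotoping the strands through the $S^{1}\times S^{2}$ summands (moving strands across the surgery region). As a fallback, I would absorb such crossings into the projector using Lemma \ref{lem:rozprops}(6), since crossings adjacent to a projector can be slid off up to homotopy equivalence without shifting degrees.
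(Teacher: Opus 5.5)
Your proposal is correct and is essentially the paper's proof: the paper also exhibits the standard Kirby diagram of $\Sigma_g\times D^2$, a single crossingless $0$-framed unknot running through $2g$ crossingless dotted circles so that the knot in $\#^{2g}S^1\times S^2$ has a crossingless admissible diagram, and then invokes Proposition~\ref{thrm:crossingless0framed}. The planar realization you flag as the main obstacle does exist, so your fallback is never needed. For $g=1$, take the attaching curve to be the boundary of a U-shaped planar region, with one surgery box across the base and one across the gap between the arms; this is the Borromean picture. For general $g$, band-sum $g$ such pictures. This matters because the fallback would not work as stated: a single crossing adjacent to $P_{n,0}$ is not absorbed in general, unlike a full twist.
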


\begin{proof}
    There exists a Kirby diagram of $\Sigma_{g}\times{D^{2}}$ consisting of a single $0$-framed crossingless unknot linked with $2g$ dotted 1-handles given by a crossingless unlink such that the diagram of the corresponding knot $K\subset\#^{2g}S^{1}\times S^{2}$ obtained from performing surgeries on the dotted 1-handle components is admissible (see Figure \ref{fig:B4andSigma_gxD^2_kirby_diagrams}). The result then follows immediately from Proposition \ref{thrm:crossingless0framed}.
\end{proof}

\begin{center}
\begin{figure}
    \includegraphics[width=16cm]{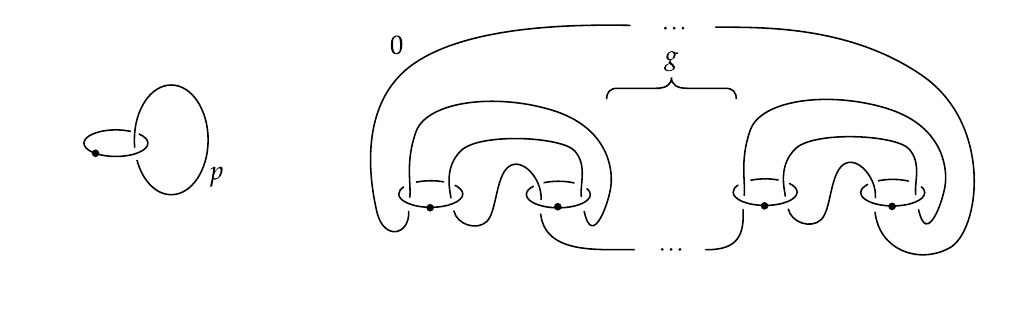}
    \caption{\textbf{Left:} A Kirby diagram of $B^{4}$. \textbf{Right:} A Kirby diagram for $\Sigma_{g}\times{D^{2}}$, the disk bundle over the genus $g$ surface $\Sigma_{g}$ with Euler number 0, built from $2g$ 1-handles and a single $0$-framed 2-handle.}
    \label{fig:B4andSigma_gxD^2_kirby_diagrams}
\end{figure}
\end{center}

\begin{remark}
\label{rmk:FTApprox}
In \cite{ROZ-Cat}, Rozansky shows that the projector $P_{n,0}$ is realized by the stable limit of tensor powers of a full-twist braid on $2n$ strands for particular choices of connecting maps. For a link $L\subset{S^{1}\times{S^{2}}}$, realized as the closure of a tangle $T$ passing through the surgery region of $S^{1}\times{S^{2}}$, let $L(p)$ be the link in $S^{3}$ obtained by replacing the $S^{1}\times{S^{2}}$ surgery region by a $p$-fold positive full-twist braid. 

\begin{center}
    \includegraphics[width=0.9\linewidth]{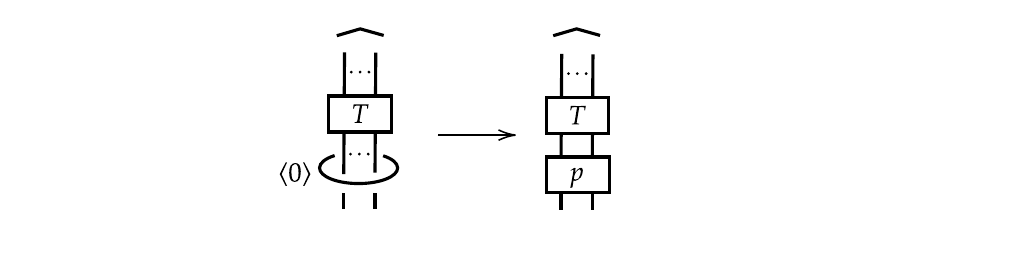}
\end{center}

In the conventions set in \cite[Theorem 6.8]{ROZ-Cat}, the invariant $\Kh^{*,*}(S^{1}\times{S^{2}};L)$ may be approximated in a given homological degree by computing the usual Khovanov homology of $L(p)$ up to a shift. In particular, there is a natural isomorphism
\[\Kh_{RW}^{h,*}(S^{1}\times{S^{2}};L)\cong{\Kh^{h,*}(S^{3};L(p))}
    \]\label{eq:KhFTApprox}
when $h\geq{n_{+}-2p+2}$. If we instead switch to the $\KhR_{2,\diamond}^{-}$ convention, we again have a natural isomorphism as in \ref{eq:KhFTApprox}, but the link $L(p)$ corresponding to $L\subset{S^{1}\times{S^{2}}}$ is obtained by negative full-twists. Under the assumption that $h\geq{n_{+}-2p+2}$, we denote the isomorphism in homological degree $h$ and for a fixed $p$ by:
\begin{equation}
\label{eq:ftpmap}
\KhR_{2,\diamond}^{-,h}(S^{1}\times{S^{2}};L)\xrightarrow{\cong}{\KhR_{2,\diamond}^{-,h}(S^{3};L(-p))}.
\end{equation}

Next, let $L$ be an admissible link in $\#^{n}(S^{1}\times{S^{2}})$, and for $\overrightarrow{p}=(p,...,p)\in{\mathbb{N}^{n}}$, let $C^{\#}(L(\overrightarrow{p}))$ denote the (shifted) complex in the Bar-Natan category obtained by erasing the $S^{1}\times{S^{2}}$ connect-summands and replacing them with a (shifted) Khovanov complexes corresponding to a $p$-fold positive full-twist braid. Willis \cite{WillisS1xS2} provides a full-twist approximation statement for the more general invariant $\Kh(\#^{n}S^{1}\times{S^{2}},L)$. For a fixed homological degree $h$ satisfying 
\begin{equation}\label{eq:twistbound}
    p\geq{\lceil\tfrac{1}{2}(n_{D_{L}}^{+}+1-h)}\rceil
    \end{equation}
where $n_{D_{L}}^{+}$ is the number of positive crossings in an admissible diagram $D_{L}$ of $L$, we have that
\begin{equation}\label{eq:approx}
\Kh^{h,*}(\#^{n}S^{1}\times{S^{2}};L)\cong H^{h}(C^{\#}(L(\overrightarrow{p})).
\end{equation}
\end{remark}

We remark that after fixing a cabling pattern of the link $L\subset{\#^{n}S^{1}\times{S^{2}}}$, one may use full-twist approximation isomorphisms to compute the Rozansky-Willis homology groups that appear in the cabling colimit. For example, if we consider again the case where the 2-handle attaching knot in the Kirby diagram is 0-framed and crossingless and fix a small $h$, then there is a corresponding fixed $p$ satisfying \ref{eq:twistbound} for all diagrams of cables $D_{L(r,r)}$. Let $D_{L(r,r)(\overrightarrow{p})}$ denote the diagram obtained from a diagram of $L(r,r)$ after erasing the $0$-framed unknots corresponding to $S^{1}\times{S^{2}}$ connect summands and replacing them with $p$-full twist braids. There are then isomorphisms of the form
\begin{equation}\label{eq:approxmap}
\phi_{(r,r)}:\KhR_{2}^{h,-}(\#^{n}S^{1}\times{S^{2}};L(r,r))\rightarrow{\KhR_{2}^{h,-}(S^{3};L^{k}(r,r)})
\end{equation}
for each $r$. Note that if either assumption on the attaching knot is dropped then $k$ cannot remain fixed as the number of positive crossings in each cable diagram increases as $r\rightarrow{\infty}$. For the maps $\phi_{(r,r)}$ to assemble to an isomorphism between 1-dimensional inputs skein lasagna modules, one would need establish an analogue of \ref{eq:approx}  for projectors in the $\mathfrak{gl}_{2}$ webs and foams formalism, as well as verify cobordism naturality for the maps \ref{eq:approxmap}. We leave this for future work (See Questions \ref{q:surfacexdh=0},\ref{Q:gl2roz}).

\bibliographystyle{alpha}
\bibliography{refs}

@article {MN22,
    AUTHOR = {Manolescu, Ciprian and Neithalath, Ikshu},
     TITLE = {Skein lasagna modules for 2-handlebodies},
   JOURNAL = {J. Reine Angew. Math.},
  FJOURNAL = {Journal f\"{u}r die Reine und Angewandte Mathematik. [Crelle's
              Journal]},
    VOLUME = {788},
      YEAR = {2022},
     PAGES = {37--76},
      ISSN = {0075-4102},
   MRCLASS = {57K18},
  MRNUMBER = {4445546},
MRREVIEWER = {Daniel V. Mathews},
       DOI = {10.1515/crelle-2022-0021},
       URL = {https://doi.org/10.1515/crelle-2022-0021},
}

@article{MSzT96,
  title={{A product formula for the Seiberg-Witten invariants and the generalized Thom conjecture}},
  author={Morgan, John W and Szab{\'o}, Zolt{\'a}n and Taubes, Clifford Henry},
  journal={Journal of Differential Geometry},
  volume={44},
  number={4},
  pages={706--788},
  year={1996},
  publisher={Lehigh University}
}

@article {MWW-lasagna,
    AUTHOR = {Morrison, Scott and Walker, Kevin and Wedrich, Paul},
     TITLE = {Invariants of 4-manifolds from {K}hovanov-{R}ozansky link
              homology},
   JOURNAL = {Geom. Topol.},
  FJOURNAL = {Geometry \& Topology},
    VOLUME = {26},
      YEAR = {2022},
    NUMBER = {8},
     PAGES = {3367--3420},
      ISSN = {1465-3060},
   MRCLASS = {57K18 (57R56)},
  MRNUMBER = {4562565},
       DOI = {10.2140/gt.2022.26.3367},
       URL = {https://doi.org/10.2140/gt.2022.26.3367},
}

@article {GLW-schur-weyl,
    AUTHOR = {Grigsby, J. Elisenda and Licata, Anthony M. and Wehrli,
              Stephan M.},
     TITLE = {Annular {K}hovanov homology and knotted {S}chur-{W}eyl
              representations},
   JOURNAL = {Compos. Math.},
  FJOURNAL = {Compositio Mathematica},
    VOLUME = {154},
      YEAR = {2018},
    NUMBER = {3},
     PAGES = {459--502},
      ISSN = {0010-437X},
   MRCLASS = {57M27 (17B37 57M25 81R50)},
  MRNUMBER = {3731256},
MRREVIEWER = {Matthew Stoffregen},
       DOI = {10.1112/S0010437X17007540},
       URL = {https://doi.org/10.1112/S0010437X17007540},
}

@article{HRW25,
  title={{A Kirby color for Khovanov homology}},
  author={Hogancamp, Matthew and Rose, David\_E V and Wedrich, Paul},
  journal={Journal of the European Mathematical Society},
  year={2025},
  publisher={EMS Press}
}

@article {WillisS1xS2,
    AUTHOR = {Willis, Michael},
     TITLE = {Khovanov homology for links in {$\#^r(S^2\times S^1)$}},
   JOURNAL = {Michigan Math. J.},
  FJOURNAL = {Michigan Mathematical Journal},
    VOLUME = {70},
      YEAR = {2021},
    NUMBER = {4},
     PAGES = {675--748},
      ISSN = {0026-2285,1945-2365},
   MRCLASS = {57K18},
  MRNUMBER = {4332675},
MRREVIEWER = {Daniel\ V.\ Mathews},
       DOI = {10.1307/mmj/1594281620},
       URL = {https://doi.org/10.1307/mmj/1594281620},
}

@article {MWWhandles,
    AUTHOR = {Manolescu, Ciprian and Walker, Kevin and Wedrich, Paul},
     TITLE = {Skein lasagna modules and handle decompositions},
   JOURNAL = {Adv. Math.},
  FJOURNAL = {Advances in Mathematics},
    VOLUME = {425},
      YEAR = {2023},
     PAGES = {Paper No. 109071, 40},
      ISSN = {0001-8708,1090-2082},
   MRCLASS = {57K41},
  MRNUMBER = {4589588},
       DOI = {10.1016/j.aim.2023.109071},
       URL = {https://doi.org/10.1016/j.aim.2023.109071},
}

@article{CHE-Floer,
  title={{F}loer lasagna modules from link {F}loer homology},
  author={Chen, Daren},
  journal={arXiv preprint arXiv:2203.07650},
  year={2022}
}

@article{ROZ-Cat,
  title={{A categorification of the stable $SU(2)$ Witten-Reshetikhin-Turaev invariant of links in $S^{2}\times S^{1}$}},
  author={Rozansky, Lev},
  journal={arXiv preprint arXiv:1011.1958},
  year={2010}
}

@article {BHPW23,
    AUTHOR = {Beliakova, Anna and Hogancamp, Matthew and Putyra, Krzysztof
              K. and Wehrli, Stephan M.},
     TITLE = {On the functoriality of {$\mathfrak{sl}_2$} tangle homology},
   JOURNAL = {Algebr. Geom. Topol.},
  FJOURNAL = {Algebraic \& Geometric Topology},
    VOLUME = {23},
      YEAR = {2023},
    NUMBER = {3},
     PAGES = {1303--1361},
      ISSN = {1472-2747,1472-2739},
   MRCLASS = {57K18 (18N25)},
  MRNUMBER = {4598808},
       DOI = {10.2140/agt.2023.23.1303},
       URL = {https://doi.org/10.2140/agt.2023.23.1303},
}

@article {lee-endo,
    AUTHOR = {Lee, Eun Soo},
     TITLE = {An endomorphism of the {K}hovanov invariant},
   JOURNAL = {Adv. Math.},
  FJOURNAL = {Advances in Mathematics},
    VOLUME = {197},
      YEAR = {2005},
    NUMBER = {2},
     PAGES = {554--586},
      ISSN = {0001-8708,1090-2082},
   MRCLASS = {57M27},
  MRNUMBER = {2173845},
MRREVIEWER = {Paola\ Cristofori},
       DOI = {10.1016/j.aim.2004.10.015},
       URL = {https://doi.org/10.1016/j.aim.2004.10.015},
}

@article {Kho02,
    AUTHOR = {Khovanov, Mikhail},
     TITLE = {A functor-valued invariant of tangles},
   JOURNAL = {Algebr. Geom. Topol.},
  FJOURNAL = {Algebraic \& Geometric Topology},
    VOLUME = {2},
      YEAR = {2002},
     PAGES = {665--741},
      ISSN = {1472-2747,1472-2739},
   MRCLASS = {57M27 (57R56)},
  MRNUMBER = {1928174},
MRREVIEWER = {Jacob\ Andrew\ Rasmussen},
       DOI = {10.2140/agt.2002.2.665},
       URL = {https://doi.org/10.2140/agt.2002.2.665},
}

@article{OSz00,
  title={{The symplectic Thom conjecture}},
  author={Ozsv{\'a}th, Peter and Szab{\'o}, Zolt{\'a}n},
  journal={Annals of Mathematics},
  pages={93--124},
  year={2000},
  publisher={JSTOR}
}

@article{RW24,
  title={Khovanov homology and exotic $4 $-manifolds},
  author={Ren, Qiuyu and Willis, Michael},
  journal={arXiv preprint arXiv:2402.10452},
  year={2024}
}

@article{SZ2024,
  title={{Kirby belts, categorified projectors, and the skein lasagna module of $S^{2}\times S^{2}$}},
  author={Sullivan, Ian A and Zhang, Melissa},
  journal={arXiv preprint arXiv:2402.01081},
  volume={1},
  year={2024}
}

@misc{rasinv-genus,
      title={Khovanov homology and the slice genus}, 
      author={Jacob A. Rasmussen},
      year={2004},
      eprint={math/0402131},
      archivePrefix={arXiv},
      primaryClass={math.GT},
      url={https://arxiv.org/abs/math/0402131}, 
}

@article{AGL25,
  title={{Colored knot Floer homology: structures and examples}},
  author={Alishahi, Akram and Gorsky, Eugene and Liu, Beibei},
  journal={arXiv preprint arXiv:2508.21776},
  year={2025}
}

@article{Kho04,
  title={{Crossingless matchings and the cohomology of $(n,n)$ Springer varieties}},
  author={Khovanov, Mikhail},
  journal={Communications in Contemporary Mathematics},
  volume={6},
  number={04},
  pages={561--577},
  year={2004},
  publisher={World Scientific}
}

@misc{MMSW22,
      title={{A generalization of Rasmussen's invariant, with applications to surfaces in some four-manifolds}}, 
      author={Ciprian Manolescu and Marco Marengon and Sucharit Sarkar and Michael Willis},
      year={2022},
      eprint={1910.08195},
      archivePrefix={arXiv},
      primaryClass={math.GT},
      url={https://arxiv.org/abs/1910.08195}, 
}

@article{RSWWZ25,
  title={Khovanov skein lasagna modules with $1 $-dimensional inputs},
  author={Ren, Qiuyu and Sullivan, Ian and Wedrich, Paul and Willis, Michael and Zhang, Melissa},
  journal={arXiv preprint arXiv:2510.05273},
  year={2025}
}

@article{hogancamp2020constructing,
  title={Constructing categorical idempotents},
  author={Hogancamp, Matthew},
  journal={arXiv preprint arXiv:2002.08905},
  year={2020}
}

@misc{EH17,
      title={Categorical diagonalization}, 
      author={Ben Elias and Matthew Hogancamp},
      year={2017},
      eprint={1707.04349},
      archivePrefix={arXiv},
      primaryClass={math.RT},
      url={https://arxiv.org/abs/1707.04349}, 
}

@article{BKL25,
  title={Cornered skein lasagna theory},
  author={Blackwell, Sarah and Krushkal, Vyacheslav and Luo, Yangxiao},
  journal={arXiv preprint arXiv:2512.05861},
  year={2025}
}

@article{MSW26,
  title={4-dimensional Skein modules, Handle attachments, and Tangles},
  author={Martin, Gage and Stelow, Mary and Wattal, Mira},
  journal={arXiv preprint arXiv:2602.17825},
  year={2026}
}

@book{Lau74,
  title={Topologie de la dimension trois: homotopie et isotopie},
  author={Laudenbach, Fran{\c{c}}ois},
  volume={12},
  year={1974},
  publisher={{Soci{\'e}t{\'e} math{\'e}matique de France}}
}

@article{Nahm25,
  title={{Khovanov homology can distinguish exotic Mazur manifolds}},
  author={Nahm, Gheehyun},
  journal={arXiv preprint arXiv:2510.10809},
  year={2025}
}

@article{Sul25,
  title={{Bar-Natan skein lasagna modules and exotic surfaces in 4-manifolds}},
  author={Sullivan, Ian A},
  journal={arXiv preprint arXiv:2504.03968},
  year={2025}
}

\end{document}